\setlist[enumerate,1]{label=(\roman*),font=\itshape}
\titleformat{\section}{\centering\large\scshape}{\thesection.}{0.5em}{}
\titlespacing{\section}{0pc}{4ex plus 0ex minus 0ex}{0.3pc}
\titleformat{\subsection}[runin]{\normalfont\bfseries}{\thesubsection.}{0.3em}{}[.]
\titlespacing{\subsection}{0pc}{1.5ex plus 0ex minus 0ex}{0.3pc}
\titleformat{\subsubsection}[runin]{\normalfont\bfseries}{\thesubsubsection.}{0.3em}{}[.]
\titlespacing{\subsubsection}{0pc}{1.5ex plus 0ex minus 0ex}{0.3pc}
\theoremstyle{plain}
\newtheorem{theorem}{Theorem}[section]
\newaliascnt{proposition}{theorem} 
\newtheorem{proposition}[proposition]{Proposition}
\newaliascnt{lemma}{theorem}
\newtheorem{lemma}[lemma]{Lemma}
\newaliascnt{corollary}{theorem}
\newaliascnt{hypothesis}{theorem}
\theoremstyle{remark}
\newaliascnt{remark}{theorem}
\newtheorem{remark}[remark]{remark}
\newtheoremstyle{rmk}  
  {\topsep}               
  {\topsep}               
  {\normalfont}           
  {0pt}                 
  {\bfseries}            
  {.}                    
  { }             
  {}                     
\theoremstyle{rmk}
\newtheorem{rmk}[remark]{Remark}
\newtheoremstyle{definition}
  {\topsep}
  {\topsep}
  {\itshape}
  {0pt}
  {\bfseries}
  {.}
  { }
  {}
\theoremstyle{definition}
\newaliascnt{definition}{theorem}
\newtheorem{definition}[definition]{Definition}
\newtheoremstyle{breakthm} 
  {\topsep}  
  {\topsep}   
  {\itshape} 
  {0pt}      
  {\bfseries} 
  {.}         
  {\newline} 
  {}
  {}
\theoremstyle{breakthm}
\newtheorem{breaktheorem}[theorem]{Theorem}
\newtheorem{breakproposition}[proposition]{Proposition}
\newtheorem{breaklemma}[lemma]{Lemma}
\numberwithin{equation}{section}
\numberwithin{theorem}{section}
\numberwithin{lemma}{section}
\numberwithin{proposition}{section}
\numberwithin{corollary}{section}
\numberwithin{definition}{section}
\numberwithin{remark}{section}
\def\AA{\mathbb A}\def\BB{\mathbb B}\def\CC{\mathbb C}
\def\NN{\mathbb N}\def\PP{\mathbb P}
\def\QQ{\mathbb Q}\def\RR{\mathbb R}\def\TT{\mathbb T}
\def\ZZ{\mathbb Z}
\def\cA{\mathcal A}\def\cB{\mathcal B}\def\cD{\mathcal D}
\def\cH{\mathcal H}
\def\cK{\mathcal K}
\def\cM{\mathcal M}
\def\SP{\mathscr{S}}
\def\rddsym{\RR^{d\times d}_{sym}}
\def\rdd{\RR^{d\times d}}
\def\ttheta{\tilde{\theta}}
\def\tb{\tilde{b}}
\def\ta{\tilde{a}}
\def\tgamma{\tilde{\gamma}}
\def\tu{\tilde{u}}
\def\Sdm{S^{d-1}}
\def\rrdmz{\RR^d\setminus\{0\}}
\def\tcK{\tilde{\cK}}
\def\tchi{\tilde{\chi}}
\def\hcK{\widehat{\cK}}
\def\hF{\widehat{F}}
\DeclareMathOperator*{\argmax}{arg\,max}
\DeclareMathOperator*{\argmin}{arg\,min}
\DeclareMathOperator{\supp}{supp}
\DeclareMathOperator{\dist}{dist}
\DeclareMathOperator{\cc}{{cc}}
\DeclareMathOperator{\spn}{span}
\DeclareMathOperator{\rank}{rank}
\DeclareMathOperator{\curl}{{curl}}
\DeclareMathOperator{\ccurl}{{curlcurl}}
\DeclareMathOperator{\Ran}{Ran}
\DeclareMathOperator{\Lin}{Lin}
\DeclareMathOperator{\sym}{sym}
\DeclareMathOperator{\Skw}{Skew}
\DeclareMathOperator{\Div}{{div}}
\DeclareMathOperator{\length}{length}
\DeclareMathOperator{\per}{Per}
\def\ppa{\PP_{\hspace*{-0.07cm}\cA}}
\def\raa{r_{\hspace*{-0.07cm}\cA}}
\def\Raa{R_{\cA}}
\def\ppaCC{\ppa^{\hspace{0.01cm}\CC}}
\def\ppc{\PP_{\curl}}
\def\ppd{\PP_{\Div}}
\def\ppcc{\PP_{\cc}}
\newcommand{\abs}[1]{\left\lvert#1\right\rvert}
\newcommand{\norm}[1]{\left\lVert#1\right\rVert}
\newcommand{\rddd}[1]{\RR^{#1\times #1}}
\newcommand{\rddsymd}[1]{\RR^{#1\times #1}_{sym}}
\newcommand{\del}{\partial}
\newcommand{\eps}{\epsilon}
\newcommand{\Step}[1]{\smallbreak \indent \textit{Step #1.}\\}
\newcommand{\Case}[1]{\smallbreak \noindent \textit{Case #1.}}
\newcommand{\hsp}{\hspace{1.4cm}}
\renewenvironment{abstract}{
  \begin{adjustwidth}{1cm}{1cm} 
  \footnotesize 
  \noindent\textsc{Abstract.}
}{
  \end{adjustwidth}
  \normalsize
}
\begin{document}
\pagenumbering{gobble}
\hypersetup{linkcolor=black}    
\pagestyle{fancy}
\fancyhead{}
\fancyfoot{}

\begin{adjustwidth}{1.0cm}{1.0cm} 
\begin{center}
\setstretch{1.5}
{\LARGE The energy scaling behavior of a class of incompatible~two-well~problems\footnote{
This work is based on the author's master's thesis, supervised by Angkana Rüland and Antonio Tribuzio.
He is deeply grateful to them for their guidance, valuable discussions and constant support.
The author gratefully acknowledges funding from the Studienstiftung des deutschen Volkes during his studies
and from the Deutsche Forschungsgemeinschaft through the Leibniz Prize, project-ID 545985277, awarded to Angkana Rüland, which supports his current doctoral research.}}
\\
\vspace{0.2cm}
\text{\large Noah Piemontese-Fischer}\footnote{Institute for Applied Mathematics, University of Bonn, Endenicher Allee 60, 53115 Bonn, Germany.}\smallbreak
\text{\large December 12, 2025}\smallbreak
\end{center}
\end{adjustwidth} 
\vspace{1pt}
\begin{abstract}
In this article, we study scaling laws for singularly perturbed two-well energies with prescribed Dirichlet boundary data in settings where the wells and/or the boundary data are incompatible.
Our main focus is the \textit{geometrically linear two-well problem}, for which we characterize the energy scaling in two dimensions for nearly all combinations of linear boundary data and stress-free strains. In particular, we prove that if the boundary data enforces oscillations and the weight $\eps$ of the surface energy is small,
the minimal energy upon subtracting the zeroth-order contribution scales either as $\eps^{{4}/{5}}$ or as $\eps^{{2}/{3}}$, depending on whether the wells differ by a rank-one or a rank-two matrix, respectively.
For the \textit{gradient} and \textit{divergence-free two-well problem}, we obtain analogous results, showing an $\eps^{{2}/{3}}$-scaling behavior in two dimensions whenever oscillations are energetically favored.
These results follow by deriving matching upper and lower scaling bounds.
The lower scaling bounds are established in a general \textit{$\cA$-free framework} for incompatible two-well problems, which allows us 
to compute the excess energy and characterize boundary data which enforce oscillations.
The upper scaling bounds are obtained by branching constructions which are adapted to the incompatible setting.

\end{abstract}
\vspace{-7pt}
\setcounter{tocdepth}{2}
\tableofcontents
\newpage

\hypersetup{linkcolor=blue}   
\pagenumbering{arabic}
\fancyhead[RE]{\thepage}
\fancyhead[LO]{\thepage}
\renewcommand{\sectionmark}[1]{\markboth{\thesection.\ #1}{}}
\fancyhead[RO]{\small\nouppercase{\leftmark}}
\renewcommand{\subsectionmark}[1]{\markright{#1}}
\fancyhead[LE]{\rightmark}

\section{Introduction}\label{sec:intro}
Due to their unique elastic properties, shape-memory alloys have received much attention in the mathematical and engineering literature \cite{muller1, otsuka1999shape, chai}.
A key feature of these materials is a temperature-induced solid-solid phase transformation.
At high temperatures, they form a highly symmetric crystal lattice structure, known as \textit{austenite}.
Upon cooling below a critical temperature, the austenite transforms into variants of a less symmetric phase, called \textit{martensite}.
This phase transformation is accompanied by the formation of microstructures in which the different phases are arranged throughout the material \cite{rindler,ruland4}.
This article is concerned with the analysis of such microstructures in shape-memory alloys at subcritical temperature with two variants of martensite.

\subsection{The geometrically linear two-well problem}
In the geometrically linear theory of elasticity, the deformation of the material is described by a \textit{displacement field} $v:\Omega\to\RR^d$ that (approximately) minimizes
an \textit{elastic energy} of the form
\begin{equation}\label{eq:IntroEnergy}
  E_{el}(v) = \int_\Omega W(\nabla^{sym}v)\,dx,
\end{equation}
where $\Omega\subset\RR^d$ is the \textit{reference configuration} and $W:\rddsym\to[0,\infty)$ is the \textit{energy density}.\medbreak

We assume that the energy density is minimized by two matrices $a_0,a_1\in\rddsym$, which are referred to as \textit{wells} or \textit{stress-free strains},
corresponding to the two variants of martensite.
The energy density depends on the \textit{linear strain} which is the symmetric part of the \textit{displacement gradient}:
\begin{equation}
  \nabla^{sym}v := \sym (\nabla v) = \frac{\nabla v + (\nabla v)^T}{2}.
\end{equation}
As was shown in \cite{kohn, DalmasoNegriPercivale-2002}, the energy \eqref{eq:IntroEnergy} can formally be derived by linearization of the nonlinear model developed by Ball and James \cite{bj,bj2}.
Based on this linearization, we assume that the energy density grows quadratically away from the stress-free strains $\cK:=\{a_0,a_1\}$, taking the specific form
\begin{equation}\label{eq:EnergyDensitySquaredDist}
  W(A):=\dist^2(A,\cK),\quad A\in\rddsym.
\end{equation}
Due to the lack of quasiconvexity of this double-well potential, the minimization of \eqref{eq:IntroEnergy} under suitable boundary conditions enforces fine-scale mixing of the phases, giving rise to rich microstructures.
However, as the elastic energy neglects interfacial energy, the variational model favors infinitesimally fine mixtures of the phases and fails to predict the length scale of the microstructure \cite{km92, km94}.\medbreak

This issue can be resolved by introducing a \textit{singular perturbation} penalizing large surface area of the phase interfaces \cite{km92,km94}.
Following \cite{CO09,CO12}, we measure the surface area using \textit{phase arrangements}.
These are maps $\chi\in L^2(\Omega;\cK)$ which allow us to define the \textit{elastic energy at fixed phase arrangement} $\chi$
for displacement fields $v\in W^{1,2}(\Omega;\RR^d)$ as
\begin{equation}\label{eq:ElasticEnergyIntro}
    E_{el}(v,\chi):=\int_\Omega \abs{\nabla^{sym}v-\chi}^2\,dx.
\end{equation}
Selecting at each point the optimal phase for a fixed displacement field $v$, we observe that $E_{el}(v)=\min_{\chi\in L^2(\Omega;\cK)}E_{el}(v,\chi)$.
Now, this description via phase arrangements yields the following \textit{surface energy}
\begin{equation}\label{eq:surfaceenergy}
    E_{surf}(\chi):= \norm{\nabla \chi}_{TV(\Omega)},\quad \chi\in BV(\Omega;\cK),
\end{equation} 
which we use to introduce the \textit{singularly perturbed elastic energy} for $\eps\geq 0$
\begin{equation}\label{eq:SingPertEnergyIntro}
    E_\eps(v,\chi):= E_{el}(v,\chi) +\eps  E_{surf}(\chi),\quad v\in W^{1,2}(\Omega;\RR^d),\, \chi\in BV(\Omega;\cK).
\end{equation}
The singular perturbation prevents arbitrarily fine mixtures of the phases and thus has a regularizing effect. 
Moreover, the singularly perturbed energy selects a length scale, allowing for increasingly fine structures as $\eps\to0$.
This behavior has been analyzed very successfully in terms of \textit{scaling laws},
which quantify the blow-up rate of the surface energy occurring in the minimization of the elastic energy and characterize the complexity of optimal microstructures.
In this context, we consider the following problem:\\
Given $F\in\rdd$ and $\cK=\{a_0,a_1\}\subset\rddsym$, identify the scaling of $E_\eps(F,\cK)$ as $\eps\to0$, where
\begin{equation}\label{eq:baseEnergyIntro}
     E_{\eps}(F,\cK):=\inf\{E_\eps(v,\chi) : \chi\in BV(\Omega;\cK), \,v\in W^{1,2}(\Omega;\RR^d)\text{ with } v(x)=Fx \text{ on }\del\Omega\}.
\end{equation}
In order to ensure that $E_\eps(F,\cK)\to0$ as $\eps\to0$, the following assumptions are typically made:
\begin{alignat}{2}
  &\hspace{-1.0cm}\textit{(i) The wells are compatible (as linear strains): }  &&a_1-a_0=b\odot\xi \text{ for some }b,\xi\in\RR^d. \label{eq:compatiblewellsIntro}\\
  &\hspace{-1.0cm}\textit{(ii) The boundary data is compatible: }\quad &&F=(1-\lambda) a_0+\lambda a_1 \text{ for some }\lambda\in[0,1]. \label{eq:compatiblebdrydataIntro}
\end{alignat}
If both \eqref{eq:compatiblewellsIntro} and \eqref{eq:compatiblebdrydataIntro} are satisfied, we say that the \textit{data $(F,\cK)$ are compatible} (as linear strains);
otherwise, they are called \textit{incompatible}.  \medbreak

The energy scaling is well-understood for compatible data.
In a simplified scalar model, the seminal works \cite{km92,km94}
used a branching construction to show that the minimal singularly perturbed energy scales as $\eps^{\nicefrac{2}{3}}$.
This result disproved the widely expected $\eps^{\nicefrac{1}{2}}$-scaling which had been predicted based on simple laminates.
An improved $\eps^{\nicefrac{4}{5}}$-energy scaling was later reported in \cite{cc15} for a singular perturbation model derived from nonlinear elasticity that,
depending on the structure of the wells, arises instead of the typical $\eps^{\nicefrac{2}{3}}$-scaling.
In \cite{ruland} and \cite{ruland2}, the scaling behavior of two-well energies was then systematically studied in a general $\cA$-free setting.
Taken together, the above works (see, in particular, \cite[Theorem~1.1, Theorem~1.2]{cc15} for the upper bound constructions, 
\cite[Theorem~1, Proposition~3.10]{ruland} for lower bounds and \cite[Theorem~1.2]{ruland2} for a generalized setting)
yield the following scaling laws.
\begin{theorem}[Scaling laws for the singularly perturbed geometrically linear two-well energy]\label{thm:chanconti}
Let $\Omega=(0,1)^2$. Let $\cK=\{a_0,a_1\}\subset\RR^{2\times2}_{sym}$ and $F=(1-\lambda)a_0+\lambda a_1$ for some $\lambda\in(0,1)$.
For $\eps\geq0$, let $E_\eps(F,\cK)$ be given as in \eqref{eq:baseEnergyIntro}.
Then, for small $\eps\geq0$, it holds that
\begin{equation}\label{eq:ScalingIntro}
        E_{\eps}(F,\cK)\sim
                \begin{cases}
                  \eps^{\nicefrac{4}{5}}  & \text{ if } a_1-a_0 \in \{e_1\odot e_1, e_2\odot e_2\},\\
        \eps^{\nicefrac{2}{3}}  & \text{ if } a_1-a_0 = e_1\odot e_2.
        \end{cases}        
\end{equation}
\end{theorem}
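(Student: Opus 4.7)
The plan is to establish matching upper and lower bounds in each of the two regimes, by combining the branching constructions of \cite{km92, km94, cc15} with the Fourier-based $\cA$-free lower bound techniques of \cite{ruland, ruland2}. Since $F = (1-\lambda) a_0 + \lambda a_1$ with $\lambda \in (0,1)$ lies strictly in the interior of the convex hull of the two wells, the laminate genuinely has to oscillate and both energy contributions are nontrivial for $\eps > 0$.

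\textbf{Upper bounds.} For $a_1 - a_0 = e_1 \odot e_2$, I would use the classical Kohn--M\"uller branching construction \cite{km92, km94}. Writing $a_1 - a_0 = b \odot \xi^*$, a simple laminate perpendicular to $\xi^*$ realizes the average strain $F$ in the bulk; to attach continuously to the affine datum $v(x) = Fx$ via a displacement in $W^{1,2}$, the laminate is geometrically refined towards the boundary perpendicular to $b$ through levels of scales $\sim 2^{-k} h_0$. Balancing the resulting surface cost against the elastic boundary-layer cost gives an upper bound of order $\eps^{2/3}$. For $a_1 - a_0 = e_i \odot e_i$, the symmetric rank-one structure of the well difference permits an additional degree of freedom in the displacement, and I would invoke the refined two-scale branching construction of Chan--Conti \cite[Thm.~1.1]{cc15}: oscillations at two independent scales are introduced, and the two-parameter optimization produces the strictly smaller upper bound $\eps^{4/5}$.

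\textbf{Lower bounds.} I would recast the minimization in the $\cA$-free framework with $\cA = \ccurl$, exploiting that every admissible linear strain satisfies St.-Venant compatibility and hence lies in $\ker \cA$. After extending $\chi - F$ to $\RR^2$ with controlled boundary corrections, Plancherel bounds the elastic energy from below by $\int_{\RR^2}|m(\xi)\,\widehat{\chi - F}(\xi)|^2\,d\xi$ for a matrix-valued Fourier multiplier $m$ that vanishes precisely on the symbol of $\cA$. The quantitative computation of $|m(\xi)(a_1 - a_0)|^2$ carried out in \cite[Prop.~3.10]{ruland} and \cite[Thm.~1.2]{ruland2} provides a sharp rate of degeneration of this quantity as $\xi$ approaches the lamination direction $\xi^*$, and this rate depends on whether $a_1 - a_0$ has rank one or rank two as a symmetric matrix. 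A dyadic decomposition in Fourier space combined with interpolation against the BV surface contribution $\eps\|\nabla\chi\|_{TV}$ then yields matching lower bounds of order $\eps^{4/5}$ in the rank-one case and $\eps^{2/3}$ in the rank-two case.

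\textbf{Main obstacle.} The central difficulty is the sharp quantitative analysis of the $\cA$-free multiplier near its degenerate direction, since the precise order of degeneration is exactly what discriminates between the two exponents. Once the sharp Fourier weights and the associated dyadic interpolation argument from \cite[Section~3]{ruland} and \cite[Section~3]{ruland2} are in place, matching the upper and lower bounds is a routine combination of the branching constructions with the $\cA$-free lower-bound inequality.
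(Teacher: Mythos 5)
Your proposal is correct and follows exactly the route the paper intends: Theorem~\ref{thm:chanconti} is stated as a summary of known results and is not proved in the paper itself, which instead cites \cite{cc15} for the branching upper bounds and \cite{ruland,ruland2} for the Fourier-multiplier lower bounds — precisely the ingredients you describe. Your sketch is also consistent with the paper's own general machinery (the multiplier $p_{\cc,a}$ with maximal vanishing order $L=2$ for rank-one and $L=1$ for rank-two differences, yielding $\eps^{2L/(2L+1)}$, together with the Kohn--Müller and Chan--Conti constructions of Sections~\ref{sec:UpperBoundGradDiv}--\ref{sec:UpperBoundCC}), which recovers this theorem as the compatible special case.
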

In contrast, if $F\in\cK$ (i.e. $\lambda\in\{0,1\}$ in \eqref{eq:compatiblebdrydataIntro}), we obtain the \textit{trivial energy scaling} 
\begin{equation}\label{eq:trivialScalingIntro}
    E_\eps(F,\cK)=0\quad\forall\eps\geq0
\end{equation}
since \eqref{eq:baseEnergyIntro} is minimized by the linear displacement field $v(x):=Fx$ and the constant phase arrangement $\chi(x):= F$.
The trivial energy scaling indicates that no surface energy is required in elastic energy minimization with a single phase occupying the entire domain.
\medbreak

Much less is known if the data are incompatible.
In this direction, the work of Kohn \cite{kohn} analyzed the geometrically linear two-well energy without surface energy using the theory of relaxations.
This showed that layered microstructures are optimal in the minimization of \eqref{eq:ElasticEnergyIntro} for both compatible and incompatible data,
as discussed in more detail in \autoref{sec:relaxation}. 
In this context, Kohn derives an explicit formula for $E_0(F,\cK)$ for all data $F\in\rdd$ and $\cK=\{a_0,a_1\}\subset\rddsym$ (see \autoref{thm:qwdom})
which in particular shows that
\begin{equation}\label{eq:E0VanishIFFDataComp}
    E_0(F,\cK)=0 \iff \text{the data }(F,\cK)\text{ are compatible or } F\in\cK.
\end{equation}
The goal of this article is to extend Kohn's analysis of the incompatible setting to the singular perturbation model.
Seeking to understand features of optimal microstructures, we study scaling laws for $E_\eps(F,\cK)$.
However, for all incompatible data $(F,\cK)$ with $F\notin\cK$, there holds
\begin{equation}
    E_\eps(F,\cK)\xrightarrow{\eps\to 0} E_0(F,\cK)>0.
\end{equation}
Therefore, in order to characterize the \textit{higher order behavior} of the geometrically linear two-well energy,
we subtract the zeroth order term and seek to determine the scaling of the \textit{zeroth-order-corrected energy} $E_\eps(F,\cK)-E_0(F,\cK)$ as $\eps\to0$ in the spirit of \autoref{thm:chanconti}.
The results on this problem are collected in \autoref{thm:incompsyma} below.\medbreak

As in \cite{kohn}, we also consider the corresponding theory for gradients, which provides valuable insight for the geometrically linear problem.
In this context, we systematically study the scaling of the \textit{singularly perturbed two-well energy for the gradient}
\begin{equation}\label{eq:GradSingPertEnergyIntro}
       E^{\text{grad}}_{\eps}(v, \chi):= \int_\Omega \abs{\nabla v-\chi}^2\,dx +\eps E_{surf}(\chi),\quad v\in W^{1,2}(\Omega;\RR^d),\, \chi\in BV(\Omega;\cK),\,\eps\geq0
\end{equation}
with the aim of removing any assumptions on the compatibility of the data.
Note that the energy \eqref{eq:GradSingPertEnergyIntro} arises from \eqref{eq:SingPertEnergyIntro} by disregarding the gauge invariance
with respect to the action of the group $\Skw(d)$ encoding infinitesimal frame indifference.
The results on the scaling behavior of \eqref{eq:GradSingPertEnergyIntro} are collected in \autoref{thm:incompgrad} below.

\subsection{Formulation of the two-well problems in the \texorpdfstring{$\cA$}{A}-free framework}
We work within the $\cA$-free framework developed by Murat and Tartar in the context of compensated compactness \cite{murat1, tartar1, tartar2}.
Following \cite{fonsecamuller99, DPPR18, kristensenraita, ST21, ruland}, we employ this framework to analyze 
the two-well energies \eqref{eq:SingPertEnergyIntro} and \eqref{eq:GradSingPertEnergyIntro} in an abstract setting,
which allows us to formulate some results more generally; see \autoref{thm:qwdom}, \autoref{thm:incompa} and \autoref{lem:LCP}.\medbreak

Let $X$ and $Y$ be real inner product spaces of dimension $n,m\in\NN$, respectively.
For $d\in\NN$, let $\cA:C^\infty(\RR^d; X )\rightarrow C^\infty(\RR^d;Y)$ be a \textit{linear, homogeneous, constant-coefficient differential operator of order $k\in\NN$}
\begin{equation}\label{eq:diffopera}
  \cA = \sum_{\abs{\alpha}=k} A_\alpha \del^\alpha,
\end{equation}
where $\abs{\alpha}=\sum_{i=1}^d \alpha_i $ denotes the length of the multi-index $\alpha\in\NN^d$ and $A_\alpha\in\Lin(X;Y)$.
If necessary, $\cA$ is understood in the sense of distributions. 
For brevity, when referring to a differential operator $\cA$ of the form \eqref{eq:diffopera},
we understand the spaces $X,Y$ and the dimension $d$ to be part of~$\cA$.
In order to further simplify the notation, we denote by $(\cdot,\cdot)_\cA$ the inner product in $X$, writing $v\perp_\cA w$ if two vectors $v,w\in X $ are perpendicular.
Moreover, we denote by $\lvert \cdot \rvert _\cA$ the norm in $X$ and by $\dist_\cA(\cdot,U)$ the distance associated to a set $U\subset X$.\medbreak

Now, let $\Omega\subset\RR^d$ be a bounded domain.
Let $F\in X$ be boundary data and $\cK=\{a_0, a_1\}\subset X$ a set of distinct wells. We consider
the \textit{$\cA$-free two-well energy}
\begin{equation}\label{eq:afreemulti}
  E_{el}^\cA(u,\chi):=\int_\Omega \abs{u-\chi}_\cA^2 \,dx,\quad u\in\cD^\cA_F(\Omega),\, \chi\in L^2(\Omega;\cK),
\end{equation}
where the \textit{set of admissible maps} is given by
\begin{equation}\label{eq:admapsa}
  \cD^\cA_F(\Omega):=\{u\in L^2_{\text{loc}}(\RR^d; X ):\cA u = 0 \text{ in } \RR^d, \,u=F \text{ on } \RR^d\setminus\overline{\Omega}\}.
\end{equation}
In addition, we introduce the \textit{singularly perturbed $\cA$-free two-well energy}
\begin{equation}\label{eq:SingPertAfreemulti}
  E_{\eps}^\cA(u,\chi):= E_{el}^\cA(u,\chi)+\eps E_{surf}(\chi),\quad u\in\cD^\cA_F(\Omega),\, \chi\in BV(\Omega;\cK),\,\eps\geq0,
\end{equation}
where $E_{surf}(\chi)$ is given by \eqref{eq:surfaceenergy}. 
\medbreak

In the general $\cA$-free setting, we seek to derive \textit{lower scaling bounds} for the zeroth-order-corrected energy $E_{\eps}^\cA(F,\cK)-E_0^\cA(F,\cK)$ as $\eps\to0$, where
\begin{equation}\label{eq:MinSingPertAfreeEnergy}
  E_{\eps}^\cA(F,\cK):=\inf\{E_\eps(u,\chi) : u\in\cD^\cA_F(\Omega),\, \chi\in BV(\Omega;\cK)\},\quad\eps\geq0.
\end{equation}
Under compatibility assumptions on the data (see \eqref{eq:compatiblewells} and \eqref{eq:compatiblebdrydata} below), which
guarantee that $E_0^\cA(F,\cK)$ vanishes, such lower scaling bounds have been studied in \cite{ruland,ruland2}.
Building on these works, we extend the analysis to the incompatible setting where the \textit{excess energy} $E_0^\cA(F,\cK)$ is strictly positive.\medbreak

In order to introduce the notion of compatibility in the $\cA$-free framework, we need some definitions.
For $\xi\in\RR^d\setminus\{0\}$, we set
\begin{equation}\label{eq:xicompatiblestates}
  V_\cA(\xi) := \ker(\AA(\xi))\subset X,
\end{equation}
where $\AA(\xi)$ denotes the symbol of $\cA$, which we take to be
\begin{equation}\label{eq:symbol}
  \AA(\xi)=\textstyle\sum_{\abs{\alpha}=k} \xi^\alpha A_\alpha \in \Lin(X,Y).
\end{equation}
Note that $V_\cA(\xi)$ only depends on the direction ${\xi}/{\lvert{\xi}\rvert}$.
The differential operator $\cA$ is said to be \textit{elliptic} if $\AA(\xi)$ is injective for all $\xi\in\rrdmz$.
As noted in \cite{tartar2, murat2}, the \textit{wave cone} associated to $\cA$ 
\begin{equation}\label{eq:wavecone}
  \Lambda_\cA := \bigcup_{\xi\in S^{d-1}} V_\cA(\xi)
\end{equation}
generalizes the (symmetrized) rank-one connections for the (symmetrized) gradient and indicates whether there exists a \textit{simple laminate} of two vectors in $X$.
More precisely, if $\cK=\{a_0,a_1\}\subset X$ with $a_1-a_0\in V_\cA(\xi)$ for some $\xi\in\RR^d$ and $h:\RR\to\{0,1\}$ is measurable,
then the function $u:\RR^d\to X$ given by $u(x):=(1-h(x\cdot\xi))a_0+h(x\cdot\xi)a_1$
is a solution to the \textit{differential inclusion}
\begin{equation}\label{eq:AfreeDifIncl}
\begin{cases}
    u\in\cK \text{ in }\RR^d,\\
    \cA u = 0 \text{ in }\RR^d.
\end{cases}
\end{equation}
In this context, we refer to a function $u:\RR^d\to\{a_0,a_1\}$
that only depends on $x\cdot\xi$ as a \textit{simple laminate of $a_0$ and $a_1$ with lamination direction $\xi$}.
Moreover, we note that the work \cite{DPPR18} showed that for $\cK=\{a_0,a_1\}\subset X$,
there exist non-constant solutions of \eqref{eq:AfreeDifIncl} if and only if $a_1-a_0\in\Lambda_\cA$,
which generalized the exact rigidity result \cite[Proposition~1]{bj}.
Based on this observation, the compatibility notions \eqref{eq:compatiblewellsIntro} and \eqref{eq:compatiblebdrydataIntro} generalize to the $\cA$-free setting as follows:
\begin{alignat}{2}
  &\hspace{-1.0cm}\textit{(i) The wells $a_0,a_1\in X$ are compatible if }&&a_1-a_0\in \Lambda_\cA, \label{eq:compatiblewells}\\
  &\hspace{-1.0cm}\textit{(ii) The boundary data $F\in X$ is compatible if }\hspace{0.1cm}&&F=(1-\lambda) a_0+\lambda a_1 \text{ for some }\lambda\in[0,1]. \label{eq:compatiblebdrydata}
\end{alignat}
We say that \textit{the data $(F,\cK)$ are compatible} if both assumptions \eqref{eq:compatiblewells} and \eqref{eq:compatiblebdrydata} are satisfied;
otherwise, they are said to be \textit{incompatible}.\medbreak
Next, we formulate the two-well energy for the gradient \eqref{eq:GradSingPertEnergyIntro} in the $\cA$-free framework by choosing $\cA$ to be the differential operator
$\curl:C^\infty(\RR^d;\RR^{d\times d})\rightarrow C^\infty(\RR^d;\RR^{d\times d\times d})$ given by
\begin{equation}\label{eq:curl}
  (\curl u)_{i,j,k}= \del_k u_{i,j}-\del_j u_{i,k}, \quad  1\leq i,j,k\leq d.
\end{equation}
This is motivated by the Poincaré lemma, which states that in a simply connected domain, a matrix field is a gradient if and only if it is $\curl$-free.
Similarly, for the geometrically linear energy \eqref{eq:SingPertEnergyIntro}, we choose $\cA$ as the differential operator
$\ccurl:C^\infty(\RR^d;\RR^{d\times d}_{sym})\rightarrow C^\infty(\RR^d;\RR^{d\times d\times d\times d})$~given~by
\begin{equation}\label{eq:curlcurl}
  (\ccurl u)_{i,j,k,l} = \del^2_{i,j}u_{k,l} + \del^2_{k,l}u_{i,j} - \del^2_{i,l}u_{k,j} - \del^2_{k,j}u_{i,l},\quad 1\leq i,j,k,l\leq d.
\end{equation}
This is based on the observation that in a simply connected domain $\Omega\subset\RR^d$, a symmetric matrix field $u:\Omega\to \rddsym$ is a symmetrized gradient if and only if the Saint-Venant compatibility condition $\ccurl u = 0$ is satisfied; see \cite{CiarletVenant} and \cite{gmeineder2022natural}.
When $\ccurl$ appears as subscript or superscript, we abbreviate it by $\cc$.\medbreak

In addition to the geometrically linear setting \eqref{eq:SingPertEnergyIntro} and the corresponding setting for gradients \eqref{eq:GradSingPertEnergyIntro},
we also study another prototypical example, namely the \textit{singularly perturbed two-well energy for the divergence}.
This energy is obtained from \eqref{eq:SingPertAfreemulti} by choosing $\cA$ to be the divergence operator $\Div:C^\infty(\RR^d;\RR^{d\times d})\to C^\infty(\RR^d;\RR^d)$
\begin{equation}\label{eq:Div}
(\Div u)_{i}=\textstyle\sum_{j=1}^d \del_j u_{i,j}.
\end{equation}
The divergence-free singularly perturbation model was first analyzed in \cite{ruland}, where quantitative rigidity estimates were established.
This analysis was inspired by the results in \cite{Garroni,PP04} on exact and approximate rigidity for the differential inclusion \eqref{eq:AfreeDifIncl}
with $\cA=\Div$ for two and three wells.
In this context, \cite[Theorem 1]{ruland} proved that the two-well energy $E_\eps^{\Div}$ scales as $\eps^{\nicefrac{2}{3}}$ for compatible data.
In the present work, we aim to extend this analysis to the incompatible setting.
We remark that the two-well problem for the divergence is of particular interest as any differential operator $\cA$ of the form \eqref{eq:diffopera}
can be reduced, via a suitable linear transformation, to the setting of a (possibly higher order) divergence operator; see \cite[Appendix A]{ST21} and \cite[Appendix B]{ruland}.\medbreak

The spaces $\rdd$, $\rddsym$, $\RR^{d\times d\times d}$ and $\RR^{d\times d\times d\times d}$ in \eqref{eq:curl}, \eqref{eq:curlcurl} and \eqref{eq:Div}
are endowed with the Frobenius inner product. Moreover, we restrict our attention to square matrix fields in \eqref{eq:curl} and \eqref{eq:Div} for simplicity.

\subsection{Minimization of an \texorpdfstring{$\cA$}{A}-free two-well energy without surface energy}
In order to study the scaling of $E_{\eps}^\cA(F,\cK)-E_0^\cA(F,\cK)$ for incompatible data $(F,\cK)$,
it will prove valuable to first characterize the excess energy $E_0^\cA(F,\cK)$.
The work of Kohn \cite{kohn} provides an explicit formula for the excess energy if $\cA\in\{\curl,\ccurl\}.$
We generalize this result to the $\cA$-free setting using Kohn's arguments in the context of $\cA$-quasiconvex envelope developed in \cite{fonsecamuller99,aquasi}
and applying the results of Rai\c t\u a \cite{raitapot19}.\medbreak

Before stating the main result on this topic, we need to introduce some terminology.
As noted in \cite{kohn}, minimization of the two-well energy \eqref{eq:afreemulti} is closely related to the following projection operators.
\begin{definition}[Compatibility projection]\label{def:compatibilityProjection}
Let $\cA$ be a differential operator as in \eqref{eq:diffopera}. 
For $\xi\in\RR^d\setminus\{0\}$, let $V_\cA(\xi)$ be given as in \eqref{eq:xicompatiblestates} and denote by $\ppa(\xi)$ the orthogonal projection onto $V_\cA(\xi)$ in $X$.
This induces the map
\begin{equation}\label{eq:compatibilityProjection}
  \begin{aligned}
    \ppa:\RR^d\setminus\{0\}&\rightarrow \Lin( X ),\\
    \xi &\mapsto \ppa(\xi),
  \end{aligned}
\end{equation}
to which we refer as the \textit{compatibility projection}.
\end{definition}
\begin{rmk}
As $V_\cA(\xi)$ only depends on ${\xi}/{\lvert\xi\rvert}$, the compatibility projection is zero-homogeneous:
\begin{equation}\label{eq:zerohom}
    \ppa(\xi)=\ppa\big(\tfrac{\xi}{\abs{\xi}}\big)\quad\forall\xi\in\RR^d\setminus\{0\}.
\end{equation}
\end{rmk}
Based on the notions from \cite{kohn} for $\cA\in\{\curl,\ccurl\}$, we define the following functions.
\begin{definition}[Compatibility quantifiers]\label{def:compquant}
Let $\cA$ be a differential operator as in \eqref{eq:diffopera}. Let $\ppa$ be given as in \autoref{def:compatibilityProjection}.
We define the two functions $h_\cA,\,g_\cA: X\to[0,\infty)$ for $a\in X$ by
\begin{equation}\label{eq:compquant}
h_\cA(a) := \inf_{\xi\in S^{d-1}} \abs{a-\ppa(\xi)a}_\cA^2, \qquad\qquad g_\cA(a) :=  \sup_{\xi\in S^{d-1}}\abs{\ppa(\xi)a}_\cA^2.
\end{equation}
We refer to both functions as compatibility quantifiers.
\end{definition}
\begin{rmk}
Let $a\in X$. For all $\xi\in\rrdmz$, there holds $(a-\ppa(\xi)a) \perp_\cA \ppa(\xi)a$, which implies
\begin{equation}\label{eq:compquanteqNotExtremal}
    \abs{a-\ppa(\xi)a}_\cA^2 + \abs{\ppa(\xi)a}_\cA^2 = \abs{a}^2_\cA\quad\forall\xi\in\rrdmz.
\end{equation}
This shows that the optimization problems in \eqref{eq:compquant} are equivalent, and that the compatibility quantifiers are connected through the relation
\begin{equation}\label{eq:compquanteq}
    h_\cA(a) + g_\cA(a) = \abs{a}^2_\cA.
\end{equation}
As the wave cone collects the compatible states, the following calculation shows that $h_\cA(a)$ indeed quantifies how far a state $a\in X$ is from being compatible:
\begin{equation}\label{eq:OptimalCompatibleApprox2}
  \dist^2(a, \Lambda_{\cA}) = \inf_{\xi\in\Sdm}\dist^2(a, V_{\cA}(\xi)) =  \inf_{\xi\in\Sdm} \abs{a-\ppa(\xi)a}^2 = h_\cA(a).
\end{equation}
\end{rmk}
In \autoref{subsec:aquasi}, we will see that the optimal $\xi\in S^{d-1}$ in \eqref{eq:compquant} determine the
lamination directions of optimal microstructures. This motivates the following definition.
\begin{definition}[Optimal lamination directions]\label{def:optLaminationDirections}
Let $\cA$ be a differential operator as in \eqref{eq:diffopera}. Let $\ppa$ be given as in \autoref{def:compatibilityProjection}.
For $a\in X$, we define the set of optimal lamination directions by
\begin{equation}\label{eq:optLaminationDirections}
  S_\cA(a) := \{\xi\in S^{d-1}: \abs{\ppa(\xi)a}_\cA^2 = g_\cA(a)\}\subset S^{d-1}.
\end{equation}
\end{definition}
To apply the results of \cite{raitapot19}, we often assume that $\cA$ satisfies the \textit{constant rank property.}
\begin{definition}[Constant rank property, \cite{wilcox}]\label{def:constantrank}
A differential operator $\cA$ as in \eqref{eq:diffopera} is said to have constant rank if there exists $\raa\in\NN$ such that
its symbol $\AA(\xi)$ (see \eqref{eq:symbol}) satisfies
\begin{equation}\label{eq:constantrank}
\rank \AA(\xi) = \raa \quad \forall\xi\in\RR^d \setminus\{0\}.
\end{equation}
\end{definition}
In \autoref{subsec:ApplicationOfGeneralLowerScaling}, we will see that the differential operators $\cA\in\{\curl,\Div,\ccurl\}$ have constant rank.
Finally, given a set of wells $\cK=\{a_0,a_1\}\subset X$, we denote the \textit{weighted average} by
\begin{equation}\label{eq:atheta}
  a_\theta := (1-\theta)a_0 + \theta a_1\in X,\quad\theta\in[0,1].
\end{equation}
We are now ready to state the main result on the minimization of the $\cA$-free two-well energy without surface energy,
which for $\cA\in\{\curl,\ccurl\}$ had already been established in \cite{kohn}.
\begin{theorem}\label{thm:qwdom}
Let $\cA$ be a differential operator as in \eqref{eq:diffopera} satisfying the constant rank property; see \autoref{def:constantrank}.
Let $\Omega\subset\RR^d$ be bounded domain.
Given $F\in X$ and $\cK=\{a_0,a_1\}\subset X$, let $E_0^\cA(F,\cK)$ be given as in \eqref{eq:MinSingPertAfreeEnergy}.
Let $a_\theta$ be as in \eqref{eq:atheta}. For $a:=a_1-a_0$, let $h_\cA(a)$ be as in \autoref{def:compquant}.
Then, it holds that
\begin{equation}\label{eq:baseenergyAfree}
  E_0^\cA(F,\cK)  = \abs{\Omega}\min_{\theta\in[0,1]} \Big(\abs{F-a_\theta}_\cA^2 +\theta(1-\theta)h_\cA(a)\Big).
\end{equation}
\end{theorem}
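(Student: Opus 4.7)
The plan is to first reduce to an unconstrained problem in the phase field, then apply Fourier analysis on $\cA$-free maps to obtain a sharp lower bound, and finally match it by an explicit simple-laminate construction. Minimising $E_{el}^\cA$ pointwise in $\chi\in L^2(\Omega;\cK)$ yields $E_0^\cA(F,\cK)=\inf_{u\in\cD^\cA_F(\Omega)}\int_\Omega\dist_\cA^2(u(x),\cK)\,dx$.

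\textbf{Lower bound.} For admissible $u,\chi$, write $u=F+w$ (with $\cA w=0$ and $w$ supported in $\overline{\Omega}$) and $\chi(x)=a_0+\theta(x)\,a$ with $\theta:\Omega\to\{0,1\}$, $a=a_1-a_0$; set $\bar\theta:=|\Omega|^{-1}\int_\Omega\theta\,dx$ and $\psi:=\theta-\bar\theta$, extended by $0$ outside $\Omega$. Under constant rank, Rai\c t\u a's potential representation \cite{raitapot19} provides $w=\cB\varphi$ for a homogeneous differential operator $\cB$ of positive order, so $\int_{\RR^d}w\,dx=0$; together with $\int\psi=0$ this makes the cross terms in
\[
\int_\Omega|u-\chi|_\cA^2\,dx=|\Omega|\,|F-a_{\bar\theta}|_\cA^2+\int_{\RR^d}|w-\psi a|_\cA^2\,dx
\]
vanish. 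By Plancherel and the pointwise constraint $\hat w(\xi)\in V_\cA(\xi)$,
\[
\int_{\RR^d}|w-\psi a|_\cA^2\,dx\geq\int_{\RR^d}|\hat\psi(\xi)|^2\lvert a-\ppa(\xi)a\rvert_\cA^2\,d\xi\geq h_\cA(a)\int|\psi|^2\,dx=h_\cA(a)\,|\Omega|\,\bar\theta(1-\bar\theta),
\]
using $\theta^2=\theta$. Infimising over $\bar\theta\in[0,1]$ gives the lower bound in \eqref{eq:baseenergyAfree}.

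\textbf{Upper bound.} Fix $\theta^*$ realising the minimum in \eqref{eq:baseenergyAfree} and $\xi^*\in S_\cA(a)$; see \autoref{def:optLaminationDirections}. Let $h:\RR\to\{0,1\}$ be $1$-periodic with mean $\theta^*$ and, for $\delta>0$, set
\[
\chi_\delta(x):=a_0+h\bigl(x\cdot\xi^*/\delta\bigr)a,\qquad u_\delta(x):=F+\bigl(h(x\cdot\xi^*/\delta)-\theta^*\bigr)\ppa(\xi^*)a.
\]
Since $\ppa(\xi^*)a\in V_\cA(\xi^*)$ and $u_\delta-F$ depends only on $x\cdot\xi^*$, one verifies $\cA u_\delta=0$; a direct computation using the orthogonality \eqref{eq:compquanteqNotExtremal} together with $\xi^*\in S_\cA(a)$ shows that $|\Omega|^{-1}\int_\Omega|u_\delta-\chi_\delta|_\cA^2\,dx$ equals the right-hand side of \eqref{eq:baseenergyAfree} up to a boundary error. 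To enforce $u=F$ outside $\overline{\Omega}$, cut off the Rai\c t\u a potential $\varphi_\delta$ satisfying $\cB\varphi_\delta=u_\delta-F$ in a layer of width $\delta$ around $\partial\Omega$; the resulting discrepancy is of surface order and vanishes as $\delta\to 0$.

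\textbf{Main obstacle.} The conceptual heart of the proof is bridging the pointwise Fourier condition $\hat w(\xi)\in V_\cA(\xi)$ with the Dirichlet-type admissibility $u=F$ outside $\Omega$. Constant rank of $\cA$ is precisely what makes this possible: Rai\c t\u a's theorem \cite{raitapot19} supplies the potential operator $\cB$, and the $\cA$-quasiconvex relaxation framework of \cite{fonsecamuller99,aquasi} justifies both the vanishing $\hat w(0)=0$ used in the lower bound and the potential-based cutoff in the upper bound. Once these two ingredients are in place, the remaining steps are a Plancherel estimate and an elementary expansion.
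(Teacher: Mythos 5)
Your argument is correct in substance, but it is organized differently from the paper's. The paper first reduces the Dirichlet problem on $\Omega$ to the periodic problem on $\TT^d$ (\autoref{lem:domaininvarianceInText}, via Vitali covering and \cite[Corollary~1]{raitapot19}) and then performs the Fourier analysis on the torus (\autoref{thm:qtw}), where the upper bound requires laminating along a \emph{rational} direction and an approximation argument when $S_\cA(a)\cap S^{d-1}_\QQ=\emptyset$. You instead work directly on $\RR^d$: for the lower bound you relax the hard constraint $u=F$ outside $\overline\Omega$ to $\cA w=0$ on $\RR^d$ and project frequency-wise — this is exactly the computation the paper later uses in Step~2 of the proof of \autoref{thm:incompa}, and it is harmless here because only a lower bound is needed; the cross term does vanish, since $\int_\Omega\psi=0$ by construction and $\int_\Omega w=0$ is the content of \autoref{lem:raitamean} (your appeal to a positive-order potential fails only in the elliptic case, where $w\equiv0$ anyway). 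For the upper bound your direct laminate avoids the rational-direction detour entirely, since a potential of the form $\varphi_\delta(x)=g(x\cdot\xi^*)v$ with $\BB(\xi^*)v=\ppa(\xi^*)a$ exists for any $\xi^*\in S_\cA(a)$. What you buy is a shorter, more self-contained route; what you pay is that the entire difficulty is concentrated in the one sentence "the resulting discrepancy is of surface order": for a constant-rank operator of order $k$ the potential $\cB$ has order $2\raa k$ (\autoref{rem:ellipticorder}), and the cut-off error involves all commutator terms $\nabla^j\eta\otimes\nabla^{2\raa k-j}\varphi_\delta$, so one must verify that every intermediate derivative of $\varphi_\delta$ scales like $\delta^{j}$ — which holds for your one-dimensional profile because the antiderivatives of a bounded, mean-zero, $\delta$-periodic function can be chosen $\delta$-periodic — before the layer contributes $O(\delta)$ to the energy. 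This is precisely the truncation step that the paper outsources to \cite[Corollary~1]{raitapot19} via \autoref{lem:AQuasiconvexificationAsPotential}, so your proof is complete modulo the same external input, just invoked in a different place.
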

This extends \eqref{eq:E0VanishIFFDataComp} to the $\cA$-free setting and relates the excess energy $E_0^\cA(F,\cK)$ to the incompatibility
of the data, measured by $h_\cA(a)$ and the distance of $F$ to the line segment $\overline{a_0a_1}$.
As we will see in the proof of \autoref{thm:qwdom}, the variable $\theta$ in \eqref{eq:baseenergyAfree} arises by minimizing \eqref{eq:MinSingPertAfreeEnergy}
subject to the constraint that the phase $\{\chi=a_1\}$ has the volume fraction $\theta$, which is afterwards removed by minimizing over $\theta\in[0,1]$.
For simplicity, we want to restrict to the setting, where the optimal $\theta$ in \eqref{eq:baseenergyAfree} is unique.
Therefore, we require the following property.
\begin{definition}[Spanning wave cone, \cite{guerraraita}]\label{def:SpanWaveCone}
A differential operator $\cA$ as in \eqref{eq:diffopera} is said to have spanning wave cone if
\begin{equation}\label{eq:spanningwavecone}
\spn \Lambda_\cA =  X.
\end{equation}
\end{definition}
This assumption is standard in the $\cA$-free framework \cite{guerraraita,kristensenraita}, and it is satisfied by the model differential operators
$\cA\in\{\curl,\Div,\ccurl\}$; see \autoref{subsec:ApplicationOfGeneralLowerScaling}.
Restricting our analysis to differential operators with spanning wave cone bears the following advantage.
\begin{proposition}[Optimal volume fraction]\label{prop:optvolfrac}
Assume the hypotheses of \autoref{thm:qwdom}. In addition, suppose that $\cA$ has spanning wave cone.
Then, there exists a unique minimizer $\theta\in[0,1]$ for \eqref{eq:baseenergyAfree}
\begin{equation}\label{eq:optvolfrac}
  \ttheta_\cA(F,\cK):=\argmin_{\theta\in[0,1]} \Big(\abs{F-a_\theta}_\cA^2 +\theta(1-\theta)h_\cA(a)\Big)
\end{equation}
to which we refer as the optimal volume fraction.
\end{proposition}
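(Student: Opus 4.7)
The plan is to reduce the claim to the strict convexity of a simple quadratic in $\theta$ and to show that spanning wave cone rules out the degenerate case.

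First I would expand the objective. Using $a_\theta=a_0+\theta a$ with $a=a_1-a_0$, I write
\begin{equation}
\abs{F-a_\theta}_\cA^2=\abs{F-a_0}_\cA^2-2\theta(F-a_0,a)_\cA+\theta^2\abs{a}_\cA^2,
\end{equation}
and group this with $\theta(1-\theta)h_\cA(a)$. Using the identity $\abs{a}_\cA^2-h_\cA(a)=g_\cA(a)$, which follows from \eqref{eq:compquanteq}, the function to be minimized becomes
\begin{equation}
\theta\mapsto g_\cA(a)\,\theta^2+\bigl(h_\cA(a)-2(F-a_0,a)_\cA\bigr)\,\theta+\abs{F-a_0}_\cA^2,
\end{equation}
which is a quadratic polynomial in $\theta$ with leading coefficient $g_\cA(a)\geq 0$.

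The core step is to show that the spanning wave cone assumption forces $g_\cA(a)>0$, so that the quadratic is strictly convex. Assume towards contradiction that $g_\cA(a)=0$. By \autoref{def:compquant} this means $\ppa(\xi)a=0$ for every $\xi\in S^{d-1}$, i.e.\ $a\perp_\cA V_\cA(\xi)$ for every $\xi\in S^{d-1}$. In view of \eqref{eq:wavecone}, this gives $a\perp_\cA \Lambda_\cA$, and by linearity $a\perp_\cA \spn\Lambda_\cA$. The spanning wave cone property \eqref{eq:spanningwavecone} then forces $a\perp_\cA X$, hence $a=0$, contradicting $a_0\neq a_1$ from the assumption $\cK=\{a_0,a_1\}$ with two distinct wells.

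Once $g_\cA(a)>0$ is established, strict convexity of the quadratic on $\RR$ implies that its restriction to the compact interval $[0,1]$ attains its minimum at a unique point $\ttheta_\cA(F,\cK)\in[0,1]$, proving the proposition. I do not anticipate any serious obstacle: the only subtle point is the implication $g_\cA(a)=0\Rightarrow a\perp_\cA\spn\Lambda_\cA$, which hinges on having the inner product $(\cdot,\cdot)_\cA$ on $X$ so that orthogonality to each $V_\cA(\xi)$ passes to the span of their union. Since $\Lambda_\cA$ is defined precisely as this union in \eqref{eq:wavecone}, the argument is direct.
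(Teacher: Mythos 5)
Your proof is correct and follows essentially the same route as the paper: both reduce the objective to a quadratic in $\theta$ with leading coefficient $g_\cA(a)$ and then use the spanning wave cone together with $a\neq 0$ to conclude $g_\cA(a)>0$, hence strict convexity and uniqueness. The only (cosmetic) difference is that you verify the leading coefficient by direct expansion of $\abs{F-a_0-\theta a}_\cA^2$, while the paper uses the Pythagorean decomposition through the orthogonal projection of $F$ onto the line through $a_0$ and $a_1$.
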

\begin{rmk}
Given a set $\cK=\{a_0,a_1\}\subset X$, the optimal volume fraction partitions the space $X$ into three regions:
two opposing half spaces and the slab separating them. 
As illustrated in \autoref{fig:optvolfrac}, the half spaces comprise all $F\in X$ with $\ttheta_\cA(F,\cK)=0$ and $\ttheta_\cA(F,\cK)=1$, respectively,
while the slab contains all $F\in X$ with $\ttheta_\cA(F,\cK)\in(0,1)$. The geometry of this partition is discussed in more detail in \autoref{rmk:GeometryOptVolFrac} below.
\end{rmk}
\begin{figure}
\begin{center}

\tikzset{every picture/.style={line width=0.75pt}}      

\begin{tikzpicture}[x=0.75pt,y=0.75pt,yscale=-1,xscale=1]
\draw  [draw opacity=0] (429.89,20.33) -- (549.89,20.33) -- (549.89,200.33) -- (429.89,200.33) -- cycle ; \draw  [color={rgb, 255:red, 204; green, 204; blue, 204 }  ,draw opacity=1 ] (459.89,20.33) -- (459.89,200.33)(489.89,20.33) -- (489.89,200.33)(519.89,20.33) -- (519.89,200.33) ; \draw  [color={rgb, 255:red, 204; green, 204; blue, 204 }  ,draw opacity=1 ] (429.89,50.33) -- (549.89,50.33)(429.89,80.33) -- (549.89,80.33)(429.89,110.33) -- (549.89,110.33)(429.89,140.33) -- (549.89,140.33)(429.89,170.33) -- (549.89,170.33) ; \draw  [color={rgb, 255:red, 204; green, 204; blue, 204 }  ,draw opacity=1 ] (429.89,20.33) -- (549.89,20.33) -- (549.89,200.33) -- (429.89,200.33) -- cycle ;
\draw  [draw opacity=0] (99.89,20.33) -- (219.89,20.33) -- (219.89,200.33) -- (99.89,200.33) -- cycle ; \draw  [color={rgb, 255:red, 204; green, 204; blue, 204 }  ,draw opacity=1 ] (129.89,20.33) -- (129.89,200.33)(159.89,20.33) -- (159.89,200.33)(189.89,20.33) -- (189.89,200.33) ; \draw  [color={rgb, 255:red, 204; green, 204; blue, 204 }  ,draw opacity=1 ] (99.89,50.33) -- (219.89,50.33)(99.89,80.33) -- (219.89,80.33)(99.89,110.33) -- (219.89,110.33)(99.89,140.33) -- (219.89,140.33)(99.89,170.33) -- (219.89,170.33) ; \draw  [color={rgb, 255:red, 204; green, 204; blue, 204 }  ,draw opacity=1 ] (99.89,20.33) -- (219.89,20.33) -- (219.89,200.33) -- (99.89,200.33) -- cycle ;
\draw [color={rgb, 255:red, 0; green, 0; blue, 0 }  ,draw opacity=1 ]   (189.78,110.33) -- (459.78,110.33) ;
\draw  [color={rgb, 255:red, 0; green, 174; blue, 179 }  ,draw opacity=1 ][fill={rgb, 255:red, 0; green, 174; blue, 179 }  ,fill opacity=1 ] (183.93,110.33) .. controls (183.93,107.04) and (186.6,104.37) .. (189.89,104.37) .. controls (193.18,104.37) and (195.85,107.04) .. (195.85,110.33) .. controls (195.85,113.62) and (193.18,116.29) .. (189.89,116.29) .. controls (186.6,116.29) and (183.93,113.62) .. (183.93,110.33) -- cycle ;
\draw  [color={rgb, 255:red, 0; green, 174; blue, 179 }  ,draw opacity=1 ][fill={rgb, 255:red, 0; green, 174; blue, 179 }  ,fill opacity=1 ] (453.93,109.33) .. controls (453.93,106.04) and (456.6,103.37) .. (459.89,103.37) .. controls (463.18,103.37) and (465.85,106.04) .. (465.85,109.33) .. controls (465.85,112.62) and (463.18,115.29) .. (459.89,115.29) .. controls (456.6,115.29) and (453.93,112.62) .. (453.93,109.33) -- cycle ;
\draw   (219.89,140.65) -- (250.64,140.65) -- (250.64,109.89) ;
\draw  [fill={rgb, 255:red, 0; green, 0; blue, 0 }  ,fill opacity=1 ] (238.06,124.92) .. controls (238.06,126.27) and (236.97,127.37) .. (235.62,127.37) .. controls (234.26,127.37) and (233.17,126.27) .. (233.17,124.92) .. controls (233.17,123.57) and (234.26,122.47) .. (235.62,122.47) .. controls (236.97,122.47) and (238.06,123.57) .. (238.06,124.92) -- cycle ;

\draw    (220.2,80.8) -- (190.2,80.8) ;
\draw [shift={(190.2,80.8)}, rotate = 360] [color={rgb, 255:red, 0; green, 0; blue, 0 }  ][line width=0.75]    (0,5.59) -- (0,-5.59)   ;
\draw [shift={(220.2,80.8)}, rotate = 360] [color={rgb, 255:red, 0; green, 0; blue, 0 }  ][line width=0.75]    (0,5.59) -- (0,-5.59)   ;
\draw    (459.49,80.01) -- (430.2,80.01) ;
\draw [shift={(430.2,80.01)}, rotate = 360] [color={rgb, 255:red, 0; green, 0; blue, 0 }  ][line width=0.75]    (0,5.59) -- (0,-5.59)   ;
\draw [shift={(459.49,80.01)}, rotate = 360] [color={rgb, 255:red, 0; green, 0; blue, 0 }  ][line width=0.75]    (0,5.59) -- (0,-5.59)   ;
\draw  [dash pattern={on 4.5pt off 4.5pt}]  (292.78,23.19) -- (273.66,110.13) ;
\draw  [color={rgb, 255:red, 0; green, 174; blue, 179 }  ,draw opacity=1 ][fill={rgb, 255:red, 0; green, 174; blue, 179 }  ,fill opacity=1 ] (287.93,23.85) .. controls (287.93,20.99) and (290.25,18.67) .. (293.12,18.67) .. controls (295.98,18.67) and (298.3,20.99) .. (298.3,23.85) .. controls (298.3,26.72) and (295.98,29.04) .. (293.12,29.04) .. controls (290.25,29.04) and (287.93,26.72) .. (287.93,23.85) -- cycle ;
\draw  [color={rgb, 255:red, 0; green, 174; blue, 179 }  ,draw opacity=1 ][fill={rgb, 255:red, 0; green, 174; blue, 179 }  ,fill opacity=1 ] (267.93,110) .. controls (267.93,106.71) and (270.6,104.04) .. (273.89,104.04) .. controls (277.18,104.04) and (279.85,106.71) .. (279.85,110) .. controls (279.85,113.29) and (277.18,115.96) .. (273.89,115.96) .. controls (270.6,115.96) and (267.93,113.29) .. (267.93,110) -- cycle ;
\draw   (399.77,110.21) -- (399.77,140.65) -- (430.2,140.65) ;
\draw  [fill={rgb, 255:red, 0; green, 0; blue, 0 }  ,fill opacity=1 ] (415.33,128.2) .. controls (413.99,128.2) and (412.91,127.11) .. (412.91,125.78) .. controls (412.91,124.44) and (413.99,123.36) .. (415.33,123.36) .. controls (416.67,123.36) and (417.75,124.44) .. (417.75,125.78) .. controls (417.75,127.11) and (416.67,128.2) .. (415.33,128.2) -- cycle ;

\draw (165,120) node [anchor=north west][inner sep=0.75pt]  [font=\Large] [align=left] {${a_{0}}$};
\draw (466,120) node [anchor=north west][inner sep=0.75pt]  [font=\Large] [align=left] {${a_{1}}$};
\draw (425.5,34) node [anchor=north west][inner sep=0.75pt]  [font=\Large] [align=left] {$\tfrac{h_\cA(a)}{2\abs{a}_\cA}$};
\draw (185.5,34) node [anchor=north west][inner sep=0.75pt]  [font=\Large] [align=left] {$\tfrac{h_\cA(a)}{2\abs{a}_\cA}$};
\draw (110,174) node [anchor=north west][inner sep=0.75pt]  [font=\Large] [align=left] {$\ttheta_{\cA}( \cdot ,\cK)=0$};
\draw (325,174) node [anchor=north][inner sep=0.75pt]  [font=\Large] [align=left] {$\ttheta_{\cA}( \cdot ,\cK)\in ( 0,1)$};
\draw (441,174) node [anchor=north west][inner sep=0.75pt]  [font=\Large] [align=left] {$\ttheta_{\cA}( \cdot ,\cK)=1$};
\draw (302,14) node [anchor=north west][inner sep=0.75pt]  [font=\Large] [align=left] {$F$};
\draw (270,120) node [anchor=north west][inner sep=0.75pt]  [font=\Large] [align=left] {$a_\theta$};

\end{tikzpicture}
\end{center}
\caption{Given $\cK=\{a_0,a_1\}\subset X$, the optimal volume fraction $\ttheta_\cA(\cdot,\cK)$ partitions $X$ into two opposing half spaces and a slab separating them.
As detailed in \autoref{rmk:GeometryOptVolFrac}, the boundaries of the half spaces are orthogonal to $a_1-a_0$,
and the line segment $\overline{a_0a_1}$ extends into each half space by a distance of $\tfrac{h_\cA(a)}{2\lvert{a}\rvert _\cA}$.}
\label{fig:optvolfrac}
\end{figure}
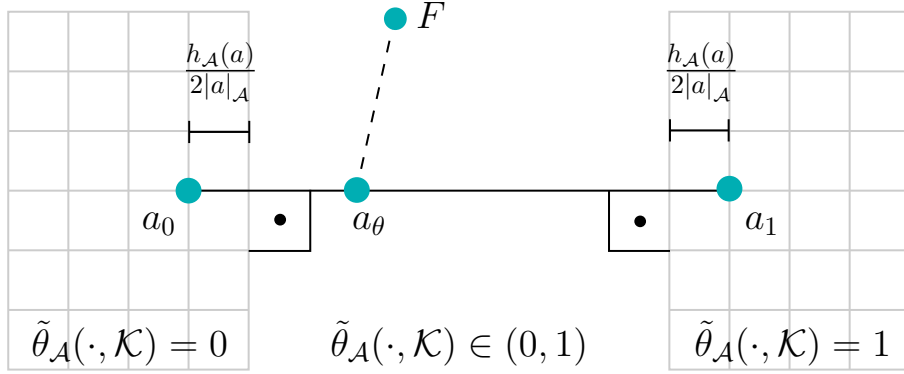  

\subsection{Scaling laws for the model two-well problems}
We now come to the main results of this article.
These characterize the higher-order scaling behavior of the singularly perturbed two-well energy in the geometrically linear setting, in the corresponding gradient setting and in the divergence-free setting, removing the compatibility assumptions \eqref{eq:compatiblewells} and \eqref{eq:compatiblebdrydata}.
These scaling laws provide new information on the complexity of optimal microstructures arising when the data are incompatible,
indicating that branching-type microstructures, which refine near the boundary, are energetically favored.\medbreak

We prove that the typical $\eps^{\nicefrac{2}{3}}$-scaling, originally derived in \cite{km92, km94},
also applies in the incompatible setting upon subtracting the zeroth-order excess energies for both the two-well energies for the gradient and the divergence.
\begin{breaktheorem}[Scaling of the two-well energies for the gradient and the divergence]\label{thm:incompgrad}
Let $\Omega\subset \RR^d$ be a bounded Lipschitz domain with $d\geq2$.
Let $\cA\in\{\curl,\Div\}$; see \eqref{eq:curl} and \eqref{eq:Div}.
Given $F\in\RR^{d\times d}$ and $\cK=\{a_0,a_1\}\subset\rdd$, let $E^{\cA}_\eps(F,\cK)$ be given as in \eqref{eq:MinSingPertAfreeEnergy}.
For $a:=a_1-a_0$, let $S_{\cA}(a)$ be given as in \autoref{def:optLaminationDirections}.
Then, the optimal volume fraction $\ttheta_{\cA}=\ttheta_{\cA}(F,\cK)\in[0,1]$ (see \autoref{prop:optvolfrac}) determines the energy scaling in the following way:
\begin{enumerate}
\item(Pure phase) If $\ttheta_{\cA} \in \{0,1\}$, there holds the trivial energy scaling
\begin{equation}\label{eq:gradientpolarcase}
    E^{\cA}_\eps (F,\cK)-E^{\cA}_0(F,\cK) = 0 \quad \forall \eps>0
\end{equation}
as is observed by considering the minimizer $(u,\chi)$ of \eqref{eq:MinSingPertAfreeEnergy}
given by the constant maps $u = F$ and $\chi =(1-\ttheta_{\cA})a_0 + \ttheta_{\cA} a_1$.

\item(Phase mixing -- lower bound) If $\ttheta_{\cA}\in(0,1)$ and $a\not\in \RR O(d)$ (scaled orthogonal matrices),
there exist $c=c(a,\Omega,d,\ttheta_{\cA})>0$ and $\eps_0= \eps_0(a,\Omega,d, \ttheta_{\cA})>0$ such that
\begin{equation}\label{eq:ThmGradLowerBound}
    E^{\cA}_\eps (F,\cK)-E^{\cA}_0(F,\cK)
    \geq c\eps^{\nicefrac{2}{3}} \quad \forall \eps\in (0,\eps_0).
\end{equation}

\item(Phase mixing -- upper bound) If $\ttheta_{\cA}\in(0,1)$ and $d=2$, let $\xi^*\in S_{\cA}(a)$ and assume that $\Omega\subset \RR^2$ is a rotated unit square with two faces normal to $\xi^*$.
Then, we have the matching upper bound: there exists $C=C(a, \ttheta_{\cA})>0$ such that
\begin{equation}\label{eq:ThmGradUpperBound}
    E^{\cA}_\eps (F,\cK)-E^{\cA}_0(F,\cK)
    \leq C\eps^{\nicefrac{2}{3}} \quad \forall \eps \in (0,1).
\end{equation}
\end{enumerate}
\end{breaktheorem}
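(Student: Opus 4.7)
The trivial case (i) is stated in the theorem itself and follows by direct substitution. For (ii) and (iii), the strategy is to decompose optimal configurations into a bulk laminate at the optimal volume fraction $\ttheta_\cA$ and a boundary-layer correction of Kohn--Müller branching type. Throughout, it is convenient to work with the shifted configuration $\tu := u - a_{\ttheta_\cA}$ and $\tchi := \chi - a_{\ttheta_\cA}$, reducing the problem to estimating the excess over an optimal laminate realizing $E_0^{\cA}(F,\cK)$ via \autoref{thm:qwdom}.

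\textbf{Lower bound (ii).} The plan is to pass to the Fourier side and exploit the constant-rank property of $\cA\in\{\curl,\Div\}$, which guarantees that the compatibility projection $\ppa(\xi)$ is a smooth, zero-homogeneous multiplier on $\RR^d\setminus\{0\}$. Extending $\chi$ by $F$ outside $\Omega$ and using $\cA u=0$ in $\RR^d$, the shifted elastic energy can be controlled from below by a frequency-side expression of the form
\begin{equation}
    \int_{\RR^d} \bigl\lvert(\mathrm{Id}-\ppa(\xi))\widehat{\tchi}(\xi)\bigr\rvert_{\cA}^{2}\,d\xi.
\end{equation}
A three-scale decomposition of frequencies into low ($\lvert\xi\rvert\leq R^{-1}$), intermediate ($R^{-1}\leq\lvert\xi\rvert\leq R$) and high ($\lvert\xi\rvert\geq R$) then proceeds as follows. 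The low-frequency contribution is bounded below by the boundary-data mismatch, which is nontrivial precisely because $\ttheta_\cA\in(0,1)$ places $F$ strictly inside the slab of \autoref{fig:optvolfrac}. The intermediate frequencies are controlled using that, when $a\notin\RR O(d)$, the incompatibility defect $\lvert(\mathrm{Id}-\ppa(\xi))a\rvert_\cA^2$ admits a quantitative, direction-uniform lower bound away from $S_\cA(a)$. The high frequencies are absorbed by the surface term via $\lVert\widehat{\nabla\chi}\rVert_{L^2}\gtrsim\lvert\xi\rvert\lvert\widehat{\tchi}\rvert$ combined with an $L^{2}$--$BV$ interpolation. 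Optimizing $R\sim\eps^{-1/3}$ yields the announced $\eps^{2/3}$ rate.

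\textbf{Upper bound (iii).} In $d=2$, with $\Omega$ a rotated unit square having two faces normal to $\xi^*\in S_\cA(a)$, the construction is an explicit branching pattern. After rotating coordinates so that $\xi^*=e_1$, on the bulk of $\Omega$ take $\chi$ to be a simple laminate of $a_0$ and $a_1$ in direction $\xi^*$ with volume fractions $1-\ttheta_\cA,\ttheta_\cA$. Since $\xi^*\in S_\cA(a)$, this laminate is $\cA$-free and, together with the associated $u$, the bulk integrand exactly realizes the minimum in \eqref{eq:baseenergyAfree}, so the bulk contribution matches $E_0^\cA(F,\cK)$. Near the two faces normal to $\xi^*$, interpolate to the constant field $F$ by a dyadic refinement: introduce generations of stripes of tangential width $w_n=2^{-n}w_0$ at distance of order $w_n$ from the boundary, with each $\cA$-free transition realized through a potential (for $\cA=\curl$ directly as a $W^{1,\infty}$ scalar potential; for $\cA=\Div$ as a $\curl$ of a vector potential, using the reduction to divergence form in \cite{ST21, ruland}). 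The classical Kohn--Müller counting yields elastic cost $\sim w_n$ and surface cost $\sim\eps/w_n$ per generation; summing the geometric series and optimizing the outer boundary-layer width gives $\eps^{2/3}$.

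\textbf{Main obstacle.} The delicate point is the interplay between incompatibility and the branching construction: in contrast to the classical compatible setting, where one constructs a deformation $v$ with $v=Fx$ on $\partial\Omega$ and then differentiates, here the $\cA$-free field $u$ must be built directly while tracking which component of $F-a_{\ttheta_\cA}$ lies in $V_\cA(\xi^*)$ and which does not. Designing the branching transition so that the non-compatible excess is absorbed into the bulk laminate without extra cost, while the compatible excess is resolved at the price $\eps^{2/3}$, is the technical heart of (iii); on the lower-bound side the corresponding obstacle is obtaining a quantitative, direction-uniform compatibility gap under the hypothesis $a\notin\RR O(d)$, which is what ultimately excludes the degenerate directions and drives the $\eps^{2/3}$ rate.
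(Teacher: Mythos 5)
Your overall strategy (Fourier-multiplier lower bound plus a branching upper bound built on a compatibilized laminate) is the same as the paper's, and you correctly identify the role of $a\notin\RR O(d)$ as excluding the equicompatible case $S_\cA(a)=S^{d-1}$. However, both halves of the sketch contain a genuine gap.

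\textbf{Lower bound.} You fix the shift at $a_{\ttheta_{\cA}}$ from the start. A competitor $\chi$ has its own volume fraction $\theta$ with $\fint_\Omega\chi=a_\theta$, and in general $\theta\neq\ttheta_{\cA}$; if you expand about $a_{\ttheta_{\cA}}$ the zero mode of $\chi_1-\ttheta_{\cA}$ does not vanish and the Parseval bookkeeping that separates off $E_0^\cA(F,\cK)$ breaks down. The paper expands about the actual mean $a_\theta$, obtains the multiplier bound with the function $p_{\cA,a}(\xi)=\abs{a-\ppa(\xi)a}_\cA^2-h_\cA(a)$ (note: the $h_\cA(a)$ must be subtracted --- your displayed integrand omits this and would otherwise double-count part of $E_0$), and only at the end compares $Q^\cA_\theta W(F)$ with $Q^\cA_{\ttheta_{\cA}}W(F)$ using that $\theta\mapsto Q^\cA_\theta W(F)$ is strictly convex with leading coefficient $g_\cA(a)>0$. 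This last step is essential and absent from your sketch: the Fourier estimate only produces a prefactor $\theta(1-\theta)$, which degenerates as $\theta\to0,1$, and one must show that for $\eps<\eps_0$ either $\theta$ stays away from $\{0,1\}$ or the quadratic penalty $g_\cA(a)(\theta-\ttheta_{\cA})^2$ dominates. This balancing is precisely what produces $\eps_0=\eps_0(a,\Omega,d,\ttheta_{\cA})$.

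\textbf{Upper bound.} The sentence ``Since $\xi^*\in S_\cA(a)$, this laminate is $\cA$-free'' is false when the wells are incompatible: a simple laminate of $a_0$ and $a_1$ in direction $\xi^*$ is $\cA$-free if and only if $a\in V_\cA(\xi^*)$, which fails exactly when $h_\cA(a)>0$. The admissible field $u$ must instead laminate between the compatibilized states $\ta_0=F-\ttheta_{\cA}\ppa(\xi^*)a$ and $\ta_1=F+(1-\ttheta_{\cA})\ppa(\xi^*)a$ (which do differ by an element of $V_\cA(\xi^*)$), while the phase arrangement $\chi$ --- on which there is no differential constraint --- oscillates between $a_0$ and $a_1$. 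The excess energy is then recovered not because the ``bulk integrand realizes the minimum'' pointwise, but from the algebraic identity $(1-\ttheta_{\cA})\abs{\ta_0-a_0}^2+\ttheta_{\cA}\abs{\ta_1-a_1}^2=\abs{F-a_{\ttheta_{\cA}}}^2+\ttheta_{\cA}(1-\ttheta_{\cA})h_\cA(a)$ together with the vanishing of the cross term $\int(u-\tchi,\tchi-\chi)$, which requires the left/right symmetry of the branching cells. Your ``main obstacle'' paragraph gestures at this, but the construction as written is not admissible. Two further slips: the dyadic refinement occurs toward the faces \emph{not} normal to $\xi^*$ (the sawtooth potential already vanishes on the faces normal to $\xi^*$), and in the cut-off layer one does not ``interpolate to the constant field $F$'' --- that would cost $\dist^2(F,\cK)$ per unit area, destroying the scaling; instead the potential is cut off in the transverse variable while the tangential oscillation persists.
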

This result offers an almost complete picture of the scaling behavior for $d=2$.
It illustrates that either a single phase fills the entire domain without any interfacial energy; or 
the phases mix, producing branching-type microstructures of Kohn-Müller type \cite{km92,km94}.
We prove the upper bound \eqref{eq:ThmGradUpperBound} using a branching construction which agrees with the one in \cite{km92,km94} if the data are compatible
but is somewhat different for incompatible data.
In the latter case, the constructed $\cA$-free map $u\in\cD_F^{\cA}(\Omega)$, which is used to estimate \eqref{eq:MinSingPertAfreeEnergy}, oscillates between two compatible matrices
that approximate the wells.\medbreak

To complete the picture for $d=2$, it remains to determine the energy scaling when $a\in\RR O(2)$.
In this case, all lamination directions are optimal:
\begin{equation}\label{eq:equiCompIntro}
  S_{\cA}(a)=S^{1},
\end{equation}
which is a rather peculiar situation not occurring in the compatible setting; see \autoref{subsec:ApplicationOfGeneralLowerScaling}.
This is also the reason, why the Fourier-based technique (\autoref{thm:incompa}), used in the proof of \eqref{eq:ThmGradLowerBound}, fails to produce a non-trivial lower bound in this case,
as discussed in \autoref{rmk:FourierEquicomp}.
As \eqref{eq:equiCompIntro} may increase flexibility, it is possible that a non-$\eps^{\nicefrac{2}{3}}$-scaling behavior arises when $a\in\RR O(2)$.
It is planned to address this question in a forthcoming article \cite{MPF26}.\medbreak

Based on the rotation-type arguments introduced in \cite{ruland,ruland3}, the upper bound \eqref{eq:ThmGradUpperBound} is expected to remain valid in higher dimensions $d\geq3$.
For simplicity, we only carry out the upper bound constructions for $d=2$ in the present work.\medbreak

We now come to the main result in the geometrically linear setting, which generalizes the well-known scaling behavior summarized in \autoref{thm:chanconti}. 
\begin{breaktheorem}[Scaling of the geometrically linear two-well energy]\label{thm:incompsyma}
Let $\Omega\subset \RR^2$ be a bounded Lipschitz domain.
Let $d=2$ and $\cA=\ccurl$; see \eqref{eq:curlcurl}.
Given $F\in\rddsymd{2}$ and $\cK=\{a_0,a_1\}\subset\rddsymd{2}$, let $E^{\cc}_\eps(F,\cK)$ be given as in \eqref{eq:MinSingPertAfreeEnergy}.
For $a:=a_1-a_0$, let $S_{\cc}(a)$ be given as in \autoref{def:optLaminationDirections}.
Then, the optimal volume fraction $\ttheta_{\cc}=\ttheta_{\cc}(F,\cK)\in[0,1]$ (see \autoref{prop:optvolfrac}) determines the energy scaling in the following way:
\begin{enumerate}
\item(Pure phase) If $\ttheta_{\cc} \in \{0,1\}$, it holds that
\begin{equation}
    E^{\cc}_\eps (F,\cK)-E^{\cc}_0(F,\cK) = 0 \quad \forall \eps>0
\end{equation}
as is observed by considering the minimizer $(u,\chi)$ of \eqref{eq:MinSingPertAfreeEnergy}
given by the constant maps $u = F$ and $\chi = (1-\ttheta_{\cc})a_0 + \ttheta_{\cc} a_1$.

\item(Phase mixing -- lower bound) If $\ttheta_{\cc}\in(0,1)$, assume that $a \not\in \RR I_2$ (scaled identity). Then,
there exist $c=c(a,\Omega, \ttheta_{\cc})>0$ and $\eps_0= \eps_0(a,\Omega, \ttheta_{\cc})>0$ such that
\begin{equation}\label{eq:ThmCCLowerBound}
    E^{\cc}_\eps(F,\cK)-E^{\cc}_0(F,\cK)
    \geq 
    \begin{cases}
    c\eps^{\nicefrac{4}{5}} &\text{if } \rank a =1,\\
    c\eps^{\nicefrac{2}{3}} &\text{if } \rank a =2,
    \end{cases}
    \quad
    \forall \eps\in (0,\eps_0).
\end{equation}
\item(Phase mixing -- upper bound) 
If $\ttheta_{\cc}\in(0,1)$, let $\xi^*\in S_{\cc}(a)$ and assume that $\Omega\subset \RR^2$ is a rotated unit square with two faces normal to $\xi^*$.
Then, we have the matching upper bound: there exists $C=C(a, \ttheta_{\cc})>0$ such that
\begin{equation}\label{eq:ThmCCUpperBound}
    E^{\cc}_\eps(F,\cK)-E^{\cc}_0(F,\cK)
    \leq
    \begin{cases}
    C\eps^{\nicefrac{4}{5}} &\text{if } \rank a =1,\\
    C\eps^{\nicefrac{2}{3}} &\text{if } \rank a =2,
    \end{cases}
    \quad
    \forall \eps\in (0,1).
\end{equation}
\end{enumerate}
\end{breaktheorem}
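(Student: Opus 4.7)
\textbf{Proof plan for \autoref{thm:incompsyma}.}

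\emph{Part (i).} When $\ttheta_{\cc}\in\{0,1\}$, the constant pair $u\equiv F$, $\chi\equiv a_{\ttheta_{\cc}}$ is $\ccurl$-free, satisfies the boundary condition $u=F$ on $\del\Omega$, and contributes no surface energy. Its elastic energy $|\Omega|\,|F-a_{\ttheta_{\cc}}|_{\cA}^2$ upper-bounds $E_\eps^{\cc}(F,\cK)$ for every $\eps\geq 0$, and by \autoref{thm:qwdom} evaluated at $\theta=\ttheta_{\cc}\in\{0,1\}$ (where the cross term $\theta(1-\theta)h_{\cc}(a)$ vanishes) this coincides with $E_0^{\cc}(F,\cK)$. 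Hence the difference is identically zero.

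\emph{Part (ii), lower bound.} The plan is to invoke the general $\cA$-free lower scaling bound (\autoref{thm:incompa}) with $\cA=\ccurl$ in $d=2$; both the constant rank and spanning wave cone hypotheses are satisfied here. The scaling exponent is governed by the order of vanishing of $\xi\mapsto g_{\cc}(a)-|\ppcc(\xi)a|^2$ in a neighborhood of any optimal direction $\xi^{\ast}\in S_{\cc}(a)$. Using $\Lambda_{\cc}=\{b\odot\xi:b,\xi\in\RR^2\}$ and parameterizing $\xi=(\cos\alpha,\sin\alpha)$, one identifies the one-dimensional orthogonal complement as $V_{\cc}(\xi)^{\perp}=\RR\,\xi^{\perp}\otimes\xi^{\perp}$, so that $|a-\ppcc(\xi)a|^2=(a\xi^{\perp}\cdot\xi^{\perp})^2$. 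Substituting a rank-$1$ symmetric matrix $a=\lambda v v^{T}$ shows this quantity is quartic in the angular deviation from $\pm v$, while for rank-$2$ $a\notin\RR I_2$ it is generically quadratic. The Fourier-based duality argument of \autoref{thm:incompa}, which trades an elastic control on low frequencies against a $BV$-control of $\chi$ at high frequencies and balances the two regimes, then converts these degeneracy orders into the scalings $\eps^{\nicefrac{4}{5}}$ and $\eps^{\nicefrac{2}{3}}$, respectively. The hypothesis $a\notin\RR I_2$ excludes the equicompatible case $S_{\cc}(a)=S^{1}$ referenced in \eqref{eq:equiCompIntro}, in which the degeneracy vanishes altogether and the Fourier argument breaks down.

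\emph{Part (iii), upper bound.} The upper bound is built via a branching construction on the rotated unit square with two sides orthogonal to the chosen $\xi^{\ast}\in S_{\cc}(a)$. Using the first-order optimality of $\ttheta_{\cc}$ from \autoref{prop:optvolfrac}, the first step is to pick auxiliary wells $\ta_0,\ta_1\in\rddsymd{2}$ such that $\ta_1-\ta_0\in V_{\cc}(\xi^{\ast})$ (so that they are exactly compatible along $\xi^{\ast}$) and $(1-\ttheta_{\cc})\ta_0+\ttheta_{\cc}\ta_1=F$. By \autoref{thm:qwdom} this choice already accounts for the constant excess $|\Omega|\big((1-\ttheta_{\cc})|a_0-\ta_0|^2+\ttheta_{\cc}|a_1-\ta_1|^2\big)=E_0^{\cc}(F,\cK)$. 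Then $u$ is defined to oscillate between $\ta_0$ and $\ta_1$ in bands normal to $\xi^{\ast}$ whose widths refine self-similarly toward the two parallel boundary faces (with thin smoothing layers), while $\chi$ oscillates between the true wells $a_0,a_1$ at the same interfaces. A Kohn--Müller type first-order branching \cite{km92,km94} with $N$ refinement levels gives a surface cost $\lesssim\eps N$ and a boundary-layer elastic cost $\lesssim 2^{-2N}$ in the rank-$2$ case, yielding $\eps^{\nicefrac{2}{3}}$ after optimizing $N$. In the rank-$1$ case, a Chan--Conti type second-order branching \cite{cc15}, which cancels the leading-order oscillation, reduces the boundary cost to $\lesssim 2^{-4N}$ and yields $\eps^{\nicefrac{4}{5}}$.

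\emph{Main obstacle.} The most delicate step is the rank-$1$ upper bound in the incompatible regime: one must verify that the Chan--Conti second-order construction remains efficient in the presence of the constant excess $\ta_i-a_i$, i.e.\ that the cross terms between this excess and the oscillating part of $u-\chi$ either vanish or contribute only at order $\eps^{\nicefrac{4}{5}}$. This will rest on the orthogonality $F-a_{\ttheta_{\cc}}\perp_{\cA}(\ta_1-\ta_0)\subset V_{\cc}(\xi^{\ast})$, itself a first-order optimality condition from \autoref{thm:qwdom} and \autoref{prop:optvolfrac}, which decouples the zeroth-order and higher-order contributions. On the lower-bound side, the corresponding subtle point is the explicit verification of the sharp (quartic or quadratic) vanishing rate of $|\ppcc(\xi)a|^2$ at $\xi^{\ast}$, which feeds into the exponent produced by \autoref{thm:incompa}.
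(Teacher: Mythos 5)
Your plan is correct and follows the same architecture as the paper: part (i) is identical; part (ii) invokes \autoref{thm:incompa} with $\cA=\ccurl$ and reduces to computing the vanishing order of $p_{\cc,a}$; part (iii) builds on a compatible approximation of the data, with a Kohn--Müller branching in the rank-two case and the Chan--Conti construction in the rank-one case. Two genuine differences are worth recording. First, your computation of the vanishing order via the identity $\lvert a-\ppcc(\xi)a\rvert^2=(a\xi^{\perp},\xi^{\perp})^2$, which exploits that $V_{\cc}(\xi)^{\perp}=\RR\,\xi^{\perp}\otimes\xi^{\perp}$ is one-dimensional when $d=2$, is substantially shorter than the paper's route (\autoref{lem:CompProjCC}, \autoref{prop:optlaminationdirectioncurlcurl} and the lengthy expansion in \autoref{prop:maxVanOrderCC}); it is specific to two dimensions, but that is all the theorem requires. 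You should still note explicitly that for $a\notin\RR I_2$ the set $S_{\cc}(a)$ consists of at most four points, hence lies in finitely many one-dimensional subspaces, as required by \autoref{thm:incompa}~(ii). Second, for the $\eps^{\nicefrac{2}{3}}$ upper bound you propose to redo the branching directly for the symmetrized gradient, whereas the paper instead reduces to the gradient problem by constructing non-symmetric wells $\cK'$ with $\sym a_j'=a_j$ and taking symmetric parts of the $\curl$-construction (\autoref{lem:UpperBoundCC}); both routes work, and the paper's choice merely avoids repeating Section~5.

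Two imprecisions should be repaired. The conditions $\ta_1-\ta_0\in V_{\cc}(\xi^{\ast})$ and $(1-\ttheta_{\cc})\ta_0+\ttheta_{\cc}\ta_1=F$ do not determine $\ta_0,\ta_1$: by \autoref{lem:importantequality} one gets $(1-\ttheta_{\cc})\lvert\ta_0-a_0\rvert^2+\ttheta_{\cc}\lvert\ta_1-a_1\rvert^2=\lvert F-a_{\ttheta_{\cc}}\rvert^2+\ttheta_{\cc}(1-\ttheta_{\cc})\lvert a-(\ta_1-\ta_0)\rvert^2$, which equals $E_0^{\cc}(F,\cK)/\lvert\Omega\rvert$ only for the specific choice $\ta_1-\ta_0=\ppcc(\xi^{\ast})a$ (the orthogonal projection, as in \autoref{def:compatibleApprox}). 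More importantly, the orthogonality $F-a_{\ttheta_{\cc}}\perp(\ta_1-\ta_0)$ that you present as the key to decoupling the cross terms is \emph{not} a first-order optimality condition in general: for $\ttheta_{\cc}\in(0,1)$ the stationarity of \eqref{eq:optvolfrac} reads $(F-a_{\ttheta_{\cc}},a)=\tfrac{1}{2}(1-2\ttheta_{\cc})h_{\cc}(a)$, which vanishes only when $h_{\cc}(a)=0$. Since a rank-one symmetric $a$ always lies in $\Lambda_{\cc}$, this does hold in the $\eps^{\nicefrac{4}{5}}$ case, so your argument is not wrong where you deploy it; but it would fail for incompatible wells, and the paper instead kills the cross terms by symmetry/mean-zero arguments (the oscillating part of $u-\tchi$ has zero average over each cell, and in the cut-off layer one only uses $(a-\ppcc(\xi^{\ast})a)\perp V_{\cc}(\xi^{\ast})$), which is the mechanism you should adopt if you want the construction to cover all cases uniformly.
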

Aside from the case $a\in\RR I_2$, which corresponds to the peculiar situation where $S_{\cc}(a)=S^1$, this result fully characterizes the scaling behavior in two dimensions.
Assuming that the phases mix, the scaling law indicates that the energy minimization produces branched microstructures of the
Kohn-Müller type \cite{km92,km94} when $a$ has rank two and of the Chan-Conti type \cite[Theorem 1.2]{cc15} when $a$ has rank one.
In the latter case, the minimal singularly perturbed energy can be substantially reduced compared to 
the $\eps^{\nicefrac{2}{3}}$-scaling by an interaction of the components of the displacement field.
A consequence of \autoref{thm:incompsyma} is that such an additional energy reduction is not possible if the wells are incompatible
since then the corrected energy scales as~$\eps^{\nicefrac{2}{3}}$.\medbreak

In the proof of the upper bounds \eqref{eq:ThmCCUpperBound} for incompatible data,
we construct a linear strain that oscillates between two compatible states which approximate the stress-free strains.
The $\eps^{\nicefrac{2}{3}}$-upper bound is derived by linking the gradient theory and the geometrically linear theory,
thereby avoiding an explicit upper bound construction and relying instead on the construction used in the proof of \autoref{thm:incompgrad}.
We prove the $\eps^{\nicefrac{4}{5}}$-upper bound employing the Chan-Conti~branching construction \cite{cc15}, repeating their arguments 
in the setting of incompatible boundary data.\medbreak

Generalizing these results to higher dimensions presents additional challenges, both on the level of the lower bounds and the upper bounds.
We therefore defer the analysis of the scaling laws for $d\geq3$ to future work.
In this context, the recent work \cite{machill2025energyscalingbehavioursingular} extended the Chan-Conti construction \cite{cc15} to a three-dimensional cylindrical domain.\medbreak

To conclude our discussion of the main results, we briefly turn to the energy scaling behavior in the $\cA$-free setting.
Here, we highlight \autoref{thm:incompa} which generalizes the lower bounds \eqref{eq:ThmGradUpperBound} and \eqref{eq:ThmCCUpperBound}.
Using the Fourier-theoretic framework developed in \cite[Theorem 1.4]{ruland2}, this theorem provides a systematic method to derive
lower scaling bounds for 
\begin{equation}
    E_\eps^\cA(F,\cK)-E_0^\cA(F,\cK)\text{ as }\eps\to 0,
\end{equation}
applicable to a broad class of differential operators $\cA$ of the form \eqref{eq:diffopera}
for both compatible and incompatible data $F\in X$ and $\cK=\{a_0,a_1\}\subset X$.

\subsection{Relation to the literature} 
Differential inclusions and their associated multi-well energies arise naturally in variational models of martensitic phase transformations
\cite{bj, bj2, kohn, DolzmannMueller-1995, muller1, MS, chai, dolzmann, CDK, rindler, CDPRZZ20, RS23}.
Starting with the foundational work \cite{km92,km94}, a large body of research has developed quantitative results for the corresponding singular perturbation models,
producing upper bound constructions and matching rigidity/lower bound estimates
\cite{CO09, CO12,Z14,cc15,conti2016low, ruland2016rigidity,RZZ18,RTZ19, RZZ19,  Sim21a,Sim21b, RT22Tartar, ruland3, AKKR24, RT24}.
Important complementary contributions include self-similarity of minimizers of the Kohn-Müller energy \cite{conti1}
and the identification of sharp-interface limits \cite{CSGammaSO2, CSGammaGeoLin,davoli2020two}.
Closely related nucleation phenomena have been investigated in \cite{KK11, kko13, ball2013nucleation, conti2017piecewise, KO19,CDMZ20, RT23a, TZ24, GRTZ24}.\medbreak

Rooted in the work of Murat and Tartar \cite{murat1, tartar1, tartar2}, the $\cA$-free framework provides a unified PDE-constrained viewpoint
and underpins modern results on compensated compactness and relaxation theory \cite{dacoWeakCont,fonsecamuller99, aquasi, raitapot19, SW21, guerraraita, kristensenraita, GRS22}.
Approximative rigidity of the $\cA$-free differential inclusion \eqref{eq:AfreeDifIncl} with two wells was established in \cite{DPPR18}.
Further, quantitative rigidity properties of the corresponding singular perturbation model were derived in \cite{ruland,ruland2}.
Beyond the differential operators $\cA\in\{\curl,\ccurl\}$ arising in nonlinear and linear elasticity,
the divergence operator $\cA=\Div$ has received considerable attention in two- and three-well problems \cite{PP04, Garroni, PS09, ruland, ruland2}.\medbreak

The present article is inspired by Kohn's work \cite{kohn} on the relaxation of incompatible two-well energies (without surface energy)
and aims to investigate the corresponding singular perturbation models and their generalization within the $\cA$-free framework.
The results of \autoref{thm:incompgrad} and \autoref{thm:incompsyma} show that for $\cA\in\{\curl,\Div,\ccurl\}$,
the singularly perturbed $\cA$-free two-well energies produce microstructures analogous to those in the compatible case \cite{km92, km94, cc15, ruland, ruland2}.
Furthermore, \autoref{thm:incompa} is motivated by the aim of extending \cite[Theorem 3]{ruland2} to the incompatible setting
and builds upon the Fourier-based technique developed in \cite{ruland2}.\medbreak

Finally, the problems studied here are closely related to variational models in compliance minimization \cite{KohnWirth, KW16, PW22},
micromagnetics \cite{CKO99, DesimoneKnuepferOtto-2006, DesimoneKohnMuellerOtto-2006}, microstructure in composites \cite{PS09, Palombaro10} 
and dislocation microstructures in plasticity \cite{CO05Dislocation}.

\subsection{Outline of the article}
The remainder of this article is structured as follows:
In \autoref{sec:notation}, we collect the relevant notation.
In \autoref{sec:relaxation}, we prove \autoref{thm:qwdom} using the theory of relaxations in the $\cA$-free setting.
In \autoref{sec:LowerBounds}, we establish the general result on singularly perturbed $\cA$-free two-well energies (\autoref{thm:incompa}),
which is then applied to the model problems to prove the first and second parts of both \autoref{thm:incompgrad} and \autoref{thm:incompsyma}.
In \autoref{sec:UpperBoundGradDiv}, we complete the proof of \autoref{thm:incompgrad} employing an $\eps^{\nicefrac{2}{3}}$-upper bound construction.
In \autoref{sec:UpperBoundCC}, we turn to the upper bounds of \autoref{thm:incompsyma} for which we implement an $\eps^{\nicefrac{4}{5}}$-upper bound construction.
In \autoref{sec:AppendixA}, we provide some details to a relaxation result (\autoref{lem:domaininvarianceInText}) that we use in \autoref{sec:relaxation}.

\section{Notation}\label{sec:notation}
In this section, we summarize the relevant notation and conventions.
\begin{itemize}
\item A set $\Omega\subset\RR^d$ is said to be a domain if it is nonempty, open and connected.
\item When writing $\cK=\{a_0,a_1\}$, it is understood that the elements are distinct. We will typically write $a=a_1-a_0$ for the difference and
$a_\theta$ for the weighted average as given by \eqref{eq:atheta}.
\item The energy $E^\cA_\eps(F,\cK)$ defined in \eqref{eq:MinSingPertAfreeEnergy} depends on the domain $\Omega$ which may be explicitly indicated as $E^\cA_\eps(F,\cK;\Omega)$ when ambiguous.
\item When referring to a differential operator $\cA$ of the form \eqref{eq:diffopera}, we understand $\cA$ to stand for the differential operator as well as the inner product spaces $X$ and $Y$. In particular, the notation discussed below \eqref{eq:diffopera} for the inner product on $X$ and its associated norm/distance/orthogonality are in effect.
\item In our analysis of incompatible $\cA$-free two-well problems, the following quantities will play an important role:
the compatibility projection $\ppa$ given as in \autoref{def:compatibilityProjection},
the compatibility quantifiers $h_{\cA}(a)$ and $g_{\cA}(a)$ given as in \autoref{def:compquant},
the set of optimal lamination directions $S_{\cA}(a)$ given as in \autoref{def:optLaminationDirections} and
the optimal volume fraction $\ttheta_{\cA}(F,\cK)$ given as in \eqref{eq:optvolfrac}.
\item Given a function $f\in L^1(\Omega)$, we denote its average by 
$\overline{f}=\fint_\Omega f\, dx = \tfrac{1}{\abs{\Omega}}\int_\Omega f\, dx.$
\item The space $\RR^d$ is understood to be equipped with the Euclidean inner product. The spaces $\rdd$ and $\rddsym$ are endowed with the Frobenius inner product.
\item We identify the torus $\TT^d$ with the standard unit cube $[0,1]^d$.
\item Given $f\in L^1(\TT^d)$, we denote its Fourier transform by $\hat{f}(\xi)=\int_{\TT^d} f(x) e^{-2\pi i\xi\cdot x} dx$ for $\xi\in\ZZ^d$.
\item Given $f\in L^1(\RR^d)$, we denote its Fourier transform by $\hat{f}(\xi)=(2\pi)^{-\frac{d}{2}}\int_{\RR^d}f(x) e^{-i\xi\cdot x} dx$ for $\xi\in\RR^d$.
\item For a matrix $a\in\rdd$ with eigenvalue $\lambda\in\RR$, we denote by $E(a,\lambda)=\ker (a-\lambda I_d)$ the associated eigenspace.
Moreover, we write $\lambda_-=\lambda_-(a)$ for the smallest eigenvalue of $a$ and $\lambda_+=\lambda_+(a)$ for the largest eigenvalue of $a$, if they exist.
As we will need to compare the eigenvalues of $a$ with the eigenvalues of $a^Ta$, we reserve the
notation $\lambda_{min}=\lambda_{min}(a^Ta)$ and $\lambda_{max}=\lambda_{max}(a^Ta)$ for the smallest and largest eigenvalues of $a^Ta$, respectively.
\item Given a finite-dimensional real vector space $X$ and a domain $\Omega\subset\RR^d$, we denote by $\cM(\Omega;X)$ the space of finite $X$-valued Radon measures.
\end{itemize}

\section{Relaxation under an \texorpdfstring{$\cA$}{A}-free constraint}\label{sec:relaxation}
In this section, we study the minimization of the $\cA$-free two-well energy \eqref{eq:afreemulti} with the goal of proving \autoref{thm:qwdom}.
After optimizing \eqref{eq:afreemulti} in $\chi\in L^2(\Omega;\cK)$, it remains to minimize
\begin{equation}\label{eq:AFreeEnergyIntroRelaxation}
    E^{\cA}_{el}(u;\cK):=\int_\Omega \dist^2_\cA(u,\cK)\,dx\text{ among } u\in\cD^\cA_F(\Omega).
\end{equation}
In \autoref{subsec:aquasi}, we thus compute the $\cA$-quasiconvex envelope of the energy density $W:=\dist^2_\cA(\cdot,\cK)$ 
by adapting Kohn's arguments \cite{kohn} to the $\cA$-free setting.
Our proof of \autoref{thm:qwdom} then builds on the work \cite{raitapot19}, which shows that the $\cA$-quasiconvex envelope fully determines the minimum of \eqref{eq:AFreeEnergyIntroRelaxation}; see \autoref{lem:domaininvarianceInText}.\medbreak

In \autoref{subsec:compApprox}, we establish uniqueness of the optimal volume fraction (\autoref{prop:optvolfrac}) and discuss its geometric interpretation.
Finally, we conclude this section by introducing the notion of compatible approximations (\autoref{def:compatibleApprox}).

\subsection{The \texorpdfstring{$\cA$}{A}-quasiconvex envelope}\label{subsec:aquasi}
We begin by recalling the notion of $\cA$-quasiconvex envelope in \autoref{def:aquasi} which can be considered a
periodic variant of the variational problem \eqref{eq:AFreeEnergyIntroRelaxation}.
The notion of $\cA$-quasiconvex envelope was first introduced in \cite{fonsecamuller99} and
then used to generalize relaxation results to the $\cA$-free setting in \cite{aquasi}.
\begin{definition}[$\cA$-quasiconvex envelope]\label{def:aquasi}
Let $\cA$ be a differential operator as in \eqref{eq:diffopera} and $\cK=\{a_0,a_1\}\subset X$.
Let $W:=\dist_\cA^2(\cdot,\cK): X\to\RR$.
Following \cite{fonsecamuller99}, we introduce the $\cA$-quasiconvex envelope of $W$ as the function $Q^\cA W: X\to\RR$ given by
\begin{equation}\label{eq:AQuasiconvexification}
  Q^\cA W(F) := \inf \left\{\int_{\TT^d} W(u)\,dx : u\in\cD^{\cA,\textup{per}}_F\right\},\quad F\in X,
\end{equation}
where the set of admissible maps is defined as
\begin{equation}\label{eq:admissibleMapsPeriodic}
    \cD_F^{\cA,\textup{per}} :=\{u\in L^2(\TT^d; X ):\cA u = 0 \text{ in } \TT^d,\, \overline{u}=F\}.
\end{equation}
Based on \cite{kohn}, we introduce the $\cA$-quasiconvex envelope at fixed volume fraction $\theta\in[0,1]$ as
the function $Q^\cA_\theta W: X\to\RR$ given by
\begin{equation}\label{eq:AQuasiconvexificationAtVolFrac}
  Q^\cA_\theta W(F) := \inf \left\{\int_{\TT^d} \abs{u-\chi}_\cA^2\,dx : u\in\cD^{\cA,\textup{per}}_F,\,\chi\in L^2(\TT^d;\cK) \text{ with }\overline{\chi}=a_\theta\right\},\quad F\in X,
\end{equation}
where $a_\theta$ is defined as in \eqref{eq:atheta} and encodes the volume constraint $\lvert\{\chi=a_1\}\rvert=\theta$.
\end{definition}
In the classical theory of relaxations, that is, for $\cA=\curl$, it is well-known that the minimization \eqref{eq:AFreeEnergyIntroRelaxation}
is equivalent to computing the quasiconvex envelope of the energy density
since the definition of the quasiconvex envelope does not depend on the domain; see \cite[Section~5.1.1.2]{Dacorogna-Book} and \cite[Section 7.1]{rindler}.
Due to the following lemma, this remains valid in the $\cA$-free setting if $\cA$ has constant rank.
\begin{lemma}\label{lem:domaininvarianceInText}
Let $\cA$ be a differential operator as in \eqref{eq:diffopera} satisfying the constant rank property; see \autoref{def:constantrank}.
Let $F\in X$ and $\cK=\{a_0,a_1\}\subset X$. Furthermore, let $Q^\cA W$ be the $\cA$-quasiconvex envelope of $W:=\dist_\cA^2(\cdot,\cK)$; see \eqref{eq:AQuasiconvexification}.
Given a bounded Lipschitz domain $\Omega\subset\RR^d$, let $E_0^\cA(F,\cK;\Omega)$ be given as in \eqref{eq:MinSingPertAfreeEnergy}.
Then, it holds that
\begin{equation}
  E_0^\cA(F,\cK;\Omega) = \abs{\Omega}Q^\cA W(F).
\end{equation}
\end{lemma}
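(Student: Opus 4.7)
The plan is to reduce the lemma to the $\cA$-free relaxation theorem for constant-rank operators, following the framework of \cite{fonsecamuller99, aquasi, raitapot19}. The argument proceeds in two steps: first, eliminating the phase field $\chi$; second, invoking the domain independence of the $\cA$-quasiconvex envelope under the constant rank hypothesis.

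First I would reduce to a single-variable minimization. Fix $u \in \cD_F^\cA(\Omega)$; the pointwise inequality $|u(x)-\chi(x)|_\cA^2 \geq \dist_\cA^2(u(x),\cK) = W(u(x))$ holds with equality when $\chi(x)$ is the nearest-point projection of $u(x)$ onto $\cK = \{a_0,a_1\}$. This pointwise optimal $\chi$ is measurable but typically not BV; since the surface penalty is absent at $\eps = 0$, it can be approximated in $L^2$ by BV-regular maps obtained by smoothing the level-set boundary $\{u\text{ closer to }a_1\}$ at negligible $L^2$ cost. Hence
\begin{equation*}
    E_0^\cA(F,\cK;\Omega) = \inf\Big\{\int_\Omega W(u)\,dx : u \in \cD_F^\cA(\Omega)\Big\}.
\end{equation*}

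Then I would invoke the domain independence of the $\cA$-quasiconvex envelope. Since $W$ is continuous with quadratic growth, \cite{raitapot19} (building on \cite{fonsecamuller99, aquasi}) asserts that under the constant rank assumption
\begin{equation*}
    \inf_{u \in \cD_F^\cA(\Omega)} \fint_\Omega W(u)\,dx = Q^\cA W(F),
\end{equation*}
and multiplying by $|\Omega|$ would deliver the claim. The upper bound direction of this identity proceeds by starting from an almost-optimal periodic test field $v \in \cD_F^{\cA,\textup{per}}$ and producing $\cA$-free competitors on $\Omega$ by rescaling to small period, cutting off near $\partial\Omega$ by means of a homogeneous $\cA$-free potential $\cB$ (whose existence is guaranteed by the constant rank property) via the representation $v-F = \cB w$, and then matching the Dirichlet data $F$ outside a thin boundary layer of vanishing mass. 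The lower bound direction exploits the $\cA$-quasiconvexity of $Q^\cA W$ together with a Jensen-type inequality for $\cA$-free Dirichlet perturbations of mean $F$, which again reduces to the existence of the potential $\cB$.

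The main obstacle I expect is the cutoff construction in the upper bound step: truncating a periodic $\cA$-free test field so that it matches $F$ near $\partial\Omega$ must be carried out without breaking the differential constraint $\cA u = 0$, and one must at the same time control the additional $W$-energy contributed by the thin boundary layer. This is precisely the technical point resolved by the constant rank hypothesis via the potential $\cB$; once it is in place, Riemann--Lebesgue averaging in the bulk and an elementary area estimate in the boundary layer close the argument.
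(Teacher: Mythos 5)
Your proposal is correct in outline and shares its first step with the paper (eliminating $\chi$ at $\eps=0$ and reducing to $\inf_{u\in\cD_F^\cA(\Omega)}\int_\Omega W(u)\,dx$; the BV-approximation of the pointwise-optimal phase is indeed needed and is standard). The routes diverge in the second step. The paper splits the domain-independence into two lighter pieces: a Vitali covering argument (Lemma~\ref{lem:domaininvariance}) reducing an arbitrary bounded Lipschitz $\Omega$ to the unit cube $Q$, and then, on $Q$ only, the identification of the Dirichlet infimum with $Q^\cA W(F)$ via Rai\c t\u a's potential representation $Q^\cA W(F)=\inf\{\int_Q W(F+\cB v): v\in C_c^\infty(Q;X)\}$ together with $1$-periodic extension of competitors (Proposition~\ref{prop:qwdomCube}). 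This avoids any cut-off construction on a general domain. You instead invoke the relaxation identity directly on $\Omega$, which forces you to carry out the rescaling-plus-cutoff argument for the upper bound and a Jensen-type argument for the lower bound. Both are viable under constant rank, but two points deserve care in your version. First, the ``Jensen-type inequality for $\cA$-free Dirichlet perturbations of mean $F$'' on a general domain is essentially equivalent to the domain-independence you are trying to prove; the non-circular implementation extends $u$ by $F$ to a large surrounding cube $R\supset\Omega$, periodizes, and applies $\cA$-quasiconvexity to $Q^\cA W$ (not to $W$ -- applying the periodic definition of $Q^\cA W$ to $W$ directly on $R$ loses a term $\tfrac{|R\setminus\Omega|}{|R|}W(F)$ that cannot be absorbed). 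This requires the nontrivial fact that $Q^\cA W$ is itself $\cA$-quasiconvex, which is due to Fonseca--M\"uller under constant rank and is an ingredient the paper's route does not need. Second, your upper bound requires the full cut-off machinery (small-period rescaling, boundary layer of vanishing mass, matching the data via the potential $\cB$) on an arbitrary Lipschitz domain, which is precisely the technical content that the Vitali step lets the paper outsource to the cube, where Rai\c t\u a's Corollary~1 already provides it. So: same ingredients, genuinely different decomposition; the paper's is the more economical, yours is closer to a self-contained rederivation of the general relaxation theorem.
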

This result essentially follows from the work of Rai\c t\u a \cite{raitapot19}, but for completeness we give a proof in \autoref{sec:AppendixA}.\medbreak

As noted in \cite{kohn}, to compute $Q^\cA W$, it suffices to consider the $\cA$-quasiconvex envelope at fixed volume fraction, which is summarized in the following lemma.
\begin{lemma}\label{lem:QuasiAtFixed}
In the setting of \autoref{def:aquasi}, it holds that
\begin{equation}
    Q^\cA W(F) = \inf_{\theta\in[0,1]}Q^\cA_\theta W(F)\quad\forall F\in X.
\end{equation}
\end{lemma}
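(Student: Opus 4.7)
The plan is to prove the claimed identity by establishing the two inequalities separately, both of which reduce to elementary manipulations relating the pointwise distance $\dist_\cA^2(u(x),\cK)$ to the expression $\lvert u(x)-\chi(x)\rvert_\cA^2$ appearing in the definition of $Q^\cA_\theta W$.

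For the inequality $Q^\cA W(F)\leq\inf_{\theta\in[0,1]}Q^\cA_\theta W(F)$, I would fix an arbitrary $\theta\in[0,1]$ and an admissible pair $(u,\chi)$ for \eqref{eq:AQuasiconvexificationAtVolFrac}. Since $\chi(x)\in\cK$ almost everywhere, the pointwise bound $\lvert u(x)-\chi(x)\rvert_\cA^2\geq\dist_\cA^2(u(x),\cK)=W(u(x))$ holds, so integrating yields $\int_{\TT^d}\lvert u-\chi\rvert_\cA^2\,dx\geq\int_{\TT^d}W(u)\,dx\geq Q^\cA W(F)$, where the last inequality uses that $u\in\cD_F^{\cA,\text{per}}$ is admissible for \eqref{eq:AQuasiconvexification}. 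Taking the infimum over admissible $(u,\chi)$ gives $Q^\cA_\theta W(F)\geq Q^\cA W(F)$, and then the infimum over $\theta$ yields the desired inequality.

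For the reverse inequality $Q^\cA W(F)\geq\inf_{\theta\in[0,1]} Q^\cA_\theta W(F)$, the key step is to replace the implicit minimization over $\cK$ in the definition of $W$ by an explicit, measurable selection of the optimal phase. Given any admissible $u\in\cD_F^{\cA,\text{per}}$, I would define $\chi^\ast:\TT^d\to\cK$ by $\chi^\ast(x)=a_1$ if $\lvert u(x)-a_1\rvert_\cA<\lvert u(x)-a_0\rvert_\cA$ and $\chi^\ast(x)=a_0$ otherwise. This selection is Borel measurable because $u$ is, and it satisfies $\lvert u(x)-\chi^\ast(x)\rvert_\cA^2=\dist_\cA^2(u(x),\cK)=W(u(x))$ pointwise. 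Setting $\theta^\ast:=\lvert\{\chi^\ast=a_1\}\rvert\in[0,1]$, we obtain $\overline{\chi^\ast}=a_{\theta^\ast}$, so $(u,\chi^\ast)$ is admissible in \eqref{eq:AQuasiconvexificationAtVolFrac} for the value $\theta=\theta^\ast$. Consequently,
\begin{equation}
\int_{\TT^d}W(u)\,dx=\int_{\TT^d}\lvert u-\chi^\ast\rvert_\cA^2\,dx\geq Q^\cA_{\theta^\ast}W(F)\geq\inf_{\theta\in[0,1]}Q^\cA_\theta W(F),
\end{equation}
and passing to the infimum over $u\in\cD_F^{\cA,\text{per}}$ concludes the argument.

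There is no substantial obstacle here: the only point that requires a brief justification is the measurability of the pointwise phase selection $\chi^\ast$, which follows immediately from the measurability of $u$ and the continuity of $y\mapsto\lvert y-a_i\rvert_\cA$. The proof is genuinely an accounting identity: the definition of $Q^\cA_\theta W$ just reorganizes the minimization in $Q^\cA W$ by separating the choice of the phase arrangement $\chi$ from the choice of its global volume fraction $\theta$.
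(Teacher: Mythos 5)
Your proof is correct and is essentially the paper's argument: the paper condenses both directions into a single two-line computation (removing the volume constraint by noting every $\chi$ has some volume fraction, then minimizing pointwise in $\chi$ to recover $\dist_\cA^2(u,\cK)$), and your two inequalities with the explicit measurable selection $\chi^\ast$ simply spell out the same steps in detail.
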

\begin{proof}
Given $F\in X$, we compute
\begin{equation}
    \begin{aligned}
            \inf_{\theta\in[0,1]}Q^\cA_\theta W(F) =  \inf \big\{\textstyle\int_{\TT^d} \abs{u-\chi}_\cA^2 \,dx : u\in\cD^{\cA,\textup{per}}_F,\,\chi\in L^2(\TT^d;\cK) \big\}& \\
            =\inf \big\{\textstyle\int_{\TT^d} \dist^2_\cA(u,\cK)\,dx : u\in\cD^{\cA,\textup{per}}_F \big\}& = Q^\cA W(F).
    \end{aligned}
\end{equation}
\end{proof}
In the subsequent analysis, we employ Fourier methods to derive an explicit formula for $Q^\cA_\theta W$ for all $\theta\in[0,1]$.
Since the Fourier transform of periodic functions is concentrated on the integer lattice $\mathbb{Z}^d$, the following definition will prove useful.
\begin{definition}\label{def:rationaldirection}
A direction $\xi \in S^{d-1}$ is said to be \textit{rational} if $\xi = {k}/{\lvert{k}\rvert}$ for some $k \in \mathbb{Z}^d \setminus \{0\}$;
otherwise, it is called \textit{irrational}. We denote the set of rational directions by $S_{\mathbb{Q}}^{d-1}$.
\end{definition}
\begin{rmk}\label{def:rationaldirectionDense}
It is well-known that $S_{\mathbb{Q}}^{d-1}$ is dense in $S^{d-1}$, which follows from density of $\QQ^d$ in $\RR^d$ together with the continuity
of the map $k\mapsto {k}/{\lvert k\rvert}$ for $k\in\RR^d\setminus\{0\}$.
\end{rmk}
With this we are ready to formulate the key result of \autoref{subsec:aquasi}.
\begin{theorem}\label{thm:qtw}
Let $\cA$ be a differential operator as in \eqref{eq:diffopera} satisfying the constant rank property; see \autoref{def:constantrank}.
Given $\cK=\{a_0,a_1\}\subset X$ and $\theta\in[0,1]$, let $Q^\cA_\theta W$ be the $\cA$-quasiconvex envelope of $W=\dist^2_\cA(\cdot,\cK)$ at fixed volume fraction $\theta$;
see \autoref{def:aquasi}.
Let $a_\theta$ be as in \eqref{eq:atheta}. For $a:=a_1-a_0$, let $h_\cA(a)$ and $S_\cA(a)$ be as in \autoref{def:compquant} and \autoref{def:optLaminationDirections}, respectively.
Then, it holds that
\begin{equation}\label{eq:qtw}
Q^\cA_\theta W(F)= \abs{F-a_\theta}_\cA^2 + \theta(1-\theta)h_\cA(a) \quad \forall F\in X .
\end{equation}
If there exists $\xi^*\in S_\cA(a)\cap S^{d-1}_\QQ $ (see \autoref{def:rationaldirection}) then the variational problem \eqref{eq:AQuasiconvexificationAtVolFrac}
admits minimizers $(u,\chi)$ that are simple laminates with lamination direction $\xi^*$.
\end{theorem}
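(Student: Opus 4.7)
The plan is to prove the formula \eqref{eq:qtw} by matching upper and lower bounds; the claim about simple laminate minimizers in the rational case will then be a direct by-product of the upper bound construction. In both bounds the key preliminary step is to subtract the mean. Writing any admissible pair as $u = F + \tilde u$, $\chi = a_\theta + \tilde \chi$ with $\overline{\tilde u} = \overline{\tilde \chi} = 0$, the cross-term integrates to zero and
\begin{equation*}
\int_{\TT^d} |u-\chi|_\cA^2 \, dx = |F-a_\theta|_\cA^2 + \int_{\TT^d} |\tilde u - \tilde \chi|_\cA^2 \, dx,
\end{equation*}
so the theorem reduces to showing that the remaining integral is at least $\theta(1-\theta) h_\cA(a)$ and that this bound is attainable (in the limit) by simple laminates.

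For the lower bound I would pass to Fourier series on $\TT^d$. Since $\chi$ takes only the values $a_0$ and $a_1$, write $\chi = a_0 + f a$ with $f : \TT^d \to \{0,1\}$ and $\overline f = \theta$; then $\tilde \chi = (f-\theta) a$ and $\widehat{\tilde \chi}(k) = \hat f(k)\, a$ for $k \neq 0$. Plancherel yields
\begin{equation*}
\int_{\TT^d} |\tilde u - \tilde \chi|_\cA^2 \, dx = \sum_{k \in \ZZ^d \setminus \{0\}} |\hat u(k) - \hat f(k)\, a|_\cA^2.
\end{equation*}
The constraint $\cA u = 0$ forces $\AA(k)\hat u(k) = 0$, so $\hat u(k)$ lies in (the complex span of) $V_\cA(k)$; each summand is therefore bounded below by $|\hat f(k)|^2\,|a - \ppa(k) a|_\cA^2$, and by zero-homogeneity of $\ppa$ and the definition of $h_\cA(a)$ this is at least $|\hat f(k)|^2 h_\cA(a)$. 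Summing and applying Plancherel in reverse to $f-\theta$ gives the bound $\theta(1-\theta) h_\cA(a)$, since $f \in \{0,1\}$ with mean $\theta$ implies $\|f-\theta\|_{L^2(\TT^d)}^2 = \theta(1-\theta)$.

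For the upper bound I would exhibit an explicit simple laminate in a rational direction. Fix $\xi \in S^{d-1}_\QQ$ and write $\xi = k/|k|$ with $k \in \ZZ^d$. Let $h : \RR \to \{0,1\}$ be $1$-periodic with $\int_0^1 h\, dt = \theta$ and define
\begin{equation*}
\chi(x) := a_0 + h(k \cdot x)\, a, \qquad u(x) := F + (h(k \cdot x) - \theta)\, \ppa(\xi) a.
\end{equation*}
These maps have the required periodicity and means; the constraint $\cA u = 0$ holds because $\ppa(\xi) a \in \ker \AA(\xi)$ and every Fourier mode of $u$ is supported on $\ZZ k$, so the symbol vanishes on each mode by its homogeneity. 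Expanding $u - \chi = (F-a_\theta) - (h(k \cdot x) - \theta)(I - \ppa(\xi)) a$ and using $\int (h - \theta)\, dx = 0$ together with $\int (h - \theta)^2 \, dx = \theta(1-\theta)$ gives
\begin{equation*}
\int_{\TT^d} |u-\chi|_\cA^2\, dx = |F-a_\theta|_\cA^2 + \theta(1-\theta)\, |a - \ppa(\xi) a|_\cA^2.
\end{equation*}
Taking the infimum over $\xi \in S^{d-1}_\QQ$ and invoking density of $S^{d-1}_\QQ$ in $S^{d-1}$ together with continuity of $\xi \mapsto \ppa(\xi)$ produces the matching upper bound; for $\xi^* \in S_\cA(a) \cap S^{d-1}_\QQ$, the choice $\xi = \xi^*$ realizes the infimum with the above simple laminate, establishing the second assertion. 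The main technical point is the continuity of $\ppa$ on $S^{d-1}$ needed in the density argument: this is where the constant rank hypothesis enters the proof, since it guarantees that $V_\cA(\xi)$ varies continuously with $\xi$ and hence so does its orthogonal projection.
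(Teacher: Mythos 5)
Your proposal is correct and follows essentially the same route as the paper's proof: mean subtraction (equivalently, isolating the zero Fourier mode), a frequency-wise projection onto $[V_\cA(\xi)]^\CC$ combined with Plancherel and $\|f-\theta\|_{L^2}^2=\theta(1-\theta)$ for the lower bound, and simple laminates in rational directions plus density of $S^{d-1}_\QQ$ and continuity of $\ppa$ (the point where constant rank enters, via \autoref{prop:smoothCompProj}) for the upper bound. The only cosmetic difference is that the paper records the frequency-wise minimizer $u_\chi$ explicitly (it is reused later, e.g.\ in the proof of \autoref{thm:incompa}), whereas you only need the resulting inequality.
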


The advantage of restricting our attention to constant rank differential operators is the following.
\begin{proposition}[\cite{fonsecamuller99}]\label{prop:smoothCompProj}
Let $\cA$ be a differential operator as in \eqref{eq:diffopera} satisfying the constant rank property; see \autoref{def:constantrank}.
Let $\ppa$ be given as in \autoref{def:compatibilityProjection}.
Then, it holds that 
\begin{equation}\label{eq:smoothCompProj}
    \ppa\in C^\infty(\RR^d\setminus\{0\};\,\Lin(X)).
\end{equation}
\end{proposition}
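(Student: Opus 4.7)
The plan is to represent $\ppa(\xi)$ as a Riesz spectral projection applied to the smooth family of self-adjoint, positive semi-definite operators $M(\xi) := \AA(\xi)^\ast \AA(\xi) \in \Lin(X)$. Since $\AA$ is polynomial in $\xi$, so is $M$. The identity $\ker(\AA^\ast \AA) = \ker \AA$ gives $\ker M(\xi) = V_\cA(\xi)$, whence $\rank M(\xi) = \rank \AA(\xi) = \raa$ for every $\xi \in \RR^d\setminus\{0\}$ by the constant rank hypothesis. Being self-adjoint, $M(\xi)$ is orthogonally diagonalizable with $0$ as an eigenvalue of constant multiplicity $n - \raa$ and with all remaining eigenvalues strictly positive.

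Fix $\xi_0 \in \RR^d\setminus\{0\}$ and pick $\mu_0 > 0$ strictly below the smallest positive eigenvalue of $M(\xi_0)$. Continuity of the eigenvalues of $M(\xi)$ as a multiset (via continuity of the coefficients of the characteristic polynomial), combined with the fact that constant rank pins exactly $n - \raa$ of them at $0$, yields an open neighborhood $U \subset \RR^d\setminus\{0\}$ of $\xi_0$ on which the spectrum of $M(\xi)$ is contained in $\{0\} \cup (\mu_0, \infty)$. Let $\Gamma \subset \CC$ be the positively oriented circle of radius $\mu_0/2$ about the origin; the Riesz functional calculus for the self-adjoint $M(\xi)$ then gives
$$\ppa(\xi) = \frac{1}{2\pi i}\oint_\Gamma (zI - M(\xi))^{-1}\, dz \quad \forall\, \xi \in U,$$
since the right-hand side equals the spectral projection onto the eigenspace of $M(\xi)$ enclosed by $\Gamma$, which coincides with $\ker M(\xi) = V_\cA(\xi)$.

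The integrand is smooth on $\Gamma \times U$: the uniform spectral gap makes $zI - M(\xi)$ invertible there, operator inversion is smooth on the open set of invertible operators in $\Lin(X_\CC)$, and $M$ is polynomial in $\xi$. Differentiation under the integral sign therefore yields $\ppa \in C^\infty(U; \Lin(X))$, and since $\xi_0$ was arbitrary, the claim follows. The decisive point is securing a uniform spectral gap between $0$ and the positive spectrum of $M(\xi)$ on neighborhoods, which is exactly where the constant rank assumption enters: without it, positive eigenvalues of $M(\xi)$ could drift toward $0$, causing $\Gamma$ to pick up more than the kernel and breaking the smoothness — indeed even the continuity — of $\ppa$.
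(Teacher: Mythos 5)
Your argument is correct and is precisely the classical contour-integral proof that the paper itself defers to (it cites Prosinski's Theorem 4.6, going back to Fonseca--M\"uller and Schulenberger--Wilcox, rather than proving the statement): the constant rank hypothesis keeps the eigenvalue $0$ of $M(\xi)=\AA(\xi)^*\AA(\xi)$ isolated from the positive spectrum locally, so the Riesz projection onto $\ker M(\xi)=V_\cA(\xi)$ depends smoothly on $\xi$. The only cosmetic point is that the resolvent integral should be read in the complexification $X^\CC$, with the resulting projection real because $M(\xi)$ is self-adjoint and real --- which you already acknowledge.
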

A proof of \autoref{prop:smoothCompProj} can be found in \cite[Theorem 4.6]{prosinski}.
A consequence of \autoref{prop:smoothCompProj} is that for any constant rank differential operator $\cA$ as in \eqref{eq:diffopera} and any state $a\in X$,
the set of optimal lamination directions $S_{\cA}(a)$ is nonempty.
With this observation, we now turn to the proof of \autoref{thm:qtw}, which is based on the arguments in \cite[Theorem 3.1]{kohn}.
\begin{proof}[Proof of \autoref{thm:qtw}]
We proceed in two steps: In the first step, we prove that the right-hand side of \eqref{eq:qtw} is a lower bound for $Q^\cA_\theta W(F)$. 
In the second step, we then minimize \eqref{eq:AQuasiconvexificationAtVolFrac} using $\cA$-free simple laminates to show that this lower bound is sharp.
It will prove useful to identify phase arrangements with the indicator function of their $a_1$-phase through the relation
\begin{equation}
    \chi=(1-\chi_1)a_0+\chi_1a_1 \text{ in }\TT^d.
\end{equation}
Under this relation, phase arrangements $\chi\in L^2(\TT^d;\cK)$ with $\overline{\chi}=a_\theta$
are in one-to-one correspondence with indicator functions $\chi_1\in L^2(\TT^d;\{0,1\})$ with $\overline{\chi_1}=\theta$.\smallbreak
\Step{1: Lower bound}
Now, let $F\in X $ and fix a phase arrangement $\chi=(1-\chi_1)a_0+\chi_1a_1\in L^2(\TT^d;\cK)$ with $\overline{\chi}=a_\theta$.
For $u\in L^2(\TT^d;\RR^d)$, taking the Fourier transform of $\cA u=0$ implies that
\begin{equation}\label{eq:cAu}
  u\in \cD^{\cA,\textup{per}}_F \iff \begin{cases}
    \hat{u}(0)=\overline{u}=F, \\
    \hat{u}(\xi) \in [V_\cA(\xi)]^\CC \quad\forall\xi\in\ZZ^d\setminus\{0\},
  \end{cases}
\end{equation}
where $[V_\cA(\xi)]^\CC$ denotes the complexification of $V_\cA(\xi)$, viewed a linear subspace of the complexification $X^\CC$ of $X$.
Note that $X^\CC$ naturally inherits a complex inner product structure, which, by a slight abuse of notation, we also denote by $(\cdot,\cdot)_\cA$.
For $u\in \cD^{\cA,\textup{per}}_F$, we apply Parseval's theorem to compute
\begin{equation}\label{eq:el}
  E_{el}^\cA(u,\chi) =  \int_{\TT^d} \abs{u-\chi}_\cA^2\, dx = \sum_{\xi\in\ZZ^d} \abs{\hat{u}(\xi)-\hat{\chi}(\xi)}_\cA^2
  = \abs{F-a_\theta}_\cA^2 + \sum_{\xi\in\ZZ^d\setminus\{0\}} \abs{\hat{u}(\xi)-\hat{\chi}(\xi)}_\cA^2.
\end{equation} 
In view of \eqref{eq:cAu}, optimizing $\hat{u}(\xi)$ at each frequency yields a minimizer $u_\chi \in\cD^{\cA,\textup{per}}_F$ for \eqref{eq:el} given by
\begin{equation}\label{eq:uchi}
  u_\chi(x):=F +\sum_{\xi\in\ZZ^d\setminus\{0\}}  e^{2\pi i \xi\cdot x} \,\ppaCC(\xi) [\hat{\chi}(\xi)],\quad x\in\TT^d,
\end{equation}
where $\ppaCC(\xi)$ denotes the orthogonal projection onto $[V_\cA(\xi)]^\CC$ in $X^{\CC}$.
Note that for any $\xi\in\ZZ^d\setminus\{0\}$, we have $\ppaCC(\xi)[a+ia']=\ppa(\xi)[a]+i\ppa(\xi)[a']$ for all $a,a'\in X$.\medbreak

Next, we define $f :=(\chi_1-\theta )\in L^2(\TT^d;\{-\theta,1-\theta\})$ to expand $\chi$ about its mean 
\begin{equation}\label{eq:MeanExpansion}
  \chi (x)= a_\theta + f(x)a\quad \forall x\in\TT^d.
\end{equation}
From this, we obtain
\begin{equation}\label{eq:FourierCoeffChi}
  \hat{\chi}(\xi) = \hat{f}(\xi)a \quad \forall\xi\in\ZZ^d\setminus\{0\},
\end{equation}
which, together with \eqref{eq:uchi}, allows us to compute the Fourier coefficients of $u_\chi$:
\begin{equation}\label{eq:uchi2}
  \hat{u}_\chi(\xi) = \ppaCC(\xi) [\hat{\chi}(\xi)] = \ppaCC(\xi) [\hat{f}(\xi)a] =  \hat{f}(\xi) \ppa(\xi) a \quad \forall\xi\in\ZZ^d\setminus\{0\}.
\end{equation}
Plugging this into \eqref{eq:el} and using \eqref{eq:FourierCoeffChi} yields the formula
\begin{equation}\label{eq:perenchi}
  \min_{u\in\cD^{\cA,\textup{per}}_F}E_{el}^{\cA}(u,\chi) = E_{el}^\cA(u_\chi,\chi)
  = \abs{F-a_\theta}_\cA^2 + \sum_{\xi\in\ZZ^d\setminus\{0\}} \lvert \hat{f}(\xi)\rvert^2\lvert a -\ppa(\xi) a \rvert _\cA^2.
\end{equation}
Since $\ppa$ is zero-homogeneous, see \eqref{eq:zerohom}, we find
\begin{equation}
  \abs{a -\ppa(\xi) a}_\cA \geq h_\cA(a)\quad \forall\xi\in\RR^d\setminus\{0\}.
\end{equation}
Using Parseval's theorem and $\hat{f}(0)=\overline{\chi}_1-\theta = 0$, it follows that
\begin{equation}\label{eq:fhatsquared}
  \sum_{\xi\in\ZZ^d\setminus\{0\}} \big| \hat{f}(\xi)\big|^2 = \int_{\TT^d} \abs{f}^2\,dx = \abs{\{\chi=a_0\}} \theta^2 + \abs{\{\chi=a_1\}}(1-\theta)^2= \theta(1-\theta),
\end{equation}
which yields the estimate 
\begin{equation}\label{eq:upboundqatw}
\min_{u\in\cD^{\cA,\textup{per}}_F}E_{el}^{\cA}(u,\chi)\geq \abs{F-a_\theta}_\cA^2 + \theta(1-\theta) h_\cA(a).
\end{equation}
Minimizing over phase arrangements $\chi\in L^2(\TT^d,\cK)$ with $\overline{\chi}=a_\theta$, we find
\begin{equation}
  Q^\cA_\theta W(F) \geq \abs{F-a_\theta}_\cA^2 + \theta(1-\theta) h_\cA(a).
\end{equation} 
To establish the formula for the $\cA$-quasiconvex envelope \eqref{eq:qtw}, it remains to show that this estimate is sharp.\smallbreak
\Step{2: Upper bound}
For a rational direction $\zeta\in S^{d-1}_\QQ$, there exists a unique $k\in\ZZ^d$ with minimal length $|k|$ such that $\zeta = {k}/{\abs{k}}$.
Utilizing the indicator function $h=\mathbbm{1}_{[0,\theta]+\ZZ}:\RR\rightarrow\{0,1\}$ as profile, we 
associate to $\zeta$ the phase arrangement $\chi_{\zeta}$ given by
\begin{equation}\label{eq:chixi}
  \chi_{\zeta}(x):=(1-\chi_{1,\zeta}(x))a_0+\chi_{1,\zeta}(x)a_1,\quad x\in \TT^d,
\end{equation}
where $\chi_{1,\zeta}(x) := h(k \cdot x)$ for $x\in\TT^d$. Note that $\chi_\zeta$ is a simple laminate with lamination direction~$\zeta$.\medbreak
By homogenization, it is verified that $\overline{\chi}_{1,\zeta}=\theta$. Since $k\in\ZZ^d$, the Fourier transform of $f_{\zeta}=\chi_{1,\zeta}-\theta$ is
supported on a line; that is, $\supp \hat{f}_{\zeta} \subset \spn\{\zeta\}$.
With formula \eqref{eq:perenchi} and zero-homogeneity of $\ppa$, we derive 
\begin{equation}\label{eq:perenchi2}
   \min_{u\in\cD^{\cA,\textup{per}}_F}E_{el}^{\cA}(u, \chi_{\zeta}) =  \abs{F-a_\theta}_\cA^2 + \theta(1-\theta) \lvert a -\ppa(\zeta) a\rvert _\cA^2\quad\forall \zeta\in S^{d-1}_\QQ.
\end{equation}
Now, first suppose there exists a rational direction $\xi^*\in S_\cA(a)\cap S^{d-1}_\QQ $.
By \autoref{def:optLaminationDirections} of $S_\cA(a)$, it holds that $\lvert a -\ppa(\xi^*) a\rvert _\cA^2=h_\cA(a)$.
Hence, the associated phase arrangement $\chi_{\xi^*}$ saturates the inequality \eqref{eq:upboundqatw}, proving that
\begin{equation}\label{eq:enchistar}
  Q^\cA_\theta W(F) =  \min_{u\in\cD^{\cA,\textup{per}}_F}E_{el}^{\cA}(u, \chi_{\xi^*})  = \abs{F-a_\theta}_\cA^2 + \theta(1-\theta) h_\cA(a).
\end{equation}
Due to \eqref{eq:perenchi}, we have $E_{el}^{\cA}(u_{\chi_{\xi^*}},\chi_{\xi^*})=Q^\cA_\theta W(F)$ for $u_{\chi_{\xi^*}}\in\cD^{\cA,\textup{per}}_F$
given as in \eqref{eq:uchi}. This shows that $(u_{\chi_{\xi^*}},\chi_{\xi^*})$ is a minimizer of the variational problem \eqref{eq:AQuasiconvexificationAtVolFrac}.
By \eqref{eq:uchi}, \eqref{eq:uchi2} and \eqref{eq:chixi}, it follows that
\begin{equation}
    u_{\chi_{\xi^*}} = F + (\chi_{1,\xi^*}(x)-\theta)\ppa(\xi^*)a\quad \forall x\in \TT^d.
\end{equation}
In particular, the map $u_{\chi_{\xi^*}}$ is a simple laminate of the two matrices
\begin{equation}\label{eq:TildeWellsInQuasiconvexificationPf}
    \ta_0 := F -\theta \ppa(\xi^*)a  \hspace{2cm}   \ta_1 := F +(1-\theta) \ppa(\xi^*)a
\end{equation}
with lamination direction $\xi^*$.\medbreak
If there are no rational directions in $S_\cA(a)$, we must rely on the continuity of $\ppa$; see \autoref{prop:smoothCompProj}.
This ensures that there exists $\xi^* \in S_\cA(a)\setminus S^{d-1}_\QQ$ with
\begin{equation}\label{eq:optimalIrrational}
  |a -\ppa(\xi^*) a|_\cA^2=h_\cA(a).
\end{equation}
However, as $\xi^*$ is irrational, it is not evident how to extend the definition \eqref{eq:chixi} for $\zeta=\xi^*$ such that $\chi_{\xi^*}$ is $\TT^d$-periodic 
and its Fourier transform is supported on the line $\spn\{\xi^*\}$.
We therefore approximate $\xi^*$ using the density of rational directions in the sphere.
Let $(\xi_l)_l\subset S^{d-1}_\QQ$ be a sequence with $\xi_l \rightarrow \xi^*$ as $l\rightarrow\infty$.
Again using the continuity of $\ppa$, we obtain
\begin{equation}
  \lim_{l\rightarrow\infty}|a -\ppa(\xi_l) a|_\cA^2=|a -\ppa(\xi^*) a|_\cA^2=h_\cA(a),
\end{equation}
which, together with \eqref{eq:perenchi2}, allows us to infer that
\begin{equation}\label{eq:chil}
  Q^\cA_\theta W(F) = \lim_{l\rightarrow\infty} \min_{u\in\cD^{\cA,\textup{per}}_F} E_{el}^{\cA}(u, \chi_{\xi_l}) = \abs{F-a_\theta}_\cA^2 + \theta(1-\theta) h_\cA(a)
\end{equation}
and concludes the proof.
\end{proof}

With \autoref{thm:qtw} in hand, we are ready to prove the main result on the $\cA$-free relaxation.
\begin{proof}[Proof of \autoref{thm:qwdom}]
Applying \autoref{lem:domaininvarianceInText}, \autoref{lem:QuasiAtFixed} and \autoref{thm:qtw}, it follows that
\begin{equation}
    E_0^\cA(F,\cK;\Omega) = \abs{\Omega}Q^\cA W(F) =\abs{\Omega} \min_{\theta\in[0,1]} \Big(\abs{F-a_\theta}_\cA^2 + \theta(1-\theta)h_\cA(a)\Big).
\end{equation}
This completes the proof.
\end{proof}
\subsection{The optimal volume fraction and compatible approximations}\label{subsec:compApprox}
In this section, we prove \autoref{prop:optvolfrac} and discuss the geometric interpretation of the optimal volume fraction.
We then conclude this section by introducing compatible approximations in \autoref{def:compatibleApprox},
which will play an important role in our upper bound constructions in \autoref{sec:UpperBoundGradDiv} and
\autoref{sec:UpperBoundCC}.\medbreak

We will now show that the optimal volume fraction is unique if $\cA$ has constant rank and spanning wave cone.
\begin{proof}[Proof of \autoref{prop:optvolfrac}]
First, let us define $H:[0,1]\to \RR$ by
\begin{equation}\label{eq:quasiconvexificationPfOptVolFrac}
    H(\theta):=Q^\cA_\theta W(F)=\abs{F-a_\theta}_\cA^2 +\theta(1-\theta)h_\cA(a),\quad \theta\in[0,1].
\end{equation}
Using \eqref{eq:compquanteq}, we observe that $H$ is quadratic with leading coefficient
\begin{equation}\label{eq:leadingCoefficient}
  \abs{a}_\cA^2-h_\cA(a)=g_\cA(a) \geq 0.
\end{equation}
To see this, note that 
\begin{equation}\label{eq:PythagorasProjectionF}
  \abs{F-a_\theta}_\cA^2 = \abs{F-F_*}_\cA^2 + \abs{F_*-a_\theta}_\cA^2 = \abs{F-F_*}_\cA^2 + (\theta-\theta^*)^2\abs{a}^2_\cA \quad\forall\theta\in[0,1],
\end{equation}
where $F_*=(1-\theta^*)a_0+\theta^* a_1$ is the orthogonal projection of $F$ onto the line spanned by $a_0$ and $a_1$ in $X$.
Here, the orthogonality $(F-F_*,a)_\cA=0$ uniquely determines $\theta^*\in\RR$ to be
\begin{equation}
  \theta^*=\theta^*_\cA(F,\cK):= \argmin_{\theta\in\RR} \abs{F-a_\theta}_\cA^2=\frac{(F-a_0,a)_\cA}{\lvert a\rvert^2_\cA} .
\end{equation}
By \autoref{def:compquant}, there holds $g_\cA(a)=0$ if and only if $a\perp_\cA \spn\Lambda_\cA$.
However, as the wave cone is spanning and $a=a_1-a_0\ne 0$, it follows that $g_{\cA}(a)>0$. 
In particular, $H$ is strictly convex and has a unique minimizer.
\end{proof}

\begin{rmk}[Geometric interpretation of the optimal volume fraction]\label{rmk:GeometryOptVolFrac}
Since the optimal volume fraction plays a central role in our main results (\autoref{thm:incompgrad} and \autoref{thm:incompsyma}),
we next discuss its geometric interpretation, illustrated in \autoref{fig:optvolfrac}.\smallbreak
Let $\cA$ be a differential operator as in \eqref{eq:diffopera} that has constant rank and spanning wave cone.
Let $F\in X$ and $\cK=\{a_0,a_1\}\subset X$. As usual, we use the notation $a:=a_1-a_0$.
From \eqref{eq:optvolfrac}, we observe that $\ttheta_\cA(F,\cK)$ is invariant under translations of $F$ in directions perpendicular to $a$:
\begin{equation}
  \ttheta_\cA(F+v,\cK)=\ttheta_\cA(F,\cK) \quad\forall v\in X :v\perp_\cA a =0.
\end{equation}
This is immediate from \eqref{eq:quasiconvexificationPfOptVolFrac} and \eqref{eq:PythagorasProjectionF}.
Although these translations do not influence the optimal volume fraction, they do change
the minimal energy $E_0^\cA(F,\cK)$ as follows from \autoref{thm:qwdom}.
Defining $\Raa:=\Raa(a):=\tfrac{1}{2}{h_\cA(a)}/{\abs{a}_\cA^2}\in[0,\tfrac{1}{2})$, a quick computation (see \cite[Theorem 3.5]{kohn}) shows that
\begin{equation}
\begin{aligned}
\ttheta_\cA(F,\cK)=0&\iff \theta^*_\cA(F,\cK) \leq \Raa,\\
\ttheta_\cA(F,\cK)=1&\iff \theta^*_\cA(F,\cK) \geq 1-\Raa.
\end{aligned}
\end{equation}
With this, we see that the optimal volume fraction partitions $X$ into three parts: the two half spaces
\begin{equation}
\begin{aligned}
\{F\in X :\ttheta_\cA(F,\cK)=0\}&= \{F\in X : (F-a_{\Raa}, a)_\cA\leq 0\},\\
\{F\in X :\ttheta_\cA(F,\cK)=1\}& = \{F\in X : (F-a_{1-\Raa},a)_\cA\geq 0\}
\end{aligned}
\end{equation}
containing boundary data resulting in a pure phase, and the slab separating them
\begin{equation}
  \{F\in X :\ttheta_\cA(F,\cK)\in(0,1)\} = \{F\in X : \lvert(F-a_{1/2}, a)_\cA\rvert< (\tfrac{1}{2}-\Raa)\abs{a}_\cA^2 \}
\end{equation}
containing boundary data that causes mixing of the phases.
Note that both $h_\cA(a)$ and $\Raa(a)$ vanish if and only if the wells are compatible.
In this case, the slab is given by $\{a_\theta+v\in X :\theta\in(0,1)\text{ and }v\perp_\cA a\}$ and 
the optimal volume fraction determines the projection of $F$ onto the line segment $\overline{a_0a_1}$:
\begin{equation}
    \ttheta_{\cA}(F,\cK)=\argmin_{\theta\in[0,1]}\abs{F-a_\theta}_\cA^2.
\end{equation}
If, in addition, the boundary data $F$ satisfies the compatibility condition $F=(1-\lambda)a_0+\lambda a_1$ for some $\lambda\in[0,1]$ then 
the optimal volume fraction can be read off from $F$ with $\ttheta_{\cA}(F,\cK)=\lambda$.
For this reason, the notion of the optimal volume fraction introduced in this work is primarily of interest in the incompatible setting.
\end{rmk}

We conclude this section by briefly reviewing the arguments used in the proof of
\autoref{thm:qwdom}, which then leads us to introduce the notion of compatible approximations (\autoref{def:compatibleApprox}).

\begin{rmk}[Compatible approximation]\label{rmk:compatibleApprox}
Let $\cA$ be a differential operator as in \eqref{eq:diffopera} that has constant rank and spanning wave cone.
Let $\Omega\subset\RR^d$ be a bounded Lipschitz domain.
Given $F\in X$ and $\cK=\{a_0,a_1\}\subset X$ with $a:=a_1-a_0$, we again consider the minimization of $E^{\cA}_{el}(u;\cK)$ among $u\in\cD^\cA_F(\Omega)$; see \eqref{eq:AFreeEnergyIntroRelaxation}.
The definition of the optimal volume fraction $\ttheta_\cA=\ttheta_\cA(F,\cK)$ in \eqref{eq:optvolfrac} together
with \autoref{lem:domaininvarianceInText}, \autoref{lem:QuasiAtFixed} and \autoref{thm:qtw} imply that
\begin{equation}
  E_{el}(F,\cK)=\abs{\Omega}Q^{\cA}_{\ttheta_{\cA}}W(F)= \abs{\Omega} \Big(\big\lvert F-a_{\ttheta_{\cA}}\big\rvert_\cA^2 +\ttheta_{\cA}(1-\ttheta_{\cA})h_\cA(a)\Big).
\end{equation}
In particular, the proofs of \autoref{lem:domaininvarianceInText} and \autoref{thm:qtw} suggest that we can minimize $E^{\cA}_{el}(u;\cK)$
by picking any optimal lamination direction $\xi^*\in S_{\cA}(a)$ and considering simple laminates $u\in\cD^\cA_F(\Omega)$ of the two compatible states 
\begin{equation}\label{eq:TildeWellsInQuasiconvexificationPfcA}
\ta_0 := F -\ttheta_{\cA}(F,\cK) \ppa(\xi^*)a  \hspace{2cm}   \ta_1 := F +(1-\ttheta_{\cA}(F,\cK)) \ppa(\xi^*)a
\end{equation}
with lamination direction $\xi^*$ and volume proportion $\ttheta_\cA(F,\cK)$ for the $\ta_1$-phase.
When we investigate singularly perturbed two-well energies with incompatible data in \autoref{sec:UpperBoundGradDiv} and \autoref{sec:UpperBoundCC},
we will see that branching constructions based on these simple laminates allow us
to prove the upper scaling bounds of the main results \autoref{thm:incompgrad} and \autoref{thm:incompsyma}.
\medbreak

It is worth noting that if the data $(F,\cK)$ are compatible with $F=(1-\lambda)a_0+\lambda a_1$ for some $\lambda\in[0,1]$,
it follows that $\ta_0=a_0$ and $\ta_1=a_1$ because $\ttheta_\cA(F,\cK)=\lambda$ (see \autoref{rmk:GeometryOptVolFrac}) and $\ppa(\xi^*)a=a$ for all $\xi^*\in S_{\cA}(a)$.
In contrast, when the data are incompatible, any choice of $\xi^*\in S_{\cA}(a)$ leads, via \eqref{eq:TildeWellsInQuasiconvexificationPfcA}, to states $\ta_0$ and $\ta_1$
that differ from $a_0$ and $a_1$, as illustrated in \autoref{fig:tildewells}.
In this way, the approach compa\-tibilizes the data $(F,\cK)$ giving rise to the compatible data $(F,\tcK)$ with $\tcK:=\{\ta_0,\ta_1\}$ satisfying
\begin{equation}\label{eq:CompatibleApproxAverage}
F=(1-\ttheta_{\cA}(F,\cK))\ta_0+\ttheta_{\cA}(F,\cK)\ta_1 \text{ and }\ta_1-\ta_0=\ppa(\xi^*)a\in\Lambda_\cA.
\end{equation}
In addition, since $\xi^*\in S_{\cA}(a)$, it follows from \eqref{eq:OptimalCompatibleApprox2} that 
$\ta_1-\ta_0$ is a wave cone approximation of the difference of the wells $a=a_1-a_0$ in the sense that
\begin{equation}\label{eq:OptimalCompatibleApprox}
  \dist^2(a, \Lambda_{\cA}) = \min_{\xi\in\Sdm} \abs{a-\ppa(\xi)a}^2 =\abs{a-\ppa(\xi^*)a}^2.
\end{equation}
\end{rmk}

\begin{figure}
\centering
\tikzset{every picture/.style={line width=0.75pt}}      
\begin{tikzpicture}[x=0.75pt,y=0.75pt,yscale=-1,xscale=1]
\draw  [dash pattern={on 4.5pt off 4.5pt}]  (290.1,29.8) -- (261.1,117.8) ;
\draw [color={rgb, 255:red, 0; green, 0; blue, 0 }  ,draw opacity=1 ]   (193.89,121.33) -- (444.89,121.33) ;
\draw  [color={rgb, 255:red, 0; green, 174; blue, 179 }  ,draw opacity=1 ][fill={rgb, 255:red, 0; green, 174; blue, 179 }  ,fill opacity=1 ] (187.93,121.33) .. controls (187.93,118.04) and (190.6,115.37) .. (193.89,115.37) .. controls (197.18,115.37) and (199.85,118.04) .. (199.85,121.33) .. controls (199.85,124.62) and (197.18,127.29) .. (193.89,127.29) .. controls (190.6,127.29) and (187.93,124.62) .. (187.93,121.33) -- cycle ;
\draw  [color={rgb, 255:red, 0; green, 174; blue, 179 }  ,draw opacity=1 ][fill={rgb, 255:red, 0; green, 174; blue, 179 }  ,fill opacity=1 ] (438.93,121.33) .. controls (438.93,118.04) and (441.6,115.37) .. (444.89,115.37) .. controls (448.18,115.37) and (450.85,118.04) .. (450.85,121.33) .. controls (450.85,124.62) and (448.18,127.29) .. (444.89,127.29) .. controls (441.6,127.29) and (438.93,124.62) .. (438.93,121.33) -- cycle ;
\draw  [color={rgb, 255:red, 0; green, 174; blue, 179 }  ,draw opacity=1 ][fill={rgb, 255:red, 0; green, 174; blue, 179 }  ,fill opacity=1 ] (253.93,121.33) .. controls (253.93,118.04) and (256.6,115.37) .. (259.89,115.37) .. controls (263.18,115.37) and (265.85,118.04) .. (265.85,121.33) .. controls (265.85,124.62) and (263.18,127.29) .. (259.89,127.29) .. controls (256.6,127.29) and (253.93,124.62) .. (253.93,121.33) -- cycle ;
\draw [color={rgb, 255:red, 0; green, 0; blue, 0 }  ,draw opacity=1 ]   (218.73,44.08) -- (467.96,14.36) ;
\draw  [color={rgb, 255:red, 0; green, 174; blue, 179 }  ,draw opacity=1 ][fill={rgb, 255:red, 0; green, 174; blue, 179 }  ,fill opacity=1 ] (212.81,44.78) .. controls (212.42,41.51) and (214.75,38.55) .. (218.02,38.16) .. controls (221.29,37.77) and (224.25,40.1) .. (224.64,43.37) .. controls (225.03,46.64) and (222.7,49.6) .. (219.43,49.99) .. controls (216.17,50.38) and (213.2,48.05) .. (212.81,44.78) -- cycle ;
\draw  [color={rgb, 255:red, 0; green, 174; blue, 179 }  ,draw opacity=1 ][fill={rgb, 255:red, 0; green, 174; blue, 179 }  ,fill opacity=1 ] (462.05,15.06) .. controls (461.66,11.8) and (463.99,8.83) .. (467.26,8.44) .. controls (470.52,8.05) and (473.49,10.38) .. (473.88,13.65) .. controls (474.27,16.92) and (471.94,19.88) .. (468.67,20.27) .. controls (465.4,20.66) and (462.44,18.33) .. (462.05,15.06) -- cycle ;
\draw  [color={rgb, 255:red, 0; green, 174; blue, 179 }  ,draw opacity=1 ][fill={rgb, 255:red, 0; green, 174; blue, 179 }  ,fill opacity=1 ] (283.93,35.19) .. controls (283.93,32.32) and (286.25,30) .. (289.12,30) .. controls (291.98,30) and (294.3,32.32) .. (294.3,35.19) .. controls (294.3,38.05) and (291.98,40.37) .. (289.12,40.37) .. controls (286.25,40.37) and (283.93,38.05) .. (283.93,35.19) -- cycle ;

\draw (293.89,10) node [anchor=north west][inner sep=0.75pt]  [font=\Large] [align=left] {$F$};
\draw (190,20) node [anchor=north west][inner sep=0.75pt]  [font=\Large,rotate=-359] [align=left] {$\ta_0$};
\draw (482,9) node [anchor=north west][inner sep=0.75pt]  [font=\Large,rotate=-359] [align=left] {$\ta_1$};
\draw (241.89,132) node [anchor=north west][inner sep=0.75pt]  [font=\Large] [align=left] {$a_{\ttheta_{\cA}(F,\,\cK)}$};
\draw (172,132) node [anchor=north west][inner sep=0.75pt]  [font=\Large] [align=left] {$a_0$};
\draw (448,132) node [anchor=north west][inner sep=0.75pt]  [font=\Large] [align=left] {$a_1$};
\end{tikzpicture}
\caption{The compatible approximation $(F,\tcK)$ of the data $(F,\cK)$.}
\label{fig:tildewells}
\end{figure}

Based on \autoref{rmk:compatibleApprox}, we introduce the following notion.
\begin{definition}[Compatible approximation]\label{def:compatibleApprox}
Let $\cA$ be a differential operator as in \eqref{eq:diffopera}.
Suppose that $\cA$ has constant rank and spanning wave cone; see \autoref{def:constantrank} and \autoref{def:SpanWaveCone}.
Let $\ppa$ be the compatibility projection; see \autoref{def:compatibilityProjection}.
Given $F\in X$ and $\cK=\{a_0,a_1\}\subset X$, let $\ttheta_{\cA}(F,\cK)$ be given as in \autoref{prop:optvolfrac}.
For $a:=a_1-a_0$, let $\xi^*\in S_\cA(a)$; see \autoref{def:optLaminationDirections}. \smallbreak
We call the data $(F,\tcK)$ a compatible approximation of $(F,\cK)$ if $\tcK=\{\ta_0,\ta_1\}\subset X $ with $\ta_0$ and $\ta_1$ given by \eqref{eq:TildeWellsInQuasiconvexificationPfcA}.
\end{definition}

\section{Lower bounds for singularly perturbed two-well energies}\label{sec:LowerBounds}
This section is arranged into two parts.
In \autoref{subsec:GeneralLowerScaling}, a rather general result is presented, which provides a tool to deduce lower scaling bounds of $\cA$-free two-well energies.
Subsequently, we specialize to the model settings $\cA\in\{\curl,\Div,\ccurl\}$ in \autoref{subsec:ApplicationOfGeneralLowerScaling}, making the introduced notions explicit while deriving the lower bounds asserted in \autoref{thm:incompgrad} and \autoref{thm:incompsyma}.

\subsection{General result on incompatible \texorpdfstring{$\cA$}{A}-free two-well energies}\label{subsec:GeneralLowerScaling}
We systematically derive lower scaling bounds of zeroth-order-corrected singularly perturbed incompatible $\cA$-free two-well energies.
We will see that the lower bound critically depends on the maximal vanishing order (on the unit sphere) of a specific function, which is introduced in \autoref{def:LcAa}.
Let us therefore recall the definition of the maximal vanishing order.
\begin{definition}[Maximal vanishing order on the unit sphere, {\cite[Definition 1.3]{ruland2}}]\label{def:maxvanishorderdef}
Let $d\in\NN$. Let $p\in C^\infty(S^{d-1};\RR)$ be a nonnegative function. Let $S$ denote the zero set of $p$. We define the maximal vanishing order $L[p]$ of $p$ as
\begin{equation}\label{eq:defMaxVanOrder}
    L[p]:=\min\left\{l\in\NN: \inf_{\xi\in S^{d-1}\setminus S}\frac{p(\xi)}{\dist^{2l}(\xi,S)}>0\right\}
\end{equation}
if $S\subsetneq S^{d-1}$ and the set on the right-hand side of \eqref{eq:defMaxVanOrder} is nonempty. 
\end{definition}
We focus primarily on the maximal vanishing order of the following function.
\begin{definition}\label{def:LcAa}
Let $\cA$ be a differential operator as in \eqref{eq:diffopera} satisfying the constant rank property; see \eqref{def:constantrank}.
Let $a\in X $. We define the nonegative function $p_{\cA,a}\in C^\infty(S^{d-1})$ by setting
\begin{equation}
  p_{\cA,a}(\xi):=\abs{a-\ppa(\xi)a}_\cA^2-h_\cA(a),\quad\xi\in S^{d-1}.
\end{equation}
If the maximal vanishing order of $p_{\cA,a}$ exists, we denote it by $L_\cA(a):=L[p_{\cA,a}]$.
\end{definition}
\begin{rmk}\label{rmk:ZeroOfpAndOtherFormOfp}
Using \eqref{eq:compquanteqNotExtremal} and \eqref{eq:compquanteq}, we can write 
\begin{equation}\label{eq:pcAaForumla2}
  p_{\cA,a}(\xi) = g_\cA(a)-\abs{\ppa(\xi)a}_\cA^2\quad\forall\xi\in S^{d-1}.
\end{equation}
Hence, the zero set of $p_{\cA,a}$ is given by $S_\cA(a)$; see \eqref{eq:compquant} and \eqref{eq:optLaminationDirections}.
\end{rmk}
We now come to the main result of \autoref{subsec:GeneralLowerScaling}.
\begin{breaktheorem}[Lower scaling bounds for $\cA$-free two-well energies]\label{thm:incompa}
Let $\cA$ be a differential operator as in \eqref{eq:diffopera}.
Suppose that $\cA$ has constant rank and spanning wave cone; see \autoref{def:constantrank} and \autoref{def:SpanWaveCone}.
Denote by $\ppa$ the compatibility projection; see \autoref{def:compatibilityProjection}.
Let $\cK=\{a_0,a_1\} \subset  X $ and $F\in X $.
For $a:=a_1-a_0$, let $S_\cA(a)$ be as in \autoref{def:optLaminationDirections}.
Given a bounded Lipschitz domain $\Omega\subset \RR^d$ with $d\geq2$, let $E^\cA_\eps(F,\cK)$ be as in \eqref{eq:MinSingPertAfreeEnergy}.
Then, the optimal volume fraction $\ttheta_{\cA}=\ttheta_{\cA}(F,\cK)\in[0,1]$ (see \autoref{prop:optvolfrac}) determines the energy scaling in the following way:
\begin{enumerate}
\item(Pure phase) If $\ttheta_\cA \in \{0,1\} $, it holds that
\begin{equation}\label{eq:zeroscaling2}
    E^\cA_\eps(F,\cK)-E^\cA_0(F,\cK) = 0 \quad \forall \eps> 0,
\end{equation}
as is observed by considering the minimizer $(u,\chi)$ of \eqref{eq:MinSingPertAfreeEnergy} given by the constant maps $u = F$ and $\chi = (1-\ttheta_{\cA})a_0 + \ttheta_{\cA} a_1$.

\item(Phase mixing) If $\ttheta_\cA\in(0,1)$, assume that $S_\cA(a)$ is contained in a finite union of linear subspaces of $\RR^d$ of dimension at most $d-1$.
Suppose that $L=L_\cA(a)\in\NN$; see \autoref{def:LcAa}.\\
Then, there exist $c=c(a,\cA,\Omega,d,\ttheta_\cA)>0$ and $\eps_0= \eps_0(a,\cA,\Omega,d, \ttheta_\cA)>0$
such that
\begin{equation}\label{eq:afree23scaling}
    E^\cA_\eps(F,\cK)-E^\cA_0(F,\cK)
    \geq c\eps^{\nicefrac{2L}{(2L+1)}} \quad \forall \eps\in(0,\eps_0).
\end{equation}
\end{enumerate}
\end{breaktheorem}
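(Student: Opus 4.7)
Part (i) is immediate: the constant pair $u \equiv F$, $\chi \equiv a_{\ttheta_\cA} \in \cK$ is admissible, has zero surface cost, and its elastic cost equals $|\Omega|\,|F - a_{\ttheta_\cA}|_\cA^2 = E_0^\cA(F,\cK)$ by \autoref{thm:qwdom} together with $\ttheta_\cA(1-\ttheta_\cA)=0$.

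For part (ii), the plan is to adapt the Fourier-theoretic argument of \cite[Theorem~1.4]{ruland2} to the incompatible setting, where the baseline $E_0^\cA(F,\cK)$ replaces the zero baseline used there. Given an admissible pair $(u,\chi)$, I would first reduce to a periodic problem by placing $\Omega$ inside a large cube $Q$ identified with $\TT^d$ and extending $(u,\chi)$ by the constants $u=F$, $\chi = a_{\ttheta_\cA}$ on $Q\setminus\Omega$, incurring only a controlled boundary contribution to $\|\nabla\chi\|_{TV(Q)}$ along $\partial\Omega$. Writing $\chi = a_0 + f\,a$ with $f : \TT^d \to \{0,1\}$ of mean $\theta$ and optimizing $\hat u(\xi) \in [V_\cA(\xi)]^\CC$ at each nonzero frequency via the compatibility projection, the same Plancherel computation that drives the proof of \autoref{thm:qtw} gives
\begin{equation}
E_{el}^\cA(u,\chi) \, \geq \, |F-a_\theta|_\cA^2 + \sum_{\xi \in \ZZ^d\setminus\{0\}} |\hat f(\xi)|^2 \, |a - \ppa(\xi)a|_\cA^2.
\end{equation}
Splitting $|a-\ppa(\xi)a|_\cA^2 = h_\cA(a) + p_{\cA,a}(\xi)$, using Plancherel ($\sum_{\xi\ne 0}|\hat f|^2 = \theta(1-\theta)$) to reconstitute $H(\theta)$ from \eqref{eq:quasiconvexificationPfOptVolFrac}, and then subtracting $E_0^\cA(F,\cK) = |\Omega|\,H(\ttheta_\cA)$, the strict convexity of $H$ at $\ttheta_\cA$ guaranteed by $g_\cA(a)>0$ (spanning wave cone) produces the Fourier-side lower bound
\begin{equation}
E_\eps^\cA(u,\chi) - E_0^\cA(F,\cK) \, \gtrsim \, (\theta-\ttheta_\cA)^2 + \sum_{\xi \ne 0} |\hat f(\xi)|^2 \, p_{\cA,a}(\xi) + \eps\,\|\nabla\chi\|_{TV(Q)}.
\end{equation}

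To extract the claimed rate from this inequality, I would invoke the Fourier-scaling mechanism of \cite[Theorem~1.4]{ruland2}. The assumption $L_\cA(a)=L<\infty$ yields the polynomial vanishing $p_{\cA,a}(\xi) \gtrsim \dist(\xi/|\xi|, S_\cA(a))^{2L}$ on $S^{d-1}$, while the hypothesis that $S_\cA(a)$ lies in a finite union of at most $(d-1)$-dimensional subspaces ensures that the degeneracy cone is thin enough for the lattice-counting and frequency-splitting estimates behind that theorem. Consequently, for any $f : \TT^d \to \{0,1\}$ of mean $\theta$, the scaling inequality of \cite{ruland2} delivers
\begin{equation}
\sum_{\xi \ne 0} |\hat f(\xi)|^2 \, p_{\cA,a}(\xi) + \eps\,\|\nabla\chi\|_{TV(Q)} \, \gtrsim \, \eps^{2L/(2L+1)} \, \theta(1-\theta),
\end{equation}
once $\eps$ is small. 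If $E_\eps^\cA(u,\chi) - E_0^\cA(F,\cK) \geq c_0$ for some absolute $c_0 > 0$ we are already done for small $\eps$; otherwise, the $(\theta-\ttheta_\cA)^2$ term forces $\theta$ close to $\ttheta_\cA \in (0,1)$, so that $\theta(1-\theta) \gtrsim \ttheta_\cA(1-\ttheta_\cA) > 0$, and the above inequality concludes the proof.

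The main obstacle I foresee is the clean passage from the Dirichlet problem on $\Omega$ to the periodic framework while preserving both energy contributions: the extension across $\partial\Omega$ must be arranged so that the added $BV$-boundary terms are absorbed into $\eps\,\|\nabla\chi\|_{TV}$ and the zero Fourier mode of the extended $\cA$-free field remains pinned to $F$. A secondary technical point is the careful bookkeeping which separates the quadratic $\theta$-penalty from the already-subtracted baseline $E_0^\cA(F,\cK)$, and which justifies a dichotomy between a regime where the excess energy is of order one (trivial case) and one where it is small (forcing $\theta \approx \ttheta_\cA$ and triggering the Fourier inequality). Conceptually, however, once these points are handled, the mechanism producing the $\eps^{2L/(2L+1)}$ scaling---namely the tension between the vanishing of $p_{\cA,a}$ near $S_\cA(a)$ and the cost of high-frequency modulations imposed by the surface energy---is identical to the compatible setting treated in \cite{ruland2}.
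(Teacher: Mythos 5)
Your proposal is correct and follows essentially the same route as the paper's proof: frequency-wise optimization of the $\cA$-free field via $\ppa$, the split $\abs{a-\ppa(\xi)a}_\cA^2 = h_\cA(a) + p_{\cA,a}(\xi)$ reconstituting $Q_\theta^\cA W(F)$, the quadratic penalty $g_\cA(a)(\theta-\ttheta_\cA)^2$ coming from strict convexity (spanning wave cone), and the surface-energy interpolation inequality of \cite{ruland2} (invoked in the paper as \autoref{prop:lowerscaling33}) yielding the exponent $\nicefrac{2L}{(2L+1)}$. The one obstacle you flag --- the passage to a periodic framework --- does not actually arise: since $\cD_F^\cA(\Omega)$ already prescribes $u=F$ on $\RR^d\setminus\overline\Omega$, the paper works with the whole-space Fourier transform of $w=u-F$ and of the compactly supported $f$, and \autoref{prop:lowerscaling33} is formulated precisely for such $f$ with an additive $\eps\per(\Omega)$ correction that is absorbed at the end, so no torus extension is needed.
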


The proof of \autoref{thm:incompa} is based on the following result.
\begin{proposition}[{\cite[Proposition 3.3]{ruland2}}]\label{prop:lowerscaling33}
  Let $d, L\in\NN$ with $d\geq 2$. Given a bounded Lipschitz domain $\Omega\subset\RR^d$, let $W\subset\RR^d,\, W\ne \{0\}$ be a union of finitely many 
  linear subspaces of dimension at most $d-1$. Let $\theta \in[0,1]$. For $f\in BV(\Omega; \{-\theta, 1-\theta, 0\})$ with $f\in\{-\theta,  1-\theta\}$ in $\Omega$
  and $f=0$ outside $\Omega$, we consider the energies
  \begin{equation}\label{eq:proplowerscaling33energies}
    \tilde{E}_{el}(f):= \int_{\RR^d}\big\lvert{\hat{f}(\xi)}\big\rvert^2 \frac{\dist^{2L}(\xi,W)}{\abs{\xi}^{2L}}\, d\xi,\qquad \tilde{E}_{surf}(f):=\norm{\nabla f}_{TV(\Omega)}.
  \end{equation}
  Then, there exists $c=c(\Omega,d,L,W)>0$ such that
  \begin{equation}\label{eq:proplowerscaling33lowerbound}
    \tilde{E}_{el}(f)+\eps\tilde{E}_{surf}(f)+\eps\per(\Omega) \geq c \norm{f}_{L^2(\Omega)}^2 \eps^{\nicefrac{2L}{(2L+1)}}\quad\forall\eps\in(0,1).
  \end{equation}
\end{proposition}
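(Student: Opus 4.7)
The plan is to translate the estimate to the Fourier side via Plancherel and perform a three-region frequency decomposition with two tuneable parameters. First I would extend $f$ by zero outside $\Omega$, so that $f\in BV(\RR^d;[-1,1])$ with $\|\nabla f\|_{\mathcal M(\RR^d)}\leq \tilde E_{\text{surf}}(f)+\|f\|_{L^\infty}\per(\Omega)\leq \tilde E_{\text{surf}}(f)+\per(\Omega)=:M$. The Fourier identity $\widehat{\nabla f}(\xi)=2\pi i\,\xi\hat f(\xi)$ together with $\|\widehat\nu\|_{L^\infty}\leq\|\nu\|_{\mathcal M}$ then yields the pointwise bound $|\hat f(\xi)|\lesssim M/|\xi|$, which is how the surface energy will enter the estimates.

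For parameters $\mu\in(0,1)$ and $R>0$ to be optimized in terms of $\eps$, I would introduce the conical neighborhood $A_\mu:=\{\xi:\dist(\xi,W)\leq\mu|\xi|\}$ of $W$ and the ball $B_R$, and split $\|f\|_{L^2}^2=\int_{\RR^d}|\hat f|^2\,d\xi$ over the three pieces $B_R\cap A_\mu^c$, $B_R\cap A_\mu$, and $B_R^c$. On the first the multiplier $\dist^{2L}(\xi,W)/|\xi|^{2L}$ is bounded below by $\mu^{2L}$, yielding a bound $\leq\mu^{-2L}\tilde E_{\text{el}}(f)$. On the third, the standard BV-mollification estimate $\|f-f\ast\rho_{1/R}\|_{L^1}\lesssim M/R$ combined with $\|f\|_{L^\infty}\leq 1$ and Plancherel on a smooth cutoff adapted to $B_R$ gives a bound $\lesssim M/R$. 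On the conical region, the structural hypothesis that $W$ is a finite union of linear subspaces of dimension at most $d-1$ yields $|B_R\cap A_\mu|\lesssim R^d\mu$; combined with the Hausdorff--Young bound $\|\hat f\|_{L^\infty}\leq\|f\|_{L^1}\leq|\Omega|^{1/2}\|f\|_{L^2}$, this contribution is $\lesssim|\Omega|R^d\mu\|f\|_{L^2}^2$ and can be absorbed into the left-hand side under the coupling $|\Omega|R^d\mu\leq c$.

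Collecting the three bounds and absorbing the near-$W$ term reduces the estimate to an inequality of the form $\|f\|_{L^2}^2\lesssim\mu^{-2L}\tilde E_{\text{el}}(f)+M/R$. Optimizing in $R$ (and $\mu$ via the coupling) and then applying Young's inequality with the weights $2L/(2L+1)$ and $1/(2L+1)$ in the $\eps$-rescaled form $M^{\alpha}\tilde E_{\text{el}}^{\beta}=(\eps M)^{\alpha}\tilde E_{\text{el}}^{\beta}\,\eps^{-\alpha}\leq C\eps^{-\alpha}(\tilde E_{\text{el}}+\eps M)$ converts the resulting product into the bound $\|f\|_{L^2}^2\lesssim\eps^{-2L/(2L+1)}(\tilde E_{\text{el}}(f)+\eps M)$, which after rearrangement is exactly the claim (the $\eps\per(\Omega)$ contribution being absorbed into $\eps M$).

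The main obstacle is to actually extract the sharp dimension-free exponent $2L/(2L+1)$: a naive execution of the scheme above forces the coupling $\mu\sim R^{-d}$ and produces the weaker, dimension-dependent exponent $2Ld/(2Ld+1)$. Obtaining the stated scaling therefore requires a finer analysis of the near-$W$ contribution, for instance by stratifying the cone via an additional lower radial cutoff $\rho_0$ below which only the crude $L^\infty$ bound $\|\hat f\|_{L^\infty}\leq|\Omega|$ is used and above which the sharper pointwise estimate $|\hat f|\lesssim M/|\xi|$ takes over, followed by a three-parameter optimization tailored to the codimension of $W$. This careful bookkeeping to eliminate the spurious factor of $d$ from the final exponent is the most delicate step of the proof.
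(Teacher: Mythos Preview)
First, note that this proposition is not proved in the paper: it is quoted verbatim from \cite[Proposition~3.3]{ruland2} and used as a black box in the proof of \autoref{thm:incompa}. So there is no paper proof to compare against, and the question is whether your sketch stands on its own.

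Your general framework (Plancherel, conical decomposition around $W$, high-frequency control via BV mollification) is the right one, and you correctly diagnose the obstacle: the naive cone estimate $|B_R\cap A_\mu|\lesssim R^d\mu$ forces the coupling $\mu\sim R^{-d}$ and produces the exponent $2Ld/(2Ld+1)$ rather than $2L/(2L+1)$. However, the fix you propose---radially stratifying the cone and using $|\hat f(\xi)|\lesssim M/|\xi|$ on the outer shell---does not close this gap. A direct computation shows that for a single hyperplane $W=\nu^\perp$ in $\RR^d$, the contribution of the outer shell is
\[
\int_{\substack{\rho_0<|\xi|<R\\ |\xi\cdot\nu|<\mu|\xi|}} \frac{M^2}{|\xi|^2}\,d\xi
\;\sim\; M^2\mu\int_{\rho_0}^{R} r^{d-3}\,dr,
\]
which for $d\geq 3$ is of order $M^2\mu R^{d-2}$ and for $d=2$ grows like $M^2\mu\log(R/\rho_0)$. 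Neither of these is absorbable into $\|f\|_{L^2}^2$, and any subsequent optimization in $(\mu,R,\rho_0)$ still carries a dimension-dependent exponent. More fundamentally, any estimate on the cone that is based on its \emph{volume} will see the ambient dimension; this is not a bookkeeping issue but a structural one.

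What is missing is an argument that is one-dimensional by design. Since each component $W_j$ of $W$ is a proper subspace, one can pick a unit vector $\nu_j\in W_j^\perp$ and mollify $f$ \emph{only along $\nu_j$} at scale $\delta$. This costs $\delta M$ in $L^2$ by the one-dimensional BV estimate, while on the Fourier side it introduces the factor $\hat\rho(\delta\,\xi\cdot\nu_j)$, which is adapted to the slab $\{|\xi\cdot\nu_j|\lesssim\delta^{-1}\}$ rather than to a ball. The point is that $\dist(\xi,W_j)\geq|\xi\cdot\nu_j|$, so the multiplier $\dist^{2L}(\xi,W)/|\xi|^{2L}$ and the directional frequency $|\xi\cdot\nu_j|$ are compatible without any reference to $d$; carrying this through (and summing over the finitely many $W_j$) is what yields the clean exponent $2L/(2L+1)$. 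Your sketch does not contain this idea, so as written it has a genuine gap at precisely the step you flag as ``most delicate.''
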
 
Equipped with \autoref{prop:lowerscaling33}, we prove the main result of this section.
\begin{proof}[Proof of \autoref{thm:incompa}]
The proof is organized into two steps.
\Step{1: Pure phase}
Suppose that $\ttheta_\cA\in\{0,1\}$.
Consider the constant maps $u = F$ and $\chi = (1-\ttheta_{\cA})a_0 + \ttheta_{\cA} a_1$.
Recalling \autoref{thm:qwdom} and \autoref{prop:optvolfrac}, we find
\begin{equation}
  E^\cA_\eps(F,\cK)\leq E^\cA_\eps(u,\chi)=E^\cA_0(u,\chi) = \abs{\Omega}\big\lvert F-a_{\ttheta_\cA} \big\rvert _\cA^2=E^\cA_0(F,\cK)\quad\forall\eps\geq0.
\end{equation}
For all $\eps\geq0$, the reverse inequality $E^\cA_\eps(F,\cK)\geq E^\cA_0(F,\cK)$ is immediate from the definitions of the energies.
\Step{2: Phase mixing}
Suppose that $\ttheta_\cA\in(0,1)$.
Let $\chi\in BV(\Omega; \cK)$.
Throughout this proof, we will use the notation
\begin{equation}
  E^\cA_{el}(\chi;F):=\inf_{u\in\cD_F^\cA(\Omega)}E^\cA_{el}(u,\chi)\text{ and } E^\cA_{\eps}(\chi;F):=\inf_{u\in\cD_F^\cA(\Omega)}E^\cA_{\eps}(u,\chi) \text{ for }\eps\geq0.
\end{equation}
Now, let $u\in\cD_F^\cA(\Omega)$. By \autoref{lem:raitamean},
it holds that $\fint_\Omega u\,dx = F$. In addition, there exists $\theta\in[0,1]$ such that
$\fint_\Omega \chi \,dx= a_\theta $.
Therefore, using that $\int_\Omega \lvert v-\overline{v}\rvert^2\, dx=\int_\Omega \lvert v\rvert^2\, dx - \abs{\Omega}{\overline{v}}^2$ for all $v\in L^2(\Omega)$
with $\overline{v}=\fint_\Omega v\,dx$, it follows by Parseval's theorem that
\begin{equation}\label{eq:prekeyestimate}
  \begin{aligned}
    E^\cA_{el}(u,\chi)&= \int_\Omega\abs{ u -\chi}_\cA^2\, dx= \int_\Omega\abs{(u-F) -(\chi-a_\theta)}_\cA^2\, dx + \abs{\Omega}\abs{F-a_\theta}_\cA^2\\
    &= \int_{\RR^d}\abs{w -\tilde{\chi}}_\cA^2\, dx + \abs{\Omega}\abs{F-a_\theta}_\cA^2
    = \int_{\RR^d}\big\lvert\widehat{w} -\widehat{\tilde{\chi}}\big\rvert _\cA^2\, d\xi + \abs{\Omega}\abs{F-a_\theta}_\cA^2,
  \end{aligned}
\end{equation}
where, for $x\in\RR^d$, we define the functions
\begin{equation}\label{eq:intermediate1f}
  w(x):=u(x)-F,\qquad \tilde{\chi}(x):=\chi-a_\theta =  f(x)a, \qquad f(x):=
  \begin{cases}
    -\theta&\chi(x)=a_0,\\
    1-\theta &\chi(x)=a_1,\\
    0 &x\in\RR^d\setminus\Omega.
  \end{cases}
\end{equation}
Note that under the relation $w=u-F$, admissible maps $u\in\cD_F^\cA(\Omega)$ are in one-to-one correspondence with $w\in\cD_0^\cA(\Omega)$.
Relaxing the boundary constraint from $w\in \cD^\cA_0(\Omega)$ to $w\in L^2(\RR^d; X )$ with $\cA w=0$ and then minimizing $\hat{w}(\xi)$
at each frequency in \eqref{eq:prekeyestimate} yields
\begin{equation}\label{eq:softboundaryconstraint}
\begin{aligned}
    E^\cA_{el}(\chi; F) -  \abs{\Omega}\abs{F-a_\theta}_\cA^2 &\geq \int_{\RR^d}\abs{\widehat{\tilde{\chi}}_\cA(\xi)
    - \ppaCC(\xi)[\widehat{\tilde{\chi}}(\xi)] }_\cA^2\, d\xi
    = \int_{\RR^d} \lvert\hat{f}(\xi)\rvert^2 \abs{a - \ppa(\xi)a}_\cA^2\, d\xi\\
    &= \abs{\Omega}\theta(1-\theta)h_\cA(a)
    + \int_{\RR^d}\lvert\hat{f}(\xi)\rvert^2 \left( \abs{a - \ppa(\xi)a}_\cA^2 -h_\cA(a) \right) d\xi,
\end{aligned}
\end{equation}
where we have omitted some details that were elaborated on in the proof of \autoref{thm:qtw};
see, in particular, \eqref{eq:cAu}, \eqref{eq:uchi}, \eqref{eq:uchi2} and \eqref{eq:fhatsquared}.
Comparing this to the formula for $Q^\cA_\theta W$ in \autoref{thm:qtw}, we find
\begin{equation}\label{eq:keyestimate}
  E^\cA_{el}(\chi; F) -\abs{\Omega} Q^\cA_\theta W(F) \geq
  \int_{\RR^d}\lvert\hat{f}(\xi)\rvert^2 \left( \abs{a - \ppa(\xi)a}_\cA^2 -h_\cA(a) \right)\, d\xi.
\end{equation}
Now, let $L=L_\cA(a)\in\NN$ be the maximal vanishing order of $p_{\cA,a}$.
As pointed out in \autoref{rmk:ZeroOfpAndOtherFormOfp}, the zero set of $p$ is $S_\cA(a)$.
Therefore, the definition of the maximal vanishing order implies that there exists $c_L=c_L(a,\cA)>0$ such that
\begin{equation}
\abs{a - \ppa(\xi)a}_\cA^2 -h_\cA(a)=p_{\cA,a}\Big(\frac{\xi}{\abs{\xi}}\Big)\geq c_L \dist^{2L}\Big(\frac{\xi}{\abs{\xi}},S_\cA(a)\Big) \geq c_L \frac{\dist^{2L}(\xi,\RR S_\cA(a))}{\abs{\xi}^{2L}}
\end{equation}
for all $\xi\in\RR^d\setminus\{0\}$, where we used zero-homogeneity of $\ppa$ and denote by $\RR S_\cA(a)\subset\RR^d$ the linear cone generated by $S_\cA(a)$.\medbreak

By assumption, there exists a set $W\subset\RR^d$, which contains $S_\cA(a)$ and is a finite union of linear subspaces of $\RR^d$ of dimensions at most $d-1$.
Now, the linearity of the subspaces yields $\RR S_\cA(a)\subset W$. Together with \eqref{eq:keyestimate}, this implies
\begin{equation}\label{eq:intermediate2}
  E^\cA_{el}(\chi; F) -\abs{\Omega} Q^\cA_\theta W(F) \geq 
  c_L \int_{\RR^d}\lvert\hat{f}(\xi)\rvert^2  \frac{\dist^{2L}(\xi,W)}{\abs{\xi}^{2L}} \, d\xi =: \tilde{E}_{el}(f).
\end{equation}
Since $S_\cA(a)\subset S^{d-1}$ is nonempty (\autoref{prop:smoothCompProj}), we have $W\ne\{0\}$
and can apply \autoref{prop:lowerscaling33}. This yields a constant $c_1=c_1(a,\cA, \Omega, d, L, W)=c_1(a,\cA, \Omega, d)>0$ such that
\begin{equation}\label{eq:proplowerscaling33lowerboundApplied}
\tilde{E}_{el}(f)+\eps\tilde{E}_{surf}(f)+\eps\per(\Omega) \geq c_1 \norm{f}_{L^2(\Omega)}^2 \eps^{\nicefrac{2L}{(2L+1)}} \quad\forall\eps\in(0,1).
\end{equation}
Because $\del_j \chi = (\del_j f) a$ in $\cM(\RR^d; X )$ for all $j\in\{1,\dots,d\}$,
the surface energies from \eqref{eq:surfaceenergy} and \eqref{eq:proplowerscaling33energies} are equivalent:
\begin{equation}\label{eq:surfaceenergiesequivalent}
E_{surf}(\chi)=\abs{a}\tilde{E}_{surf}(f).
\end{equation}
Again, using that $\norm{f}_{L^2(\Omega)}^2=\abs{\Omega}\theta(1-\theta)$ (see \eqref{eq:fhatsquared}), we obtain
\begin{equation}\label{eq:intermediate3}
  E^\cA_{el}(\chi; F) -\abs{\Omega} Q^\cA_\theta W(F) +\eps E_{surf}(\chi)+\eps\per(\Omega) \geq 
 c_2 \,\theta(1-\theta) \eps^{\nicefrac{2L}{(2L+1)}} \quad\forall\eps\in(0,1)
\end{equation}
for some constant $c_2=c_2(a,\cA, \Omega, d)>0$.
Recalling \autoref{thm:qwdom} and the definition of the optimal volume fraction \eqref{eq:optvolfrac}, it follows that
\begin{equation}
    E^\cA_{\eps}(\chi; F) - E^\cA_0(F,\cK) +\eps\per(\Omega) \geq \abs{\Omega} \big(Q^\cA_\theta W(F) - Q^\cA_{\ttheta_\cA} W(F) \big) + c_2 \,\theta(1-\theta) \eps^{\nicefrac{2L}{(2L+1)}} \quad\forall\eps\in(0,1).
\end{equation}
By \eqref{eq:leadingCoefficient}, we know that $\theta\mapsto Q^\cA_\theta W(F)$ is quadratic with leading coefficient $g_\cA(a)>0$ and minimum in $\ttheta_\cA\in(0,1)$.
We infer that
\begin{equation}\label{eq:intermediate4}
    E^\cA_{\eps}(\chi; F) - E^\cA_0(F,\cK) +\eps\per(\Omega) \geq \abs{\Omega} g_\cA(a)(\theta-\ttheta_\cA)^2 + c_2 \,\theta(1-\theta) \eps^{\nicefrac{2L}{(2L+1)}} \quad\forall\eps\in(0,1).
\end{equation}
Now, pick any $\eps_0=\eps_0(a,\cA,\Omega, d,\ttheta_\cA)\in(0,1)$ with
\begin{equation}
    \eps_0^{\nicefrac{2L}{(2L+1)}}\leq \frac{(\min\{\ttheta_\cA,1-\ttheta_\cA\})^2 \abs{\Omega}g_\cA(a)}{4 c_2\ttheta_\cA(1-\ttheta_\cA)}.  
\end{equation}
A quick computation shows that there exists a constant $c_3=c_3(a,\cA, \Omega, d,\ttheta_\cA)>0$ such that
\begin{equation}
\min_{\theta\in\RR}\abs{\Omega} g_\cA(a)(\theta-\ttheta_\cA)^2 + c_2 \,\theta(1-\theta) \eps^{\nicefrac{2L}{(2L+1)}} \geq c_3 \eps^{\nicefrac{2L}{(2L+1)}}\quad\forall\eps\in(0,\eps_0).
\end{equation}
Combining this estimate with \eqref{eq:intermediate4} and minimizing over $\chi\in BV(\Omega;\cK)$ proves
\begin{equation}\label{eq:intermediate5}
    E^\cA_{\eps}(F,\cK) - E^\cA_0(F,\cK) +\eps\per(\Omega) \geq c_3 \eps^{\nicefrac{2L}{(2L+1)}}\quad\forall\eps\in(0,\eps_0).
\end{equation}
By possibly reducing the constants $\eps_0>0$ and $c_3>0$, still depending on the same parameters, we can absorb the perimeter:
\begin{equation}
 E^\cA_{\eps}(\chi; F) - E^\cA_0(F,\cK)  \geq c_3 \eps^{\nicefrac{2L}{(2L+1)}}\quad\forall\eps\in(0,\eps_0).
\end{equation}
This shows \eqref{eq:afree23scaling} and completes the proof.
\end{proof}
\begin{rmk}\label{rmk:FourierEquicomp}
In the Fourier representation of the elastic energy \eqref{eq:softboundaryconstraint}, the function $p_{\cA,a}$ arises very naturally.
As mentioned in the introduction, \autoref{thm:incompa} fails to provide a lower scaling bound in the peculiar situation when $S_{\cA}(a)=\Sdm$.
In this case, the Fourier multiplier $p_{\cA,a}$ vanishes everywhere in $\Sdm$ and the lower bound \eqref{eq:softboundaryconstraint} degenerates.
In particular, the maximal vanishing order $L_\cA(a)$ is not defined, and we do not control the elastic energy as in \eqref{eq:intermediate2}.
We note that, in deducing the lower bound \eqref{eq:softboundaryconstraint}, the only step where the elastic energy may have been estimated too coarsely
occurs when the hard boundary constraint $w\in \cD^\cA_0(\Omega)$ is relaxed to the softer condition $w\in L^2(\RR^d; X )$ with $\cA w=0$.
Therefore, to effectively control the elastic energy when $S_{\cA}(a)=\Sdm$, a more refined treatment of the boundary condition is necessary.\medbreak
\end{rmk}
\begin{rmk}
Although \autoref{thm:incompa} does not yield a lower scaling bound for $E^{\cA}_\eps(F,\cK)-E^{\cA}_0(F,\cK)$ when $S_{\cA}(a)=\Sdm$, 
it is possible to obtain an $\eps$-lower scaling bound:
In the setting of \autoref{thm:incompa}, suppose that $\ttheta_{\cA}(F,\cK)\in(0,1)$. Then,
estimating the surface energy using the Poincaré inequality for $BV$-functions \cite[Remark 3.50]{ambrosio}, there exists $c=c(a, \Omega,d,\ttheta_{\cA})>0$ such that
\begin{equation}\label{eq:EpsLowerBdPeculiar}
  c\eps\leq E^{\cA}_\eps (F,\cK)-E^{\cA}_0(F,\cK) \quad \forall \eps\geq 0.
\end{equation}
However, as this lower scaling bound essentially neglects the energy contribution of the elastic energy, it may not be optimal. We plan to investigate this in future work \cite{MPF26}.
\end{rmk}

In the present work, we do not attempt to improve the lower bound \eqref{eq:EpsLowerBdPeculiar} when the Fourier-based technique fails.
We therefore exclude the following states in our analysis of lower bounds.
\begin{definition}[Equicompatible states]\label{def:equicompatibleStates}
Let $\cA$ be a differential operator as in \eqref{eq:diffopera}. For $a\in X$, let $S_{\cA}(a)$ be given as in \autoref{def:optLaminationDirections}.
We define the set of equicompatible states by
\begin{equation}\label{eq:equicompatibleStates}
  E_\cA:=\{a\in X :S_\cA(a)=S^{d-1}\}.
\end{equation}
\end{definition}

In the next section, we apply \autoref{thm:incompa} for $\cA\in\{\curl,\Div,\ccurl\}$.
As it is rather cumbersome to compute the maximal vanishing order by hand, we next provide a lemma that ensures that the assumptions of 
\autoref{thm:incompa}~\textcolor{blue}{(ii)} are satisfied when the compatibility projection has a specific structure.
\begin{lemma}\label{lem:LCP}
Let $\cA$ be a differential operator as in \eqref{eq:diffopera}. Let $E_\cA$ be the set of equicompatible states as given in \autoref{def:equicompatibleStates}.
Let $a\in X \setminus E_\cA$. Denote by $S_\cA(a)$ the set of optimal lamination directions; see \autoref{def:optLaminationDirections}.
Suppose that there exist a dimension $l\in\NN$, a linear map $m\in\Lin(\RR^d;\RR^{l})$ and a constant $c\in\RR$ such that one of the following holds:
\begin{align}
    (i)&\hspace{1cm} \abs{a-\ppa(\xi)a}_\cA^2 =  \abs{ m \xi}^2+c \quad\forall \xi\in S^{d-1},\label{eq:LCPa} \\
    (ii)&\hspace{1.68cm} \abs{\ppa(\xi)a}_\cA^2 = \abs{ m \xi}^2+c \quad\forall \xi\in S^{d-1}.\label{eq:LCPb}
\end{align}
Then, $S_\cA(a)$ is contained in a linear subspace of $\RR^d$ of dimension at most $d-1$ and the maximal vanishing order of $p_{\cA,a}$ is $L_\cA(a)=1$; see \autoref{def:LcAa}.
\end{lemma}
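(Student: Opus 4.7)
The plan is to exploit that under either hypothesis the Fourier multiplier
\begin{equation}
p_{\cA,a}(\xi) = \abs{a-\ppa(\xi)a}_\cA^2 - h_\cA(a) = g_\cA(a) - \abs{\ppa(\xi)a}_\cA^2
\end{equation}
is an affine function of the positive semi-definite quadratic form $Q(\xi) := \abs{m\xi}^2 = \xi^T m^T m\,\xi$ on $S^{d-1}$; consequently, its vanishing behavior is completely determined by the spectrum of the symmetric operator $m^T m$ via standard Rayleigh-quotient arguments.

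\textbf{Identifying $S_\cA(a)$.} In case (i), substituting \eqref{eq:LCPa} gives $p_{\cA,a}(\xi) = Q(\xi) + c - h_\cA(a)$ on $S^{d-1}$. Since $p_{\cA,a} \geq 0$ and $S_\cA(a)\neq \emptyset$ (by continuity of $\ppa$; see \autoref{prop:smoothCompProj}), I would identify $h_\cA(a) - c$ as the smallest eigenvalue $\mu_-$ of $m^T m$, attained precisely on $V \cap S^{d-1}$ where $V := \ker(m^T m - \mu_- I)$. Hence $S_\cA(a) = V \cap S^{d-1}$. The hypothesis $a \notin E_\cA$ forces $V \subsetneq \RR^d$: otherwise $m^T m = \mu_- I$, making $p_{\cA,a}$ vanish identically on $S^{d-1}$, which would give $S_\cA(a) = S^{d-1}$. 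Thus $\dim V \leq d-1$, proving the first claim. Case (ii) is symmetric: via \eqref{eq:LCPb}, $S_\cA(a)$ equals the intersection of $S^{d-1}$ with the eigenspace of the largest eigenvalue of $m^T m$, and the same dimension argument applies.

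\textbf{Spectral lower bound.} Next, using the orthogonal splitting $\RR^d = V \oplus V^\perp$, which is $m^T m$-invariant, I would decompose $\xi = \xi_V + \xi_{V^\perp}$. Since $V^\perp \neq \{0\}$, the restriction $m^T m|_{V^\perp}$ has a smallest eigenvalue $\mu_2$, which is strictly larger than $\mu_-$. Combining the resulting spectral estimate $Q(\xi) \geq \mu_-\abs{\xi_V}^2 + \mu_2\abs{\xi_{V^\perp}}^2$ with $\abs{\xi_V}^2 + \abs{\xi_{V^\perp}}^2 = 1$ on $S^{d-1}$ yields
\begin{equation}
p_{\cA,a}(\xi) = Q(\xi) - \mu_- \geq (\mu_2 - \mu_-)\abs{\xi_{V^\perp}}^2 \quad \forall \xi \in S^{d-1}.
\end{equation}
Case (ii) produces the analogous bound with the gap between the two largest eigenvalues of $m^T m$.

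\textbf{Geometric comparison and conclusion.} Finally, I would compare $\abs{\xi_{V^\perp}}$ with $\dist(\xi, S_\cA(a))$ uniformly on $S^{d-1}$. For $\xi \in S^{d-1}$ with $\xi_V \neq 0$, testing against $\xi_V/\abs{\xi_V} \in S_\cA(a)$ gives
\begin{equation}
\dist^2(\xi, S_\cA(a)) \leq 2 - 2\sqrt{1-\abs{\xi_{V^\perp}}^2} \leq 2\abs{\xi_{V^\perp}}^2,
\end{equation}
using $1 - \sqrt{1-t} \leq t$ for $t \in [0,1]$; the antipodal case $\xi_V = 0$ is handled directly. Combined with the spectral estimate, this yields $p_{\cA,a}(\xi) \geq \tfrac{\mu_2 - \mu_-}{2}\dist^2(\xi, S_\cA(a))$ on $S^{d-1} \setminus S_\cA(a)$, so \autoref{def:maxvanishorderdef} gives $L_\cA(a) \leq 1$; the reverse bound $L_\cA(a) \geq 1$ holds because $p_{\cA,a}$ vanishes on the nonempty set $S_\cA(a)$. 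The main obstacle is ensuring the spectral gap $\mu_2 - \mu_-$ is strictly positive, which is precisely what the hypothesis $a \notin E_\cA$ secures; the remaining distance comparison is routine linear algebra and elementary geometry on the sphere.
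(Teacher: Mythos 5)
Your proposal is correct and follows essentially the same route as the paper: identify $S_\cA(a)$ as the intersection of $S^{d-1}$ with the extremal eigenspace of $m^Tm$, use $a\notin E_\cA$ to get a proper subspace, derive $p_{\cA,a}(\xi)\gtrsim\abs{\xi_{V^\perp}}^2$ from the spectral gap, and compare $\abs{\xi_{V^\perp}}^2$ with $\dist^2(\xi,S_\cA(a))$. The only (cosmetic) difference is that your one-sided bound $\dist^2(\xi,S_\cA(a))\leq 2\abs{\xi_{V^\perp}}^2$ holds uniformly on all of $S^{d-1}$, whereas the paper proves a two-sided scaling restricted to a neighborhood of $S_\cA(a)$; both suffice for $L_\cA(a)=1$.
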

This lemma is inspired by the result \cite[Lemma 3.7]{ruland} on lower scaling bounds of $E^{\cA}_\eps(F,\cK)$ in the \textit{compatible} setting for
first order linear differential operators $\cA$ of the form \eqref{eq:diffopera}. The proof in \cite{ruland} makes use of the lower bound (see \cite[Proof of Lemma 3.1]{ruland})
\begin{equation}\label{eq:pseudoinverseEst}
p_{\cA,a}(\xi) =  \abs{a - \ppa(\xi)a}_\cA^2 -h_\cA(a) = \abs{a - \ppa(\xi)a}_\cA^2 \geq c \lvert{\AA(\xi)a}\rvert_Y^2\quad\forall \xi\in\Sdm,\, a\in\Lambda_\cA
\end{equation}
and the linear structure of $\xi\in\RR^d\mapsto \AA(\xi)a$ for fixed $a\in X$.
The conditions \eqref{eq:LCPa} and \eqref{eq:LCPb} are reminiscent of this linear structure and allow us to generalize the argument from \cite[Lemma~3.7]{ruland} to the incompatible setting.
Note that the estimate \eqref{eq:pseudoinverseEst} relies on the fact that $h_\cA(a)=0$ for all compatible states $a\in\Lambda_\cA$.
\begin{proof}[Proof of \autoref{lem:LCP}]
The proof is organized into two steps. First, we derive the lemma under assumption \eqref{eq:LCPa}. In the second step, we provide the details for the setting \eqref{eq:LCPb}.
\Step{1: Proof for the setting \eqref{eq:LCPa}}
Let $m\in\Lin(\RR^d;\RR^{l})$ and $c\in\RR$ be such that \eqref{eq:LCPa} is satisfied.
We make use of the identity:
\begin{equation}\label{eq:AdjointDef}
     \abs{m \xi}^2 = (m^Tm\xi,\xi)\quad\forall\xi\in\RR^d.
\end{equation}
Since $m^Tm$ is a positive semidefinite, symmetric operator on $\RR^d$, the spectral theorem provides a spectral decomposition of $m^Tm$.
Let $\lambda_{min}$ denote the smallest eigenvalue of $m^Tm$ and $U_{min}:=E(m^Tm, \lambda_{min})$ the associated eigenspace.
Due to \eqref{eq:AdjointDef}, we find that
\begin{equation}\label{eq:formulaLCPCompQuant}
h_\cA(a)=\min_{\xi\in S^{d-1}}\abs{a-\ppa(\xi)a}_\cA^2 = \min_{\xi\in S^{d-1}} \abs{m \xi}^2+c = \lambda_{min}+c
\end{equation}
and that this minimum is attained in
\begin{equation}\label{eq:LCPOptiLamDir}
    S_\cA(a)=\argmin_{\xi\in S^{d-1}}\abs{a-\ppa(\xi)a}_\cA^2 =S^{d-1}\cap U_{min},
\end{equation}
which is the zero set of the function $p_{\cA,a}$.
Since we assumed that $a$ is not equicompatible, it follows that $ U_{min}$ is a linear subspace of dimension at most $d-1$, which contains $S_\cA(a)$.\medbreak

We now turn towards proving $L_\cA(a)=1$.
As $\lambda_{min}$ is the smallest eigenvalue of $m^Tm$, there exists $\delta=\delta(m)>0$ such that 
\begin{equation}
  \abs{m\xi'}^2\geq\abs{m\xi''}^2+\delta=\lambda_{min}+\delta \quad \forall\xi'\in S^{d-1}\cap U_{min}^\perp, \,\xi''\in S^{d-1}\cap U_{min}
\end{equation}
and therefore it holds that
\begin{equation}\label{eq:usefulest}
  \abs{m\xi'}^2  \geq \left(\lambda_{min}+\delta\right)\abs{\xi'}^2\quad\forall\xi'\in U_{min}^\perp.
\end{equation}
Another important observation is the orthogonality:
\begin{equation}\label{eq:frequortho}
  (m\xi', m\xi'')=  (\xi', \lambda_{min}\xi'') = 0\quad\forall\xi'\in U_{min}^\perp, \,\xi''\in U_{min}
\end{equation}
For $\xi\in\RR^d$, we use the orthogonal decomposition to write $\xi=\xi'+\xi''$ with $\xi'\in U_{min}^\perp,\, \xi''\in U_{min}$.
Taking both \eqref{eq:usefulest} and \eqref{eq:frequortho} into account, a brief computation shows that
\begin{equation}\label{eq:estforrul}
  \abs{m\xi}^2= \abs{m\xi'}^2+2(m\xi', m\xi'')+\abs{m\xi''}^2
  = \abs{m\xi'}^2+\lambda_{min}\abs{\xi''}^2\geq \lambda_{min}\abs{\xi}^2 + \delta\abs{\xi'}^2 \quad\forall \xi\in \RR^d.
\end{equation}
Together with \eqref{eq:LCPa} and \eqref{eq:formulaLCPCompQuant}, we obtain the bound 
\begin{equation}\label{eq:pcAaLowerBound}
p_{\cA,a}(\xi)= \abs{a-\ppa(\xi)a}_\cA^2-h_\cA(a) = \abs{m\xi}^2 - \lambda_{min} \geq \delta\abs{\xi'}^2\quad\forall \xi\in S^{d-1}.
\end{equation}
As we will show next, we have the scaling $\dist^2(\xi,S_\cA(a))\sim \abs{\xi'}^2$ for $\xi\in\Sdm$ close to $S_\cA(a)$, which 
together with \eqref{eq:pcAaLowerBound} yields $L_\cA(a)=1$.
More precisely, we will now prove the scaling:
\begin{equation}\label{eq:LCPSphereDistanceProjection}
    \abs{\xi'}^2\leq \dist^2(\xi,S_\cA(a))\leq 2\abs{\xi'}^2 \quad \forall \xi\in S^{d-1}: \abs{\xi'}^2\leq\tfrac{3}{4}.
\end{equation}
To this end, let $\xi\in S^{d-1}$ with $\abs{\xi'}^2\leq\tfrac{3}{4}$. Then, there exists $\eta\in S_\cA(a)$ such that $\dist^2(\xi,S_\cA(a))= \abs{\xi-\eta}^2$.
Since $\xi''\ne 0$, it follows by \eqref{eq:LCPOptiLamDir} that $\eta$ is unique and explicitly given by $\eta = {\xi''}/{\abs{\xi''}}$.
Using orthogonality, we obtain
\begin{equation}
    \dist^2(\xi,S_\cA(a))=\abs{\xi'+\xi''-\eta}^2 = \abs{\xi'}^2 +  \abs{\xi''-\eta}^2   = \abs{\xi'}^2 +  \big(1-\abs{\xi''}\big)^2.
\end{equation}
Next, we employ $1 = \abs{\xi}^2=\abs{\xi'}^2+ \abs{\xi''}^2$ to infer that
\begin{equation}
\begin{aligned}
        \dist^2(\xi,S_\cA(a)) = \abs{\xi'}^2 + \big(1-\abs{\xi''}\big)^2 &= 2-2\abs{\xi''}= 2\left(1-\sqrt{1-\abs{\xi'}^2} \,\right)\\
        &= - 2\int_0^1 \frac{d}{dt}\sqrt{1-t\abs{\xi'}^2}\,dt =  \int_0^1 \frac{\abs{\xi'}^2}{\sqrt{1-t\abs{\xi'}^2}}\,dt.
\end{aligned}
\end{equation}
Since $\abs{\xi'}^2\leq\tfrac{3}{4}$, we have $1\leq (1-t\abs{\xi'}^2)^{-\frac{1}{2}}\leq2$ for all $t\in(0,1)$,
which proves \eqref{eq:LCPSphereDistanceProjection} and completes the proof for the setting \eqref{eq:LCPa}.\medbreak
\Step{2: Proof for the setting \eqref{eq:LCPb}}
It remains to prove the lemma under assumption \eqref{eq:LCPb}.
Let $m\in\Lin(\RR^d;\RR^{l})$ and $c\in\RR$ be such that \eqref{eq:LCPb} is satisfied.
Moreover, let $\lambda_{max}$ denote the largest eigenvalue of $m^Tm$ and $U_{max}:=E(m^Tm, \lambda_{max})$ the associated eigenspace.
Again using \eqref{eq:AdjointDef}, we compute
\begin{equation}\label{eq:LCPbgcA}
g_\cA(a)=\max_{\xi\in S^{d-1}}\abs{\ppa(\xi)a}_\cA^2 =  \max_{\xi\in S^{d-1}} \abs{ m \xi}^2+c = \lambda_{max} + c
\end{equation}
and the set of maximizers
\begin{equation}\label{eq:LCPbgcA2}
S_\cA(a)=\argmax_{\xi\in S^{d-1}}\abs{\ppa(\xi)a}_\cA^2 =S^{d-1}\cap U_{max}.
\end{equation}
In particular, $U_{max}$ is a linear subspace of $\RR^d$ which contains $S_\cA(a)$. By assumption $a$ is not equicompatible and so $U_{max}$ has dimension at most $d-1$.\medbreak
Next, using \autoref{rmk:ZeroOfpAndOtherFormOfp} and \eqref{eq:LCPb}, we rewrite
\begin{equation}
    p_{\cA,a}(\xi)=g_\cA(a)-\abs{\ppa(\xi)a}_\cA^2 = \lambda_{max} - \abs{ m \xi}^2\quad\forall\xi\in S^{d-1}.
\end{equation}
Furthermore, there exists a constant $\delta=\delta(m)>0$ such that
\begin{equation}
  \abs{m\xi'}^2 +\delta  \leq\abs{m\xi''}^2=\lambda_{max} \quad \forall\xi'\in S^{d-1}\cap U_{max}^\perp, \,\xi''\in S^{d-1}\cap U_{max}.
\end{equation}
Denoting by $\xi'$ the orthogonal projection of $\xi$ onto $U_{max}^\perp$ and repeating the arguments from the first part of the proof, we obtain the estimate
\begin{equation}
p_{\cA,a}(\xi)= \lambda_{max}-\abs{m\xi}^2 \geq \delta\abs{\xi'}^2\quad\forall \xi\in S^{d-1}.
\end{equation}
Due to \eqref{eq:LCPSphereDistanceProjection}, this implies $L_\cA(a)=1$ and concludes the proof.
\end{proof}

\subsection{Application of the general result to the model problems}\label{subsec:ApplicationOfGeneralLowerScaling}
Our next goal is to apply \autoref{thm:incompa} for $\cA\in\{\curl, \Div, \ccurl\}$.
\autoref{subsec:ApplicationOfGeneralLowerScaling} is thus divided into three parts -- one for each differential operator.\\
In \autoref{subsubsec:Grad}, we characterize for the differential operator $\curl$ the objects that were introduced to study incompatible $\cA$-free two-well problems.
This is complemented by the corresponding analysis for the divergence operator in \autoref{subsubsec:Div}.
We then apply \autoref{lem:LCP} and \autoref{thm:incompa} for $\cA\in\{\curl, \Div\}$ and prove \autoref{thm:incompgrad}~\textcolor{blue}{(i+ii)}.
In \autoref{subsubsec:CC}, we carry out the analogous calculations for the differential operator $\ccurl$, which allows us to prove \autoref{thm:incompsyma}~\textcolor{blue}{(i+ii)}.
\subsubsection{Two-well problem for the gradient}\label{subsubsec:Grad}
To apply \autoref{thm:incompa} for the differential operator $\curl$, we need to determine the maximal vanishing order $L_{\curl}(a)$ for $a\in\rdd$; see \autoref{def:LcAa}.
Towards this goal, the following lemma characterizes the compatibility projection.
\begin{lemma}\label{lem:CompProjCurl}
Let $d\geq2$. Consider the differential operator $\curl$; see \eqref{eq:curl}.
For $\xi\in\rrdmz$, let $V_{\curl}(\xi)$ and $\ppc(\xi)$ be given as in \eqref{eq:xicompatiblestates} and \autoref{def:compatibilityProjection}, respectively.
Then, it holds that
\begin{equation}\label{eq:xicompatiblestatesCurl}
    V_{\curl}(\xi)=\{b\otimes\xi:b\in\RR^d\}\quad\forall\xi\in\rrdmz.
\end{equation}
The operator $\curl$ has constant rank and spanning wave cone; see \autoref{def:constantrank} and \autoref{def:SpanWaveCone}.
Moreover, we have
\begin{equation}\label{eq:compProjCurl}
\ppc(\xi)a = (a\xi)\otimes \xi \; \text{ and } \; \abs{\ppc(\xi)a}^2 = \abs{a\xi}^2\quad\forall\xi\in\Sdm,\,a\in\rdd.
\end{equation}
\end{lemma}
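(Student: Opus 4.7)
The plan is a direct symbol calculation. The starting point is that the symbol of $\curl$ acts on a matrix $a\in\rdd$ by
\begin{equation}
(\AA(\xi)a)_{i,j,k} = \xi_k a_{i,j} - \xi_j a_{i,k},\quad 1\leq i,j,k\leq d,
\end{equation}
so $V_{\curl}(\xi)=\ker\AA(\xi)$ is determined row-by-row: the relation forces each row $a_{i,\cdot}\in\RR^d$ to be proportional to $\xi$. I would fix an index $k_0$ with $\xi_{k_0}\ne 0$ and solve the linear system to obtain $a_{i,j}=(a_{i,k_0}/\xi_{k_0})\xi_j$, so $a=b\otimes\xi$ with $b_i:=a_{i,k_0}/\xi_{k_0}$. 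The reverse inclusion is immediate from the antisymmetric form of the defining relation. This proves \eqref{eq:xicompatiblestatesCurl}.

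With $V_{\curl}(\xi)$ identified, the constant rank property follows from rank-nullity: the map $b\mapsto b\otimes\xi$ is injective for $\xi\ne 0$, so $\dim V_{\curl}(\xi)=d$ and $\rank\AA(\xi)=d^2-d$ is independent of $\xi$. The spanning property of the wave cone is then immediate, because $\Lambda_{\curl}$ contains all elementary rank-one tensors $e_i\otimes e_j$, which form a basis of $\rdd$.

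For the projection formula I would impose Frobenius-orthogonality of $a-b\otimes\xi$ against all $c\otimes\xi$ with $c\in\RR^d$. Expanding the inner product gives $\sum_i c_i\bigl((a\xi)_i - b_i|\xi|^2\bigr)=0$, which forces $b=a\xi/|\xi|^2$. For $\xi\in\Sdm$ this reduces to $b=a\xi$, yielding $\ppc(\xi)a=(a\xi)\otimes\xi$. The norm identity $|\ppc(\xi)a|^2=|a\xi|^2$ then follows from $|b\otimes\xi|^2=|b|^2|\xi|^2$ and $|\xi|=1$.

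There is no substantive obstacle: the entire lemma is a routine exercise in the symbol calculus of $\curl$. The only point that requires care is the consistent use of the Frobenius inner product on $\rdd$ when deriving the projection formula in the final step.
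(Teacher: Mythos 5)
Your proposal is correct and follows essentially the same route as the paper: identify $V_{\curl}(\xi)$ as the rank-one matrices $b\otimes\xi$, deduce constant rank from $\dim V_{\curl}(\xi)=d$ and the spanning property from the basis $\{e_i\otimes e_j\}$, and derive the projection formula by imposing Frobenius-orthogonality against $c\otimes\xi$, which forces $b=a\xi$ for $\xi\in S^{d-1}$. The only cosmetic difference is that you verify \eqref{eq:xicompatiblestatesCurl} by a direct row-by-row computation with the symbol, whereas the paper cites this well-known fact from the literature.
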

\begin{rmk}
As $\ppc$ is zero-homogeneous, we only computed $\ppc(\xi)a$ for $\xi\in\Sdm$. By normalization, formula \eqref{eq:compProjCurl} extends to all $\xi\in\rrdmz$.
\end{rmk}
\begin{proof}[Proof of \autoref{lem:CompProjCurl}]
Formula \eqref{eq:xicompatiblestatesCurl} is well-known; see, for instance, \cite[Corollary 8.31]{rindler}.
We observe that $\curl$ has constant rank, since $V_{\curl}(\xi)$ is the kernel of the symbol of $\curl$ and it is of dimension $d$ for all $\xi\in\rrdmz$.
The wave cone contains the matrices $e_i\otimes e_j$ for all $1\leq i,j\leq d$, which form a basis of $\rdd$. Hence, the $\curl$ operator has spanning wave cone.\medbreak

It remains to prove \eqref{eq:compProjCurl}. To this end, let $\xi\in\Sdm$ and $a\in\rdd$.
By \autoref{def:compatibilityProjection}, we have $\ppc(\xi)a\in V_{\curl}(\xi)$ and $(a-\ppc(\xi)a) \perp  V_{\curl}(\xi)$ in $\rdd$.
In particular, there exists $\tb\in\RR^d$ such that
\begin{equation}\label{eq:perpCondForCurl}
  \ppc(\xi)a=\tb\otimes\xi\text{ and }
  (\tb\otimes\xi,  b\otimes\xi) = (a,  b\otimes\xi)\quad \forall b \in\RR^d.
\end{equation}
Using that 
\begin{equation}\label{eq:MbotimesXi}
  (M, b\otimes\xi)= (M\xi,b) \quad\forall M\in\rdd,\, b\in\RR^d,
\end{equation}
we obtain from \eqref{eq:perpCondForCurl} that
\begin{equation}\label{eq:perpCondForCurl2}
 ((\tb\otimes\xi)\xi,  b) = (a\xi,  b)\quad \forall b \in\RR^d.
\end{equation}
Since $\abs{\xi}=1$, we find $((\tb\otimes\xi)\xi,  b) = (\tb,b)$, 
which together with \eqref{eq:perpCondForCurl2} shows that $\tb=a\xi$ and proves the first formula in \eqref{eq:compProjCurl}. 
The second formula in \eqref{eq:compProjCurl} follows from the identity $\abs{b\otimes\xi}= \abs{b}\abs{\xi}$, which holds for all $b,\xi \in\RR^d$.
\end{proof}
A direct consequence of \autoref{lem:LCP} and \autoref{lem:CompProjCurl} is that $L_{\curl}(a)=1$ for any $a\in\rdd$ that is not equicompatible.
To see this, note that \eqref{eq:LCPb} is satisfied due to \eqref{eq:compProjCurl}.
It remains to characterize the set of equicompatible states $E_{\curl}$, which is addressed in the following lemma.
\begin{lemma}\label{lem:EquicompatibleStatesCurl}
Let $d\geq2$. Consider the differential operator $\curl$; see \eqref{eq:curl}.
Let $a\in\rdd$. Let $g_{\curl}(a)$ and $S_{\curl}(a)$ be as in \autoref{def:compquant} and \autoref{def:optLaminationDirections}, respectively.
Moreover, let $\lambda_{max}(a^Ta)$ be the largest eigenvalue of $a^Ta$ and $E(a^Ta;\lambda_{max}(a^Ta))$ the associated eigenspace.
Then, it holds that 
\begin{equation}\label{eq:CompQuantAndOptLamCurl}
g_{\curl}(a) = \lambda_{max}(a^Ta) \; \text{ and } \; 
S_{\curl}(a) = S^{d-1}\cap E(a^Ta;\lambda_{max}(a^Ta)).
\end{equation}
In particular, the set of equicompatible states, introduced in \autoref{def:equicompatibleStates}, is given by
\begin{equation}\label{eq:equicompatibleStatesCurl}
    E_{\curl}=\RR O(d).
\end{equation}
\end{lemma}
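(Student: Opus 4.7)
The proof is essentially a Rayleigh quotient computation that relies directly on \autoref{lem:CompProjCurl}.

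The plan is to begin by combining the definitions of $g_{\curl}$ and $S_{\curl}$ (see \autoref{def:compquant} and \autoref{def:optLaminationDirections}) with the explicit formula $\abs{\ppc(\xi)a}^2=\abs{a\xi}^2$ for $\xi\in S^{d-1}$ established in \autoref{lem:CompProjCurl}. This rewrites
\begin{equation}
g_{\curl}(a)=\sup_{\xi\in S^{d-1}}\abs{a\xi}^2=\sup_{\xi\in S^{d-1}}(a^Ta\,\xi,\xi),\qquad S_{\curl}(a)=\argmax_{\xi\in S^{d-1}}(a^Ta\,\xi,\xi),
\end{equation}
so that both objects become the maximum value and maximizer set of the Rayleigh quotient of the symmetric positive semidefinite matrix $a^Ta$ on $S^{d-1}$.

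The second step is then a standard application of the spectral theorem: writing $a^Ta=\sum_{i}\lambda_i v_i\otimes v_i$ with an orthonormal eigenbasis $\{v_i\}$ and eigenvalues $\lambda_1\leq\dots\leq\lambda_d=\lambda_{max}(a^Ta)$, and expanding $\xi=\sum_ic_iv_i$, one obtains $(a^Ta\,\xi,\xi)=\sum_i\lambda_ic_i^2\leq\lambda_{max}(a^Ta)\abs{\xi}^2$ with equality on $S^{d-1}$ precisely when $\xi\in E(a^Ta;\lambda_{max}(a^Ta))$. This establishes both formulas in \eqref{eq:CompQuantAndOptLamCurl}.

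For the characterization of the equicompatible states, by \autoref{def:equicompatibleStates} $a\in E_{\curl}$ holds if and only if $S_{\curl}(a)=S^{d-1}$, which in view of \eqref{eq:CompQuantAndOptLamCurl} is equivalent to $E(a^Ta;\lambda_{max}(a^Ta))=\RR^d$, i.e.\ to $a^Ta=\lambda_{max}(a^Ta)I_d$. Setting $c:=\sqrt{\lambda_{max}(a^Ta)}\geq 0$, this last condition says $a^Ta=c^2I_d$. If $c=0$ then $a=0\in\RR O(d)$; if $c>0$ then $Q:=c^{-1}a$ satisfies $Q^TQ=I_d$, so $Q\in O(d)$ and hence $a=cQ\in\RR O(d)$. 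Conversely, any $a=cQ$ with $c\in\RR$ and $Q\in O(d)$ satisfies $a^Ta=c^2I_d$, giving $a\in E_{\curl}$. This proves \eqref{eq:equicompatibleStatesCurl}.

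No step is really an obstacle here; the only mild subtlety is the two-directional argument for $E_{\curl}=\RR O(d)$, which requires observing that the eigenvalue-and-eigenspace description forces $a^Ta$ to be a scalar multiple of the identity, a condition that characterizes scalar multiples of orthogonal matrices.
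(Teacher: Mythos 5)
Your proof is correct and follows essentially the same route as the paper: the paper simply observes that the formula $\abs{\ppc(\xi)a}^2=\abs{a\xi}^2$ from \autoref{lem:CompProjCurl} puts the problem in the form \eqref{eq:LCPb} with $m=a$, $c=0$, and then cites the Rayleigh-quotient/spectral-theorem computation already carried out in Step 2 of the proof of \autoref{lem:LCP} (see \eqref{eq:LCPbgcA} and \eqref{eq:LCPbgcA2}), whereas you perform that same computation directly. Your slightly more detailed justification of $E_{\curl}=\RR O(d)$ (including the degenerate case $a=0$) is a harmless elaboration of the paper's one-line conclusion.
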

\begin{proof}
As \eqref{eq:compProjCurl} corresponds to \eqref{eq:LCPb} with $m=a$ and $c=0$,
both formulae in \eqref{eq:CompQuantAndOptLamCurl} follow by the same argument as in the second step of the proof of \autoref{lem:LCP}; see \eqref{eq:LCPbgcA} and \eqref{eq:LCPbgcA2}.
Using \eqref{eq:CompQuantAndOptLamCurl}, it holds that $S_{\curl}(a)=\Sdm$ if and only if $a^Ta=\lambda_{max}(a^Ta) I_d$, which proves \eqref{eq:equicompatibleStatesCurl}.
\end{proof}
We are now ready to prove \autoref{thm:incompgrad}~\textcolor{blue}{(i+ii)} for $\cA=\curl$.
We however postpone this proof to the end of \autoref{subsubsec:Div}, where we simultaneously prove the theorem for $\cA=\Div$.
    
\subsubsection{Two-well problem for the divergence}\label{subsubsec:Div}
Next, we study the situation for the divergence operator.
\begin{lemma}\label{lem:CompProjDiv}
Let $d\geq2$. Consider the differential operator $\Div$; see \eqref{eq:Div}.
For $\xi\in\rrdmz$, let $V_{\Div}(\xi)$ and $\ppd(\xi)$ be given as in \eqref{eq:xicompatiblestates} and \autoref{def:compatibilityProjection}, respectively.
Then, we have
\begin{equation}\label{eq:xicompatiblestatesDiv}
    V_{\Div}(\xi)=\{a\in\rdd:a\xi=0\}\quad\forall\xi\in\rrdmz.
\end{equation}
The divergence operator has constant rank and spanning wave cone; see \autoref{def:constantrank} and \autoref{def:SpanWaveCone}.
Moreover, it holds that
\begin{equation}\label{eq:compProjDiv}
\ppd(\xi)a = a- (a\xi)\otimes \xi \; \text{ and } \; \abs{a-\ppd(\xi)a}^2 = \abs{a\xi}^2\quad\forall\xi\in\Sdm.
\end{equation}
\end{lemma}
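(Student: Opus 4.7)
The plan is to prove this lemma in direct parallel to \autoref{lem:CompProjCurl}, exploiting the observation that $V_{\Div}(\xi)$ and $V_{\curl}(\xi)$ are orthogonal complements in $\rdd$, so that most of the work has already been done.

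First I would unpack the symbol of $\Div$. From \eqref{eq:Div} one reads off $\AA(\xi) a = a\xi$ for every $a \in \rdd$, so
\begin{equation}
V_{\Div}(\xi) = \ker\bigl(a \mapsto a\xi\bigr) = \{a \in \rdd : a\xi = 0\} \quad \forall \xi \in \rrdmz,
\end{equation}
which proves \eqref{eq:xicompatiblestatesDiv}. This subspace has dimension $d(d-1)$ for every $\xi \ne 0$, so $\rank \AA(\xi) = d$ is constant, giving \autoref{def:constantrank}. For the spanning wave cone, observe that for any $1 \leq i, j \leq d$, the basis matrix $e_i \otimes e_j$ lies in $V_{\Div}(\xi)$ for any $\xi \in S^{d-1}$ orthogonal to $e_j$ (such a $\xi$ exists because $d \geq 2$); hence $\Lambda_{\Div}$ contains the standard basis of $\rdd$, establishing $\spn \Lambda_{\Div} = \rdd$.

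The crucial observation is that $V_{\Div}(\xi) = V_{\curl}(\xi)^\perp$ for every $\xi \in \rrdmz$. Indeed, using the identity \eqref{eq:MbotimesXi} recalled in the proof of \autoref{lem:CompProjCurl}, we have $(a, b\otimes\xi) = (a\xi, b)$ for all $a \in \rdd$ and $b \in \RR^d$; thus $a \perp V_{\curl}(\xi)$ if and only if $a\xi = 0$, i.e.\ $a \in V_{\Div}(\xi)$. Since $\dim V_{\curl}(\xi) = d$ and $\dim V_{\Div}(\xi) = d(d-1)$ add up to $d^2 = \dim \rdd$, the two spaces are genuinely orthogonal complements.

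As a direct consequence, the orthogonal projections are related by $\ppd(\xi) = \id_{\rdd} - \ppc(\xi)$. Combining this with the formula $\ppc(\xi) a = (a\xi)\otimes\xi$ from \eqref{eq:compProjCurl} yields $\ppd(\xi) a = a - (a\xi)\otimes\xi$ for all $\xi \in \Sdm$ and $a \in \rdd$, which is the first identity in \eqref{eq:compProjDiv}. The norm identity then follows immediately from $a - \ppd(\xi) a = \ppc(\xi) a$ and the second half of \eqref{eq:compProjCurl}, giving $\abs{a - \ppd(\xi) a}^2 = \abs{\ppc(\xi) a}^2 = \abs{a\xi}^2$. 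No step here is genuinely difficult; the only real content is identifying the orthogonality $V_{\Div}(\xi) = V_{\curl}(\xi)^\perp$, after which everything becomes a transcription of the previous lemma.
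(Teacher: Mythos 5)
Your proof is correct and follows essentially the same route as the paper: read off the symbol to get \eqref{eq:xicompatiblestatesDiv}, check the spanning wave cone on the basis matrices $e_i\otimes e_j$, identify $V_{\Div}(\xi)=V_{\curl}(\xi)^\perp$ via \eqref{eq:MbotimesXi}, and deduce \eqref{eq:compProjDiv} from $\ppd(\xi)=\id-\ppc(\xi)$ together with \eqref{eq:compProjCurl}. The only cosmetic difference is that you obtain constant rank directly from $\dim V_{\Div}(\xi)=d(d-1)$ before invoking the complement relation, whereas the paper derives the dimension from it; this changes nothing of substance.
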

\begin{proof}
Let $\AA$ be the symbol of $\cA=\Div$ given as in \eqref{eq:symbol}. It holds that
\begin{equation}
  \AA(\xi)a=a\xi\quad\forall\xi\in \rrdmz,\,a\in\rdd,
\end{equation}
which proves \eqref{eq:xicompatiblestatesDiv}.\medbreak

Since $e_i\otimes e_j\in V_{\Div}(e_k)$ for all $1\leq i,j,k\leq d$ with $j\ne k$,
the wave cone contains the matrices $e_i\otimes e_j$ for all  $1\leq i,j\leq d$, which form a basis of $\rdd$. Therefore, the divergence operator has spanning wave cone. 
For all $\xi\in\rrdmz$ and $a\in\rdd$, we obtain from \eqref{eq:MbotimesXi} and \eqref{eq:xicompatiblestatesDiv} that
\begin{equation}
  a\in V_{\Div}(\xi) \iff \forall b\in\RR^d: a\perp b\otimes\xi.
\end{equation}
With \eqref{eq:xicompatiblestatesCurl}, this implies 
\begin{equation}\label{eq:VDivIsVCurlPerp}
  V_{\Div}(\xi)=V_{\curl}(\xi)^{\perp} \quad \forall \xi\in\rrdmz.
\end{equation}
In particular, it holds that $\dim V_{\Div}(\xi)=d(d-1)$ for all $\xi\in \rrdmz$ and thus the divergence operator has constant rank.\medbreak

Now, \eqref{eq:VDivIsVCurlPerp} yields the orthogonal decomposition $\RR^d=V_{\curl}(\xi)\oplus V_{\Div}(\xi)$, which proves
\begin{equation}\label{eq:PpcPlusPpdIsId}
  \ppc(\xi)a+\ppd(\xi)a=a \quad\forall a\in\rdd,\,\xi\in \Sdm.
\end{equation}
We conclude the proof by observing that \eqref{eq:compProjCurl} and \eqref{eq:PpcPlusPpdIsId} imply \eqref{eq:compProjDiv}.
\end{proof}
By \autoref{lem:LCP} and \autoref{lem:CompProjDiv}, it follows that $L_{\Div}(a)=1$ for any $a\in\rdd$ that is not equicompatible. Note that
\eqref{eq:LCPa} is satisfied due to \eqref{eq:compProjDiv}.
It remains to characterize the set $E_{\Div}$, which is the content of the following lemma.
\begin{lemma}\label{lem:EquicompatibleStatesDiv}
Let $d\geq2$. Consider the differential operator $\Div$; see \eqref{eq:Div}.
Let $a\in\rdd$. Let $h_{\Div}(a)$ and $S_{\Div}(a)$ be as in \autoref{def:compquant} and \autoref{def:optLaminationDirections}, respectively.
Moreover, let $\lambda_{min}(a^Ta)$ be the smallest eigenvalue of $a^Ta$ and $E(a^Ta;\lambda_{min}(a^Ta))$ the associated eigenspace.
Then, it holds that 
\begin{equation}\label{eq:CompQuantAndOptLamDiv}
h_{\Div}(a) = \lambda_{min}(a^Ta) \; \text{ and } \; 
S_{\Div}(a) = S^{d-1}\cap E(a^Ta;\lambda_{min}(a^Ta)).
\end{equation}
In particular, the set of equicompatible states, introduced in \autoref{def:equicompatibleStates}, is given by
\begin{equation}\label{eq:equicompatibleStatesDiv}
    E_{\Div}=\RR O(d).
\end{equation}
\end{lemma}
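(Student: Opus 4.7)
The plan is to mirror the strategy used in the proof of Lemma~\ref{lem:EquicompatibleStatesCurl}, exploiting the identity $\abs{a-\ppd(\xi)a}^2 = \abs{a\xi}^2$ established in \eqref{eq:compProjDiv}. Note that this identity corresponds exactly to hypothesis \eqref{eq:LCPa} of Lemma~\ref{lem:LCP} applied with $m = a \in \Lin(\RR^d;\RR^d)$ and $c = 0$. First, I would rewrite $\abs{a\xi}^2 = (a^Ta\xi,\xi)$ to realize $h_{\Div}(a)$ as a Rayleigh quotient minimization over $S^{d-1}$ for the positive semidefinite symmetric operator $a^Ta$. By the spectral theorem (or equivalently by reading off \eqref{eq:formulaLCPCompQuant} and \eqref{eq:LCPOptiLamDir} from the first step of the proof of Lemma~\ref{lem:LCP}), the minimum of this Rayleigh quotient equals $\lambda_{min}(a^Ta)$ and is attained precisely on $S^{d-1} \cap E(a^Ta;\lambda_{min}(a^Ta))$. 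This immediately yields both identities in \eqref{eq:CompQuantAndOptLamDiv}.

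For the characterization $E_{\Div} = \RR O(d)$, I would then argue by chain of equivalences. By \autoref{def:equicompatibleStates}, $a \in E_{\Div}$ iff $S_{\Div}(a) = S^{d-1}$, which by the formula just established is equivalent to $S^{d-1} \subset E(a^Ta;\lambda_{min}(a^Ta))$, i.e., $E(a^Ta;\lambda_{min}(a^Ta)) = \RR^d$, or in other words $a^Ta = \lambda_{min}(a^Ta)\, I_d$. Setting $\mu := \sqrt{\lambda_{min}(a^Ta)} \geq 0$, this last identity is equivalent to $a \in \mu\, O(d) \subset \RR O(d)$; conversely any $a = cQ$ with $c\in\RR$ and $Q\in O(d)$ satisfies $a^Ta = c^2 I_d$, which is a scalar multiple of the identity. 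The degenerate case $a=0$ is absorbed by taking $\mu=0$.

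Given the complete parallel to the $\curl$-case and the fact that all nontrivial computations have already been carried out in \autoref{lem:CompProjDiv} and in the proof of Lemma~\ref{lem:LCP}, I do not anticipate any substantial obstacle. The only conceptual point worth emphasizing is the duality underlying the switch from \eqref{eq:LCPb} (used for $\curl$, which led to $g_{\curl}(a) = \lambda_{max}(a^Ta)$) to \eqref{eq:LCPa} (used here for $\Div$, leading instead to $h_{\Div}(a) = \lambda_{min}(a^Ta)$); this reflects the orthogonal decomposition $V_{\curl}(\xi) \oplus V_{\Div}(\xi) = \rdd$ from \eqref{eq:VDivIsVCurlPerp} and the identity $g_\cA + h_\cA = \abs{\cdot}_\cA^2$ from \eqref{eq:compquanteq}, but does not require new machinery.
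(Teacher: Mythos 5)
Your proposal is correct and follows essentially the same route as the paper: both reduce to the identity $\abs{a-\ppd(\xi)a}^2=\abs{a\xi}^2=(a^Ta\xi,\xi)$ from \autoref{lem:CompProjDiv}, recognize this as hypothesis \eqref{eq:LCPa} of \autoref{lem:LCP} with $m=a$, $c=0$, and read off \eqref{eq:CompQuantAndOptLamDiv} from the Rayleigh-quotient/spectral argument in the first step of that proof, after which $S_{\Div}(a)=S^{d-1}\iff a^Ta=\lambda_{min}(a^Ta)I_d\iff a\in\RR O(d)$. Your explicit verification of the last equivalence (including $a=0$) is a minor elaboration of a step the paper leaves implicit.
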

\begin{proof}
The formulae in \eqref{eq:CompQuantAndOptLamDiv} follow from the same argument as in first step of the proof of \autoref{lem:LCP} since 
\eqref{eq:compProjDiv} corresponds to \eqref{eq:LCPa} with $m=a$ and $c=0$; see, in particular, \eqref{eq:formulaLCPCompQuant} and \eqref{eq:LCPOptiLamDir}.
By \eqref{eq:CompQuantAndOptLamDiv}, it follows that $S_{\curl}(a)=\Sdm$ if and only if $a^Ta=\lambda_{min}(a^Ta) I_d$. This proves \eqref{eq:equicompatibleStatesDiv} and completes the proof.
\end{proof}
We are now ready to prove the first and second statements of \autoref{thm:incompgrad}.
\begin{proof}[Proof of \autoref{thm:incompgrad}~\textcolor{blue}{(i+ii)}]
Let $\cA\in\{\curl,\Div\}$. By \autoref{lem:CompProjCurl} and \autoref{lem:CompProjDiv}, we know that $\cA$ has constant rank and spanning wave cone.
Hence, the requirements of \autoref{thm:incompa}~\textcolor{blue}{(i)} are satisfied, and 
\autoref{thm:incompgrad}~\textcolor{blue}{(i)} is immediate.\medbreak
To apply \autoref{thm:incompa}~\textcolor{blue}{(ii)}, we need to verify some additional conditions.
As in the assumptions of \autoref{thm:incompgrad}~\textcolor{blue}{(ii)}, suppose that $a=a_1-a_0\in\rdd\setminus \RR O(d)$.
By \autoref{lem:EquicompatibleStatesCurl} and \autoref{lem:EquicompatibleStatesDiv}, we have $a\not\in E_\cA$.
We can apply \autoref{lem:LCP} due to \eqref{eq:compProjCurl} and \eqref{eq:compProjDiv}. This proves that 
$S_\cA(a)$ is contained in a linear subspace of $\RR^d$ of dimension at most $d-1$ and that $L_\cA(a)=1$.
We conclude the proof, observing that \autoref{thm:incompa}~\textcolor{blue}{(ii)} yields \autoref{thm:incompgrad}~\textcolor{blue}{(ii)}.
\end{proof}

\subsubsection{Two-well problem for the symmetrized gradient}\label{subsubsec:CC}
In contrast, to the first order differential operators $\cA\in\{\curl,\Div\}$, more work is required to verify the conditions of \autoref{thm:incompa}~\textcolor{blue}{(ii)}
for $\cA=\ccurl$. The reason for this is that \autoref{lem:LCP} is not applicable in this setting, which makes it necessary 
that we determine the maximal vanishing order $L_{\cc}(a)$ for $a\in\rddsym$ by hand. Towards this goal, we characterize the compatibility projection in the next lemma.

\begin{lemma}\label{lem:CompProjCC}
Let $d\geq2$. Consider the differential operator $\ccurl$; see \eqref{eq:curlcurl}.
For $\xi\in\rrdmz$, let $V_{\cc}(\xi)$ and $\ppcc(\xi)$ be given as in \eqref{eq:xicompatiblestates} and \autoref{def:compatibilityProjection}, respectively.
Then, it holds that
\begin{equation}\label{eq:xicompatiblestatesCC}
    V_{\cc}(\xi)=\{b\odot\xi:b\in\RR^d\}\quad\forall\xi\in\rrdmz.
\end{equation}
The differential operator $\ccurl$ has constant rank and spanning wave cone; see \autoref{def:constantrank} and \autoref{def:SpanWaveCone}.
Moreover, we have
\begin{equation}\label{eq:CompProjCurlCurlFormula}
\ppcc(\xi)a = G_{\xi}(a\xi) \odot \xi \; \text{ and } \; \abs{\ppcc(\xi)a}^2=2\abs{a\xi}^2 -  (a\xi,\xi)^2\quad\forall\xi\in\Sdm,
\end{equation}
where $G_\xi\in\Lin(\RR^d)$ is defined, for $\xi\in\Sdm$, by
\begin{equation}
        G_\xi(v):= 2v-(\xi, v)\xi, \quad v\in \RR^d.
\end{equation}
\end{lemma}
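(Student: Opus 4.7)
The plan is to adapt the strategy used for $\curl$ in \autoref{lem:CompProjCurl} and for $\Div$ in \autoref{lem:CompProjDiv} to the symmetric-matrix-valued, second-order setting of $\ccurl$.

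First, I would establish the kernel description \eqref{eq:xicompatiblestatesCC}. The inclusion ``$\supset$'' follows by plugging $u = b \odot \xi$ into the symbol
\[
[\AA(\xi)u]_{i,j,k,l} = \xi_i\xi_j u_{k,l} + \xi_k\xi_l u_{i,j} - \xi_i\xi_l u_{k,j} - \xi_k\xi_j u_{i,l}
\]
and observing that the resulting eight monomials cancel pairwise. The reverse inclusion is most efficiently obtained from the Saint-Venant characterization of symmetrized gradients cited in \autoref{sec:intro}: the plane wave $x \mapsto e^{i x\cdot\xi} M$ with $M \in \rddsym$ is annihilated by $\ccurl$ in the sense of distributions if and only if $M \in \{b \odot \xi : b \in \RR^d\}$. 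Once \eqref{eq:xicompatiblestatesCC} is established, the constant-rank property is immediate since $\dim V_{\cc}(\xi) = d$ is independent of $\xi \in \rrdmz$, and the spanning-wave-cone property follows because $\Lambda_{\cc}$ contains the elementary symmetrized products $e_i \odot e_j$ for $1 \le i, j \le d$, which form a basis of $\rddsym$.

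The key computation is the explicit formula for $\ppcc(\xi)$. By \eqref{eq:xicompatiblestatesCC} we have $\ppcc(\xi)a = b \odot \xi$ for a unique $b \in \RR^d$, determined by the orthogonality $(a - b \odot \xi, c \odot \xi) = 0$ for every $c \in \RR^d$. The crucial algebraic identity
\[
(M, c \odot \xi) = (M\xi, c) \quad \forall M \in \rddsym,\, c \in \RR^d,
\]
relies on the symmetry of $M$ and reduces the orthogonality to the linear equation $(b \odot \xi)\xi = a\xi$ in $\RR^d$. A short computation yields $(b \odot \xi)\xi = \tfrac{1}{2}\bigl(b + (b,\xi)\xi\bigr)$ for $\xi \in \Sdm$; pairing with $\xi$ forces $(b,\xi) = (a\xi,\xi)$, and substituting back produces $b = 2a\xi - (a\xi,\xi)\xi = G_\xi(a\xi)$. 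The norm formula follows from the projection identity $\abs{\ppcc(\xi)a}^2 = (\ppcc(\xi)a, a) = (G_\xi(a\xi), a\xi) = 2\abs{a\xi}^2 - (a\xi,\xi)^2$, again via the same symmetric-matrix identity.

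The argument is elementary linear algebra on the unit sphere, and I do not anticipate a serious conceptual obstacle. The main subtlety is bookkeeping: the factor $2$ appearing in $G_\xi$ traces back precisely to the inversion of the map $b \mapsto (b \odot \xi)\xi$ on $\RR^d$, and any slip in the normalization convention for $\odot$ would distort the explicit projection formula, and thereby the compatibility quantifiers governing the maximal vanishing order $L_{\cc}(a)$ used later to derive the scaling bounds in \autoref{thm:incompsyma}.
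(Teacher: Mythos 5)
Your proposal is correct and follows essentially the same route as the paper: the same key identity $(M,c\odot\xi)=(M\xi,c)$ for symmetric $M$, the same reduction of the orthogonality condition to inverting $b\mapsto(b\odot\xi)\xi=\tfrac12\bigl(b+(b,\xi)\xi\bigr)$, and hence the same $G_\xi$. Your norm computation via the projection identity $\abs{\ppcc(\xi)a}^2=(\ppcc(\xi)a,a)=(G_\xi(a\xi),a\xi)$ is a slightly slicker shortcut than the paper's direct expansion of $\abs{G_\xi(a\xi)\odot\xi}^2$, but this is a cosmetic difference.
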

To derive \autoref{lem:CompProjCC}, the need the following formulae.
\begin{lemma}\label{lem:symprodform}
Let $b,\xi\in\RR^d$ and $a\in \rddsym$, then
\begin{equation}
  (a,b\odot\xi) = (a\xi, b), \qquad \qquad \abs{b\odot \xi}^2 = \tfrac{1}{2} \left(\abs{b}^2\abs{\xi}^2+ (b,\xi)^2\right).
\end{equation}
\end{lemma}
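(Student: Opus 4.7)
The plan is to reduce both identities to the analogous facts for the ordinary tensor product $b\otimes\xi$, which are already available in the paper (in particular, the identity $(M,b\otimes\xi)=(M\xi,b)$ recorded in \eqref{eq:MbotimesXi}). Recall that the symmetric product is defined by
\begin{equation}
b\odot\xi=\tfrac{1}{2}(b\otimes\xi+\xi\otimes b),
\end{equation}
so every statement about $b\odot\xi$ unfolds into two statements about $b\otimes\xi$ and $\xi\otimes b$, which can be handled by direct Frobenius-inner-product bookkeeping.

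For the first identity, I would write $(a,b\odot\xi)=\tfrac12\bigl((a,b\otimes\xi)+(a,\xi\otimes b)\bigr)$, apply \eqref{eq:MbotimesXi} to the first summand to get $(a\xi,b)$, and apply the same formula (with $b$ and $\xi$ swapped) together with the symmetry $a=a^T$ to the second summand to get $(a b,\xi)=(b,a^T\xi)=(a\xi,b)$. Averaging gives $(a,b\odot\xi)=(a\xi,b)$.

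For the second identity, I would expand $|b\odot\xi|^2=\tfrac14\bigl(|b\otimes\xi|^2+2(b\otimes\xi,\xi\otimes b)+|\xi\otimes b|^2\bigr)$. The outer terms both equal $|b|^2|\xi|^2$ by the elementary rank-one identity $|v\otimes w|=|v||w|$. The cross term is computed in coordinates: $(b\otimes\xi,\xi\otimes b)=\sum_{i,j}b_i\xi_j\,\xi_ib_j=\bigl(\sum_i b_i\xi_i\bigr)\bigl(\sum_j b_j\xi_j\bigr)=(b,\xi)^2$. Combining yields $|b\odot\xi|^2=\tfrac12\bigl(|b|^2|\xi|^2+(b,\xi)^2\bigr)$.

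There is no real obstacle here — this is a routine bilinear-algebra computation. The only thing to be mildly careful about is the factor of $\tfrac12$ convention in the definition of $\odot$ and, in the first identity, invoking the symmetry of $a$ at the right moment so that the two $\otimes$-contributions coincide rather than cancel.
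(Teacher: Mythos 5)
Your proof is correct. The first identity is handled exactly as in the paper: expand $b\odot\xi=\tfrac12(b\otimes\xi+\xi\otimes b)$, apply \eqref{eq:MbotimesXi} to each summand, and use $a=a^T$ to identify the two contributions. For the second identity you take a marginally different (but equally valid) route: you expand $\abs{b\odot\xi}^2$ as a quadratic form in $b\otimes\xi$ and $\xi\otimes b$ and compute the cross term $(b\otimes\xi,\xi\otimes b)=(b,\xi)^2$ in coordinates, whereas the paper observes that $b\odot\xi$ is itself symmetric and reuses the first identity with $a=b\odot\xi$ to reduce everything to $((b\odot\xi)\xi,b)$. Both are one-line bilinear computations; the paper's version buys a slightly shorter derivation by recycling the first formula, while yours is self-contained for the second claim.
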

\begin{proof}
Using \eqref{eq:MbotimesXi}, we compute
\begin{equation}
  (a,b\odot\xi) = \tfrac{1}{2}(a, b\otimes \xi + \xi\otimes b) =  \tfrac{1}{2}\left[(a\xi, b ) + (ab,\xi ) \right] = (a\xi,b),
 \end{equation}
where we used that $a^T=a$. This proves the first formula, which we now apply for $a=b\odot\xi$.
This yields
\begin{equation}
\abs{b\odot \xi}^2 = ((b\odot\xi)\xi, b) = \tfrac{1}{2}((b\otimes \xi + \xi\otimes b)\xi, b)=  \tfrac{1}{2} \left(\abs{b}^2\abs{\xi}^2+ (b,\xi)^2\right)
\end{equation} 
and completes the proof.
\end{proof}
We are now ready to prove \autoref{lem:CompProjCC}.
\begin{proof}[Proof of \autoref{lem:CompProjCC}]
For $\xi\in\rrdmz$, it is well-known that $V_{\cc}(\xi)=\{b\odot\xi:b\in\RR^d\}$; see, for instance, \cite[Lemma 2.4]{ruland2}.
It follows that $\ccurl$ has constant rank and spanning wave cone in $\rddsym$.\medbreak

It remains prove \eqref{eq:CompProjCurlCurlFormula}. To this end, let $a\in\rddsym$ and $\xi\in S^{d-1}$. 
Since $\ppcc(\xi)a$ is defined as the orthogonal projection of $a$ onto $V_{\cc}(\xi)$ in $\RR^{d\times d}_{sym}$,
there exists a unique $\tb\in\RR^d$ with $\ppcc(\xi)a = \tb \odot \xi$. Our goal is to derive an explicit formula for $\tb$.
Using $\abs{\xi}=1$ and \autoref{lem:symprodform}, we find
\begin{equation}
    (a\xi,b) = (a,b\odot \xi) = (\tb \odot \xi, b\odot\xi) =  ((\tb \odot \xi)\xi, b) = \tfrac{1}{2}(\tb+(\xi, \tb)\xi, b )\quad\forall b\in\RR^d.
\end{equation}
Given a linear subspace $U$ of $\RR^d$, we denote by $\pi_U$ the orthogonal projection onto $U$ in $\RR^d$.
Then, the above identity implies 
\begin{equation}
    a\xi = \tfrac{1}{2}(\tb+(\xi, \tb)\xi) = (\pi_{\spn\{\xi\}} + \tfrac{1}{2}\pi_{\spn\{\xi\}^\perp})\tb.
\end{equation}
It follows that
\begin{equation}
    \ppcc(\xi)a = [(\pi_{\spn\{\xi\}} + \tfrac{1}{2}\pi_{\spn\{\xi\}^\perp})^{-1}(a\xi)] \odot\xi = G_{\xi}(a\xi) \odot \xi, 
\end{equation}
where we used that for any $v\in\RR^d$, we have
\begin{equation}\label{eq:gxieq}
\begin{aligned}
        (\pi_{\spn\{\xi\}} + \tfrac{1}{2}\pi_{\spn\{\xi\}^\perp})^{-1} v &= (\pi_{\spn\{\xi\}} + 2\pi_{\spn\{\xi\}^\perp})v \\
        &= (\xi, v)\xi + 2 (v - (\xi, v)\xi) = 2v-(\xi, v)\xi=G_\xi(v).
\end{aligned}
\end{equation}
With this in hand, we again apply \autoref{lem:symprodform} to compute 
\begin{multline}
    \abs{\ppcc(\xi)a}^2 = \tfrac{1}{2} \left(\abs{G_\xi(a\xi)}^2+(G_\xi(a\xi),\xi)^2\right)
    = \tfrac{1}{2} \left(\abs{2a\xi - (\xi,a\xi)\xi}^2 +(2a\xi - (\xi,a\xi)\xi,\xi)^2\right) \\
    = \tfrac{1}{2} \left(\abs{2a\xi}^2 - 2 (2a\xi, (\xi,a\xi)\xi) + \abs{(\xi,a\xi)\xi}^2 +(\xi,a\xi)^2\right) 
    = 2\abs{a\xi}^2 -  (a\xi,\xi)^2,
\end{multline}
which concludes the proof.
\end{proof}

Formula \eqref{eq:CompProjCurlCurlFormula} enables us to make explicit the compatibility quantifiers and optimal lamination directions in the following proposition.
\begin{proposition}\label{prop:optlaminationdirectioncurlcurl}
Let $d\geq2$. Consider the differential operator $\cA=\ccurl$; see \eqref{eq:curlcurl}.
Given $a\in\rddsym$, let $g_{\cc}(a)$ be the
compatibility quantifier and $S_{\cc}(a)$ the set of optimal lamination directions as defined in \autoref{def:compquant} and \autoref{def:optLaminationDirections}, respectively.
Denote by $\lambda_{-}=\lambda_{-}(a),\,\lambda_{+}=\lambda_{+}(a)$ the smallest and largest eigenvalues of $a$, respectively.
Moreover, let $U_{-}=E(a; \lambda_{-})$ and $U_{+}=E(a; \lambda_{+})$ be the respective eigenspaces. 
Then, the following holds:
\begin{enumerate}
  \item If $a$ is positive semidefinite (i.e. $\lambda_-\geq0$), we have
  \begin{equation}\label{eq:posdefiniteS0}
    g_{\cc}(a) = \lambda_{+}^2, \qquad\qquad S_{\cc}(a) = U_{+}\cap S^{d-1}.
  \end{equation}
  \item If $a$ is negative semidefinite (i.e. $\lambda_+\leq0$), we have
  \begin{equation}\label{eq:negdefiniteS0}
    g_{\cc}(a) = \lambda_{-}^2, \qquad\qquad S_{\cc}(a) = U_{-}\cap S^{d-1}.
  \end{equation}
  \item If $a$ is indefinite (i.e. $\lambda_-<0<\lambda_+$), we have 
  \begin{equation}\label{eq:indefiniteS0}
    g_{\cc}(a) = \lambda_{-}^2 + \lambda_{+}^2, \qquad \qquad S_{\cc}(a) = r_{-}\left(U_{-}\cap S^{d-1}\right) + r_{+}\left(U_{+}\cap S^{d-1}\right),
  \end{equation}
  where we are using the Minkowski sum and $r_{-},\,r_{+}\in(0,1)$ are determined by
  \begin{equation}\label{eq:formularmrp}
    r_{-}^2 = \frac{-\lambda_{-}}{\lambda_{+}-\lambda_{-}},\qquad\qquad r_{+}^2 = \frac{\lambda_{+}}{\lambda_{+}-\lambda_{-}}. 
  \end{equation}
\end{enumerate}
In particular, the set of equicompatible states, introduced in \autoref{def:equicompatibleStates}, is given by
\begin{equation}\label{eq:curlcurlequicomp}
  E_{\cc}=\RR I_d.
\end{equation}
\end{proposition}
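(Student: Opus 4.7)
The plan is to use the explicit formula $\abs{\ppcc(\xi)a}^2 = 2\abs{a\xi}^2 - (a\xi,\xi)^2$ from \autoref{lem:CompProjCC} and to reduce the maximization over $\xi\in S^{d-1}$ to a one-dimensional problem on an interval. First, I would diagonalize $a$ in an orthonormal basis $v_1,\dots,v_d$ of eigenvectors with eigenvalues $\lambda_1\leq\dots\leq\lambda_d$, write $\xi=\sum_i c_iv_i$ and set $p_i:=c_i^2$; then $p$ lies in the standard simplex and the quantity to maximize becomes
\begin{equation}
\Phi(p) = 2\sum_i\lambda_i^2 p_i - \Big(\sum_i\lambda_i p_i\Big)^2.
\end{equation}

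The second step, which is the technical heart of the argument, is to introduce the first moment $t := \sum_i\lambda_ip_i\in[\lambda_-,\lambda_+]$ and optimize in two stages. For fixed $t$, strict convexity of $\lambda\mapsto\lambda^2$ implies that the second moment $\sum_i\lambda_i^2 p_i$ is maximized by the two-point distribution on $\{\lambda_-,\lambda_+\}$ with weights $1-s$ and $s$, where $s=(t-\lambda_-)/(\lambda_+-\lambda_-)$ (if $\lambda_-=\lambda_+$ then $a\in\RR I_d$ and the claim is trivial). This collapses the problem to maximizing the scalar function
\begin{equation}
h(t) := 2\big[\lambda_-^2+s(\lambda_+^2-\lambda_-^2)\big] - t^2
\end{equation}
on $[\lambda_-,\lambda_+]$, whose unconstrained critical point is at $t^* = \lambda_-+\lambda_+$.

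The three cases of the statement would then follow from a short case analysis on the location of $t^*$ relative to $[\lambda_-,\lambda_+]$: in the positive semidefinite case $t^*\geq\lambda_+$, so $h$ is nondecreasing and the maximum is attained at $t=\lambda_+$, yielding $g_{\cc}(a)=\lambda_+^2$ with all mass on $U_+$; the negative semidefinite case is symmetric; the indefinite case gives $t^*\in(\lambda_-,\lambda_+)$, so $h(t^*)=\lambda_-^2+\lambda_+^2$, and the optimal weights $1-s=r_-^2$, $s=r_+^2$ recover \eqref{eq:formularmrp}. Translating optimizers back to $\xi$, the support condition on $p$ becomes $\xi\in U_-\oplus U_+$ and the weights prescribe the squared norms of the components of $\xi$ in $U_\pm$, which yields the Minkowski sum description of $S_{\cc}(a)$. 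The identity $E_{\cc}=\RR I_d$ then follows quickly: in the semidefinite cases, $S_{\cc}(a)=U_\pm\cap S^{d-1}$ fills $S^{d-1}$ only when the corresponding eigenspace is all of $\RR^d$, while in the indefinite case the Minkowski sum cannot equal $S^{d-1}$ since the radii $r_\pm\in(0,1)$ are fixed and positive.

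The main obstacle I anticipate is the \emph{identification of optimizers} in the indefinite case: to verify that the maximum of $\Phi$ is attained exactly when the mass lies on the $\lambda_-$- and $\lambda_+$-eigenspaces with the prescribed total weights (and with otherwise arbitrary distribution within each eigenspace), one must carefully exploit strict convexity of $\lambda\mapsto\lambda^2$ to rule out contributions from intermediate eigenvalues, and then handle eigenspace multiplicities so that $U_\pm$ of dimension greater than one are correctly parametrized by the Minkowski sum. Once this is in place, combining the case analysis into \eqref{eq:posdefiniteS0}--\eqref{eq:indefiniteS0} and deducing \eqref{eq:curlcurlequicomp} is routine.
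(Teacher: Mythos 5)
Your argument is correct, and it takes a genuinely different route from the paper's. The paper works directly on the sphere: it computes $\nabla q$ for $q(\xi)=2\lvert a\xi\rvert^2-(a\xi,\xi)^2$, shows via the Lagrange condition that any critical point $\xi^*$ of $q|_{S^{d-1}}$ is an eigenvector of $a^2-(a\xi^*,\xi^*)a$, classifies critical points as lying either in a single eigenspace $U_i$ or in $(U_i\oplus U_j)\setminus(U_i\cup U_j)$ subject to $\lambda_i+\lambda_j=(a\xi^*,\xi^*)$ (which forces $\lambda_i\lambda_j<0$), evaluates $q$ at each type, and finally compares critical values to isolate the maximizers. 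Your substitution $p_i=c_i^2$ instead turns the problem into maximizing $2M_2-M_1^2$ over probability measures on the spectrum; conditioning on the mean $M_1=t$ and using the secant bound for the strictly convex function $\lambda\mapsto\lambda^2$ collapses everything to a strictly concave scalar function $h$ on $[\lambda_-,\lambda_+]$ with vertex at $\lambda_-+\lambda_+$, so the three cases of the statement reduce to comparing $\lambda_-+\lambda_+$ with the endpoints of that interval. What this buys is precisely the point you flag as the main obstacle: the full maximizer set, including eigenspace multiplicities and the Minkowski-sum structure in the indefinite case, falls out of the equality conditions (strict concavity of $h$ pins down $t$ uniquely; the equality case of the secant inequality forces the mass onto $\{\lambda_-,\lambda_+\}$ with weights determined by $t$, i.e. $\lvert\pi_{U_-}\xi\rvert=r_-$ and $\lvert\pi_{U_+}\xi\rvert=r_+$), so no separate elimination of spurious critical points is needed. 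The only details worth writing out are that $\Phi$ depends on $\xi$ only through $p$ (both $\lvert a\xi\rvert^2=\sum_i\lambda_i^2c_i^2$ and $(a\xi,\xi)=\sum_i\lambda_ic_i^2$ do), that $\xi\mapsto p$ maps $S^{d-1}$ onto the whole simplex so the reduction is lossless, and the boundary case $t=\lambda_+$ where extremality of the mean alone already forces the measure to be $\delta_{\lambda_+}$; your deduction of $E_{\cc}=\RR I_d$ from the three cases is fine.
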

\begin{rmk}\label{rmk:OptLamCCurlInTwoDim}
Let $d=2$ and $a\in\rddsym\setminus\RR I_2$.
An important consequence of \autoref{prop:optlaminationdirectioncurlcurl} is that $S_{\cc}(a)$ is contained in a union of up to two one-dimensional linear subspaces of $\RR^2$.\smallbreak
Indeed, if $a$ is positive semidefinite, we have $S_{\cc}(a)\subset U_+$ and $\dim U_+=1$.
Similarly, if $a$ is negative semidefinite, it holds that $S_{\cc}(a)\subset U_-$ with $\dim U_-=1$.
In contrast, if $a$ is indefinite, both $U_-$ and $U_+$ are one-dimensional and $S_{\cc}(a)$ consists of two pairs of antipodal points. In particular, the set $S_{\cc}(a)$ is contained in a union of two one-dimensional linear subspaces of $\RR^2$.
\end{rmk}
\begin{rmk}\label{rmk:OptLamCCurlInHighDim}
Note that $S_{\cc}(a)$ is not necessarily contained in a finite union of lower-dimensional linear subspaces of $\RR^d$ if $d>2$.
For instance, take $d=3$ and consider the indefinite matrix $a=e_1\otimes e_1+ e_2\otimes e_2 - e_3\otimes e_3\in\rddsymd{3}$. The set of optimal lamination directions consists of two circles shifted along the $\xi_3$-axis:
\begin{equation}
  S_{\cc}(a) = \big\{(\xi_1,\xi_2,\xi_3) \in S^2:(\xi_1,\xi_2)\in \tfrac{1}{\sqrt{2}} S^1 \text{ and } \xi_3 = \pm \tfrac{1}{\sqrt{2}} \big\}
\end{equation}
which cannot be covered by finitely many two-dimensional linear subspaces.
In particular, \autoref{thm:incompa}~\textcolor{blue}{(ii)} is not applicable, which is one of the reasons we restrict to two dimensions in \autoref{thm:incompsyma}~\textcolor{blue}{(ii)}.
\end{rmk}
\begin{proof}[Proof of \autoref{prop:optlaminationdirectioncurlcurl}]
The proof is organized into three steps. 
In the first step, we associate to a given $a\in \rddsym$ the polynomial $q:\RR^d\to\RR$ which agrees with the map $\xi\mapsto\abs{\ppc(\xi)a}^2$ on $\Sdm$.
Then, our strategy is to determine $g_{\cc}(a)$ and $S_{\cc}(a)$, by maximizing $q|_{\Sdm}$.
To this end, we derive properties of critical points of $q|_{\Sdm}$ in the second step.
We conclude the proof in the third step, where we characterize the maximum and the set of maximizers.
\Step{1: A related maximization problem}
Let $a\in \rddsym$. Since $g_{\cc}(a)=\max_{\xi\in \Sdm}\abs{\ppcc(\xi)a}^2$ and $S_{\cc}(a)=\argmax_{\xi\in \Sdm}\abs{\ppcc(\xi)a}^2$, we need to solve a variational problem,
which reformulate as follows. 
First, we associate to $a$ the polynomial $q:\RR^d\to\RR$, defined by
\begin{equation}\label{eq:p0}
  q(\xi):=2\lvert{a\xi}\rvert^2 -  (a\xi,\xi)^2,\quad\xi\in\RR^d.
\end{equation}
By \autoref{lem:CompProjCC}, we have
\begin{equation}\label{eq:p0identity}
  q(\xi)=\abs{\ppcc(\xi)a}^2 \quad\forall\xi\in S^{d-1}.
\end{equation}
It follows that $g_{\cc}(a)=\max_{S^{d-1}}q$ and $S_{\cc}(a)=\argmax_{S^{d-1}}q$.
We therefore maximize the polynomial $q$ on the unit sphere instead of the map $\xi\in\Sdm\mapsto\abs{\ppcc(\xi)a}^2$.
Note that identity \eqref{eq:p0identity} does not hold for general $\xi \in \RR^d \setminus \{0\}$, since $\ppcc$ is zero-homogeneous, whereas $q$ is not.
\Step{2: Properties of critical points}
Next, we derive properties of critical points of $q|_{\Sdm}$. To this end, let $\xi^*\in\Sdm$ be a critical point; that is, 
for any tangential vectors $v\in T_{\xi^*} S^{d-1}=\spn\{\xi^*\}^\perp$, it holds that
\begin{equation}\label{eq:XiStarCritical}
\nabla q(\xi^*)\cdot v =0.
\end{equation}
This implies $\nabla q(\xi^*)=t\xi^*$ for some $t\in\RR$. 
Using symmetry of $a$, we compute the gradient of $q$ at $\xi\in\RR^d$:
\begin{equation}\label{eq:gradhess}
  \nabla q(\xi) = 4a^T a\xi - 2(a\xi,\xi)(a+a^T)\xi = 4(a^2\xi-(a\xi,\xi)a\xi).
\end{equation}
It follows that $\xi^*$ is an eigenvector of $a^2-(a\xi^*,\xi^*)a$.
The spectral theorem provides an orthogonal decomposition $\RR^d=\oplus_{j=1}^J U_j$ into eigenspaces $U_j=E(a,\lambda_j)$ of $a$ with $1\leq J\leq d$ such that 
\begin{equation}\label{eq:orthodecomp}
  a = \textstyle\sum_{j=1}^{J} \lambda_j \pi_{U_j}\quad\text{and}\quad \lambda_1<\dots<\lambda_J.
\end{equation}
\textbf{Claim: }\textit{We claim that $\xi^*\in U_i$ for some $i\in\{1,\dots,J\}$ or 
$\xi^*\in (U_i\oplus U_j)\setminus (U_i\cup U_j)$ for distinct $i,\,j\in\{1,\dots,J\}$. In the latter case, it holds that}
\begin{equation}\label{eq:lilj}
  \lambda_i+\lambda_j = (a\xi^*,\xi^*).
\end{equation}
\textit{Proof of the claim.}
First, note that $\xi^*$ is an eigenvector of
\begin{equation}\label{eq:diagShapeAsquaredMinusa}
  a^2-(a\xi^*,\xi^*) a = \textstyle\sum_{j=1}^{J} (\lambda_j^2 - (a\xi^*,\xi^*) \lambda_j )\pi_{U_j}.
\end{equation}
Since \eqref{eq:diagShapeAsquaredMinusa} is in diagonal form, we know that for any $i\in\{1,\dots,J\}$,
\begin{equation}\label{eq:muI}
  \mu_i:=\lambda_i^2 - (a\xi^*,\xi^*) \lambda_i\text{  is an eigenvalue of $a^2-(a\xi^*,\xi^*) a$.}
\end{equation}
If there is an index $i\in\{1,\dots,J\}$ such that $\mu_i\ne\mu_j$ for all $j\in\{1,\dots,J\}$, then the eigenspace of $a^2-(a\xi^*,\xi^*) a$ associated with $\mu_i$ is $U_i$.
For two distinct indices $i,\,j\in\{1,\dots,J\}$, completing the square in \eqref{eq:muI} shows that
\begin{equation}\label{eq:squarelilj}
\begin{aligned}
    \mu_i=\mu_j \quad&\iff \quad \bigg(\lambda_i- \frac{(a\xi^* , \xi^*)}{2}\bigg)^2 = \bigg(\lambda_j - \frac{(a\xi^*,\xi^*)}{2}\bigg)^2\\
    &\iff\hspace{2.2cm} \lambda_i+\lambda_j = (a\xi^*,\xi^*),
\end{aligned}
\end{equation}
where for the last equivalence we used that $\lambda_i, \lambda_j$ are distinct and both have the same distance to ${(a\xi^*,\xi^*)}/{2}$ and so their average must be equal to ${(a\xi^*,\xi^*)}/{2}$.
In case \eqref{eq:squarelilj} is satisfied, the eigenspace of $a^2-(a\xi^*,\xi^*) a$ associated with $\mu_i$ is $U_i\oplus U_j$. 
To see this, note that \eqref{eq:squarelilj} proves that $\mu_i = \mu_j=\mu_k$ for pairwise distinct $i,\,j,\,k\in\{1,\dots,J\}$ cannot occur, since
then $\lambda_i,\lambda_j$ and $\lambda_k$ all have the same distance to ${(a\xi^*,\xi^*)}/{2}$, which implies that $\lambda_i,\lambda_j$
and $\lambda_k$ are not distinct and contradicts \eqref{eq:orthodecomp}.\\
Therefore, if $\xi^*$ is an eigenvector of $a^2-(a\xi^*,\xi^*) a$ with $\xi^*\in (U_i\oplus U_j)\setminus (U_i\cup U_j)$ for distinct $i,\,j\in\{1,\dots,J\}$,
it follows that $\mu_i=\mu_j$ and we obtain \eqref{eq:lilj}. This completes the proof of the claim.\medbreak

Our next goal is to determine possible value of $q(\xi^*)$, for which we distinguish two cases.
\Case{1}
If the critical point $\xi^*\in U_i$ for some $i\in\{1,\dots,J\}$, then it follows from $\xi^*\in S^{d-1}$ that
\begin{equation}\label{eq:pzerodefinite}
  q(\xi^*) = 2\lvert{a\xi^*}\rvert^2 -  (a\xi^*,\xi^*)^2 = \lambda_i^2.
\end{equation}
\Case{2}
However, if the critical point $\xi^*\in (U_i\oplus U_j)\setminus (U_i\cup U_j)$ for distinct $i,\,j\in\{1,\dots,J\}$,
there exist two \textit{orthogonal} vectors $\xi_i\in U_i\cap S^{d-1},\, \xi_j\in U_j\cap S^{d-1}$ and $r_i,r_j\in(0,1)$ such that
\begin{equation}\label{eq:eq_ri2rj2is1}
  \xi^*= r_i \xi_i + r_j \xi_j\quad\text{and}\quad r_i^2+r_j^2 =1.
\end{equation} 
Moreover, condition \eqref{eq:lilj} must be satisfied,
which together with \eqref{eq:eq_ri2rj2is1} yields the constraint
\begin{equation}\label{eq:axixi3}
  \lambda_i+\lambda_j = r_i^2\lambda_i + r_j^2 \lambda_j,
\end{equation}
where we used that $\xi_i$ and $\xi_j$ are orthogonal.
By \eqref{eq:orthodecomp}, constraint \eqref{eq:axixi3} implies that $\lambda_i$ and $\lambda_j$ are nonzero and have different signs.
Therefore, the second case does not occur if $a$ is positive semidefinite or negative semidefinite. 
Without loss of generality, we may assume 
\begin{equation}\label{eq:lambdailesslambdaj}
  \lambda_i < 0 < \lambda_j.
\end{equation}
Then, the constraints \eqref{eq:eq_ri2rj2is1} and \eqref{eq:axixi3} translate to
\begin{equation}\label{eq:rirjformula}
  r_i^2 = \frac{-\lambda_i}{\lambda_j-\lambda_i},\qquad\qquad r_j^2 = \frac{\lambda_j}{\lambda_j-\lambda_i}. 
\end{equation}
With this, we plug $\xi^*=r_i \xi_i + r_j \xi_j$ into $q$ and use $(a-b)(a^2+ab+b^2)=a^3-b^3$ to compute
\begin{equation}\label{eq:pzeroindefinite}
  q(\xi^*)  =  2(\lambda_i^2 r_i^2+\lambda_j^2 r_j^2)- (\lambda_i+\lambda_j)^2 = \lambda_i^2+\lambda_j^2.
\end{equation}

\Step{3: Solving the variational problem}
In order to characterize maximizers of $q|_{S^{d-1}}$, we need to rule out non-maximizing critical points.
If $a$ is positive semidefinite, it follows that any maximizer $\xi^*$ lies in $U_i$ for some $i\in\{1,\dots,J\}$
and by \eqref{eq:pzerodefinite}, we obtain
\begin{equation}
  g_{\cc}(a)=\max_{S^{d-1}} q = \max_{1\leq i\leq J} \lambda_i^2 = \lambda_+^2,
\end{equation}
which is attained in 
\begin{equation}
  S_{\cc}(a) = \argmax_{S^{d-1}} q = U_+\cap\Sdm.
\end{equation}
This proves \eqref{eq:posdefiniteS0}. If $a$ is negative semidefinite, we obtain \eqref{eq:negdefiniteS0} by the same argument.\\
In contrast, if $a$ is indefinite, maximizing \eqref{eq:pzeroindefinite} for $i,j\in\{1,\dots,J\}$ subject to \eqref{eq:lambdailesslambdaj} yields 
\begin{equation}
  g_{\cc}(a)=\max_{S^{d-1}} q = \lambda_-^2 + \lambda_+^2,
\end{equation}
which, by \eqref{eq:pzeroindefinite}, is attained in 
\begin{equation}
  S_{\cc}(a) = \argmax_{S^{d-1}} q = r_-(U_-\cap\Sdm) + r_+(U_+\cap\Sdm).
\end{equation}
This observation proves \eqref{eq:indefiniteS0}. 
Finally, the characterization \eqref{eq:curlcurlequicomp} of $E_{\cc}$ follows from \eqref{eq:posdefiniteS0} and \eqref{eq:negdefiniteS0}.
Note that indefinite matrices $a\in\rddsym$ are not equicompatible, since 
\eqref{eq:indefiniteS0} implies
\begin{equation}
  \dim S_{\cc}(a)=\dim U_- + \dim U_+ - 2 < d-1,
\end{equation}
which proves that $S_{\cc}(a)\subsetneq \Sdm$.
\end{proof}

Given $a\in\rddsym$, we obtain from \autoref{lem:CompProjCC} and \autoref{prop:optlaminationdirectioncurlcurl} explicit formulae for $p_{\cc,a}$ and its zero set $S_{\cc}(a)$.\\
In order to apply \autoref{thm:incompa}~\textcolor{blue}{(ii)} in the case $d=2$, we need to determine the maximal vanishing order $L_{\cc}(a)$ of $p_{\cc,a}$, which is
the focus of the next proposition.
We emphasize that all other conditions of \autoref{thm:incompa}~\textcolor{blue}{(ii)} have already been verified for $d=2$ and $\cA=\ccurl$ if $a\not\in\RR I_2$.
\begin{proposition}\label{prop:maxVanOrderCC}
Let $d=2$. Consider the differential operator $\cA=\ccurl$; see \eqref{eq:curlcurl}.
Given $a\in\RR^{2\times2}_{sym}\setminus\RR I_2$, let $L_{\cc}(a)$ be given as in \autoref{def:LcAa}.
Then, it holds that
\begin{equation}\label{eq:maxVanOrderCC}
L_{\cc}(a) = 
      \begin{cases}
        2 &\text{if } \rank a=1,\\
        1 &\text{if } \rank a=2.
      \end{cases}
\end{equation}
\end{proposition}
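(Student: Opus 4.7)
The plan is to reduce the computation to an explicit one-variable polynomial problem. Since the compatibility projection, compatibility quantifiers, and set of optimal lamination directions all transform equivariantly under orthogonal changes of coordinates on $\RR^2$, I may work in an orthonormal eigenbasis of $a$ and assume $a = \diag(\lambda_-,\lambda_+)$ with $\lambda_- \le \lambda_+$. Parametrising $\xi = (\cos\theta,\sin\theta)$ and applying \autoref{lem:CompProjCC} gives
\begin{equation}
    \abs{\ppcc(\xi)a}^2 \;=\; 2\abs{a\xi}^2-(a\xi,\xi)^2 \;=\; P(s), \qquad s:=\sin^2\theta,
\end{equation}
where $P(s) := \lambda_-^2+2\lambda_+(\lambda_+-\lambda_-)s-(\lambda_+-\lambda_-)^2 s^2$ is a degree two polynomial in $s$. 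Since $p_{\cc,a}(\xi) = g_{\cc}(a) - P(s)$, the task becomes determining the vanishing order of $g_{\cc}(a)-P(s)$ at each optimal $s^*$ and relating it to $\dist(\xi, S_{\cc}(a))$ via the elementary identity $s-s^* = \sin(2\theta^*)(\theta-\theta^*)+O((\theta-\theta^*)^2)$ and $\abs{\xi-\xi^*}\sim\abs{\theta-\theta^*}$.

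For $\rank a = 2$, I distinguish the semidefinite and indefinite subcases. If $a$ is (say) positive definite, then $0 < \lambda_- < \lambda_+$ since $a\notin\RR I_2$, and \autoref{prop:optlaminationdirectioncurlcurl} gives $g_{\cc}(a)=\lambda_+^2$ and $S_{\cc}(a)=\{\pm e_2\}$, i.e.\ $s^*=1$. A short factorisation yields
\begin{equation}
    p_{\cc,a}(\xi) = (\lambda_+-\lambda_-)(1-s)\bigl[(\lambda_++\lambda_-)-(\lambda_+-\lambda_-)s\bigr],
\end{equation}
whose linear expansion at $s=1$ has leading coefficient $2\lambda_-(\lambda_+-\lambda_-)>0$. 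Setting $\theta=\pi/2+\phi$ yields $1-s=\sin^2\phi\sim\phi^2$, hence $p_{\cc,a}\sim\phi^2\sim\dist^2(\xi,S_{\cc}(a))$. If $a$ is indefinite, then $g_{\cc}(a)=\lambda_-^2+\lambda_+^2$ and a direct computation produces the perfect square
\begin{equation}
    p_{\cc,a}(\xi) = \bigl(\lambda_+ - (\lambda_+-\lambda_-)s\bigr)^2,
\end{equation}
which vanishes at $s^*=\lambda_+/(\lambda_+-\lambda_-)\in(0,1)$ with a simple zero in $s$. Because $s^*\in(0,1)$ forces $\sin(2\theta^*)\ne 0$, this translates to $p_{\cc,a}\sim(\theta-\theta^*)^2$ near each of the four points of $S_{\cc}(a)$. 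Hence $L_{\cc}(a)=1$ in both subcases.

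For $\rank a = 1$, one eigenvalue vanishes and, using the symmetry $p_{\cc,a}=p_{\cc,-a}$, I take $\lambda_-=0$ and $\lambda_+>0$. Then $P(s)=2\lambda_+^2 s - \lambda_+^2 s^2$ and $g_{\cc}(a)=\lambda_+^2$, so
\begin{equation}
    p_{\cc,a}(\xi) = \lambda_+^2(1-s)^2 = \lambda_+^2\cos^4\theta.
\end{equation}
Near $\theta^*=\pm\pi/2$, $\cos\theta\sim\phi$, giving $p_{\cc,a}\sim\phi^4\sim\dist^4(\xi,S_{\cc}(a))$; this simultaneously supplies the lower bound $p_{\cc,a}\ge c\,\dist^4(\xi,S_{\cc}(a))$ near the zeros and rules out $L_{\cc}(a)=1$ because $p_{\cc,a}(\xi)/\dist^2(\xi,S_{\cc}(a))\to 0$ as $\theta\to\pm\pi/2$. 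Thus $L_{\cc}(a)=2$. The only mild subtlety is promoting the local Taylor estimates to a \emph{uniform} lower bound $p_{\cc,a}(\xi)\ge c\,\dist^{2L}(\xi,S_{\cc}(a))$ on all of $S^1$; but $p_{\cc,a}$ is smooth, nonnegative, and vanishes only at the finite set $S_{\cc}(a)$ identified above, so a standard compactness argument away from $S_{\cc}(a)$ combined with the local expansions closes the estimate.
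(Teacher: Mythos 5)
Your proposal is correct; all the key computations check out. Writing $a=\diag(\lambda_-,\lambda_+)$ and $s=\sin^2\theta$, one indeed gets $\lvert\ppcc(\xi)a\rvert^2=\lambda_-^2+2\lambda_+(\lambda_+-\lambda_-)s-(\lambda_+-\lambda_-)^2s^2$, and your three factorizations of $p_{\cc,a}$ --- $(\lambda_+-\lambda_-)(1-s)\bigl[(\lambda_++\lambda_-)-(\lambda_+-\lambda_-)s\bigr]$ in the definite case, the perfect square $\bigl(\lambda_+-(\lambda_+-\lambda_-)s\bigr)^2$ in the indefinite case, and $\lambda_+^2(1-s)^2=\lambda_+^2\cos^4\theta$ in the rank-one case --- are all verified by direct expansion and immediately yield the claimed vanishing orders once you observe that $s\mapsto\theta$ is non-degenerate at interior optima ($\sin 2\theta^*\neq 0$) and quadratically degenerate at $s^*\in\{0,1\}$.

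The route is genuinely different from the paper's in its computational organization, and cleaner. The paper works directly on $S^1$, parametrizing points near the zero set as $\xi=(\xi_0+\rho\nu)/\sqrt{1+\rho^2}$ with $\xi_0\in S_{\cc}(a)$ and $\nu$ tangent, and expands $2\lvert a\xi\rvert^2-(a\xi,\xi)^2$ in powers of $\rho$; in the indefinite case this is a long brute-force computation whose first-order term must be shown to cancel and whose second-order coefficient must be identified as $-2\lambda_-\lambda_+>0$. Your reduction to the single-variable quadratic $P(s)$ replaces all of this with a two-line factorization, and it has the added benefit of being global: the closed form of $g_{\cc}(a)-P(s)$ exhibits the zero set and its multiplicity on all of $S^1$ at once, so the passage from local Taylor estimates to the uniform bound $p_{\cc,a}\geq c\,\dist^{2L}(\cdot,S_{\cc}(a))$ required by \autoref{def:maxvanishorderdef} is essentially automatic (you still invoke compactness, which is fine). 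What your approach costs is the preliminary diagonalization step, i.e.\ the observation that $\ppcc$, $g_{\cc}$ and $S_{\cc}$ are equivariant under conjugation by $O(2)$; this is true and standard, but you should state the identity $\ppcc(R\xi)(RaR^T)=R\bigl(\ppcc(\xi)a\bigr)R^T$ explicitly if you write this up. Two further small points: make the negative-definite subcase explicit (it follows from $p_{\cc,-a}=p_{\cc,a}$ exactly as in your rank-one reduction), and note that your indefinite-case zero set $\{s=s^*\}$ consists of the four points $(\pm r_-,\pm r_+)$, consistent with \autoref{prop:optlaminationdirectioncurlcurl}.
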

\begin{proof}
We need to show that the maximal vanishing order of the function $p_{\cc,a}$, introduced in \autoref{def:LcAa}, is given by the right-hand side of \eqref{eq:maxVanOrderCC}.
To do so, we use the formulae \eqref{eq:pcAaForumla2} and \eqref{eq:CompProjCurlCurlFormula}, which yield
\begin{equation}\label{eq:pcAaForumla3}
  p_{\cc,a}(\xi) = g_{\cc}(a)- \big(2\lvert{a\xi}\rvert^2 - (a\xi,\xi)^2\big)\quad\forall\xi\in S^{d-1},
\end{equation}
where $g_{\cc}(a)$ is explicitly given by \autoref{prop:optlaminationdirectioncurlcurl}.\\
Before we enter the proof, note that the spectral theorem and the assumption $a\in\RR^{2\times2}_{sym}\setminus\RR I_2$ imply 
that there is an orthonormal basis of eigenvectors $(\xi_-,\xi_+)\subset\RR^2$ of $a$ such that
\begin{equation}\label{eq:VanOrderCCSemDefFormOfa}
  a=\lambda_-\xi_-\otimes\xi_-+\lambda_+\xi_+\otimes\xi_+ \text{ with eigenvalues } \lambda_-<\lambda_+.
\end{equation}
The proof is now organized into two steps. First, we study the situation for (positive and negative) semidefinite matrices $a$. Then, we consider the indefinite setting.
\Step{1: The maximal vanishing order for semidefinite matrices}
Suppose that $a$ is semidefinite.
Without loss of generality, we may assume that $a$ is positive semidefinite. 
In case $a$ is negative semidefinite, consider $-a$ instead and note that $g_{\cc}(-a)=g_{\cc}(a)$, which by \eqref{eq:pcAaForumla3} yields $p_{\cc,-a}=p_{\cc,a}$.\medbreak

Due to \autoref{prop:optlaminationdirectioncurlcurl} and \eqref{eq:VanOrderCCSemDefFormOfa}, we have $g_{\cc}(a)=\lambda_+^2$ and $S_{\cc}(a)=\{\pm\xi_+\}$.
The maximal vanishing order is determined by the local behavior of $p_{\cc,a}$ close to its zero set $S_{\cc}(a)$.
We now parametrize points $\xi\in S^1$ close to $S_{\cc}(a)$ by 
\begin{equation}\label{eq:ParametrizeClosetoSccSemidef}
  \xi=\frac{1}{\sqrt{1+\rho^2}}(\xi_0+\rho \nu) \text{ with } \xi_0\in\{\pm\xi_+\},\, \nu\in\{\pm\xi_-\} \text{ and } \rho\in(0,1),
\end{equation}
where we are using that $\xi_-$ is a unit vector spanning the tangent space of $S^1$ at both $\pm\xi_+$.
Applying \eqref{eq:LCPSphereDistanceProjection}, we see that, for all $\xi\in S^1$ as in \eqref{eq:ParametrizeClosetoSccSemidef}, there holds
\begin{equation}\label{eq:distanceCircleScaling}
  \dist^2(\xi,S_{\cc}(a)) \sim \rho^2.
\end{equation}
Now, let $\xi\in S^1$ be parametrized as in \eqref{eq:ParametrizeClosetoSccSemidef}. To determine $p_{\cc,a}$ using \eqref{eq:pcAaForumla3}, we calculate
\begin{align}\label{eq:absaxi}
  \abs{a\xi}^2 = \frac{1}{1+\rho^2}\left(\abs{a\xi_0}^2 +2\rho(a\xi_0, a\nu) + \rho^2\abs{a\nu}^2\right)
  = \frac{1}{1+\rho^2} \left(\lambda^2_{+}+\rho^2\lambda_-^2\right)
\end{align}
by observing $(a\xi_0, a\nu)=\lambda_+\lambda_-(\xi_0,\nu)=0$. Similarly, we find
\begin{align}\label{eq:axixi}
  (a\xi,\xi) = \frac{1}{1+\rho^2} \left( (a\xi_0,\xi_0) + 2 \rho(a\xi_0,\nu) +\rho^2 (a\nu,\nu) \right)
  = \frac{1}{1+\rho^2} \left( \lambda_{+} +\rho^2 \lambda_- \right),
\end{align}
which yields
\begin{multline}
2\lvert{a\xi}\rvert^2 - (a\xi,\xi)^2 
  = \frac{2}{1+\rho^2} \left(\lambda^2_{+}+\rho^2\lambda_-^2  \right)
  - \frac{1}{(1+\rho^2)^2}\left(  \lambda_{+}^2 +2\lambda_{+}\rho^2  \lambda_- + \rho^4 \lambda_-^2\right)\\
  = \frac{1+2\rho^2}{(1+\rho^2)^2}\lambda_{+}^2 + \frac{2\rho^2}{(1+\rho^2)^2}\left(\lambda_-^2-\lambda_{+}\lambda_- \right)
  +\frac{\rho^4}{(1+\rho^2)^2}\lambda_-^2.
\end{multline}
Since $g_{\cc}(a)=\lambda_+^2$, we infer from \eqref{eq:pcAaForumla3} that
\begin{equation}\label{eq:vanishorderkeyestimate}
    p_{\cc,a}(\xi)= \frac{2\rho^2}{(1+\rho^2)^2} \left(\lambda_+  - \lambda_- \right)\lambda_- +\frac{\rho^4}{(1+\rho^2)^2}\left(\lambda_+^2 -\lambda_-^2\right).
\end{equation}
As $a$ is positive semidefinite, we have $0\leq \lambda_-<\lambda_+$.
If $\rank a=1$, there holds that $\lambda_-=0$ and $p_{\cc,a}(\xi)\sim\rho^4$. 
In that case, it follows by \autoref{def:maxvanishorderdef} and \eqref{eq:distanceCircleScaling} that $L_{\cc}(a)=2$.
In contrast, if $\rank a=2$, we have $\lambda_->0$ and $p_{\cc,a}(\xi)\sim\rho^2$, which proves $L_{\cc}(a)=1$.

\Step{2: The maximal vanishing order for indefinite matrices}
Next, we assume that $a$ is indefinite. Proceeding to use the notation introduced in \eqref{eq:VanOrderCCSemDefFormOfa},
we have $\lambda_-<0<\lambda_+$.
By \autoref{prop:optlaminationdirectioncurlcurl}, it holds that
\begin{equation}\label{eq:gccsccrecall}
  g_{\cc}(a) = \lambda_{-}^2 + \lambda_{+}^2, \qquad \qquad  S_{\cc}(a)= \{r_- x + r_+ y: x=\pm\xi_-\text{ and }y=\pm\xi_+\},
\end{equation}
where $r_{-},\,r_{+}\in(0,1)$ are determined by
\begin{equation}\label{eq:rminusrplus}
  r_{-}^2 = \frac{-\lambda_{-}}{\lambda_{+}-\lambda_{-}},\qquad\qquad r_{+}^2 = \frac{\lambda_{+}}{\lambda_{+}-\lambda_{-}}. 
\end{equation}
Note that the tangent space of $S^1$ at $\xi_0=r_- x + r_+ y\in S_{\cc}(a)$ is spanned by the orthogonal unit vector $\nu=r_- y - r_+ x$.
As $S_{\cc}(a)$ only consists of four points, there exists $\rho_0\in(0,1)$ such that points $\xi\in S^1$ in proximity of $S_{\cc}(a)$ can be uniquely parametrized by
\begin{equation}\label{eq:ParametrizeClosetoSccIndef}
  \xi=\frac{1}{\sqrt{1+\rho^2}}(\xi_0+\rho \nu) \text{ with } \xi_0=r_- x + r_+ y\in S_{\cc}(a),\, \nu = r_- y - r_+ x \text{ and } \rho\in(0,\rho_0).
\end{equation}
As in the first step of the proof, we compute $p_{\cc,a}$ near $S_{\cc}(a)$ using formula \eqref{eq:pcAaForumla3}.
Let $\xi\in S^1$ be as in \eqref{eq:ParametrizeClosetoSccIndef}.
From \eqref{eq:absaxi} and \eqref{eq:axixi}, we know 
\begin{equation}\label{eq:axi0axi0xi0}
  \begin{aligned}
    \abs{a\xi}^2 &= \frac{1}{1+\rho^2}\left(\abs{a\xi_0}^2 +2\rho(a\xi_0, a\nu) + \rho^2\abs{a\nu}^2\right),\\
    (a\xi,\xi) &= \frac{1}{1+\rho^2} \Big( (a\xi_0,\xi_0) + 2 \rho(a\xi_0,\nu) +\rho^2 (a\nu,\nu) \Big).
  \end{aligned}
\end{equation}
By \eqref{eq:rminusrplus} and \eqref{eq:ParametrizeClosetoSccIndef}, it follows that
\begin{equation}\label{eq:axixi2}
  \begin{aligned}
    \abs{a\xi_0}^2 &= \lambda_-^2 r_-^2 + \lambda_+^2 r_+^2 = \frac{\lambda_+^3-\lambda_-^3}{\lambda_{+}-\lambda_{-}} =\lambda_+^2 + \lambda_+ \lambda_- + \lambda_-^2,\\
    (a\xi_0,\xi_0) &= \lambda_- r_-^2 +\lambda_+ r_+^2 = \frac{\lambda_+^2-\lambda_-^2}{\lambda_{+}-\lambda_{-}}= \lambda_-+\lambda_+.
  \end{aligned}
\end{equation}
With \eqref{eq:axi0axi0xi0} and \eqref{eq:axixi2}, we obtain
\begin{equation}
  \begin{aligned}
    2\lvert{a\xi}\rvert^2 - &(a\xi,\xi)^2 \\
    &= \frac{2}{1+\rho^2}\left(\lambda_+^2 + \lambda_+ \lambda_- + \lambda_-^2 +2\rho(a\xi_0, a\nu) + \rho^2\abs{a\nu}^2\right)\\
    &\hspace{40pt}- \frac{1}{(1+\rho^2)^2} \Big( \lambda_-+\lambda_+ + 2 \rho(a\xi_0,\nu) +\rho^2 (a\nu,\nu) \Big)^2\\
    &= \frac{2}{1+\rho^2}\left(\lambda_+^2 + \lambda_+ \lambda_- + \lambda_-^2 \right)
    - \frac{1}{(1+\rho^2)^2} \left( \lambda_-+\lambda_+  \right)^2 + \frac{2}{1+\rho^2}\left(2\rho(a\xi_0, a\nu) + \rho^2\abs{a\nu}^2\right)\\
    &\hspace{40pt}- \frac{1}{(1+\rho^2)^2} \left(   2\left( \lambda_-+\lambda_+  \right)\left[ 2 \rho(a\xi_0,\nu) +\rho^2 (a\nu,\nu) \right]    +\left[ 2 \rho(a\xi_0,\nu) +\rho^2 (a\nu,\nu) \right]^2\right).
  \end{aligned}
\end{equation}
Bringing the fractions to a common denominator gives
\begin{equation}
\begin{aligned}
  2\lvert{a\xi}\rvert^2 - &(a\xi,\xi)^2 \\
  &= \frac{1}{(1+\rho^2)^2}\Big[ 2\left(\lambda_+^2 + \lambda_+ \lambda_- + \lambda_-^2 \right)- \left( \lambda_-+\lambda_+  \right)^2\Big]
    +\frac{2\rho^2}{(1+\rho^2)^2} (\lambda_+^2 + \lambda_+ \lambda_- + \lambda_-^2 )\\
    &\hspace{40pt}+ \frac{1}{(1+\rho^2)^2}\left(4\rho(a\xi_0, a\nu) + 2\rho^2\abs{a\nu}^2\right) +\frac{1}{(1+\rho^2)^2}\left(4\rho^3(a\xi_0, a\nu) + 2\rho^4\abs{a\nu}^2\right)  \\
    &\hspace{40pt}- \frac{1}{(1+\rho^2)^2} \left(   2\left( \lambda_-+\lambda_+  \right)\left[ 2 \rho(a\xi_0,\nu) +\rho^2 (a\nu,\nu) \right]    +\left[ 2 \rho(a\xi_0,\nu) +\rho^2 (a\nu,\nu) \right]^2\right).
\end{aligned}
\end{equation}
The square in the last expression is
\begin{equation}
  \left[ 2 \rho(a\xi_0,\nu) +\rho^2 (a\nu,\nu) \right]^2 = 4\rho^2(a\xi_0,\nu)^2 + 4\rho^3(a\xi_0,\nu)(a\nu,\nu) + \rho^4(a\nu,\nu)^2.
\end{equation}
Plugging this in and sorting by order of $\rho$, we find
\begin{equation}
\begin{aligned}
  2\lvert{a\xi}\rvert^2 - &(a\xi,\xi)^2 \\
  &= \frac{1}{(1+\rho^2)^2}(\lambda_+^2 + \lambda_-^2)+ \frac{\rho}{(1+\rho^2)^2}\Big(4(a\xi_0, a\nu)-  4\left( \lambda_-+\lambda_+  \right)  (a\xi_0,\nu)\Big) \\
    &\hspace{40pt} +\frac{\rho^2}{(1+\rho^2)^2} \Big(2(\lambda_+^2 + \lambda_+ \lambda_- + \lambda_-^2 ) + 2\abs{a\nu}^2-  2\left( \lambda_-+\lambda_+  \right)  (a\nu,\nu) -4(a\xi_0,\nu)^2\Big)\\
    &\hspace{40pt}+\frac{\rho^3}{(1+\rho^2)^2}\Big(4(a\xi_0, a\nu) - 4(a\xi_0,\nu)(a\nu,\nu) \Big)  + \frac{\rho^4}{(1+\rho^2)^2} \left( 2\abs{a\nu}^2  - (a\nu,\nu)^2\right).
\end{aligned}
\end{equation}
By \eqref{eq:pcAaForumla3} and \eqref{eq:gccsccrecall}, it follows that
\begin{equation}\label{eq:finalorderformula}
\begin{aligned}
    p_{\cc,a}(\xi) &=\frac{2\rho^2+\rho^4}{(1+\rho^2)^2}(\lambda_+^2 + \lambda_-^2)
    + \frac{4\rho}{(1+\rho^2)^2}\Big( \left( \lambda_-+\lambda_+  \right)(a\xi_0,\nu)    -   (a\xi_0, a\nu)\Big) \\
    &\hspace{40pt}+\frac{2\rho^2}{(1+\rho^2)^2} \Big(   \left( \lambda_-+\lambda_+  \right)  (a\nu,\nu) +2(a\xi_0,\nu)^2
                                          -(\lambda_+^2 + \lambda_+ \lambda_- + \lambda_-^2 )  - \abs{a\nu}^2  \Big)\\
    &\hspace{40pt}+\frac{4\rho^3}{(1+\rho^2)^2}\Big(    (a\xi_0,\nu)(a\nu,\nu)   -    (a\xi_0, a\nu)  \Big)  
    + \frac{\rho^4}{(1+\rho^2)^2} \left( (a\nu,\nu)^2- 2\abs{a\nu}^2 \right).
\end{aligned}
\end{equation}
Since $\xi_0$ is given as in \eqref{eq:ParametrizeClosetoSccIndef}, a quick computation shows that
\begin{equation}\label{eq:eigvec}
\begin{aligned}
    a^2\xi_0-(\lambda_-+\lambda_+)a\xi_0 &= \lambda_-^2r_-x+\lambda_+^2r_+y    -    (\lambda_-+\lambda_+)(\lambda_- r_-x+\lambda_+r_+y)\\
    &= -\lambda_-\lambda_+ \xi_0,
\end{aligned}
\end{equation}
Together with $\nu\perp\xi_0$, this implies that the first order term in \eqref{eq:finalorderformula} vanishes
\begin{equation}\label{eq:firstorderterm}
  \left( \lambda_-+\lambda_+  \right)(a\xi_0,\nu) - (a\xi_0, a\nu) = \Big((\lambda_-+\lambda_+)a  \xi_0- a^2 \xi_0,\,\nu\Big) = \lambda_-\lambda_+ (\xi_0,\nu)=0.
\end{equation}
Next, note that the second order term in \eqref{eq:finalorderformula} is
\begin{equation}\label{eq:secondorderterm}
\begin{aligned}
  2(\lambda_+^2 + \lambda_-^2) + 2\left( \lambda_-+\lambda_+  \right)  (a\nu,\nu)  +4(a\xi_0,\nu)^2
  -2(\lambda_+^2 + \lambda_+ \lambda_- + \lambda_-^2 )  - 2\abs{a\nu}^2\\
  =2\left( \lambda_-+\lambda_+  \right) (a\nu,\nu)     +4 (a\xi_0,\nu)^2 
  -2\lambda_+ \lambda_-  - 2\abs{a\nu}^2.
\end{aligned}
\end{equation}
We use \eqref{eq:rminusrplus} and the explicit form of $\xi_0$ and $\nu$ in \eqref{eq:ParametrizeClosetoSccIndef} to compute
\begin{equation}
\begin{aligned}
    &(a\nu,\nu) = \lambda_+r_-^2 +\lambda_-r_+^2=0,\\
    &(a\xi_0,\nu) = \lambda_+r_-r_+ -\lambda_-r_-r_+ =\sqrt{-\lambda_-\lambda_+},\\
    &\abs{a\nu}^2 = \lambda_+^2r_-^2 +\lambda_-^2r_+^2  =-\lambda_-\lambda_+.
\end{aligned}
\end{equation}
For all $\xi_0$ and $\nu$ as in \eqref{eq:ParametrizeClosetoSccIndef}, we obtain 
\begin{equation}\label{eq:IndefEstVanOrder}
\left( \lambda_-+\lambda_+  \right) (a\nu,\nu)     +2 (a\xi_0,\nu)^2      -\lambda_+ \lambda_-  - \abs{a\nu}^2 = -2\lambda_-\lambda_+  >0.
\end{equation}
As a consequence of \eqref{eq:finalorderformula}, \eqref{eq:secondorderterm} and \eqref{eq:IndefEstVanOrder}, it holds that $L_{\cc}(a)=1$ if $a$ is indefinite.
Here, we again used that $\dist^2(\xi,S_{\cc}(a))\sim\rho^2$ for all $\xi\in  S^1$ as in \eqref{eq:ParametrizeClosetoSccIndef}.
\end{proof}
With \autoref{prop:maxVanOrderCC}, we are ready to prove the first and second statements of \autoref{thm:incompsyma}.
\begin{proof}[Proof of \autoref{thm:incompsyma}~\textcolor{blue}{(i+ii)}]
Let $d=2$.
By the \autoref{lem:CompProjCC}, we know that the differential operator $\ccurl$ has constant rank and spanning wave cone. Hence, \autoref{thm:incompsyma}~\textcolor{blue}{(i)}
follows by applying \autoref{thm:incompa}~\textcolor{blue}{(i)} with $\cA=\ccurl$.\medbreak

In order to prove \autoref{thm:incompsyma}~\textcolor{blue}{(ii)}, let $a=a_1-a_0\in\RR^{2\times2}_{sym}$. Suppose that $a\not\in E_{\cc}=\RR I_2$; see \autoref{prop:optlaminationdirectioncurlcurl}.
Then, the requirements for \autoref{thm:incompa}~\textcolor{blue}{(ii)} are satisfied because $S_{\cc}(a)$ is contained in a union of up to two one-dimensional subspaces of $\RR^2$
(see \autoref{prop:optlaminationdirectioncurlcurl} and \autoref{rmk:OptLamCCurlInTwoDim}) and the maximal vanishing order is characterized as in \autoref{prop:maxVanOrderCC}.
The lower bound \autoref{thm:incompsyma}~\textcolor{blue}{(ii)} follows.
\end{proof}

\pagebreak
\section{Upper bounds for the singularly perturbed two-well energies for the gradient and the divergence}\label{sec:UpperBoundGradDiv}
In \autoref{subsec:ApplicationOfGeneralLowerScaling}, we established $\eps^{\nicefrac{2}{3}}$-lower scaling bounds for the zeroth-order-corrected singularly perturbed two-well energies
for the gradient and the divergence.
The goal of this section is to prove \autoref{thm:incompgrad}~\textcolor{blue}{(iii)} by deriving the matching upper bounds.\smallbreak

Our approach proceeds via an intermediate result, \autoref{prop:a4}, which serves as a simplified and more tractable formulation of the theorem.
The argument is then divided into two parts:
In \autoref{subsec:ChangeOfVar}, we begin by introducing \autoref{prop:a4} and show how it implies \autoref{thm:incompgrad}~\textcolor{blue}{(iii)} by means of a suitable change of variables. The remainder of the argument, presented in \autoref{subsec:branchingConstruction}, is devoted to the proof of \autoref{prop:a4} and relies on a branching construction.
\subsection{Reduction to a simplified setting}\label{subsec:ChangeOfVar}
In this section, we show that for the proof of \autoref{thm:incompgrad}~\textcolor{blue}{(iii)}, we may assume without loss of generality that $\cA=\curl$ and that the coordinate direction $e_1\in\RR^2$ is an optimal lamination direction.
Under these assumptions, \autoref{thm:incompgrad}~\textcolor{blue}{(iii)} reduces to the following simplified setting.

\begin{breakproposition}[Upper scaling bound of the two-well energy for the gradient]\label{prop:a4}
Let $\cK=\{a_0,a_1\}\subset\RR^{2\times2}$ and $F\in\RR^{2\times2}$.
For the differential operator $\cA=\curl$ given by \eqref{eq:curl},
consider the energy $E^{\curl}_\eps(F,\cK)$ in the domain $Q=(0,1)^2$ given by \eqref{eq:MinSingPertAfreeEnergy}.
For $a:=a_1-a_0$, let $S_{\curl}(a)$ be the set of optimal lamination directions as in \autoref{def:optLaminationDirections}.
Let $\ttheta_{\curl}=\ttheta_{\curl}(F,\cK)$ be given as in \autoref{prop:optvolfrac}.
Suppose that $\ttheta_{\curl}\in(0,1)$ and that $e_1\in S_{\curl}(a)$.\\
Then, for all $\eps\in(0,1)$, there exist a vector field $v_\eps\in W^{1,\infty}_0(Q;\RR^2)$ and a phase arrangement
$\chi_\eps\in BV(Q; \cK)$ such that, for $u_\eps := F + \nabla v_\eps\in\cD_F^{\curl}(Q)$, the following energy estimate holds:
\begin{equation}\label{eq:a4Estimate}
  E^{\curl}_\eps(u_\eps,\chi_\eps) - E^{\curl}_0(F,\cK) \leq C\eps^{\nicefrac{2}{3}}\quad\forall \eps\in (0,1)
\end{equation}
for some constant $C=C(a,\ttheta_{\curl})>0$.
\end{breakproposition}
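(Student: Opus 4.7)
The plan is to reduce the incompatible problem to the classical compatible Kohn--Müller setting via the compatible approximation of \autoref{def:compatibleApprox} using the optimal lamination direction $\xi^{*} = e_{1}$. Set $\tcK = \{\ta_{0}, \ta_{1}\}$ with $\ta_{0} := F - \ttheta_{\curl}\, \ppc(e_{1}) a$ and $\ta_{1} := F + (1 - \ttheta_{\curl})\, \ppc(e_{1}) a$. By \autoref{lem:CompProjCurl}, the matrix $p := \ppc(e_{1}) a = (a e_{1}) \otimes e_{1} \in V_{\curl}(e_{1})$ is rank-one, so $\ta_{1} - \ta_{0} = p$ is a rank-one connection with lamination direction $e_{1}$ and $F = (1 - \ttheta_{\curl}) \ta_{0} + \ttheta_{\curl} \ta_{1}$; hence $(F, \tcK)$ is compatible. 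Using the orthogonality $(a - p) \perp p$, the assumption $e_{1} \in S_{\curl}(a)$ (which gives $h_{\curl}(a) = \abs{a - p}^{2}$), and \autoref{thm:qwdom}, a short expansion gives the volume-fraction identity
\begin{equation*}
    (1 - \ttheta_{\curl}) \abs{\ta_{0} - a_{0}}^{2} + \ttheta_{\curl} \abs{\ta_{1} - a_{1}}^{2} = \abs{F - a_{\ttheta_{\curl}}}^{2} + \ttheta_{\curl}(1 - \ttheta_{\curl}) h_{\curl}(a) = \tfrac{1}{\abs{Q}} E_{0}^{\curl}(F, \cK),
\end{equation*}
so that oscillating $\chi \in \{a_{0}, a_{1}\}$ with volume fractions $(1 - \ttheta_{\curl}, \ttheta_{\curl})$ reproduces the zeroth-order energy exactly.

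For the compatible data $(F, \tcK)$ I then apply a classical Kohn--Müller branching construction on $Q = (0,1)^{2}$: a simple laminate in direction $e_{1}$ of bulk period $h \sim \eps^{\nicefrac{1}{3}}$ with volume fractions $(1 - \ttheta_{\curl}, \ttheta_{\curl})$, successively halved in thin boundary layers near $x_{2} \in \{0, 1\}$ so as to realize the Dirichlet condition $\tilde{v}_{\eps} = 0$ on $\partial Q$. This produces $\tilde{v}_{\eps} \in W_{0}^{1, \infty}(Q; \RR^{2})$ and $\tchi_{\eps} \in BV(Q; \tcK)$ with the standard estimates $\int_{Q} \abs{\tu_{\eps} - \tchi_{\eps}}^{2} dx \leq C \eps^{\nicefrac{2}{3}}$ and $\eps \norm{\nabla \tchi_{\eps}}_{TV(Q)} \leq C \eps^{\nicefrac{2}{3}}$, where $\tu_{\eps} := F + \nabla \tilde{v}_{\eps}$. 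I will arrange the construction so that (a) the level sets $V_{i} := \{\tchi_{\eps} = \ta_{i}\}$ have exact volumes $\abs{V_{0}} = (1 - \ttheta_{\curl}) \abs{Q}$ and $\abs{V_{1}} = \ttheta_{\curl} \abs{Q}$, and (b) every interface between $V_{0}$ and $V_{1}$ in the interior of $Q$ is a vertical segment with normal $\pm e_{1}$; both properties hold naturally for a plate-based laminate refining in the $e_{1}$ direction. I then set $v_{\eps} := \tilde{v}_{\eps}$ and $\chi_{\eps} := a_{i}$ wherever $\tchi_{\eps} = \ta_{i}$. Since the interfaces of $\chi_{\eps}$ and $\tchi_{\eps}$ coincide (only the jump rescales from $\abs{p}$ to $\abs{a}$), the surface estimate transfers: $\eps \norm{\nabla \chi_{\eps}}_{TV(Q)} = \tfrac{\abs{a}}{\abs{p}} \eps \norm{\nabla \tchi_{\eps}}_{TV(Q)} \leq C \eps^{\nicefrac{2}{3}}$.

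For the elastic term I expand
\begin{equation*}
    \int_{Q} \abs{u_{\eps} - \chi_{\eps}}^{2} dx = \int_{Q} \abs{\tu_{\eps} - \tchi_{\eps}}^{2} dx + 2 \int_{Q} (\tu_{\eps} - \tchi_{\eps}, \tchi_{\eps} - \chi_{\eps}) dx + \int_{Q} \abs{\tchi_{\eps} - \chi_{\eps}}^{2} dx.
\end{equation*}
The first summand is $\leq C \eps^{\nicefrac{2}{3}}$ by the Kohn--Müller bound, and the third summand equals $\abs{V_{0}} \abs{\ta_{0} - a_{0}}^{2} + \abs{V_{1}} \abs{\ta_{1} - a_{1}}^{2} = E_{0}^{\curl}(F, \cK)$ by (a) and paragraph 1. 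The cross term I split by writing $\tu_{\eps} - \tchi_{\eps} = (F - \tchi_{\eps}) + \nabla \tilde{v}_{\eps}$. The first piece evaluates to $K \cdot [\ttheta_{\curl} \abs{V_{0}} - (1 - \ttheta_{\curl}) \abs{V_{1}}]$ with $K := (p, \ta_{0} - a_{0}) = (p, \ta_{1} - a_{1})$ (the equality follows from $(a - p, p) = 0$), hence vanishes thanks to (a). For the second piece, using $\int_{V_{0}} \nabla \tilde{v}_{\eps} + \int_{V_{1}} \nabla \tilde{v}_{\eps} = 0$ (since $\tilde{v}_{\eps} = 0$ on $\partial Q$), it collapses to $(a - p, B)$ with $B := \int_{V_{0}} \nabla \tilde{v}_{\eps} dx$; applying the divergence theorem together with $\tilde{v}_{\eps}|_{\partial Q} = 0$ and (b) shows that $B = (\text{vector}) \otimes e_{1} \in V_{\curl}(e_{1})$, while $a - p = a - \ppc(e_{1}) a \in V_{\curl}(e_{1})^{\perp}$ by definition of the compatibility projection, so $(a - p, B) = 0$. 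Combining the three pieces yields $E^{\curl}_{\eps}(u_{\eps}, \chi_{\eps}) - E^{\curl}_{0}(F, \cK) \leq C \eps^{\nicefrac{2}{3}}$. The main technical obstacle will be designing the branching so that the Dirichlet condition, the $\eps^{\nicefrac{2}{3}}$ energy bounds, and the structural properties (a), (b) all hold simultaneously; this is achieved by a layerwise piecewise-affine ansatz in which each branching generation halves the period symmetrically (preserving total volume fractions and vertical interfaces) combined with a thin closing region of area $O(\eps^{\nicefrac{2}{3}})$ near $x_{2} \in \{0, 1\}$ realizing the final transition to $\tilde{v}_{\eps} = 0$.
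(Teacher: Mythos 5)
Your skeleton coincides with the paper's: pass to the compatible approximation $(F,\tcK)$, run a branching construction for the compatible data, transfer the phase arrangement back to $\cK$, and split the elastic energy into the compatible part, a cross term, and $\int\abs{\tchi_\eps-\chi_\eps}^2$, which reproduces $E_0^{\curl}(F,\cK)$ exactly via the volume--fraction identity (your identity is \autoref{lem:importantequality} combined with $\abs{a-\ppc(e_1)a}^2=h_{\curl}(a)$). The surface-energy transfer and the vanishing of the first piece of the cross term (via $K=(p,\ta_0-a_0)=(p,\ta_1-a_1)$ and the exact volume fractions) are correct.

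The gap is property (b) and the divergence-theorem argument built on it. A branching construction cannot have all interior phase interfaces vertical: if the interfaces of $\tchi_\eps$ were all normal to $e_1$, the laminate could not refine toward $x_2\in\{0,1\}$, and the closing region would carry an $O(1)$ mismatch in the $e_1\otimes e_1$-component of $\nabla \tilde{v}_\eps-(\tchi_\eps-F)$ over a set of area $\sim l$ per period, which reproduces the $\eps^{\nicefrac{1}{2}}$ scaling of simple laminates rather than $\eps^{\nicefrac{2}{3}}$ (one also cannot keep $\tilde v_\eps$ continuous across an abrupt period-halving without a tapering region). So (b), the Dirichlet condition, and the $\eps^{\nicefrac{2}{3}}$ bounds cannot hold simultaneously, and your deduction $B e_2=0$ via $\int_{\del V_0}\tilde{v}_\eps\, n_2\,dS=0$ collapses. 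The identity you actually need, $\int_{V_0}\del_2 \tilde{v}_\eps\,dx=0$, is nevertheless true for the construction — but for a different reason: in each unit cell the two tilted $\ta_1$-regions carry opposite perturbations $\pm\gamma\, b\otimes e_2$ (interface normals $e_1\pm\gamma e_2$), so their contributions cancel by the left--right mirror symmetry of the cell, and in the cut-off layer the $b\otimes e_2$-contribution cancels because the sawtooth profile $\varphi_l$ is odd. This is precisely how the paper disposes of the cross term, cell by cell (the term $I_2$ in \autoref{lem:aa1} and \autoref{lem:aa2}, the latter also using $(\ppc(e_1)a,\,a-\ppc(e_1)a)=0$, the same orthogonality behind your constant $K$). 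To repair your proof, replace (b) by this cellwise mirror symmetry, or verify $\int_{V_0}\del_2\tilde{v}_\eps\,dx=0$ directly from the explicit construction; as written, the step justifying $(a-p,B)=0$ fails.
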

We postpone the proof of \autoref{prop:a4} to \autoref{subsec:branchingConstruction}.
Assuming the validity of \autoref{prop:a4}, we now prove \autoref{thm:incompgrad}~\textcolor{blue}{(iii)}.
\begin{proof}[Proof of \autoref{thm:incompgrad}~\textcolor{blue}{(iii)}]
We begin by recollecting the assumptions of \autoref{thm:incompgrad}~\textcolor{blue}{(iii)}:
Let $\cK=\{a_0,a_1\}\subset\RR^{2\times 2}$ and $F\in\RR^{2\times 2}$. Denote by $a:=a_1-a_0$.
Let $\cA\in\{\curl,\Div\}$ and $\xi^*\in S_{\cA}(a)$.
Suppose that $\ttheta_{\cA}=\ttheta_{\cA}(F,\cK)\in(0,1)$ and that $\Omega\subset\RR^2$ is a rotated unit square with two faces normal to $\xi^*$.\medbreak

The proof is now organized into two steps. In the first step, we show the $\eps^{\nicefrac{2}{3}}$-upper bound for the differential operator $\curl$. In the second step, we examine the upper bound for the divergence.
\Step{1: The upper bound for the gradient two-well energy}
Suppose that $\cA=\curl$. Using a change of variables, we will derive an equivalent gradient two-well problem with data $(F',\cK')$ in the standard unit square and optimal lamination direction $e_1$. This allows us to apply \autoref{prop:a4} and prove the energy scaling for the original problem with data $(F,\cK)$ in $\Omega$.\medbreak
Let $R\in SO(2)$ be a rotation with 
\begin{equation}\label{eq:Re1xiStar}
  Re_1 =\xi^*.
\end{equation}
Since $E^{\curl}_\eps(F,\cK;\Omega)$ is invariant under translations of $\Omega$ for all $\eps\geq 0$, we may assume without loss of generality that
\begin{equation}\label{eq:ChangeofVarDomain}
  \Omega = R Q,\text{ where } Q=(0,1)^2.
\end{equation}
Now, let $F':=FR$ and $\cK':=\{a_0',a_1'\}$ with $a_j':=a_j R$. Let $a':=a_1'-a_0'$.
By \autoref{lem:CompProjCurl}, we find
\begin{equation}
\abs{\ppc(\xi)a'}^2=\abs{a'\xi}^2 = \abs{aR\xi}^2 \quad\forall\xi\in S^1,
\end{equation}
which, recalling \autoref{def:compquant} and \autoref{def:optLaminationDirections}, implies
\begin{equation}\label{eq:ChangeofVarCompQuantOptLam}
  h_{\curl}(a') = h_{\curl}(a) \text{ and } R S_{\curl}(a')=S_{\curl}(a).
\end{equation}
Since $\xi^*\in S_{\curl}(a)$, we obtain $e_1\in S_{\curl}(a')$ from \eqref{eq:Re1xiStar}.
Another consequence of \eqref{eq:ChangeofVarCompQuantOptLam} is that
\begin{equation}\label{eq:ChangeofVarQuasiConv}
    \abs{F'-a'_\theta}^2+\theta(1-\theta)h_{\curl}(a') = \abs{F-a_\theta}^2+\theta(1-\theta)h_{\curl}(a)\quad\forall\theta\in[0,1],
\end{equation}
where $a_\theta'$ and $a_\theta$ are defined as in \eqref{eq:atheta}.
Here, we used that the Frobenius norm has the property
\begin{equation}\label{eq:FrobeniusNormProp}
    \abs{AR}=\abs{A} \quad\forall A\in\rddd{2},\, R\in SO(2).
\end{equation}
Using \autoref{thm:qwdom}, \autoref{prop:optvolfrac}, \eqref{eq:ChangeofVarDomain} and \eqref{eq:ChangeofVarQuasiConv}, it follows that
\begin{equation}\label{eq:ChangeofVarOptVolFrac}
  E^{\curl}_0(F',\cK';Q)=E^{\curl}_0(F,\cK;\Omega), \hspace{1.5cm}  \ttheta_{\curl}(F',\cK') = \ttheta_{\curl}(F,\cK) \in(0,1),
\end{equation}
where we indicate the domain in our notation of the energies.\medbreak
Applying \autoref{prop:a4} for the data $(F',\cK')$, we obtain, for each $\eps\in(0,1)$, a vector field $v'_\eps\in W^{1,\infty}_0(Q;\RR^2)$ and a phase arrangement
$\chi'_\eps\in BV(Q; \cK')$ such that $u'_\eps:=F'+\nabla v'_\eps\in\cD_{F'}^{\curl}(Q)$ satisfies
\begin{equation}\label{eq:ChangeofVarScaling}
  E^{\curl}_\eps(u'_\eps,\chi'_\eps;Q) - E^{\curl}_0(F',\cK';Q) \leq C\eps^{\nicefrac{2}{3}}\quad\forall \eps\in (0,1)
\end{equation}
for some constant $C=C(a',\ttheta_{\curl}(F',\cK'))=C(a',\ttheta_{\curl}(F,\cK))>0$.\medbreak

For any $\eps\in(0,1)$, we now define $v_\eps\in W^{1,\infty}_0(\Omega;\RR^d)$ and $\chi_\eps\in BV(\Omega; \cK)$ by a change of variables, setting, for $x\in\Omega$,
\begin{equation}\label{eq:NonObjectiveVectorCurl}
    v_\eps(x):=v'_\eps(R^Tx),\hspace{2cm} \chi_\eps(x):=\chi'_\eps(R^Tx) R^T.
\end{equation}
We introduce $u_\eps:=F+\nabla v_\eps\in\cD_F^{\curl}(\Omega)$ for $\eps\in(0,1)$ and note that
\begin{equation}\label{eq:NonObjectiveTensorCurl}
  u_\eps(x)=u'_\eps(R^Tx)R^T\quad\forall x\in\Omega.
\end{equation}
It follows that
\begin{equation}\label{eq:ChangeofVarScaling2}
    \begin{aligned}
      E^{\curl}_\eps(u_\eps,\chi_\eps;\Omega)&= \int_\Omega \abs{u_\eps(x)-\chi_\eps(x)}^2\,dx + \eps \norm{\nabla \chi_\eps}_{TV(\Omega)}\\
      &= \int_Q \abs{u'_\eps(y)R^T-\chi'_\eps(y)R^T}^2\,dy + \eps \norm{\nabla \chi_\eps'}_{TV(Q)} = E^{\curl}_\eps(u'_\eps,\chi'_\eps;Q)\quad\forall \eps\in (0,1),
    \end{aligned}
\end{equation}
where we again used \eqref{eq:FrobeniusNormProp}.
This together with \eqref{eq:ChangeofVarOptVolFrac} and \eqref{eq:ChangeofVarScaling} implies
\begin{equation}\label{eq:ChangeofVarScaling3}
  E^{\curl}_\eps(u_\eps,\chi_\eps;\Omega) - E^{\curl}_0(F,\cK;\Omega) \leq C\eps^{\nicefrac{2}{3}}\quad\forall \eps\in (0,1).
\end{equation}
Since $a'$ only depends on $a$ and $\xi^*$, and any choice of $\xi^*\in S_{\curl}(a)$ provides an upper bound of this form,
it is justified to write $C=C(a,\ttheta_{\curl}(F,\cK))$.
This completes the proof for the gradient two-well energy.

\Step{2: The upper bound for the divergence two-well energy}
Suppose that $\cA=\Div$. Let $S\in SO(2)$ be defined by $S(x_1,x_2):=(-x_2,x_1)$ in $\RR^2$.\\
The strategy of this proof is to translate the two-well problem for the divergence into one for the gradient, using that $\Div(u)=\curl(u')$ for all $u,u'\in C^\infty(\Omega;\RR^{2\times 2})$ with $u(x)=u'(x)S$ in $\Omega$. In contrast to the first step of this proof, this corresponds to a change of variables in the target space of a matrix field $u\in\cD_F^{\Div}(\Omega)$ instead of a transformation of the domain $\Omega$.\medbreak

We recall the setting from the beginning of the proof, in particular, that $\xi^*\in S_{\Div}(a)$ and that $\Omega\subset\RR^2$ is a rotated unit square with two faces normal to $\xi^*$.
Denote by $\xi^*_\perp:=S\xi^*$.
Let $t_{min}=\lambda_{min}(a^Ta)$ and $t_{max}=\lambda_{max}(a^Ta)$ be the smallest and largest
eigenvalues of $a^Ta\in\RR^{2\times2}_{sym}$, respectively.
By \autoref{lem:EquicompatibleStatesDiv}, it holds that
\begin{equation}\label{eq:ChangeOfVarCompQuatDiv}
  h_{\Div}(a) = t_{min} \text{ and } S_{\Div}(a) = S^{d-1}\cap E(a^Ta;t_{min}).
\end{equation}
Since $\xi^*\in S_{\Div}(a)$, it follows that
\begin{equation}\label{eq:ChangeofVarSpectralDecomp}
  a^Ta = t_{min}\,\xi^*\otimes \xi^* + t_{max}\,\xi^*_\perp\otimes \xi^*_\perp.
\end{equation}
We now introduce a gradient two-well problem with data $(F',\cK')$, which is equivalent to the divergence problem with data $(F,\cK)$.
Let $F':=FS^T$ and $\cK':=\{a_0',a_1'\}$ with $a_j':=a_jS^T$. 
Setting $a':=a_1'-a_0'$, we compute
\begin{equation}\label{eq:aTaWithPrime}
  \begin{aligned}
    a'^{\,T}a' = S (a^T a) S^T
    &= t_{min}\,(S\xi^*)\otimes (S\xi^*) + t_{max}\,(S\xi^*_\perp)\otimes (S\xi^*_\perp)\\
    &= t_{min}\,\xi^*_\perp\otimes \xi^*_\perp + t_{max}\,\xi^*\otimes \xi^*.
  \end{aligned}
\end{equation}
Now, \autoref{lem:EquicompatibleStatesCurl} yields
\begin{equation}
  g_{\curl}(a') = t_{max}  \text{ and } S_{\curl}(a') = S^{d-1}\cap E(a'^{\,T}a';t_{max}).
\end{equation}
Since $\lvert a'\rvert^2=t_{min} + t_{max}$, we obtain from \eqref{eq:compquanteq} that
\begin{equation}\label{eq:ChangeofVarCompQuant2}
  h_{\curl}(a') = \lvert a' \rvert^2-g_{\curl}(a') = t_{min} = h_{\Div}(a).
\end{equation}
As in the first step of the proof, this shows that
\begin{equation}\label{eq:ChangeofVarOptVolFrac2}
  E^{\curl}_0(F',\cK')=E^{\Div}_0(F,\cK), \hspace{1.5cm}  \ttheta_{\curl}(F',\cK') = \ttheta_{\Div}(F,\cK) \in(0,1),
\end{equation}
where we consider the domain $\Omega$ for both energies.\\
Due to \eqref{eq:ChangeofVarSpectralDecomp} and \eqref{eq:aTaWithPrime}, we have $ E(a'^{\,T}a';t_{max}) = E(a^Ta;t_{min})$, which implies that
\begin{equation}\label{eq:ChangeofVarOptLam2}
  S_{\curl}(a') = S_{\Div}(a).
\end{equation}
In particular, we get $\xi^*\in S_{\curl}(a')$.
As we have already proved \autoref{thm:incompgrad}~\textcolor{blue}{(iii)} for the differential operator $\curl$ in the first step, we can apply 
it for the data $(F',\cK')$ in the domain $\Omega$. We obtain, for each $\eps\in(0,1)$, a map $u'_\eps\in\cD_{F'}^{\curl}(\Omega)$ and phase arrangement $\chi'_\eps\in BV(\Omega;\cK')$ with
\begin{equation}\label{eq:ChangeofVarScaling4}
  E^{\curl}_\eps(u'_\eps,\chi'_\eps) - E^{\curl}_0(F',\cK') \leq C\eps^{\nicefrac{2}{3}}\quad\forall \eps\in (0,1)
\end{equation}
for some constant $C=C(a',\ttheta_{\curl}(F',\cK'))>0$.\medbreak

We translate this into the setting of the divergence two-well problem by a change of variables in the target space of $u'_\eps$ and $\chi'_\eps$.
For any $\eps\in(0,1)$, we define the maps $u_\eps\in \cD_{F}^{\Div}(\Omega)$ and $\chi_\eps\in BV(\Omega; \cK)$ by setting, for $x\in\Omega$,
\begin{equation}
    u_\eps(x):=u'_\eps(x)S,\hspace{2cm} \chi_\eps(x):=\chi'_\eps(x)S.
\end{equation}
Using \eqref{eq:FrobeniusNormProp}, \eqref{eq:ChangeofVarOptVolFrac2} and \eqref{eq:ChangeofVarScaling4}, it follows as in the first step of the proof that
\begin{equation}\label{eq:ChangeofVarScaling5}
  E^{\Div}_\eps(u_\eps,\chi_\eps) - E^{\Div}_0(F,\cK) \leq C\eps^{\nicefrac{2}{3}}\quad\forall \eps\in (0,1).
\end{equation}
Since $a'$ only depends on $a$ and $\ttheta_{\curl}(F',\cK')=\ttheta_{\Div}(F,\cK)$, we may write $C=C(a,\ttheta_{\Div}(F,\cK))$, which completes the proof.
\end{proof}

\subsection{A branching construction for the incompatible setting}\label{subsec:branchingConstruction}
In this section, we prove \autoref{prop:a4}. We adapt the branching construction developed in \cite{km92, km94} to account for incompatible wells.
The approach adheres to the same recipe:
We begin with a unit cell construction (\autoref{lem:aa1}) that satisfies the boundary conditions on the lateral sides and enables frequency-doubling of the oscillations in the vertical direction. We then turn to a cut-off argument (\autoref{lem:aa2}).
Finally, we combine coarse-scale oscillations in the interior with fine-scale oscillations near the boundary (\autoref{prop:a3})
by iterating the unit-cell construction towards the top and bottom boundaries, where the cut-off is carried out.  \medbreak

We outline the approach to prove \autoref{prop:a4}, which is based on the construction anticipated in \autoref{rmk:compatibleApprox}.
Suppose that $F\in\RR^{2\times2}$ and $K=\{a_0,a_1\}\subset\RR^{2\times2}$ are incompatible; that is, they fail to satisfy \eqref{eq:compatiblewells} or \eqref{eq:compatiblebdrydata}
for $\cA=\curl$.
Then, the key idea is to use a compatible approximation $(F,\tcK)$ of the data $(F,\cK)$ (see \autoref{def:compatibleApprox}),
which, roughly speaking, allows us to split the singularly perturbed energy into two parts:
\begin{equation}\label{eq:EnergySplit}
  E^{\curl}_\eps(F,\cK) = E^{\curl}_0(F,\cK) +  E^{\curl}_\eps(F,\tcK)\quad\forall\eps\in(0,1).
\end{equation}
This is achieved using the upper bound construction for the compatible data $(F,\tcK)$, which also provides the bound 
\begin{equation}\label{eq:EnergySplit2}
  E^{\curl}_\eps(F,\tcK)\lesssim\eps^{\nicefrac{2}{3}}\quad\forall\eps\in(0,1)
\end{equation}
and together with \eqref{eq:EnergySplit} implies \autoref{prop:a4}.\medbreak
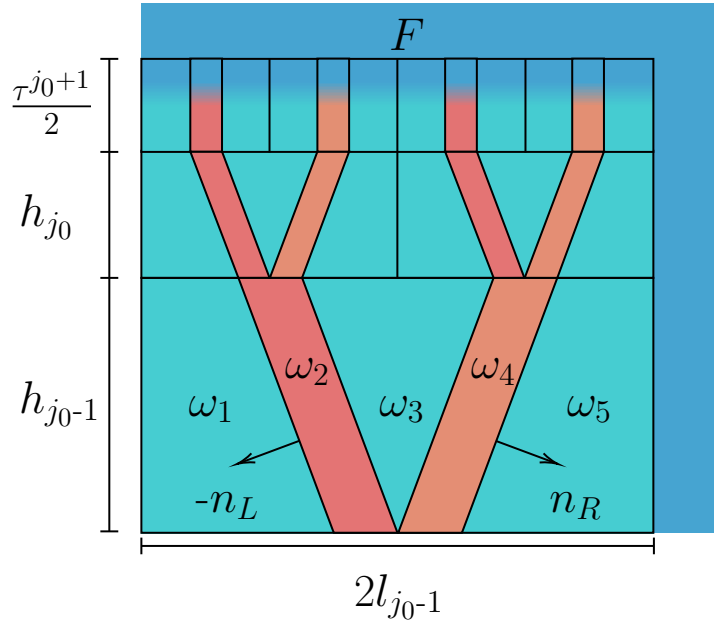
\begin{figure}
\begin{center}

  
\tikzset {_f2blwfhco/.code = {\pgfsetadditionalshadetransform{ \pgftransformshift{\pgfpoint{0 bp } { 0 bp }  }  \pgftransformrotate{-90 }  \pgftransformscale{2 }  }}}
\pgfdeclarehorizontalshading{_txxjm6u8d}{150bp}{rgb(0bp)=(0.27,0.64,0.82);
rgb(43.15383093697684bp)=(0.27,0.64,0.82);
rgb(49.70238276890346bp)=(0.27,0.8,0.82);
rgb(100bp)=(0.27,0.8,0.82)}

  
\tikzset {_ktc4321xs/.code = {\pgfsetadditionalshadetransform{ \pgftransformshift{\pgfpoint{0 bp } { 0 bp }  }  \pgftransformrotate{-90 }  \pgftransformscale{2 }  }}}
\pgfdeclarehorizontalshading{_48wupjldo}{150bp}{rgb(0bp)=(0.27,0.64,0.82);
rgb(43.360304151262554bp)=(0.27,0.64,0.82);
rgb(49.64192526681082bp)=(0.89,0.45,0.45);
rgb(100bp)=(0.89,0.45,0.45)}

  
\tikzset {_ehj43ygjy/.code = {\pgfsetadditionalshadetransform{ \pgftransformshift{\pgfpoint{0 bp } { 0 bp }  }  \pgftransformrotate{-90 }  \pgftransformscale{2 }  }}}
\pgfdeclarehorizontalshading{_4db7pcu1y}{150bp}{rgb(0bp)=(0.27,0.64,0.82);
rgb(43.360304151262554bp)=(0.27,0.64,0.82);
rgb(49.64192526681082bp)=(0.89,0.56,0.45);
rgb(100bp)=(0.89,0.56,0.45)}

  
\tikzset {_907yh3hwx/.code = {\pgfsetadditionalshadetransform{ \pgftransformshift{\pgfpoint{0 bp } { 0 bp }  }  \pgftransformrotate{-90 }  \pgftransformscale{2 }  }}}
\pgfdeclarehorizontalshading{_w1t22xsku}{150bp}{rgb(0bp)=(0.27,0.64,0.82);
rgb(43.360304151262554bp)=(0.27,0.64,0.82);
rgb(49.64192526681082bp)=(0.89,0.56,0.45);
rgb(100bp)=(0.89,0.56,0.45)}

  
\tikzset {_rrize0cjp/.code = {\pgfsetadditionalshadetransform{ \pgftransformshift{\pgfpoint{0 bp } { 0 bp }  }  \pgftransformrotate{-90 }  \pgftransformscale{2 }  }}}
\pgfdeclarehorizontalshading{_gk27dep2a}{150bp}{rgb(0bp)=(0.27,0.64,0.82);
rgb(43.360304151262554bp)=(0.27,0.64,0.82);
rgb(49.64192526681082bp)=(0.89,0.45,0.45);
rgb(100bp)=(0.89,0.45,0.45)}
\tikzset{every picture/.style={line width=0.75pt}}       

\begin{tikzpicture}[x=0.75pt,y=0.75pt,yscale=-0.75,xscale=0.8]
\draw  [draw opacity=0][fill={rgb, 255:red, 70; green, 164; blue, 209 }  ,fill opacity=1 ] (172.2,8.67) -- (529.23,8.67) -- (529.23,363.87) -- (172.2,363.87) -- cycle ;
\draw  [draw opacity=0][fill={rgb, 255:red, 70; green, 205; blue, 209 }  ,fill opacity=1 ] (172.2,108.44) -- (492,108.44) -- (492,364.15) -- (172.2,364.15) -- cycle ;
\draw   (172.2,192.93) -- (491.66,192.93) -- (491.66,363.87) -- (172.2,363.87) -- cycle ;
\draw    (492,372.7) -- (172.2,372.7) ;
\draw [shift={(172.2,372.7)}, rotate = 360] [color={rgb, 255:red, 0; green, 0; blue, 0 }  ][line width=0.75]    (0,5.59) -- (0,-5.59)   ;
\draw [shift={(492,372.7)}, rotate = 360] [color={rgb, 255:red, 0; green, 0; blue, 0 }  ][line width=0.75]    (0,5.59) -- (0,-5.59)   ;
\draw    (152.2,362.93) -- (152.2,192.93) ;
\draw [shift={(152.2,192.93)}, rotate = 90] [color={rgb, 255:red, 0; green, 0; blue, 0 }  ][line width=0.75]    (0,5.59) -- (0,-5.59)   ;
\draw [shift={(152.2,362.93)}, rotate = 90] [color={rgb, 255:red, 0; green, 0; blue, 0 }  ][line width=0.75]    (0,5.59) -- (0,-5.59)   ;
\draw    (392,301.7) -- (430.21,316.99) ;
\draw [shift={(432.07,317.73)}, rotate = 201.81] [color={rgb, 255:red, 0; green, 0; blue, 0 }  ][line width=0.75]    (10.93,-3.29) .. controls (6.95,-1.4) and (3.31,-0.3) .. (0,0) .. controls (3.31,0.3) and (6.95,1.4) .. (10.93,3.29)   ;
\draw    (331.93,107.83) -- (331.93,192.8) ;
\draw   (172.2,107.7) -- (491.66,107.7) -- (491.66,192.93) -- (172.2,192.93) -- cycle ;
\draw    (272,301.7) -- (233.79,316.99) ;
\draw [shift={(231.93,317.73)}, rotate = 338.19] [color={rgb, 255:red, 0; green, 0; blue, 0 }  ][line width=0.75]    (10.93,-3.29) .. controls (6.95,-1.4) and (3.31,-0.3) .. (0,0) .. controls (3.31,0.3) and (6.95,1.4) .. (10.93,3.29)   ;
\path  [shading=_txxjm6u8d,_f2blwfhco] (172.2,46.04) -- (491.67,46.04) -- (491.67,108.44) -- (172.2,108.44) -- cycle ; 
 \draw   (172.2,46.04) -- (491.67,46.04) -- (491.67,108.44) -- (172.2,108.44) -- cycle ; 

\draw    (152.2,108.44) -- (152.2,46.04) ;
\draw [shift={(152.2,46.04)}, rotate = 90] [color={rgb, 255:red, 0; green, 0; blue, 0 }  ][line width=0.75]    (0,5.59) -- (0,-5.59)   ;
\draw [shift={(152.2,108.44)}, rotate = 90] [color={rgb, 255:red, 0; green, 0; blue, 0 }  ][line width=0.75]    (0,5.59) -- (0,-5.59)   ;
\draw    (331.93,46.31) -- (331.93,108.17) ;
\draw    (152.2,108.44) -- (152.2,192.93) ;
\draw  [fill={rgb, 255:red, 228; green, 142; blue, 116 }  ,fill opacity=1 ] (391.98,192.9) -- (431.73,192.9) -- (372.22,363.87) -- (332.47,363.87) -- cycle ;
\draw    (411.86,45.8) -- (411.86,108.8) ;
\draw  [fill={rgb, 255:red, 228; green, 116; blue, 116 }  ,fill opacity=1 ] (272.48,192.87) -- (232.73,192.87) -- (292.24,363.84) -- (331.99,363.84) -- cycle ;
\draw  [fill={rgb, 255:red, 228; green, 142; blue, 116 }  ,fill opacity=1 ] (282.03,108.25) -- (301.83,108.25) -- (272.48,192.87) -- (252.39,192.87) -- cycle ;
\draw  [fill={rgb, 255:red, 228; green, 116; blue, 116 }  ,fill opacity=1 ] (222.52,108.23) -- (202.73,108.23) -- (232.73,192.87) -- (252.16,192.85) -- cycle ;
\draw  [fill={rgb, 255:red, 228; green, 142; blue, 116 }  ,fill opacity=1 ] (441.03,108.25) -- (460.83,108.25) -- (431.73,192.9) -- (411.39,192.87) -- cycle ;
\draw  [fill={rgb, 255:red, 228; green, 116; blue, 116 }  ,fill opacity=1 ] (381.52,108.23) -- (361.73,108.23) -- (391.98,192.9) -- (411.16,192.85) -- cycle ;
\draw    (252.16,45.85) -- (252.16,108.85) ;
\path  [shading=_48wupjldo,_ktc4321xs] (202.73,45.87) -- (222.52,45.87) -- (222.52,108.23) -- (202.73,108.23) -- cycle ; 
 \draw   (202.73,45.87) -- (222.52,45.87) -- (222.52,108.23) -- (202.73,108.23) -- cycle ; 

\path  [shading=_4db7pcu1y,_ehj43ygjy] (282.03,45.88) -- (301.83,45.88) -- (301.83,108.25) -- (282.03,108.25) -- cycle ; 
 \draw   (282.03,45.88) -- (301.83,45.88) -- (301.83,108.25) -- (282.03,108.25) -- cycle ; 

\path  [shading=_w1t22xsku,_907yh3hwx] (441.03,45.88) -- (460.83,45.88) -- (460.83,108.25) -- (441.03,108.25) -- cycle ; 
 \draw   (441.03,45.88) -- (460.83,45.88) -- (460.83,108.25) -- (441.03,108.25) -- cycle ; 

\path  [shading=_gk27dep2a,_rrize0cjp] (361.73,45.87) -- (381.52,45.87) -- (381.52,108.23) -- (361.73,108.23) -- cycle ; 
 \draw   (361.73,45.87) -- (381.52,45.87) -- (381.52,108.23) -- (361.73,108.23) -- cycle ; 

\pgfmathsetmacro{\yShift}{20}
 
 \draw (94,260) node [anchor=north west][inner sep=0.75pt]  [font=\LARGE]  {$h_{j_{0}\text{\mbox{-}} 1}$};
 \draw (303,389.4) node [anchor=north west][inner sep=0.75pt]  [font=\LARGE]  {$2l_{j_{0}\text{\mbox{-}} 1}$};
 \draw (425,335) node [anchor=north west][inner sep=0.75pt]  [font=\LARGE]  {$n_{R}$};
 \draw (204,335) node [anchor=north west][inner sep=0.75pt]  [font=\LARGE]  {-$n_{L}$};
 \draw (200,250+\yShift) node [anchor=north west][inner sep=0.75pt]  [font=\LARGE]  {$\omega _{1}$};
\draw (260,225+\yShift) node [anchor=north west][inner sep=0.75pt]  [font=\LARGE]  {$\omega _{2}$};
\draw (376 ,225+\yShift) node [anchor=north west][inner sep=0.75pt]  [font=\LARGE]  {$\omega _{4}$};
\draw (319,250+\yShift) node [anchor=north west][inner sep=0.75pt]  [font=\LARGE]  {$\omega _{3}$};
\draw (436,250+\yShift) node [anchor=north west][inner sep=0.75pt]  [font=\LARGE]  {$\omega _{5}$};
\draw (88,52) node [anchor=north west][inner sep=0.75pt]  [font=\LARGE]  {$\tfrac{\tau ^{j_{0} +1}}{2}$};
\draw (325,17) node [anchor=north west][inner sep=0.75pt]  [font=\LARGE]  {$F$};
\draw (94,135) node [anchor=north west][inner sep=0.75pt]  [font=\LARGE]  {$h_{j_{0}}$};

\end{tikzpicture}

\end{center}
\caption{The self-similar construction in \autoref{prop:a3}.
The unit cells from \autoref{lem:aa1} are iterated with decreasing size towards the top boundary.
The boundary layer corresponds to the cut-off cells from \autoref{lem:aa2}.
The partition of $\omega$ from the proof of \autoref{lem:aa1} is indicated here.}
\label{fig:branching}
\end{figure}

We now make rigorous the idea \eqref{eq:EnergySplit} of splitting the incompatible energy into the excess energy and a compatible remainder.
\begin{lemma}[Unit cell construction]\label{lem:aa1}
Assume the hypotheses of \autoref{prop:a4}.
In addition, let $b\in\RR^d$ be such that $\ppc(e_1)a=b\otimes e_1$.
Let $\omega=(-l,l)\times(0,h)$ with $0<l\leq h\leq 1$. \\
Then, there exist a vector field $v\in W^{1,\infty}(\omega;\RR^2)$ and a phase arrangement $\chi\in BV(\omega;\cK)$
such that, defining $u=F+\nabla v$, the localized energy can be estimated by
\begin{equation}\label{eq:aa1est}
  E^{\curl}_\eps(u,\chi;\omega) := \int_\omega \abs{u-\chi}^2\,dx + \eps \norm{\nabla \chi}_{TV(\omega)} \leq 
  2lh E^{\curl}_0(F,\cK) + C\left(\frac{l^3}{h} + \eps h\right) \quad\forall\eps>0
\end{equation}
for some constant $C=C(a, \ttheta_{\curl})>0$. Moreover, the vector field has the following boundary values:
\begin{enumerate}
  \item (Lateral sides) For all $x_2\in[0,h]$, we have 
  \begin{equation}\label{eq:sidestrace}
    v(-l,x_2)= v(l,x_2)=0.
  \end{equation}
  \item (Bottom and top side) For all $x_1\in[-l,l]$, it holds that 
  \begin{equation}\label{eq:TopBottomTrace}
  v(x_1,0)= \varphi_l(x_1)b, \hspace{2cm} v(x_1,h)= \varphi_{{l}/{2}}(x_1-\tfrac{l}{2})b,
  \end{equation}
  where, for $r>0$ and $t\in\RR$, we define the $2r$-periodic function $\varphi_r:\RR\to\RR$ by setting
  \begin{equation}\label{eq:varphirBoundaryData}
    \varphi_r(t):=r\varphi\left(\tfrac{t}{r}\right),\text{ where }
    \varphi(t):=
    \begin{cases}
    -\ttheta_{\curl}(1+t) &  t\in[-1,-\ttheta_{\curl})+2\ZZ,\\
    (1-\ttheta_{\curl})t &   t\in[-\ttheta_{\curl},\ttheta_{\curl})+2\ZZ,\\
    -\ttheta_{\curl}(t-1) &  t\in[\ttheta_{\curl},1)+2\ZZ.
    \end{cases}
  \end{equation}
\end{enumerate}
\end{lemma}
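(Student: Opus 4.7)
The plan is to extend the Kohn--Müller branching unit cell construction to the incompatible setting via the compatible approximation (\autoref{def:compatibleApprox}). Introduce $\tilde{a}_0:=F-\ttheta_{\curl}(b\otimes e_1)$ and $\tilde{a}_1:=F+(1-\ttheta_{\curl})(b\otimes e_1)$, which are lamination-compatible in direction $e_1$ since $\tilde{a}_1-\tilde{a}_0=b\otimes e_1\in V_{\curl}(e_1)$, and which satisfy $F=(1-\ttheta_{\curl})\tilde{a}_0+\ttheta_{\curl}\tilde{a}_1$. I will design $u=F+\nabla v$ to take values in $\{\tilde{a}_0,\tilde{a}_1\}$ outside two small transition triangles, paired with a phase arrangement $\chi\in\{a_0,a_1\}$ that equals $a_j$ precisely where $u=\tilde{a}_j$. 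Using the decomposition $\tilde{a}_0-a_0=(F-a_{\ttheta_{\curl}})+\ttheta_{\curl}(a-\ppc(e_1)a)$ and $\tilde{a}_1-a_1=(F-a_{\ttheta_{\curl}})-(1-\ttheta_{\curl})(a-\ppc(e_1)a)$, the cross-terms cancel algebraically (via $(1-\ttheta_{\curl})\cdot\ttheta_{\curl}-\ttheta_{\curl}\cdot(1-\ttheta_{\curl})=0$) and a direct expansion gives
\begin{equation*}
(1-\ttheta_{\curl})\abs{\tilde{a}_0-a_0}^2+\ttheta_{\curl}\abs{\tilde{a}_1-a_1}^2=\abs{F-a_{\ttheta_{\curl}}}^2+\ttheta_{\curl}(1-\ttheta_{\curl})h_{\curl}(a)=E^{\curl}_0(F,\cK),
\end{equation*}
where the last equality uses $\abs{Q}=1$ together with \autoref{thm:qwdom} and \autoref{prop:optvolfrac}, and where $\abs{a-\ppc(e_1)a}^2=h_{\curl}(a)$ follows from $e_1\in S_{\curl}(a)$. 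This is the identity that will recover the main term $2lhE^{\curl}_0(F,\cK)$ of \eqref{eq:aa1est}.

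\textbf{The branching construction.} I write $v(x)=\psi(x)\,b$ for a scalar piecewise affine function $\psi:\omega\to\RR$, so that $\nabla v=b\otimes\nabla\psi$ and the conditions ``$u\in\{\tilde{a}_0,\tilde{a}_1\}$'' become ``$\nabla\psi\in\{-\ttheta_{\curl}e_1,\,(1-\ttheta_{\curl})e_1\}$''. On the bottom edge I set $\psi(\cdot,0)=\varphi_l$; on the top edge I set $\psi(\cdot,h)=\varphi_{l/2}(\cdot-l/2)$. In between, the zero level-set of $\psi$ -- consisting at $x_2=0$ of the nodes in $l\ZZ\cap[-l,l]$ and at $x_2=h$ of the nodes in $(l\ZZ+l/2)\cap[-l,l]$ -- is extended by straight diagonals of slope $dx_1/dx_2$ of order $l/h$, with the outermost zero lines pinned to $\{x_1=\pm l\}$. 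This partitions $\omega$ into five polygonal regions $\omega_1,\dots,\omega_5$: three ``laminate'' pieces on which $\partial_2\psi\equiv0$ and $\partial_1\psi\in\{-\ttheta_{\curl},1-\ttheta_{\curl}\}$ (so $u\in\{\tilde{a}_0,\tilde{a}_1\}$ exactly), together with two transition triangles of base $\sim l$ and height $h$ on which $\partial_2\psi$ is forced by continuity across the slanted interfaces to satisfy $\abs{\partial_2\psi}=O(l/h)$. The lateral boundary condition $v(\pm l,x_2)=0$ follows automatically from $\varphi_l(\pm l)=\varphi_{l/2}(\pm l-l/2)=0$ together with this pinning. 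I define $\chi$ to equal $a_j$ on the regions where $u=\tilde{a}_j$, so the jump set of $\chi$ is exactly the union of the diagonal zero-lines.

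\textbf{Energy estimate and main obstacle.} The elastic energy splits into (i)~a laminate contribution where $u-\chi\in\{\tilde{a}_0-a_0,\tilde{a}_1-a_1\}$ is piecewise constant, integrating via the identity above to $2lh\,E^{\curl}_0(F,\cK)$ (with an $O(l^2)$ overcount absorbed into (ii)), and (ii)~a transition-triangle contribution bounded by $C\int_{\text{triangles}}(\abs{\partial_2\psi}^2+\abs{\tilde{a}_j-a_j}^2)\,dx\leq C(l^3/h+l^2)\leq Cl^3/h$, using $l\leq h$. For the surface energy, the jump set of $\chi$ consists of $O(1)$ nearly vertical diagonals of length $\sqrt{h^2+O(l^2)}\leq 2h$, yielding $\eps\norm{\nabla\chi}_{TV(\omega)}\leq C\eps h$. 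Summing gives \eqref{eq:aa1est} with $C=C(a,\ttheta_{\curl})$. The main technical obstacle is the bookkeeping required to construct $\psi$ explicitly so that the period-$2l$ sawtooth at the bottom matches, through globally consistent diagonals, the shifted period-$l$ sawtooth at the top while (a)~remaining continuous across all polygon interfaces, (b)~keeping the alternation of the slopes $\partial_1\psi\in\{-\ttheta_{\curl},1-\ttheta_{\curl}\}$ correct between adjacent pieces, and (c)~meeting the prescribed traces pointwise. Once the polygonal skeleton is fixed, every integrand involved is constant on each region and the remaining estimates reduce to summing areas and lengths.
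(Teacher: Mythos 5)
Your overall architecture coincides with the paper's: pass to the compatible approximation $\tcK=\{\ta_0,\ta_1\}$, build a Kohn--Müller unit cell for these compatible states, pair it with a phase arrangement valued in the original wells $\cK$, and recover the excess energy from the identity $(1-\ttheta_{\curl})\abs{\ta_0-a_0}^2+\ttheta_{\curl}\abs{\ta_1-a_1}^2=\abs{F-a_{\ttheta_{\curl}}}^2+\ttheta_{\curl}(1-\ttheta_{\curl})h_{\curl}(a)=E_0^{\curl}(F,\cK)$, which you derive correctly. The genuine gap is in the energy estimate on the transition regions, and it is not a bookkeeping issue. First, $\chi$ is left undefined there: you set $\chi=a_j$ "precisely where $u=\ta_j$", but $\chi$ must take values in $\cK$ on all of $\omega$, and the phase volumes must equal \emph{exactly} $\ttheta_{\curl}\abs{\omega}$ and $(1-\ttheta_{\curl})\abs{\omega}$; otherwise the main term $2lh\,E_0^{\curl}(F,\cK)$ is off by (area mismatch)$\times\abs{\ta_j-a_j}^2$, and $\abs{\ta_j-a_j}^2$ contains $\abs{F-a_{\ttheta_{\curl}}}^2$, which is not controlled by $(a,\ttheta_{\curl})$.

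Second, once $\chi$ and its companion $\tchi$ (valued in $\tcK$) are defined on the transition set $T$, one has $\int_T\abs{u-\chi}^2=\int_T\abs{u-\tchi}^2+2\int_T(u-\tchi,\tchi-\chi)+\int_T\abs{\tchi-\chi}^2$. Your bound $C\int_T(\abs{\del_2\psi}^2+\abs{\ta_j-a_j}^2)\,dx\leq C(l^3/h+l^2)\leq Cl^3/h$ fails twice: the constant multiplying $\int_T\abs{\ta_j-a_j}^2$ depends on $F$, and since $l\leq h$ the inequality goes the wrong way ($l^3/h\leq l^2$, not the reverse). An $l^2$-per-cell error already degrades the sum in \autoref{prop:a3} from $N^{-2}+\eps N$ to $N^{-1}+\eps N$, i.e.\ from $\eps^{\nicefrac{2}{3}}$ to $\eps^{\nicefrac{1}{2}}$; with $\abs{T}\sim lh$ as you state, the error is even larger. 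The cross term is worse still: by Cauchy--Schwarz it is of order $l^2\abs{\ta_j-a_j}$, again $F$-dependent. The paper's construction is engineered so that both problems vanish \emph{exactly} rather than approximately: the $a_1$-phase consists of two congruent slanted parallelograms $\omega_2,\omega_4$ of total area exactly $\ttheta_{\curl}\abs{\omega}$, and on them $u-\tchi$ equals $+\gamma\, b\otimes e_2$ and $-\gamma\, b\otimes e_2$ respectively, so that $\int_\omega(u-\tchi,\tchi-\chi)=\tfrac{\ttheta_{\curl}}{2}\abs{\omega}\big(\gamma\,b\otimes e_2-\gamma\,b\otimes e_2,\,\ta_1-a_1\big)=0$ identically and $\int_\omega\abs{u-\tchi}^2=\ttheta_{\curl}\abs{\omega}\gamma^2\abs{b\otimes e_2}^2\leq C(a,\ttheta_{\curl})\,l^3/h$. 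To close your argument you must either adopt this antisymmetric perturbation of $\ta_1$ or otherwise arrange exact phase volumes and an exact cancellation of the cross term; an order-of-magnitude bound on the transition regions cannot suffice.
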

In order to prove \autoref{lem:aa1}, we need the following formula.
\begin{lemma}\label{lem:importantequality}
Let $X$ be a real inner product space.
Let $v,w\in X $. Then, it holds that
\begin{equation}
  (1-\theta)\abs{v}^2+\theta\abs{w}^2 = \abs{(1-\theta)v + \theta w}^2 + \theta(1-\theta)\abs{v - w}^2\quad\forall\theta\in\RR.
\end{equation}
\end{lemma}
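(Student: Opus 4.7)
The plan is to prove the identity by a direct expansion of the right-hand side using bilinearity and symmetry of the inner product on $X$. No case distinction or limiting argument is needed; the claim is a purely algebraic identity in $\theta$, and the parameter $\theta$ is allowed to range over all of $\RR$ (rather than just $[0,1]$), so convexity plays no role.

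Concretely, I would first expand
\begin{equation}
\abs{(1-\theta)v + \theta w}^2 = (1-\theta)^2\abs{v}^2 + 2\theta(1-\theta)(v,w) + \theta^2\abs{w}^2
\end{equation}
and
\begin{equation}
\abs{v-w}^2 = \abs{v}^2 - 2(v,w) + \abs{w}^2,
\end{equation}
then add the second expansion multiplied by $\theta(1-\theta)$. The cross terms $\pm 2\theta(1-\theta)(v,w)$ cancel, leaving a pure combination of $\abs{v}^2$ and $\abs{w}^2$ whose coefficients are $(1-\theta)^2 + \theta(1-\theta) = (1-\theta)$ and $\theta^2 + \theta(1-\theta) = \theta$, respectively. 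This matches the left-hand side.

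There is no real obstacle, so the only thing to decide is presentation: one can either write the two-line expansion above, or one can observe more conceptually that the map $t\mapsto\abs{v + t(w-v)}^2$ is a quadratic polynomial in $t$ with leading coefficient $\abs{w-v}^2$, so that
\begin{equation}
\abs{(1-\theta)v + \theta w}^2 - \bigl((1-\theta)\abs{v}^2 + \theta\abs{w}^2\bigr) = -\theta(1-\theta)\abs{v-w}^2,
\end{equation}
the right-hand side being fixed by evaluation at $\theta\in\{0,1\}$ and at $\theta=\tfrac12$. I would take the first, fully explicit route since the lemma is invoked only as a computational identity in the subsequent branching construction and a self-contained one-line verification is cleanest.
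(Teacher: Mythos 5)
Your proof is correct and is essentially the same as the paper's: both are direct algebraic expansions of $\abs{(1-\theta)v+\theta w}^2$ using bilinearity of the inner product, with the cross terms $2\theta(1-\theta)(v,w)$ cancelling against the expansion of $\theta(1-\theta)\abs{v-w}^2$. No further comment is needed.
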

\begin{proof}
For any $\theta\in\RR$, we compute
\begin{equation}
    \begin{aligned}
        \abs{(1-\theta)v + \theta w}^2 &= (1-\theta)^2\abs{v}^2  +\theta^2\abs{w}^2+ 2(1-\theta)\theta(v,w)\\
        &= (1-\theta)\abs{v}^2+\theta\abs{w}^2 - (1-\theta)\theta\left(\abs{v}^2+\abs{w}^2\right) + 2(1-\theta)\theta(v,w)\\
        &=(1-\theta)\abs{v}^2+\theta\abs{w}^2- (1-\theta)\theta\abs{v-w}^2,
    \end{aligned}
\end{equation}
which yields the asserted formula.
\end{proof}
We now come to the unit cell construction.
\begin{proof}[Proof of \autoref{lem:aa1}]
The proof consists of three steps: In the first step, $\omega$ is partitioned. This partition is then used to define $u$ and $\chi$ in the second step.
Finally, the localized energy is estimated in the last step.\\
For brevity, we omit the subscript $\curl$ in the notation of the optimal volume fraction $\ttheta_{\curl}$.
\Step{1: Partition of $\omega$}
It will prove useful to define the parameter $\gamma$:
\begin{equation}
\gamma:=\frac{(1-\ttheta)l}{2h}.
\end{equation}
As indicated for the lower unit cell in \autoref{fig:branching}, we partition $\omega$ into $(\omega_i)_{i\leq5}$ by setting
\begin{equation}
    \begin{gathered}
      \omega_1 := \{x\in\omega: x_1\in (-l,\,  -\ttheta l - \gamma x_2)\},\hspace{2cm} \omega_2 := \{x\in\omega: x_1\in (-\ttheta l - \gamma x_2,\, -\gamma x_2)\},\\
      \hspace{-0.6cm}\omega_3 := \{x\in\omega: x_1\in (-\gamma x_2,\,\gamma x_2)\},\hspace{2.6cm} \omega_4 := \{x\in\omega: x_1\in (\gamma x_2,\, \ttheta l + \gamma x_2)\},\\
      \omega_5 := \{x\in\omega: x_1\in ( \ttheta l + \gamma x_2,\, l)\}.\label{eq:omegaparts}
    \end{gathered}
\end{equation}
The following volume proportions will play an important role in the proof:
\begin{equation}\label{eq:volumeproportions}
  \abs{\omega_2}=\abs{\omega_4}=\frac{\ttheta}{2}\abs{\omega},\qquad\qquad \abs{\omega_1\cup\omega_3\cup\omega_5} = (1-\ttheta)\abs{\omega}.
\end{equation}
We introduce the vectors $n_L:=e_1+\gamma e_2$ and $n_R:=e_1-\gamma e_2$, which are normal to the interfaces of the subdomains:
\begin{equation}
  n_L \perp \left(\overline{\omega_1}\cap\overline{\omega_2}\right), \,\left(\overline{\omega_2}\cap\overline{\omega_3}\right),\qquad\qquad
  n_R\perp\left(\overline{\omega_3}\cap\overline{\omega_4}\right),\, \left(\overline{\omega_4}\cap\overline{\omega_5}\right).
\end{equation}

\Step{2: Displacement gradient and phase arrangement}
We define the displacement gradient $u\in L^\infty(\omega;\rddd{2})$ by prescribing the values in each subdomain and only then argue that there exists
$v\in W^{1,\infty}(\omega;\RR^2)$ satisfying the boundary conditions \eqref{eq:sidestrace} and \eqref{eq:TopBottomTrace} with $F+\nabla v=u$ in $\omega$.\\
To this end, we use the matrices $\ta_0,\ta_1\in\rddd{2}$ from the compatible approximation discussed in \autoref{rmk:compatibleApprox}:
\begin{equation}\label{eq:TildeWellCurl}
    \ta_0 := F -\ttheta b\otimes e_1  \hspace{2cm}   \ta_1 := F +(1-\ttheta) b\otimes e_1,
\end{equation}
where we inserted $\ppc(e_1)a=b\otimes e_1$.
As we assumed the hypotheses of \autoref{prop:a4}, we have $e_1\in S_{\curl}(a)$, which yields
\begin{equation}\label{eq:EOneOptLamDir}
\abs{a-\ppc(e_1)a}^2=h_{\curl}(a).
\end{equation}
We define $E_2 := \gamma b\otimes e_2\in\rddd{2}$ and consider the following perturbations of $\ta_1$:
\begin{equation}\label{eq:perturbedtildewells}
  \ta_{1,L}:=\ta_1 + E_2,\qquad\qquad \ta_{1,R}:=\ta_1 - E_2.
\end{equation}
This allows us to introduce $u:\omega\to\rddd{2}$ by setting
\begin{equation}\label{eq:lema1mapu}
  u(x):= \begin{cases}
    \ta_0 &x\in\omega_1 \cup\omega_3\cup\omega_5,\\
    \ta_{1,L} &x\in\omega_2,\\
    \ta_{1,R} &x\in\omega_4,
  \end{cases}
\end{equation}
We verify the Hadamard jump conditions:
\begin{equation}
  \begin{aligned}
    &\text{At the interfaces } \overline{\omega_1}\cap\overline{\omega_2} \text{ and }\overline{\omega_2}\cap\overline{\omega_3}: 
    &&\ta_{1,L}-\ta_0 = b\otimes e_1 + E_2 = b\otimes n_L,\\
    &\text{At the interfaces } \overline{\omega_3}\cap\overline{\omega_4} \text{ and }\overline{\omega_4}\cap\overline{\omega_5}: 
    &&\ta_{1,R}-\ta_0 = b\otimes e_1 - E_2 = b\otimes n_R.
  \end{aligned}
\end{equation}
Therefore, the matrix field $u$ is $\curl$-free in $\omega$ and the term \textit{displacement gradient} justified.
Indeed, setting $v(-l,x_2)=0$ according to the boundary condition \eqref{eq:sidestrace} and integrating
$\del_1 v(\cdot,x_2)$ from $-l$ to $x_1$, we obtain $v\in W^{1,\infty}(\omega;\RR^2)$ given by
\begin{equation}\label{eq:gradOfVectorfieldBranchConstr}
  v(x)=\begin{cases}
    -\ttheta (l+x_1)b & x\in \omega_1,\\
    \big((1-\ttheta) x_1 + \gamma x_2\big)b & x\in \omega_2,\\
    -\ttheta x_1 b & x\in \omega_3,\\
    \big((1-\ttheta) x_1 - \gamma x_2\big)b & x\in \omega_4,\\
    -\ttheta (x_1-l)b & x\in \omega_5.
  \end{cases}
\end{equation}
From this, it follows that $F+\nabla v = u$ in $\omega$ since
\begin{align}
      \del_1 v(x) &= (u(x)-F)e_1 =\begin{cases}
        -\ttheta b &x\in\omega_1\cup\omega_3\cup\omega_5,\\
        (1-\ttheta) b &x\in\omega_2 \cup \omega_4,
      \end{cases}\label{eq:gradOfVectorfieldBranchConstr2}\\
        \del_2 v(x) &= (u(x)-F)e_2 =\begin{cases}
        0 &x\in\omega_1\cup\omega_3\cup\omega_5,\\
        \gamma b &x\in\omega_2,\\
        -\gamma b &x\in\omega_4.
      \end{cases}
\end{align}
The lateral boundary conditions \eqref{eq:sidestrace} are a consequence of \eqref{eq:gradOfVectorfieldBranchConstr}.
The top and bottom boundary conditions \eqref{eq:TopBottomTrace} can be verified by a computation or by observing that $v(-l,0)=v(-l,h)=0$
and that $\del_1 v(\cdot,x_2)$ matches with the derivative of the boundary data for both $x_2\in\{0,h\}$.\medbreak

We define the phase arrangements $\chi\in BV(\omega;\cK),\,\tchi\in BV(\omega;\tcK)$, where $\tcK:=\{\ta_0,\ta_1\}$, setting
\begin{equation}
    \chi(x):= \begin{cases}
      a_0 &x\in\omega_1\cup\omega_3\cup\omega_5,\\
      a_1 &x\in\omega_2\cup\omega_4,
    \end{cases}
    \hspace{1.5cm}
    \tilde{\chi}(x):= \begin{cases}
      \ta_0 &x\in\omega_1\cup\omega_3\cup\omega_5,\\
      \ta_1 &x\in\omega_2\cup\omega_4.
    \end{cases}
\end{equation}
\Step{3: Localized energy -- estimates \& extraction of the excess energy}
The \textit{localized elastic energy} is split into three parts
\begin{equation}\label{eq:stint}
  E^{\curl}_{el}(u,\chi;\omega):=\int_\omega \abs{u-\chi}^2\,dx  = \underbrace{\int_\omega \abs{u-\tchi}^2 \,dx}_{=E^{\curl}_{el}(u,\tchi;\,\omega)}
  + 2 \underbrace{\int_\omega (u-\tchi, \tchi-\chi) \,dx}_{=:I_2}+ \underbrace{\int_\omega \abs{\tchi-\chi}^2\,dx}_{=:I_3}.
\end{equation}
The first integral in this sum is the localized elastic energy $E^{\curl}_{el}(u,\tchi;\omega)$.
Since this is the localized elastic energy of the unit cell construction for the compatible two-well problem with data $(F,\tcK)$ (see, for instance, \cite[Lemma A.1]{ruland}),
we expect the following bound 
\begin{equation}
  E_{el}(u,\tchi;\omega) = \int_\omega \abs{u-\tchi}^2 \leq C \frac{l^3}{h}
\end{equation}
for some constant $C>0$. Indeed, this estimate is derived by
\begin{multline}\label{eq:estimateinlema1}
  E_{el}(u,\tchi;\omega) = \abs{\omega_2} \abs{\ta_{1,L}-\ta_1}^2 + \abs{\omega_4} \abs{\ta_{1,R}-\ta_1}^2 = {\ttheta}\abs{\omega}\abs{E_2}^2 \\
  = {2\ttheta lh}\gamma^2\abs{b\otimes e_2}^2
  = \frac{\ttheta(1-\ttheta)^2l^3}{2h}  \abs{b\otimes e_1}^2
  = \frac{\ttheta(1-\ttheta)^2l^3}{2h} \abs{\ppc(e_1)a}^2\\
  \leq \frac{\ttheta(1-\ttheta)^2l^3}{2h} \abs{a}^2
  =: C(a,\ttheta) \frac{l^3}{h},
\end{multline}
where we used \eqref{eq:volumeproportions} and that $b\otimes e_1=\ppc(e_1)a$.\medbreak

Note that the second and third integral in \eqref{eq:stint} arise due to incompatibility.
Indeed, if $(F,\cK)$ satisfy both compatibility conditions  \eqref{eq:compatiblewells} and \eqref{eq:compatiblebdrydata} for $\cA=\curl$,
it follows from \autoref{rmk:compatibleApprox} that $\tcK=\cK$ and in particular that $\tchi = \chi$ in $\omega$.\smallbreak
As we are studying the incompatible two-well problem,
we anticipate that these contributions are related to the excess energy $E^{\curl}_0(F,\cK)$.
The following computation shows that the excess energy can be recovered from the third integral $I_3$ in \eqref{eq:stint}:
\begin{equation}\label{eq:thirdint}
\begin{aligned}
    I_3& =\int_\omega \abs{\tchi-\chi}^2\,dx = \abs{\omega_1\cup\omega_3\cup\omega_5}\abs{\ta_0-a_0}^2 + \abs{\omega_2\cup\omega_4}\abs{\ta_1-a_1}^2\\
    &= (1-\ttheta) \abs{\omega} \abs{\ta_0-a_0}^2 + \ttheta\abs{\omega} \abs{\ta_1-a_1}^2\\
    &= \abs{\omega}\left( \big\lvert(1-\ttheta)(\ta_0-a_0) + \ttheta(\ta_1-a_1)\big\rvert ^2   + \ttheta(1-\ttheta) \big\lvert (\ta_0-a_0) -  (\ta_1-a_1) \big\rvert ^2 \right)\\
    &= \abs{\omega}\left( \big\lvert F-a_{\ttheta} \big\rvert^2   + \ttheta(1-\ttheta) \big\lvert a-\ppc(e_1)a \big\rvert ^2 \right) = \abs{\omega} E^{\curl}_0(F,\cK)=2lhE^{\curl}_0(F,\cK),
\end{aligned}
\end{equation}
where we used \eqref{eq:volumeproportions}, \autoref{lem:importantequality}, \eqref{eq:TildeWellCurl}, \eqref{eq:EOneOptLamDir} and \autoref{thm:qwdom} in this order.\\
The symmetry of $\omega_2$ and $\omega_4$ combined with the definition \eqref{eq:perturbedtildewells} of the oppositely perturbed states $a_{1,L},\,a_{1,R}$ guarantee that the
integral $I_2$ in \eqref{eq:stint} vanishes:
\begin{equation}\label{eq:vanishsecondintaa1}
  \begin{aligned}
    I_2=\int_\omega (u-\tchi, \tchi-\chi)\,dx &= \abs{\omega_2} (\ta_{1,L}-\ta_1, \ta_1-a_1) + \abs{\omega_4} (\ta_{1,R}-\ta_1, \ta_1-a_1)\\
    &= \frac{\ttheta}{2}\abs{\omega}(E_2-E_2, \ta_1-a_1)  = 0.
  \end{aligned}
\end{equation}
Using $l\leq h$, we estimate the surface energy by
\begin{equation}\label{eq:aa1surface}
  \begin{aligned}
    \frac{1}{\abs{a}}\norm{\nabla \chi}_{TV(\omega)} &= \per(\{\chi=a_0\};\omega) = \per(\omega_1\cup\omega_3\cup\omega_5;\omega) =  4 \sqrt{\gamma^2h^2+h^2}\\
    &=4 \,\sqrt{\frac{(1-\ttheta)^2}{4}l^2+h^2\,}   
    \leq 4h\,\sqrt{\frac{(1-\ttheta)^2}{4} +1\,} \leq 8 h.
  \end{aligned}
\end{equation}
Multiplying by $\lvert a\rvert$ and possibly enlarging the constant $C=C(a,\ttheta)$ such that $C\geq 8 \lvert a\rvert$, it follows from
\eqref{eq:stint}, \eqref{eq:estimateinlema1}, \eqref{eq:thirdint}, \eqref{eq:vanishsecondintaa1} and \eqref{eq:aa1surface} that
\begin{equation}\label{eq:aa1estInPF}
  E^{\curl}_\eps(u,\chi;\omega) \leq 2lh E^{\curl}_0(F,\cK) + C\left(\frac{l^3}{h} + \eps h\right) \quad\forall\eps>0.
\end{equation}
This shows \eqref{eq:aa1est} and completes the proof.
\end{proof}
In the branching construction (\autoref{prop:a3}), we will use a cut-off procedure for the unit cells in the boundary layer, illustrated in \autoref{fig:branching}.
The cut-off argument is carried out using linear interpolation and is addressed in the following lemma, where we also characterize the required energy. 
\begin{lemma}[Cut-off layer]\label{lem:aa2}
Assume the hypotheses of \autoref{prop:a4}.
In addition, let $b\in\RR^d$ be such that $\ppc(e_1)a=b\otimes e_1$.
Let $\omega=(-l,l)\times(0,h)$ with $0<l\leq 2h\leq 1$. \\
Then, there exists $v\in W^{1,\infty}(\omega;\RR^2)$ and a phase arrangement $\chi\in BV(\omega;\cK)$
such that, defining $u=F+\nabla v$, the localized energy can be estimated by
\begin{equation}\label{eq:aa1est2}
  E^{\curl}_\eps(u,\chi;\omega) = \int_\omega \abs{u-\chi}^2\,dx + \eps \norm{\nabla \chi}_{TV(\omega)} \leq 
  2lh E^{\curl}_0(F,\cK) + C\left(lh + \eps h\right) \quad\forall\eps>0
\end{equation}
for some constant $C=C(a, \ttheta_{\curl})>0$. Moreover, the vector field has the following boundary values:
\begin{enumerate}
  \item (Lateral sides) For all $x_2\in[0,h]$, we have 
  \begin{equation}\label{eq:sidestrace2}
    v(-l,x_2)= v(l,x_2)=0.
  \end{equation}
  \item (Bottom and top side) For all $x_1\in[-l,l]$, it holds that 
  \begin{equation}\label{eq:TopBottomTrace2}
  v(x_1,0)= \varphi_l(x_1)b, \hspace{2cm} v(x_1,h)=0,
  \end{equation}
  where $\varphi_l$ is given as in \eqref{eq:varphirBoundaryData}.
\end{enumerate}
\end{lemma}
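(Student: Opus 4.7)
The plan is to use a straightforward linear interpolation in the vertical direction, which will automatically match the lateral and bottom/top traces required in \eqref{eq:sidestrace2}--\eqref{eq:TopBottomTrace2}. Concretely, I would define
\begin{equation}
v(x_1,x_2) := \Big(1-\tfrac{x_2}{h}\Big)\varphi_l(x_1)\,b,
\end{equation}
and, in analogy with the $a_1$-phase slab used in \autoref{lem:aa1}, set
\begin{equation}
\chi(x):=\begin{cases} a_1 & x_1\in[-\ttheta_{\curl} l,\ttheta_{\curl} l],\\ a_0 & \text{otherwise in }\omega.\end{cases}
\end{equation}
Since $\varphi_l(\pm l)=0$ and the cut-off factor vanishes at $x_2=h$, the boundary traces \eqref{eq:sidestrace2} and \eqref{eq:TopBottomTrace2} hold by construction. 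The phase arrangement $\chi$ has only two vertical interfaces of length $h$, so $\eps\|\nabla\chi\|_{TV(\omega)}\le 2|a|\,\eps h$, which yields the $\eps h$-contribution on the right of \eqref{eq:aa1est2}.

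For the elastic energy, I would expand $u=F+\nabla v$ using the compatible approximation $\tcK=\{\ta_0,\ta_1\}$ of \autoref{def:compatibleApprox}, writing
\begin{equation}
u-\chi = (\ta_j - a_j) + \big(1-\eta(x_2)\big)\partial_1 v - \tfrac{1}{h}\varphi_l(x_1)\,b\otimes e_2,\qquad \eta(x_2):=1-\tfrac{x_2}{h},
\end{equation}
on the region where $\chi=a_j$ (with $\partial_1 v = -\ttheta_{\curl}b$ or $(1-\ttheta_{\curl})b$ respectively). Expanding $|u-\chi|^2$ produces six types of terms upon integration. The constant term gives $(1-\ttheta_{\curl})|\ta_0-a_0|^2+\ttheta_{\curl}|\ta_1-a_1|^2$ times $|\omega|=2lh$, which by \autoref{lem:importantequality} together with $\ta_1-\ta_0=\ppc(e_1)a$ collapses exactly to $2lh\,E^{\curl}_0(F,\cK)$, i.e. the leading order term in \eqref{eq:aa1est2}.

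The genuine work is to verify that all remaining terms are controlled by $Clh$. I expect two cancellations that mirror those in \autoref{lem:aa1}: the cross terms of the form $(\ta_j-a_j,\,b\otimes e_1)$ coming from the $\partial_1 v$ piece sum, after integration over $A_0$ and $A_1$ with their respective volume weights $2(1-\ttheta_{\curl})l\cdot h/2$ and $2\ttheta_{\curl}l\cdot h/2$, to $2\ttheta_{\curl}(1-\ttheta_{\curl})lh\,(a-\ppc(e_1)a,\,b\otimes e_1)=0$ by the defining orthogonality of $\ppc$; and the terms linear in $\varphi_l$ vanish because $\varphi_l$ is odd in $x_1$ while $A_0$ and $A_1$ are symmetric. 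The two remaining square terms, $\int(1-\eta)^2|\partial_1 v|^2$ and $\int\tfrac{1}{h^2}\varphi_l^2|b\otimes e_2|^2$, produce quantities of order $lh$ and $l^3/h$ respectively; the assumption $l\le 2h$ then gives $l^3/h\le 4lh$, so both are absorbed into $Clh$.

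The main obstacle, in contrast to \autoref{lem:aa1} where the interior oscillations close up into a tidy periodic structure, is that the interpolation factor $(1-x_2/h)$ destroys the exact compatibility along the interior interfaces $\{x_1=\pm\ttheta_{\curl}l\}$ and introduces the $e_2$-component $-\tfrac{1}{h}\varphi_l b\otimes e_2$. I would therefore be careful to keep track of all cross terms generated by this extra component; provided the cancellations above go through (which they do by oddness and by the orthogonality that characterises the wave cone), the desired bound \eqref{eq:aa1est2} follows by combining the elastic and surface estimates and possibly enlarging the constant $C$ to absorb $|a|$ into it.
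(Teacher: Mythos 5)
Your proposal is correct and follows essentially the same route as the paper: a vertical cut-off of the laminate profile $\varphi_l(x_1)b$, extraction of the excess energy via \autoref{lem:importantequality}, cancellation of the $(\ta_j-a_j,\,b\otimes e_1)$ cross terms through the orthogonality $(a-\ppc(e_1)a)\perp b\otimes e_1$ and of the terms linear in $\varphi_l$ by oddness, and absorption of the remaining squares into $Clh$ using $l\le 2h$. The only differences are immaterial: you use the affine cut-off $1-x_2/h$ where the paper uses a plateau-then-ramp profile $\psi$, and your displayed decomposition has a sign slip ($(1-\eta)$ should be $(\eta-1)$ in front of $\varphi_l'(x_1)b\otimes e_1$) that affects neither the squared terms nor the exact cancellations.
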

\begin{proof}
The proof is organized into three steps: In the first step, we introduce the displacement map and the phase arrangements.
In the second step, we split the localized elastic energy as in the proof of \autoref{lem:aa1} and extract the excess energy.
Finally, we estimate the remaining part of the localized energy to prove \eqref{eq:aa1est2} in the third step.
\Step{1: Displacement map and phase arrangements}
We introduce the vector field $v\in  W^{1,\infty}(\omega;\RR^2)$ by setting
\begin{equation}\label{eq:DisplacementMapCutOffLayer}
  v(x):=\psi\left(\frac{x_2}{h}\right)\varphi_l(x_1) b,\quad x\in\omega,
\end{equation}
where the cut-off function $\psi:[0,\infty)\to\RR$ is given by
\begin{equation}\label{eq:CutOffFunction}
  \psi(t):= \begin{cases}
    1 & t\in[0,\frac{1}{2}],\\
    -4t+3 & t\in (\frac{1}{2}, \frac{3}{4}),\\
    0 & t\in[\frac{3}{4},\infty).
  \end{cases}
\end{equation}
Recalling \eqref{eq:varphirBoundaryData}, we see that the boundary conditions \eqref{eq:sidestrace2} and \eqref{eq:TopBottomTrace2} are satisfied.
In order to prove \eqref{eq:aa1est2}, we partition $\omega$ into the following two parts:
\begin{equation}
  \omega_I := [-\ttheta l, \ttheta l]\times(0,h),\qquad \omega_E :=\left((-l,-\ttheta l)\cup (\ttheta l,l)\right) \times(0,h),
\end{equation}
where we again omit the subscript $\curl$ of $\ttheta_{\curl}$ in this proof.
Next, we compute
\begin{equation}\label{eq:BoundaryLayerDisplacementGradient}
  u(x)=F+\nabla v(x) = F + \psi\left(\frac{x_2}{h}\right)\varphi_l'(x_1) b\otimes e_1 + \frac{1}{h}\psi'\left(\frac{x_2}{h}\right)\varphi_l(x_1) b\otimes e_2\quad\forall x\in\omega.
\end{equation}
While the cut-off function in \eqref{eq:DisplacementMapCutOffLayer} serves to interpolate
the bottom and top boundary values \eqref{eq:TopBottomTrace2} for any fixed $x_1\in(-l,l)$ along the $x_2$-direction, 
it will prove useful to disregard the cut-off function and also compute
\begin{equation}\label{eq:UsefulIdentityInBoundaryLayer}
  F + \nabla [\varphi_l(x_1) b]=F + \varphi_l'(x_1) b\otimes e_1 =\begin{cases}
    \ta_0 &  x\in\omega_E,\\
    \ta_1 &  x\in\omega_I,
  \end{cases}
\end{equation}
where $\ta_0,\,\ta_1\in\rddd{2}$ are given as in \eqref{eq:TildeWellCurl}.
This suggests that the displacement gradient $u$ is close to $a_0$ in $\omega_E$ and close to $a_1$ in $\omega_I$.
We therefore introduce the phase arrangements $\chi\in BV(\omega;\cK)$ and $\tchi\in BV(\omega;\tcK)$, where $\tcK:=\{\ta_0,\ta_1\}$, by setting
\begin{equation}\label{eq:BoundaryLayerPhaseArrangement}
  \chi(x):=\begin{cases}
    a_0 & x\in\omega_E,\\
    a_1 & x\in\omega_I,
  \end{cases}
  \hspace{1.5cm}
  \tchi(x):=\begin{cases}
    \ta_0 & x\in\omega_E,\\
    \ta_1 & x\in\omega_I.
  \end{cases}
\end{equation}
\Step{2: Localized energy --  extraction of the excess energy}
As in the proof of \autoref{lem:aa1}, we split the localized elastic energy
\begin{equation}\label{eq:stint3}
E^{\curl}_{el}(u,\chi;\omega)=\int_\omega \abs{u-\chi}^2\,dx  = \underbrace{\int_\omega \abs{u-\tchi}^2 \,dx}_{=E^{\curl}_{el}(u,\tchi;\,\omega)} 
+ 2 \underbrace{\int_\omega (u-\tchi, \tchi-\chi) \,dx}_{=:I_2}+ \underbrace{\int_\omega \abs{\tchi-\chi}^2\,dx}_{=:I_3}.
\end{equation}
Repeating the arguments from \eqref{eq:thirdint} and using the volume proportions of $\omega_I$ and $\omega_E$ relative to $\omega$, we obtain
\begin{equation}\label{eq:BoundaryLayerBaseEnergy}
  I_3 = \int_\omega \abs{\tchi-\chi}^2\,dx = 2lh E_0^{\curl}(F,\cK).
\end{equation}
Next, we claim that the cross-term $I_2$ from \eqref{eq:stint3} vanishes.
To prove this, it is useful to note that \eqref{eq:UsefulIdentityInBoundaryLayer} implies $\tchi(x) = F + \varphi_l'(x_1) b\otimes e_1$ for all $x\in\omega$, which together
with \eqref{eq:BoundaryLayerDisplacementGradient} yields
\begin{equation}\label{eq:uMinusTchi}
  u(x)-\tchi(x) = \left(\psi\left(\frac{x_2}{h}\right) -1\right)  \varphi_l'(x_1) b\otimes e_1 + \frac{1}{h}\psi'\left(\frac{x_2}{h}\right)\varphi_l(x_1) b\otimes e_2\quad\forall x\in\omega.
\end{equation}
We rewrite the integral $I_2$ from \eqref{eq:stint3} as
\begin{equation}\label{eq:SplitSecondIntegral}
  I_2 = \int_\omega (u-\tchi, \tchi-\chi) \,dx = \int_{\omega_E} (u-\tchi, \ta_0-a_0) \,dx + \int_{\omega_I} (u-\tchi, \ta_1-a_1) \,dx
\end{equation}
and apply Fubini's theorem to compute the integrals:\vspace{0.15cm}

\noindent\textit{(i) Integral in $\omega_E$:} Fix any $x_2\in[0,h]$. Using \eqref{eq:uMinusTchi}, we integrate $x_1\in (-l,-\ttheta l)\cup(\ttheta l,l)$:
\begin{multline}
  \int_{-l}^{-\ttheta l}  (u(x_1,x_2)-\tchi (x_1,x_2), \ta_0-a_0) \,dx_1  + \int_{\ttheta l}^{l}  (u(x_1,x_2)-\tchi (x_1,x_2), \ta_0-a_0) \,dx_1\\
  = \left(\psi\left(\frac{x_2}{h}\right) -1\right)(b\otimes e_1, \ta_0-a_0) \left(\int_{-l}^{-\ttheta l}  \varphi_l'(x_1) \,dx_1  + \int_{\ttheta l}^{l}  \varphi_l'(x_1) \,dx_1 \right) \\
  + \frac{1}{h}\psi'\left(\frac{x_2}{h}\right)(b\otimes e_2, \ta_0-a_0) \left(\int_{-l}^{-\ttheta l} \varphi_l(x_1) \,dx_1  + \int_{\ttheta l}^{l}  \varphi_l(x_1) \,dx_1 \right) .
\end{multline}
For all $x_1\in(-l,-\ttheta l)\cup(\ttheta l,l)$, we have $\varphi_l'(x_1)=-\ttheta$ and $\varphi_l(-x_1)=-\varphi_l(x_1)$. Therefore, it follows that
\begin{equation}\label{eq:MixedTermOmegaE}
  \int_{\omega_E} (u-\tchi, \ta_0-a_0) \,dx = - 2\ttheta(1-\ttheta)l (b\otimes e_1, \ta_0-a_0) \int_{0}^{h} \left(\psi\left(\frac{x_2}{h}\right) -1\right)  \,dx_2 .
\end{equation}
\noindent\textit{(ii) Integral in $\omega_I$:} We again fix any $x_2\in[0,h]$ and integrate $x_1\in(-\ttheta l,\ttheta l)$: 
\begin{multline}
\int_{-\ttheta l}^{\ttheta l}  (u(x_1,x_2)-\tchi (x_1,x_2), \ta_1-a_1) \,dx_1
  = \left(\psi\left(\frac{x_2}{h}\right) -1\right)  (b\otimes e_1, \ta_1-a_1) \int_{-\ttheta l}^{\ttheta l}  \varphi_l'(x_1) \,dx_1 \\
  + \frac{1}{h}\psi'\left(\frac{x_2}{h}\right) (b\otimes e_2, \ta_1-a_1) \int_{-\ttheta l}^{\ttheta l} \varphi_l(x_1) \,dx_1  .
\end{multline}
Now, using that $\varphi_l'(x_1)=1-\ttheta$ and $\varphi_l(-x_1)=-\varphi_l(x_1)$ for all $x_1\in(-\ttheta l, \ttheta l)$, we find
\begin{equation}\label{eq:MixedTermOmegaI}
  \int_{\omega_I} (u-\tchi, \ta_1-a_1) \,dx =  2\ttheta(1-\ttheta)l (b\otimes e_1, \ta_1-a_1) \int_{0}^{h} \left(\psi\left(\frac{x_2}{h}\right) -1\right)  \,dx_2 .
\end{equation}
Having computed both integrals, we combine \eqref{eq:SplitSecondIntegral}, \eqref{eq:MixedTermOmegaE} and \eqref{eq:MixedTermOmegaI} to infer that
\begin{equation}
  I_2  = 2\ttheta(1-\ttheta)l\Big( (b\otimes e_1, \ta_1-a_1) - (b\otimes e_1, \ta_0-a_0) \Big)
  \int_{0}^{h} \left(\psi\left(\frac{x_2}{h}\right) -1\right)  \,dx_2.
\end{equation}
By \eqref{eq:TildeWellCurl}, we have $\ta_1-\ta_0 =b\otimes e_1 =\ppc(e_1)a$. Since $(a-\ppc(e_1)a)\perp V_{\curl}(e_1)$ and $b\otimes e_1\in V_{\curl}(e_1)$, we obtain
\begin{equation}
  (b\otimes e_1, \ta_1-a_1) - (b\otimes e_1, \ta_0-a_0) = (b\otimes e_1, \ppc(e_1)a-a) = 0.
\end{equation}
In consequence, the cross-term in \eqref{eq:stint3} vanishes as claimed:
\begin{equation}\label{eq:BoundaryLayerMixedTerm}
  I_2  =0.
\end{equation}
\Step{3: Localized energy --  estimates} 
We now estimate the first term $E^{\curl}_{el}(u,\tchi;\,\omega)$ from \eqref{eq:stint3} as well as the localized surface energy to establish \eqref{eq:aa1est2}.
By \eqref{eq:varphirBoundaryData} and \eqref{eq:uMinusTchi}, it follows, for all $x\in\omega$, that
\begin{equation}\label{eq:BoundaryLayerEstimate}
\begin{aligned}
\abs{u(x)-\tchi(x)}^2
&\leq \Big\lvert \left(\psi\left(\frac{x_2}{h}\right) -1\right)  \varphi_l'(x_1) b\otimes e_1 + \frac{1}{h}\psi'\left(\frac{x_2}{h}\right)\varphi_l(x_1) b\otimes e_2 \Big\rvert^2\\
&\leq 2 \max\{\ttheta^2,(1-\ttheta)^2\} \abs{b\otimes e_1}^2 + \frac{2 l^2}{h^2} \psi'^2\left(\frac{x_2}{h}\right) \abs{b\otimes e_2}^2
\leq C\left(1 + \frac{l^2}{h^2} \psi'^2\left(\frac{x_2}{h}\right)\right)
\end{aligned}
\end{equation}
for some constant $C=C(a,\ttheta)>0$.
Integrating \eqref{eq:BoundaryLayerEstimate} over $\omega$, we obtain
\begin{equation}
\begin{aligned}
    E^{\curl}_{el}(u,\tchi;\,\omega) \leq C\left(2lh + \frac{l^3}{h^2} \int_0^h \psi'^2\left(\frac{x_2}{h}\right) \,dx_2 \right)
    \leq C\left(2lh + \frac{l^3}{h} \int_0^1 \psi'^2(t) \,dt \right).
\end{aligned}
\end{equation}
By possibly enlarging the constant $C$ (still depending on the same parameters) and using that $l\leq 2h$, it follows that
\begin{equation}\label{eq:BoundaryLayerEstimate2}
  E^{\curl}_{el}(u,\tchi;\,\omega) \leq C lh.
\end{equation}
Recalling \eqref{eq:BoundaryLayerPhaseArrangement} and arguing as in \eqref{eq:aa1surface}, we estimate
\begin{equation}\label{eq:BoundaryLayerSurfaceEnergy}
\norm{\nabla \chi}_{TV(\omega)}  \leq 2 h\lvert a\rvert.
\end{equation}
Requiring that $C\geq 2 \lvert a\rvert$, we infer from \eqref{eq:stint3}, \eqref{eq:BoundaryLayerBaseEnergy}, \eqref{eq:BoundaryLayerMixedTerm}, \eqref{eq:BoundaryLayerEstimate2}
and \eqref{eq:BoundaryLayerSurfaceEnergy} that
\begin{equation}
  E^{\curl}_\eps(u,\chi;\omega) \leq 2lh E_0^{\curl}(F,\cK) + C\left(lh + \eps h\right) \quad\forall\eps>0,
\end{equation}
which shows \eqref{eq:aa1est2} and concludes the proof.
\end{proof}
With \autoref{lem:aa1} and \autoref{lem:aa2} in hand, we come to the branching construction.
\begin{breakproposition}[Branching construction]\label{prop:a3}
Assume the hypotheses of \autoref{prop:a4}.
Then, for all $N\in\NN_{>1}$, there exists a vector field $v\in W^{1,\infty}_0(Q;\RR^2)$ and a phase arrangement
$\chi\in BV(Q; \cK)$ such that, for $u = F + \nabla v\in\cD_F^{\curl}(Q)$, the following energy estimate holds:
\begin{equation}\label{eq:PropA3Estimate}
  E^{\curl}_\eps(u,\chi) - E^{\curl}_0(F,\cK) \leq C\Big(\frac{1}{N^2}+\eps N\Big)\quad\forall \eps>0
\end{equation}
for some constant $C=C(a,\ttheta_{\curl})>0$.
\end{breakproposition}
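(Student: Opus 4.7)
The plan is a self-similar branching construction on $Q$ that iterates the unit cell of \autoref{lem:aa1} with frequency-doubling towards both the top and bottom faces, and then closes off each boundary by a thin layer of cut-off cells from \autoref{lem:aa2}. Using the reflection $x_2 \mapsto 1 - x_2$, it will suffice to describe the construction on the upper half $Q_+ := (0,1)\times(1/2,1)$ and to mirror it to the lower half; the two halves then glue continuously along $x_2 = 1/2$ because both produce the same bottom trace on the coarsest level.

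I would first fix a parameter $\alpha \in (2,4)$, say $\alpha = 3$, and an integer $j_0 := \lceil\log_2 N\rceil$, and then define level widths $l_j := (2 N \cdot 2^j)^{-1}$ and heights $h_j := h_0 \alpha^{-j}$ for $j \in \{0, \ldots, j_0 - 1\}$, where $h_0$ is determined by
\begin{equation*}
  \sum_{j=0}^{j_0-1} h_j + h_{\mathrm{bdy}} = \tfrac{1}{2}, \qquad h_{\mathrm{bdy}} := \tfrac{l_{j_0}}{2} = \tfrac{1}{4 N 2^{j_0}}.
\end{equation*}
Level $j \in \{0,\ldots, j_0-1\}$ would be a horizontal strip of height $h_j$ tiled by $N_j := N \cdot 2^j$ adjacent cells of width $2 l_j$, and in each such cell I would invoke \autoref{lem:aa1}. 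The aspect-ratio condition $l_j \leq h_j$ becomes $(\alpha/2)^j \leq 2 N h_0$ and is compatible with $j_0 = \lceil\log_2 N\rceil$ whenever $\alpha < 4$, since then $\log_{\alpha/2} N > \log_2 N$. The top trace $\varphi_{l_j/2}(x_1 - l_j/2)\,b$ of a level-$j$ cell has period $l_j = 2 l_{j+1}$ and, after a suitable horizontal translation, decomposes into two copies of the bottom trace $\varphi_{l_{j+1}}(x_1)\,b$ of the level-$(j+1)$ cells; this is the standard frequency-doubling match and glues the gradients in $W^{1,\infty}$ across each inter-level interface. At the top of the stack I would instead apply \autoref{lem:aa2} to each of the $N_{j_0}$ cells of width $2 l_{j_0}$ and height $h_{\mathrm{bdy}}$; the condition $l_{j_0} \leq 2 h_{\mathrm{bdy}}$ holds with equality by construction, and the resulting top trace vanishes at $x_2 = 1$. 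After mirroring across $x_2 = 1/2$, the field $v$ lies in $W^{1,\infty}_0(Q;\RR^2)$, the phase arrangement $\chi$ lies in $BV(Q;\cK)$, and $u = F + \nabla v$ lies in $\cD^{\curl}_F(Q)$.

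The energy accounting then reduces to summing geometric series. Each level-$j$ cell contributes, by \autoref{lem:aa1}, at most $2 l_j h_j E^{\curl}_0(F,\cK) + C(l_j^3/h_j + \eps h_j)$. Multiplying by $N_j$ and using the identities $N_j (2 l_j h_j) = h_j$, $N_j l_j^3/h_j = (8 N^2 h_0)^{-1}(\alpha/4)^j$ and $N_j h_j = N h_0 (2/\alpha)^j$, the total contribution of level $j$ becomes
\begin{equation*}
  h_j\, E^{\curl}_0(F,\cK) + C\bigl( (8 N^2 h_0)^{-1}(\alpha/4)^j + \eps N h_0 (2/\alpha)^j \bigr),
\end{equation*}
and both geometric series converge uniformly in $j_0$ because $\alpha \in (2,4)$. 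The cut-off strip contributes at most $h_{\mathrm{bdy}} E^{\curl}_0(F,\cK) + C\bigl( (8 N 2^{j_0})^{-1} + \eps/4 \bigr)$. Since the cells tile $Q$ exactly (after mirroring), the base terms $h_j E^{\curl}_0(F,\cK)$ and $h_{\mathrm{bdy}} E^{\curl}_0(F,\cK)$ telescope to $E^{\curl}_0(F,\cK)$, and the residual excess is bounded by $C( (N^2 h_0)^{-1} + \eps N h_0 + (N 2^{j_0})^{-1} + \eps )$. Because $h_0$ remains bounded away from zero as $j_0 \to \infty$ (with $\alpha = 3$ one gets $h_0 \to 1/3$), $2^{j_0} \geq N$ and $\eps \leq \eps N$ for $N \geq 1$, this last expression is at most $C(1/N^2 + \eps N)$, which is \eqref{eq:PropA3Estimate}.

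I expect the main technical obstacle to be the simultaneous compatibility of the geometric constraints, together with the bookkeeping of the horizontal trace shifts. The parameter $j_0$ has to be large enough that $2^{j_0} \geq N$ (so that the boundary-layer elastic excess is absorbed into $1/N^2$), yet small enough that $l_j \leq h_j$ holds at every interior level; both happen simultaneously once $\alpha$ is chosen in the interior of $(2,4)$. Independently, the horizontal shifts $-l_j/2$ appearing in the top trace $\varphi_{l_j/2}(\cdot - l_j/2)\, b$ of level $j$ must be aligned with the cell subdivision at level $j+1$, which forces a specific choice of horizontal centerings throughout the construction. Once that bookkeeping is fixed the energy estimates themselves reduce to the geometric sums above.
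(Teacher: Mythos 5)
Your proposal is correct and follows essentially the same route as the paper's proof: iterate the unit cell of \autoref{lem:aa1} with frequency-doubling and geometrically decaying layer heights on the upper half square, cap the stack with the cut-off cells of \autoref{lem:aa2}, mirror across $\{x_2=\tfrac{1}{2}\}$, and sum the two geometric series (your $\alpha\in(2,4)$ is the paper's $\tau=1/\alpha\in(\tfrac14,\tfrac12)$). The only cosmetic difference is that you prescribe $j_0=\lceil\log_2 N\rceil$ and solve for $h_0$, whereas the paper fixes $h_j=\tau^j(1-\tau)/2$ and defines $j_0$ as the largest index with $l_j\leq h_j$, which avoids having to verify the aspect-ratio condition by hand for small $N$.
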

\begin{proof}
The proof is organized into four steps: In the first step, we partition the upper half of the unit square into layers of unit cells. 
In the second step, we define $u$ and $\chi$ by \autoref{lem:aa1} in interior unit cells and by \autoref{lem:aa2} in boundary cells.
We then extend $u$ and $\chi$ symmetrically to the lower half square in the third step.
Finally, we estimate the singularly perturbed energy in the last step.

\Step{1: Partition of the upper half square}
We will first focus on the construction in the upper half square $Q^+:=(0,1)\times(\tfrac{1}{2}, 1)$.
As illustrated in \autoref{fig:branching}, the idea is to cover $Q^+$ by starting with a row of unit cells at the bottom and stacking two smaller unit cells, each half as wide, on top of each of the unit cells of the bottom row.
Iterating this procedure, we stack layers of increasingly narrow unit cells on top of each other and define the displacement map $v$ in each of the unit cells by \autoref{lem:aa1}.
In the topmost layer, we define $v$ in each unit cell using the cut-off construction \autoref{lem:aa2}.
In this way, we obtain a self-similar displacement map $v\in W^{1,\infty}(Q^+,\RR^2)$ which vanishes along the lateral and top boundary of $Q^+$.
Moreover, the number of oscillations of $v$ doubles in every layer as we approach the top.\medbreak

Let $N\in\NN_{>1}$. In the construction, this will be the number of oscillations in the bottom layer of $Q^+$.
Pick any $\tau\in(\tfrac{1}{4},\tfrac{1}{2})$.
The precise value of $\tau$ will play no further role in our analysis,
but it can be used to optimize the constant in \eqref{eq:PropA3Estimate} as detailed in \cite{chanPHD}.
In the $j$-th layer, $j\in\NN_0$, the unit cells have the dimensions:
\begin{equation}\label{eq:BranchingConstructionLengthsAndWidths}
  l_j := \frac{1}{2N2^j }, \hspace{2cm}  h_j:=\tau^j\frac{1-\tau}{2}.
\end{equation}
As we require $l_j\leq h_j$ in \autoref{lem:aa1}, we stop at $j_0:=\max \{j\in\NN_0:l_j\leq h_j\}$.
Note that $j_0$ exists since $N>1$ implies $l_0\leq h_0$.
Introducing
\begin{equation}
    y_j:=1-\frac{\tau^j}{2},\quad j\in\NN_0,
\end{equation}
we define the unit cells in the $j$-th layer, $j\in\{0,\dots,j_0\}$, by
\begin{equation}
    \omega_{j,k}:=((2k+1)l_j, y_j) + (-l_j,l_j)\times(0,h_j), \quad k\in\{0,\dots ,N2^j-1\}.
\end{equation}
Note that $y_{j+1}-y_j=h_j$ for all $j\in\{0,\dots,j_0-1\}$ and that the number of unit cells doubles in each layer.
The cut-off is carried out in the $j_0{+}1$-th layer, where we define the unit cells by
\begin{equation}
    \omega_{{j_0+1},k}:=((2k+1)l_{j_0+1}, y_{j_0+1}) + (-l_{j_0+1},l_{j_0+1})\times(0,\tfrac{\tau^{j_0+1}}{2}), \quad k\in\{0,\dots ,N2^{j_0+1}-1\}.
\end{equation}

\Step{2: Construction in the upper half square}
For the interior layers, that is, for $j\in\{0,\dots,j_0\}$, let $v_j$ and $\chi_j$ denote the maps from \autoref{lem:aa1} for $\omega=(-l_j,l_j)\times(0,h_j)$.
Similarly, for the boundary layer, let $v_{j_0+1}$ and $\chi_{j_0+1}$ denote the maps from \autoref{lem:aa2} for $\omega=(-l_{j_0+1},l_{j_0+1})\times(0,{\tau^{j_0+1}}/{2})$.
Using this, we define the maps $v$ and $\chi$ in $\omega_{j,k}$
for $j\in\{0,\dots,j_0+1\}$ and $k\in\{0,1,\dots , N2^j-1\}$ by
\begin{equation}\label{eq:blockwiseuptotop}
    \begin{aligned}
        v(x_1,x_2)&:=v_j(x_1 - (2k+1)l_j, x_2 - y_j),\\
        \chi(x_1,x_2)&:=\chi_j(x_1 - (2k+1)l_j, x_2 - y_j).
    \end{aligned}
\end{equation}
Due to the boundary values in \autoref{lem:aa1} and \autoref{lem:aa2}, the vector field $v$ is continuous along the interfaces of the unit cells, which implies that
$v\in W^{1,\infty}(Q^+;\RR^d)$. Another consequence is that $v$ satisfies the boundary conditions at the top and at the lateral faces of $Q^+$:
\begin{equation}\label{eq:HalfSquareBoundaryValue}
\begin{aligned}
            &(i)\phantom{i} \textit{ (Top side) } \hspace{0.9cm}\forall x_1\in[0,1]: v(x_1, 1)= 0,\\
            &(ii) \textit{ (Lateral sides) } \hspace{0.2cm}\forall x_2\in[\tfrac{1}{2}, 1]: v(0,x_2)=v(1,x_2) = 0.
\end{aligned}
\end{equation}
Similarly, we see that $\chi$ lies in $BV(Q^+;\cK)$.
Indeed, retracing the boundary values of $\chi$ in \autoref{lem:aa1} and \autoref{lem:aa2}, it follows that $\nabla\chi$ has no additional jumps along the interfaces of the unit cells:
\begin{equation}\label{eq:branchingConstructionSurfaceEnergyDecomp}
    \norm{\nabla\chi}_{TV(Q^+)} = \sum_{j=0}^{j_0+1} \sum_{k=0}^{N2^j-1} \norm{\nabla\chi}_{TV(\omega_{j,k})}.
\end{equation}
\Step{3: Mirroring}
We extend $v$ and $\chi$ to the full unit square by mirroring along the line $\{x_2=\tfrac{1}{2}\}$. For $x_1\in [-1,1]$ and $x_2\in[0,\tfrac{1}{2}]$, we thus set
\begin{equation}\label{eq:Mirroring}
        v(x_1,\tfrac{1}{2}-x_2):=v(x_1,\tfrac{1}{2}+x_2), \hspace{1.5cm} \chi(x_1,\tfrac{1}{2}-x_2):=\chi(x_1,\tfrac{1}{2}+x_2).
\end{equation}
This yields $v\in W_0^{1,\infty}(Q;\RR^2)$ and $\chi\in BV(Q;\cK)$. Here, the vector field $v$ vanishes along the boundary $\del Q$ due to \eqref{eq:HalfSquareBoundaryValue}.
Moreover, no additional phase interfaces are introduced along the line $\{x_2=\tfrac{1}{2}\}$:
\begin{equation}\label{eq:MirroredSurfaceEnergy}
    \norm{\nabla\chi}_{TV(Q)} = 2\norm{\nabla\chi}_{TV(Q^+)}.
\end{equation}
\Step{4: Estimates}
Let $\eps>0$. We now estimate the singularly perturbed two-well energy $E^{\curl}_\eps(u,\chi)$ for the displacement gradient $u=F+\nabla v\in\cD_F^{\curl}(Q)$.
Using \eqref{eq:branchingConstructionSurfaceEnergyDecomp}, \eqref{eq:Mirroring} and \eqref{eq:MirroredSurfaceEnergy}, it holds that
\begin{equation}\label{eq:BranchingFullDecomp}
E^{\curl}_\eps(u,\chi)= 2 E^{\curl}_\eps(u,\chi; Q^+) = 2\sum_{j=0}^{j_0+1} \sum_{k=0}^{N2^j-1}  E^{\curl}_{\eps}(u,\chi; \omega_{j,k}).
\end{equation}
By \autoref{lem:aa1}, there exists a constant $C=C(a,\ttheta_{\curl})>0$ such that
\begin{equation}\label{eq:InteriorLayerEst}
   E^{\curl}_{\eps}(u,\chi; \omega_{j,k}) \leq  \abs{\omega_{j,k}} E^{\curl}_0(F,\cK) + C\bigg(\frac{l_j^3}{h_j} + \eps h_j\bigg)
\end{equation}
for all $j\in\{0,\dots,j_0\}$ and $k\in\{0,\dots, N2^j-1\}$.
By possibly enlarging the constant $C=C(a,\ttheta_{\curl})$, it follows from \autoref{lem:aa2} together with $h_{j_0+1} < l_{j_0+1} \leq 2 h_{j_0+1}$ and $\tau^{j_0+1}\sim h_{j_0+1}$
that
\begin{equation}\label{eq:BoundaryLayerEst}
\begin{aligned}
     E^{\curl}_{\eps}(u,\chi; \omega_{j_0+1,k}) &\leq  \abs{\omega_{j_0+1,k}} E^{\curl}_0(F,\cK) + C\bigg(l_{j_0} \frac{\tau^{j_0+1}}{2} + \eps \frac{\tau^{j_0+1}}{2}\bigg)\\
     &\leq \abs{\omega_{j_0+1,k}} E^{\curl}_0(F,\cK) + C\bigg(\frac{l_{j_0+1}^3}{h_{j_0+1}} + \eps h_{j_0+1}\bigg)
\end{aligned}
\end{equation}
for all $k\in\{0,\dots, N2^{j_0+1}-1\}$.\medbreak
Recalling \eqref{eq:BranchingConstructionLengthsAndWidths} and combining
\eqref{eq:BranchingFullDecomp}, \eqref{eq:InteriorLayerEst} with \eqref{eq:BoundaryLayerEst}, we derive
\begin{multline}
  E^{\curl}_\eps(u,\chi) \leq 2\sum_{j=0}^{j_0+1} \sum_{k=0}^{N2^j-1} \bigg(\abs{\omega_{j,k}} E^{\curl}_0(F,\cK) + C\bigg(\frac{l_j^3}{h_j} + \eps h_j\bigg)\bigg)\\
  \leq E^{\curl}_0(F,\cK) + 2C \sum_{j=0}^{j_0+1} N2^j \bigg(\frac{l_j^3}{h_j} + \eps h_j\bigg)
  = E^{\curl}_0(F,\cK) + C \sum_{j=0}^{j_0+1}\bigg(\frac{l^2_j}{h_j} + \eps \frac{h_j}{l_j}\bigg)\\
  \leq E^{\curl}_0(F,\cK) + C \sum_{j=0}^{j_0+1} \bigg(\frac{1}{N^2} \Big(  \frac{1}{4\tau} \Big)^j + \eps N(2\tau)^j\bigg).
\end{multline}
Since $\tau\in(\tfrac{1}{4},\tfrac{1}{2})$, the geometric series converge and up to enlarging the constant $C=C(a,\ttheta_{\curl})$, we obtain
\begin{equation}
  E^{\curl}_\eps(u,\chi) \leq E^{\curl}_0(F,\cK) + C \Big(\frac{1}{N^2} + \eps N\Big),
\end{equation}
which proves \eqref{eq:PropA3Estimate}.
\end{proof}
We are now ready to prove \autoref{prop:a4}. As we have already seen in \autoref{subsec:ChangeOfVar}, this then completes the proof of the main result \autoref{thm:incompgrad}.
\begin{proof}[Proof of \autoref{prop:a4}]
Given $\eps\in(0,1)$, we apply \autoref{prop:a3} for $N:=\lceil \eps^{\nicefrac{-1}{3}} \rceil\in\NN_{>1}$.
Since $\eps^{\nicefrac{-1}{3}}\leq N\leq 2 \eps^{\nicefrac{-1}{3}}$, we obtain \eqref{eq:a4Estimate} from \eqref{eq:PropA3Estimate}. 
\end{proof}
\begin{rmk}\label{rmk:DisplGradComp}
While the data $(F,\cK)$ in \autoref{prop:a4} may be incompatible, this is not apparent on the level of the displacement gradients $(u_\eps)_\eps$ in the proof of \autoref{prop:a4}.
Indeed, let $(F,\tcK)$ be the compatible approximation of $(F,\cK)$ where $\tcK=\{\ta_0,\ta_1\}$ is defined by \eqref{eq:TildeWellCurl}.
Then the displacement gradients obtained from the upper bound construction for $E^{\curl}_\eps(F,\tcK)\lesssim \eps^{\nicefrac{2}{3}}$ are indistinguishable
from the displacement gradients from the upper bound construction for $E^{\curl}_\eps(F,\cK)-E^{\curl}_0(F,\cK)\lesssim \eps^{\nicefrac{2}{3}}$.\medbreak

When considering upper scaling bounds of zeroth-order-corrected singularly perturbed $\cA$-free two-well energies for more general differential operators,
it may therefore be useful to study the corresponding problem for a compatible approximation $(F,\tcK)$
with the aim of proving the excess energy extraction \eqref{eq:EnergySplit} and the compatible upper bound \eqref{eq:EnergySplit2}.
As we will see in \autoref{subsec:branchingConstruction2}, this approach is also successful for the $\eps^{\nicefrac{4}{5}}$-upper bound construction.
\end{rmk}

\section{Upper bounds for the singularly perturbed geometrically linear two-well energy}\label{sec:UpperBoundCC}
In this section, we derive upper scaling bounds for the zeroth-order-corrected singularly perturbed geometrically linear two-well energy
with the goal of proving \autoref{thm:incompsyma}~\textcolor{blue}{(iii)}.
The argument is divided into two parts:\smallbreak
In \autoref{subsec:TwoThridsUpperBoundCC}, we establish the following general upper scaling bound:
For any set of two distinct wells $\cK=\{a_0,a_1\}\subset\rddsymd{2}$ and boundary data $F\in\rddsymd{2}$ with $\ttheta_{\cc}(F,\cK)\in(0,1)$, we have
\begin{equation}\label{eq:TwoThridsUpperBoundCCIntro}
    E_\eps^{\cc}(F,\cK)-E_0^{\cc}(F,\cK) \leq C \eps^{\nicefrac{2}{3}}\quad\forall\eps\in(0,1).
\end{equation}
The proof is based on the branching construction from \autoref{sec:UpperBoundGradDiv}, adapted for $\cA=\ccurl$.\smallbreak

In \autoref{subsec:branchingConstruction2}, we refine estimate \eqref{eq:TwoThridsUpperBoundCCIntro} in the case
where the wells differ by a rank-one matrix, yielding an improved $\eps^{\nicefrac{4}{5}}$-upper bound.
This estimate follows by redoing the branching construction from \cite[Theorem 1.2]{cc15} for incompatible boundary data.

\subsection{An \texorpdfstring{$\eps^{\frac{2}{3}}$}{eps2/3}-upper bound via a gradient two-well problem}\label{subsec:TwoThridsUpperBoundCC}
We begin by presenting the key result (\autoref{prop:UpperBoundCC1}) of \autoref{subsec:TwoThridsUpperBoundCC}. 
Its proof requires some preparation.
To motivate our approach, we outline how the branching construction for incompatible data from \autoref{sec:UpperBoundGradDiv} needs to be modified for the setting $\cA=\ccurl$.
Rather than implementing this construction, we develop a shorter proof.
This approach is formalized in \autoref{lem:UpperBoundCC}, which we then use to prove \autoref{prop:UpperBoundCC1}.
\begin{breakproposition}[Upper bound of the geometrically linear two-well energy]\label{prop:UpperBoundCC1}
Let $\cK=\{a_0,a_1\}\subset\rddsymd{2}$ and $F\in\rddsymd{2}$.
Consider the differential operator $\cA=\ccurl$; see \eqref{eq:curlcurl}.
For $a:=a_1-a_0$, let $\xi^*\in S_{\cc}(a)$ be an optimal lamination direction; see \autoref{def:optLaminationDirections}.
Further, let $\Omega\subset\RR^2$ be a rotated unit square with two faces normal to $\xi^*$.
Consider the energy $E^{\cc}_\eps(F,\cK)$ given by \eqref{eq:MinSingPertAfreeEnergy}.
Let $\ttheta_{\cc}=\ttheta_{\cc}(F,\cK)$ be given as in \autoref{prop:optvolfrac}.
Now, suppose that $\ttheta_{\cc}\in(0,1)$.
Then, there exists a constant $C=C(a,\ttheta_{\cc})>0$ such that
\begin{equation}\label{eq:UpperBoundCC1}
  E^{\cc}_\eps(F,\cK) - E^{\cc}_0(F,\cK) \leq C\eps^{\nicefrac{2}{3}}\quad\forall \eps\in (0,1).
\end{equation}
\end{breakproposition}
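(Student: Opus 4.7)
The plan is to lift the gradient branching construction of \autoref{thm:incompgrad}~(iii) into the geometrically linear setting via symmetrization, exploiting the fact that any symmetrized gradient $\nabla^{sym} v$ is automatically $\ccurl$-free. First I would introduce the $\ccurl$-compatible approximation $(F,\tcK)$ of $(F,\cK)$ from \autoref{def:compatibleApprox}, with symmetric wells $\ta_j = F+(j-\ttheta_{\cc})\ppcc(\xi^*)a$, together with an auxiliary gradient-compatible dataset $(F,\tcK')$ having non-symmetric wells $\ta_j':=F+(j-\ttheta_{\cc})\,G_{\xi^*}(a\xi^*)\otimes\xi^*$. By construction $\sym\ta_j'=\ta_j$; the difference $\ta_1'-\ta_0'$ lies in $V_{\curl}(\xi^*)$; the data is gradient-compatible with $\ttheta_{\curl}(F,\tcK')=\ttheta_{\cc}\in(0,1)$; and $\xi^*\in S_{\curl}(\ta_1'-\ta_0')$. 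Applying \autoref{thm:incompgrad}~(iii) to $(F,\tcK')$ provides $v_\eps\in W^{1,\infty}_0(\Omega;\RR^2)$ and $\chi_\eps'\in BV(\Omega;\tcK')$ such that, for $u_\eps':=F+\nabla v_\eps$, we have $E^{\curl}_\eps(u_\eps',\chi_\eps')\leq C\eps^{\nicefrac{2}{3}}$ (the zeroth-order term vanishes by gradient-compatibility).

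Next I would symmetrize: set $\tu_\eps:=F+\nabla^{sym}v_\eps\in\cD_F^{\cc}(\Omega)$ (extending $v_\eps$ by zero outside $\Omega$) and transfer the phase by setting $\chi_\eps\in BV(\Omega;\cK)$ with $\chi_\eps=a_j$ on $\{\chi_\eps'=\ta_j'\}$. Writing $\tchi_\eps:=\sym\chi_\eps'\in BV(\Omega;\tcK)$, the geometrically linear elastic energy decomposes as
\begin{equation}
\int_\Omega\abs{\tu_\eps-\chi_\eps}^2 dx = \int_\Omega\abs{\sym(u_\eps'-\chi_\eps')}^2 dx + 2I_2 + I_3,
\end{equation}
with $I_2:=\int_\Omega(\tu_\eps-\tchi_\eps,\tchi_\eps-\chi_\eps)dx$ and $I_3:=\int_\Omega\abs{\tchi_\eps-\chi_\eps}^2dx$. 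The term $I_3$ evaluates to $E^{\cc}_0(F,\cK)$ by \autoref{lem:importantequality} combined with \autoref{thm:qwdom} and \autoref{prop:optvolfrac}; the first term is controlled by $\int_\Omega\abs{u_\eps'-\chi_\eps'}^2dx\leq C\eps^{\nicefrac{2}{3}}$ via $\abs{\sym M}\leq\abs{M}$; and the surface energy transfers through $\{\chi_\eps=a_1\}=\{\chi_\eps'=\ta_1'\}$, giving $\eps\norm{\nabla\chi_\eps}_{TV(\Omega)}\leq C\eps\norm{\nabla\chi_\eps'}_{TV(\Omega)}\leq C\eps^{\nicefrac{2}{3}}$.

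The main obstacle is showing that the cross term vanishes, as a naive Cauchy-Schwarz bound only gives $\abs{I_2}\lesssim\eps^{\nicefrac{1}{3}}$, which is too weak. I would write $Q:=a-\ppcc(\xi^*)a\in V_{\cc}(\xi^*)^\perp$ and decompose $\tchi_\eps-\chi_\eps=(F-a_{\ttheta_{\cc}})+(\ttheta_{\cc}-\mathbbm{1}_{\{\chi_\eps=a_1\}})Q$. The constant-matrix contribution vanishes because $\int_\Omega(\tu_\eps-\tchi_\eps)dx=0$: the symmetrized gradient $\nabla^{sym}v_\eps$ integrates to zero, and the phase proportions of $\chi_\eps'$ reproduce $F$. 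The oscillating contribution reduces to $-\bigl(Q,\sym\int_{\{\chi_\eps'=\ta_1'\}}(u_\eps'-\ta_1')dx\bigr)$. Examining the branching construction of \autoref{prop:a3} cell by cell (including its mirroring to the lower half-square), interior cells from \autoref{lem:aa1} contribute zero to this integral by the symmetry $\abs{\omega_2}=\abs{\omega_4}$ combined with the opposite perturbations $\pm\gamma b\otimes e_2$, while cut-off cells from \autoref{lem:aa2} contribute only scalar multiples of $b\otimes\xi^*$ via Fubini and the vanishing of $\int\varphi_l\,dx_1$ over the $\ta_1'$-phase. Hence the total integral lies in $\RR\,b\otimes\xi^*$, whose symmetrization is in $V_{\cc}(\xi^*)$ and thus orthogonal to $Q$; this yields $I_2=0$, and combining all the estimates gives $E^{\cc}_\eps(F,\cK)\leq E^{\cc}_0(F,\cK)+C\eps^{\nicefrac{2}{3}}$.
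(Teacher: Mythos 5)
Your argument is correct, but it takes a genuinely different route from the paper. The paper also reduces to the gradient construction via symmetrization, but it chooses the auxiliary non-symmetric wells $a_j'$ so that $\sym a_j'=a_j$ are the \emph{original} wells and so that $E_0^{\curl}(F,\cK')=E_0^{\cc}(F,\cK)$ (conditions \ref{C1}--\ref{C4} of \autoref{lem:UpperBoundCC}); the excess energy is then already extracted inside the incompatible gradient problem, the inequality $\abs{\sym M}\leq\abs{M}$ transfers everything at once, and no cross term ever appears. The price is a case distinction on the definiteness of $a$ to exhibit such $\cK'$ (taking $\cK'=\cK$ when $a$ is semidefinite, and skew-perturbed wells when $a$ is indefinite). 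You instead symmetrize onto the compatible approximation $\tcK$, so your gradient problem $(F,\tcK')$ is compatible with zero excess energy, and you must perform the excess-energy extraction afterwards: your $I_3$ computation and the vanishing of the constant part of $I_2$ are fine (the exact volume fraction $\ttheta_{\cc}$ and $\int_\Omega\nabla^{sym}v_\eps\,dx=0$ give exactly \eqref{eq:baseenergyAfree} at $\theta=\ttheta_{\cc}$), and your reduction of the oscillating part of $I_2$ to $-\bigl(Q,\sym\int_{\{\chi'_\eps=\ta_1'\}}(u'_\eps-\ta_1')\,dx\bigr)$ with the cell-by-cell cancellation (the $\pm E_2$ symmetry in \autoref{lem:aa1}, the oddness of $\varphi_l$ in \autoref{lem:aa2}, both surviving the mirroring and the rotation $R$) is a correct, if somewhat laborious, way to see that this integral lies in $\RR\,b\otimes\xi^*\subset\sym^{-1}(V_{\cc}(\xi^*))$, hence is orthogonal to $Q=a-\ppcc(\xi^*)a$. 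The trade-off is robustness: your proof is uniform in $a$ but is not black-box with respect to \autoref{thm:incompgrad}~(iii) --- it needs the specific branching construction of \autoref{prop:a3}, not just its conclusion --- whereas the paper's \autoref{lem:UpperBoundCC} only uses that the gradient competitor has the form $F+\nabla v_\eps$ with $v_\eps\in W^{1,\infty}_0$. Note also that in the indefinite case your cross term is trivially zero (there $Q=0$ since $a\in\Lambda_{\cc}$), so the cell-by-cell argument is only genuinely needed in the semidefinite case, where your auxiliary wells in fact coincide with the paper's choice $\cK'=\cK$ up to the compatible shift.
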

As we argue in the following, it is possible to prove \autoref{prop:UpperBoundCC1} using a branching construction similar to the one in \autoref{sec:UpperBoundGradDiv}.\smallbreak
Specifically, assume the hypotheses of \autoref{prop:UpperBoundCC1} and
consider the compatible approximation $(F,\tcK)$ of the data $(F,\cK)$ for $\cA=\ccurl$; see \autoref{def:compatibleApprox}.
As there exists a unique $b\in\RR^d$ such that $\ppcc(\xi^*)a=b\odot \xi^*$, 
the elements of $\tcK=\{\ta_0,\ta_1\}\subset\rddsymd{2}$ are explicitly given by
\begin{equation}\label{eq:tildeWellsCCurl}
    \ta_0 = F-\ttheta_{\cc}\,b\odot\xi^*, \hspace{2cm}\ta_1 = F+(1-\ttheta_{\cc})b\odot\xi^*.
\end{equation}
Up to a change of variables (see \autoref{subsubsec:CCChangeOfVar} below),
we may assume without loss of generality that $\xi^*=e_1$ and $\Omega=(0,1)^2$. 
Then, repeating the arguments from \autoref{subsec:branchingConstruction}, it is possible to adapt the unit cell construction (\autoref{lem:aa1}),
the cut-off layer (\autoref{lem:aa2}) and the branching construction (\autoref{prop:a3}) to the setting of $\cA=\ccurl$.
This approach yields, for all $\eps\in(0,1)$, a linear strain $u_\eps := F + \nabla^{sym} v_\eps \in\cD_F^{\cc}(\Omega)$ and a phase arrangement $\chi_\eps\in BV(\Omega;\cK)$,
which allows us to extract the excess energy
\begin{equation}\label{eq:EnergySplit3}
  E^{\cc}_\eps(F,\cK) = E^{\cc}_0(F,\cK) +  E^{\cc}_\eps(F,\tcK)\quad\forall\eps\in(0,1)
\end{equation}
and simultaneously prove the upper bound
\begin{equation}\label{eq:EnergySplit4}
  E^{\cc}_\eps(F,\tcK)\lesssim\eps^{\nicefrac{2}{3}}\quad\forall\eps\in(0,1).
\end{equation}
A key observation is that $u_\eps$ oscillates between $\ta_0$ and the perturbations of $\ta_1$ (corresponding to \eqref{eq:perturbedtildewells} with
$E_2=\gamma b\odot e_2$) while $\chi_\eps$ mirrors this oscillatory behavior with $\chi_\eps(x)=a_0$ if $u_\eps(x)\approx \ta_0$ and $\chi_\eps(x)=a_1$ if $u_\eps(x)\approx \ta_1$.\medbreak

Rather than redoing the branching construction in the setting $\cA=\ccurl$ from scratch, this observation allows us to proceed via an equivalent argument:
The idea of this approach is to construct a set $\cK'=\{a_0',a_1'\}\subset\rddd{2}$ such that the symmetric parts of the displacement gradient and of the phase arrangement,
arising from the upper bound construction for
the \textit{gradient} two-well energy $E^{\curl}_\eps(F,\cK')$ from \autoref{sec:UpperBoundGradDiv}, yield the desired construction for the
\textit{geometrically linear} energy $E^{\cc}_\eps(F,\cK)$.
We now make rigorous this approach in the following lemma, which we then use to prove \autoref{prop:UpperBoundCC1}.
\begin{lemma}\label{lem:UpperBoundCC}
Assume the hypotheses of \autoref{prop:UpperBoundCC1}.
In addition, let $\cK'=\{a_0',a_1'\}\subset\rddd{2}$ be another set of wells.
For the differential operator $\cA=\curl$ from \eqref{eq:curl}, let $E_{0}^{\curl}(F,\cK')$ be given as in \eqref{eq:MinSingPertAfreeEnergy}.
For $a':=a'_1-a'_0$, let $S_{\curl}(a')$ be the set of optimal lamination directions; see \autoref{def:optLaminationDirections}.
Let $\ttheta_{\curl}(F,\cK')$ be given as in \autoref{prop:optvolfrac}.\\
Assume that the following conditions are satisfied:
\begin{tasks}[label=\upshape(C\arabic*)](2)
  \task$\xi^*\in S_{\curl}(a')$,\label{C1}
  \task$\ttheta_{\curl}(F,\cK')\in(0,1),$\label{C2}
  \task$\sym a_j'=a_j\text{ for both }j\in\{0,1\},$\label{C3}
  \task$E_{0}^{\curl}(F,\cK')=E_{0}^{\cc}(F,\cK).$\label{C4}
\end{tasks}
Then, for all $\eps\in(0,1)$, there exist a vector field $v_\eps\in W^{1,\infty}_0(\Omega;\RR^2)$ and a phase arrangement
$\chi_\eps\in BV(\Omega; \cK)$ such that, for $u_\eps := F + \nabla^{sym} v_\eps\in\cD_F^{\cc}(\Omega)$, the following energy estimate holds:
\begin{equation}\label{eq:UpperBoundCC2}
  E^{\cc}_\eps(u_\eps,\chi_\eps) - E^{\cc}_0(F,\cK) \leq C\eps^{\nicefrac{2}{3}}\quad\forall \eps\in (0,1)
\end{equation}
for some constant $C=C(a',\ttheta_{\curl}(F,\cK'))>0$.
\end{lemma}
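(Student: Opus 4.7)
The strategy is to reduce the geometrically linear problem directly to the already established $\eps^{\nicefrac{2}{3}}$-upper bound for the gradient two-well problem (\autoref{thm:incompgrad}~\textcolor{blue}{(iii)}) applied to the auxiliary data $(F,\cK')$, and then to take symmetric parts of both the displacement gradient and the phase arrangement. Conditions \ref{C1} and \ref{C2} guarantee that \autoref{thm:incompgrad}~\textcolor{blue}{(iii)} is applicable to $(F,\cK')$ on the rotated unit square $\Omega$, so for every $\eps\in(0,1)$ one obtains $v'_\eps\in W^{1,\infty}_0(\Omega;\RR^2)$ and $\chi'_\eps\in BV(\Omega;\cK')$ such that $u'_\eps:=F+\nabla v'_\eps\in\cD_F^{\curl}(\Omega)$ satisfies
\begin{equation}
  E^{\curl}_\eps(u'_\eps,\chi'_\eps)-E^{\curl}_0(F,\cK')\leq C\eps^{\nicefrac{2}{3}}\quad\forall\eps\in(0,1)
\end{equation}
with $C=C(a',\ttheta_{\curl}(F,\cK'))$.

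The plan is to then define $v_\eps:=v'_\eps\in W^{1,\infty}_0(\Omega;\RR^2)$ and set $u_\eps:=F+\nabla^{sym}v_\eps=\sym u'_\eps$ (using that $F\in\rddsymd{2}$). Since symmetrized gradients are automatically $\ccurl$-free by the Saint-Venant condition and $v_\eps$ vanishes outside $\Omega$, the admissibility $u_\eps\in\cD_F^{\cc}(\Omega)$ is immediate. For the phase arrangement, set $\chi_\eps:=\sym\chi'_\eps$. By \ref{C3}, we have $\sym a_j'=a_j$, so if $\chi'_\eps=a'_0+(a'_1-a'_0)\mathbf{1}_{E_\eps}$ with $E_\eps:=\{\chi'_\eps=a'_1\}$, then $\chi_\eps=a_0+(a_1-a_0)\mathbf{1}_{E_\eps}\in BV(\Omega;\cK)$.

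Next, the key observation is that taking symmetric parts does not increase either the elastic or the surface energy. For the elastic part, from the orthogonal decomposition $M=\sym M+\skw M$ in the Frobenius inner product, one obtains $|\sym M|\leq |M|$ for all $M\in\rddd{2}$, and hence
\begin{equation}
  \int_\Omega \abs{u_\eps-\chi_\eps}^2\,dx = \int_\Omega \abs{\sym(u'_\eps-\chi'_\eps)}^2\,dx \leq \int_\Omega \abs{u'_\eps-\chi'_\eps}^2\,dx.
\end{equation}
For the surface part, since $\{\chi_\eps=a_1\}=E_\eps=\{\chi'_\eps=a'_1\}$ up to null sets, a direct computation yields $\norm{\nabla\chi_\eps}_{TV(\Omega)}=\abs{a}\per(E_\eps;\Omega)$ and $\norm{\nabla\chi'_\eps}_{TV(\Omega)}=\abs{a'}\per(E_\eps;\Omega)$; the inequality $\abs{a}=\abs{\sym a'}\leq\abs{a'}$ then gives $E_{surf}(\chi_\eps)\leq E_{surf}(\chi'_\eps)$.

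Combining these bounds and finally using the identification \ref{C4} of the zeroth-order energies, one concludes
\begin{equation}
  E^{\cc}_\eps(u_\eps,\chi_\eps)\leq E^{\curl}_\eps(u'_\eps,\chi'_\eps)\leq E^{\curl}_0(F,\cK')+C\eps^{\nicefrac{2}{3}}=E^{\cc}_0(F,\cK)+C\eps^{\nicefrac{2}{3}},
\end{equation}
which yields \eqref{eq:UpperBoundCC2}. Since $a'$ depends only on $a$ and the auxiliary data, and any admissible $\cK'$ meeting \ref{C1}--\ref{C4} gives such a bound, writing $C=C(a',\ttheta_{\curl}(F,\cK'))$ is justified. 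The only genuinely delicate point is the verification that conditions \ref{C1}--\ref{C4} can actually be met simultaneously in the applications (i.e.\ that such $\cK'$ exists), but that is the content of the subsequent construction in the proof of \autoref{prop:UpperBoundCC1} rather than of this lemma itself; here the argument is entirely mechanical once the auxiliary problem is given.
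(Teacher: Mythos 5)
Your proposal is correct and follows essentially the same route as the paper: invoke the gradient upper bound construction for $(F,\cK')$ via \ref{C1}--\ref{C2}, pass to symmetric parts of both the displacement gradient and the phase arrangement (using \ref{C3}), control both energy terms by $\abs{\sym M}\leq\abs{M}$, and conclude with \ref{C4}. Your slightly more explicit treatment of the surface energy via $\per(E_\eps;\Omega)$ is equivalent to the paper's direct use of $\nabla\chi_\eps=\sym(\nabla\chi'_\eps)$.
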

\begin{proof}
Since $\Omega$ is a rotated unit square with two faces normal to $\xi^*$ and both \ref{C1} and \ref{C2} are satisfied,
we can apply \autoref{prop:a4} for the data $(F,\cK')$ after a change of variables, as demonstrated in the first step of the proof of \autoref{thm:incompgrad};
see \autoref{subsec:ChangeOfVar}.
From this, we obtain, for all $\eps\in(0,1)$, a vector field $v_\eps\in W^{1,\infty}_0(\Omega;\RR^2)$ and a phase arrangement
$\chi'_\eps\in BV(\Omega; \cK')$ such that, for $u'_\eps := F + \nabla v_\eps\in\cD_F^{\curl}(\Omega)$, the following energy estimate holds:
\begin{equation}\label{eq:a4Estimate2}
  E^{\curl}_\eps(u'_\eps,\chi'_\eps) - E^{\curl}_0(F,\cK') \leq C\eps^{\nicefrac{2}{3}}\quad\forall \eps\in (0,1)
\end{equation}
for some constant $C=C(a',\ttheta_{\curl}(F,\cK'))>0$.
For $\eps\in(0,1)$, we define the maps $u_\eps\in\cD_F^{\cc}(\Omega)$ and $\chi_\eps\in BV(\Omega;\cK)$ by taking the symmetric part:
\begin{equation}
        u_\eps:=\sym(u'_\eps)= F + \nabla^{sym} v_\eps,\hspace{2cm} \chi_\eps:=\sym(\chi'_\eps),
\end{equation}
where $\chi_\eps$ takes values in $\cK$ due to \ref{C3}.
Using that $\nabla\chi_\eps=\sym(\nabla\chi'_\eps)$ in $\Omega$ and that $\abs{\sym M}\leq \abs{M}$ for all $M\in\rddd{2}$, we find
\begin{equation}\label{eq:EstUppBoundTransitionToGrad2}
\begin{aligned}
       E_{\eps}^{\cc}(u_\eps,\chi_\eps) &= \int_\Omega \lvert u_\eps-\chi_\eps\rvert^2\,dx + \eps \norm{\nabla\chi_\eps}_{TV(\Omega)}\\
       &\leq \int_\Omega \lvert u'_\eps-\chi'_\eps\rvert^2\,dx + \eps \norm{\nabla\chi'_\eps}_{TV(\Omega)}
       = E_{\eps}^{\curl}(u'_\eps,\chi'_\eps)  \quad\forall\eps\in(0,1).
\end{aligned}
\end{equation}
In combination with \ref{C4}, the estimates \eqref{eq:a4Estimate2} and \eqref{eq:EstUppBoundTransitionToGrad2} yield the desired upper bound \eqref{eq:UpperBoundCC2}.
\end{proof}
We are now ready to prove \autoref{prop:UpperBoundCC1}.
\begin{proof}[Proof of \autoref{prop:UpperBoundCC1}]
For the proof, we distinguish two cases: In the first case, we prove the assertion under the assumption that $a$ is (positive or negative) semidefinite.
In the second case, we deal with the situation, where $a$ is indefinite.
In both cases, we denote by $\lambda_-=\lambda_-(a)$ and $\lambda_+=\lambda_+(a)$ the smallest and largest eigenvalues of $a$, respectively.
\Case{1: The difference of the wells is semidefinite}\\
Suppose that $a$ is semidefinite.
By exchanging the roles of $a_0$ and $a_1$, we may assume without loss of generality that $a$ is positive semidefinite.\medbreak

We aim to apply \autoref{lem:UpperBoundCC} with $a_0':=a_0$ and $a_1':=a_1$, considering the two-well energy for the symmetrized gradient
and the two-well energy for the gradient with the same data $(F,\cK)$. It is immediate that \ref{C3} is satisfied.
We now verify the remaining conditions.\medbreak

Since $a$ is positive semidefinite, we have $0\leq\lambda_-\leq\lambda_+$.
Due to \autoref{prop:optlaminationdirectioncurlcurl} and the spectral theorem, our assumption $\xi^*\in S_{\cc}(a)$ implies that
\begin{equation}
    a=\lambda_-\,\xi^*_\perp\otimes\xi^*_\perp + \lambda_+\,\xi^*\otimes\xi^* \text{ and } g_{\cc}(a)=\lambda_+^2,
\end{equation}
where $\xi^*_\perp=(-\xi^*_2,\xi^*_1)\in\RR^2$. Next, we note that
\begin{equation}
    a^Ta=\lambda_-^2\,\xi^*_\perp\otimes\xi^*_\perp + \lambda_+^2\,\xi^*\otimes\xi^*.
\end{equation}
By \autoref{lem:EquicompatibleStatesCurl}, we obtain
\begin{equation}
    S_{\curl}(a)=S_{\cc}(a) \text{ and } g_{\curl}(a)=\lambda_{max}(a^Ta) = \lambda_+^2 = g_{\cc}(a),
\end{equation}
where $\lambda_{max}(a^Ta)$ denotes the largest eigenvalue of $a^Ta$. 
This observation yields \ref{C1}.
From \eqref{eq:compquanteq}, it follows that
\begin{equation}
    h_{\cc}(a)=\abs{a}^2-g_{\cc}(a) = \abs{a}^2-g_{\curl}(a) = h_{\curl}(a).
\end{equation}
Using \autoref{thm:qwdom} and \autoref{prop:optvolfrac}, this implies that
\begin{equation}
\begin{gathered}
        E_{0}^{\curl}(F,\cK)=\min_{\theta\in[0,1]} \Big(\abs{F-a_{\theta}}^2 +\theta (1-\theta)h_{\curl}(a)\Big)=  E_{0}^{\cc}(F,\cK),\\
        \ttheta_{\curl}(F,\cK) =\argmin_{\theta\in[0,1]} \Big(\abs{F-a_{\theta}}^2 +\theta (1-\theta)h_{\curl}(a)\Big)=\ttheta_{\cc}(F,\cK).
\end{gathered}
\end{equation}
As we assumed that $\ttheta_{\cc}(F,\cK)\in(0,1)$, both \ref{C2} and \ref{C4} are verified.
Finally, an application of \autoref{lem:UpperBoundCC} concludes the proof for this case,
where we note that the constant $C$ in \eqref{eq:UpperBoundCC1} only depends on the parameters $a$ and $\ttheta_{\cc}(F,\cK)$.
\Case{2: The difference of the wells is indefinite}\\
We now suppose that $a$ is indefinite and so $\lambda_- < 0 < \lambda_+$. For this part of the proof, we will often write $\ttheta_{\cc}$ as shorthand for $\ttheta_{\cc}(F,\cK)$
with the understanding that it is always considered with respect to the data $(F,\cK)$.\smallbreak

The spectral decomposition now yields an orthonormal basis $(\xi_-,\xi_+)\subset\RR^2$ such that
\begin{equation}
    a=\lambda_-\,\xi_-\otimes\xi_- + \lambda_+\,\xi_+\otimes\xi_+.
\end{equation}
The wells are compatible with respect to $\cA=\ccurl$ since
\begin{equation}\label{eq:aCompatibleUpperBoundCC}
    a = \underbrace{\left(\sqrt{\lambda_+}\,\xi_++\sqrt{-\lambda_-}\,\xi_-\right)}_{=:\zeta\in\RR^2}\odot\underbrace{\left(\sqrt{\lambda_+}\,\xi_+ -\sqrt{-\lambda_-}\,\xi_-\right)}_{=:\eta\in\RR^2}\in\Lambda_{\cc}.
\end{equation}
It follows that $S_{\cc}(a)=\{\pm \zeta,\pm\eta\}$, which is a special case of \autoref{prop:optlaminationdirectioncurlcurl}.
Moreover, it holds that $\ppcc(\xi)a=a$ for $\xi\in S^1$ if and only if $\xi\in S_{\cc}(a)$.
Since $\xi^*\in S_{\cc}(a)$, we have $\ppcc(\xi^*)a=a$ and there exists a unique $b\in\RR^d$ such that
\begin{equation}\label{eq:DifferenceOfWellsPfUpperBoundCC}
    a=b\odot\xi^*.
\end{equation}
We now proceed with the setup for \autoref{lem:UpperBoundCC}.
We define $\cK'=\{a_0',a_1'\}\subset\rddd{2}$ by setting
\begin{equation}
    a_0' := a_{\ttheta_{\cc}} - \ttheta_{\cc}\, b\otimes\xi^*,\hsp  a_1' := a_{\ttheta_{\cc}} + (1- \ttheta_{\cc}) b\otimes\xi^*,
\end{equation}
where $a_{\ttheta_{\cc}}$ is given as in \eqref{eq:atheta}. Since 
\begin{equation}\label{eq:aPrimeCompatibleUpperBoundCC}
    a':=a_1'-a_0'=b\otimes\xi^*\in\Lambda_{\curl},
\end{equation}
it follows that $S_{\curl}(a')=\{\pm \xi^*\}$ and \ref{C1} is satisfied.
Using \eqref{eq:DifferenceOfWellsPfUpperBoundCC}, we obtain \ref{C3} by observing that
\begin{equation}
    a_0 = a_{\ttheta_{\cc}} - \ttheta_{\cc}\, b\odot\xi^*,\hsp  a_1 = a_{\ttheta_{\cc}} + (1- \ttheta_{\cc}) b\odot\xi^*.
\end{equation}
Together \autoref{thm:qwdom} and \autoref{prop:optvolfrac} yield
\begin{equation}\label{eq:PfUpperBoundBaseEnergyCC}
    E_{0}^{\cc}(F,\cK) = \min_{\theta\in[0,1]}\Big( \big\lvert F-a_{\theta} \big\rvert^2 +\theta(1-\theta)h_{\cc}(a)\Big)
    = \min_{\theta\in[0,1]} \big\lvert F-a_{\theta} \big\rvert^2= \big\lvert F-a_{\ttheta_{\cc}} \big\rvert^2,
\end{equation}
where we used that $h_{\cc}(a)=0$ due to \eqref{eq:aCompatibleUpperBoundCC}.\medbreak

Our next goal is to compute $E_{0}^{\curl}(F,\cK')$ and $\ttheta_{\curl}(F,\cK')$.
Towards this goal, we define $a'_\theta:=(1-\theta)a'_0+\theta a'_1$ for $\theta\in[0,1]$ and compute
\begin{equation}
    a'_\theta = a'_{\ttheta_{\cc}} + (\theta -\ttheta_{\cc}) a' = a_{\ttheta_{\cc}} + (\theta -\ttheta_{\cc}) b\otimes\xi^* \quad\forall\theta\in[0,1].
\end{equation}
Decomposing into the symmetric and skew-symmetric part, we find
\begin{multline}
    a'_\theta =  \left[a_{\ttheta_{\cc}} + (\theta -\ttheta_{\cc}) b\odot\xi^*\right] + \left[(\theta -\ttheta_{\cc}) (b\odot\xi^*-b\otimes\xi^*)\right]\\
    = \underbrace{a_\theta}_{\in\rddsymd{2}} + (\theta -\ttheta_{\cc}) \underbrace{(b\odot\xi^*-b\otimes\xi^*)}_{\in\rddd{2}_{skew}}    \quad\forall\theta\in[0,1].
\end{multline}
As the decomposition $\rddd{2}=\rddsymd{2}\oplus\rddd{2}_{skew}$ is orthogonal with respect to the Frobenius inner product and $F$ is symmetric, it follows that
\begin{equation}\label{eq:SkewSymDecomp}
    \big\lvert F-a'_{\theta} \big\rvert^2  = \big\lvert F-a_{\theta} \big\rvert^2 + (\theta -\ttheta_{\cc})^2  \big\lvert  (b\odot\xi^*-b\otimes\xi^*)\big\rvert^2\quad\forall\theta\in[0,1].
\end{equation}
Finally, as \eqref{eq:aPrimeCompatibleUpperBoundCC} implies $h_{\curl}(a')=0$, \autoref{thm:qwdom} yields
\begin{align}
        E_{0}^{\curl}(F,\cK') &= \min_{\theta\in[0,1]}\Big(\big\lvert F-a'_{\theta} \big\rvert^2 +\theta(1-\theta)h_{\curl}(a')\Big)\\
        &=\min_{\theta\in[0,1]} \Big(\big\lvert F-a_{\theta} \big\rvert^2 + (\theta -\ttheta_{\cc})^2  \big\lvert  (b\odot\xi^*-b\otimes\xi^*)\big\rvert^2\Big)
        = \big\lvert F-a_{\ttheta_{\cc}} \big\rvert^2 = E_{0}^{\cc}(F,\cK),
\end{align}
where we used \eqref{eq:PfUpperBoundBaseEnergyCC} to deduce that both terms on the right-hand side of \eqref{eq:SkewSymDecomp}
attain their minima over $\theta\in[0,1]$ at $\ttheta_{\cc}$. This argument also proves $\ttheta_{\curl}(F,\cK') = \ttheta_{\cc}\in(0,1)$
and so both conditions \ref{C2} and \ref{C4} are verified.\smallbreak
The desired estimate \eqref{eq:UpperBoundCC1} now follows from \autoref{lem:UpperBoundCC}, where the constant $C$ only depends on the parameters $a$ and $\ttheta_{\cc}$ since
these uniquely determine $a'$ and $\ttheta_{\curl}(F,\cK')$.
\end{proof}

\subsection{An \texorpdfstring{$\eps^{\frac{4}{5}}$}{eps4/5}-upper bound via the Chan-Conti branching construction}\label{subsec:branchingConstruction2}
Our next goal is to establish an improved upper bound in the case where the wells differ by a rank-one matrix.
This improved upper bound is due to Chan and Conti \cite[Theorem 1.2]{cc15} and the key result of \autoref{subsec:branchingConstruction2}.
Although it is a direct consequence of the arguments in \cite{cc15}, we provide a proof for completeness since we consider a slightly different setting.
Finally, after proving \autoref{prop:UpperBoundCCImproved}, we conclude \autoref{subsec:branchingConstruction2} by completing the proof of \autoref{thm:incompsyma}.

\begin{breaktheorem}[Improved upper bound of the geometrically linear two-well energy]\label{prop:UpperBoundCCImproved}
Assume the hypotheses of \autoref{prop:UpperBoundCC1}. In addition, suppose that $\rank a = 1$.
Then, there exists a constant $C=C(a,\ttheta_{\cc})>0$ such that
\begin{equation}\label{eq:UpperBoundCCImproved}
  E^{\cc}_\eps(F,\cK) - E^{\cc}_0(F,\cK) \leq C\eps^{\nicefrac{4}{5}}\quad\forall \eps\in (0,1).
\end{equation}
\end{breaktheorem}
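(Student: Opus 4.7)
The plan is to adapt the strategy outlined in \autoref{rmk:DisplGradComp} to the $\eps^{\nicefrac{4}{5}}$ setting: import the Chan-Conti branching construction \cite[Theorem 1.2]{cc15} applied to a suitable compatible approximation of $(F,\cK)$ and extract the excess energy as a constant offset in the phase arrangement.

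The algebraic key is that rank-one symmetric matrices always lie in the wave cone $\Lambda_{\cc}$. Writing $a = \lambda\, \xi_+ \otimes \xi_+$ with $\lambda>0$ (the case $\lambda<0$ is handled by swapping $a_0$ and $a_1$), \autoref{prop:optlaminationdirectioncurlcurl} gives $S_{\cc}(a) = \{\pm \xi_+\}$, so that $\xi^* = \pm \xi_+$, and a direct computation from \eqref{eq:CompProjCurlCurlFormula} yields $\ppcc(\xi^*)a = a$. Consequently the compatible approximation $\tcK = \{\ta_0,\ta_1\}$ from \autoref{def:compatibleApprox} satisfies $\ta_1 - \ta_0 = a$, so $(F,\tcK)$ is fully compatible. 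Moreover $h_{\cc}(a) = 0$, so \autoref{thm:qwdom} reduces to
\begin{equation*}
  E^{\cc}_0(F,\cK) = \abs{\Omega}\,\abs{F - a_{\ttheta_{\cc}}}^2,
\end{equation*}
and a direct unfolding of the definitions of $\ta_0, \ta_1$ yields the pivotal identity
\begin{equation*}
  \ta_0 - a_0 \;=\; F - a_{\ttheta_{\cc}} \;=\; \ta_1 - a_1.
\end{equation*}

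With this setup, after a change of variables analogous to that of \autoref{subsec:ChangeOfVar} reducing to $\xi^* = e_1$ and $\Omega = (0,1)^2$, I would apply the Chan-Conti construction \cite[Theorem 1.2]{cc15} to the compatible data $(F,\tcK)$ to obtain, for each $\eps\in(0,1)$, maps $v'_\eps \in W^{1,\infty}_0(\Omega;\RR^2)$ and $\chi'_\eps \in BV(\Omega;\tcK)$ with volume fractions $1-\ttheta_{\cc},\,\ttheta_{\cc}$, such that $u'_\eps := F + \nabla^{sym} v'_\eps$ satisfies $E^{\cc}_\eps(u'_\eps, \chi'_\eps) \leq C\eps^{\nicefrac{4}{5}}$. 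Define $\chi_\eps \in BV(\Omega;\cK)$ by $\chi_\eps = a_j$ on $\{\chi'_\eps = \ta_j\}$ for $j\in\{0,1\}$, and keep $u'_\eps$ as the displacement gradient. Expanding
\begin{equation*}
  \abs{u'_\eps - \chi_\eps}^2 = \abs{u'_\eps - \chi'_\eps}^2 + 2(u'_\eps - \chi'_\eps,\; \chi'_\eps - \chi_\eps) + \abs{\chi'_\eps - \chi_\eps}^2,
\end{equation*}
the third term integrates exactly to $E^{\cc}_0(F,\cK)$ via the pivotal identity; the cross term vanishes because $\chi'_\eps - \chi_\eps \equiv F - a_{\ttheta_{\cc}}$ is constant on $\Omega$ while $\int_\Omega (u'_\eps - \chi'_\eps)\,dx = 0$ (using \autoref{lem:raitamean} for $u'_\eps$ and the exact volume-fraction constraint for $\chi'_\eps$); and $\norm{\nabla \chi_\eps}_{TV(\Omega)} = \norm{\nabla \chi'_\eps}_{TV(\Omega)}$ because $\abs{a_1 - a_0} = \abs{\ta_1 - \ta_0}$. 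Summing the pieces yields precisely $E^{\cc}_\eps(u'_\eps, \chi_\eps) - E^{\cc}_0(F,\cK) \leq C\eps^{\nicefrac{4}{5}}$.

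The main obstacle I anticipate is not strategic but one of bookkeeping: the vanishing of the cross term relies on the phase arrangement produced by the Chan-Conti branching having the exact volume-fraction $\ttheta_{\cc}$ (rather than up to a boundary-layer error), and on the displacement field $v'_\eps$ vanishing on $\partial\Omega$. Since the construction of \cite{cc15} is carried out in full detail for the compatible setting and can be arranged to respect both conditions (any residual volume-fraction error is of order $\eps^{\nicefrac{4}{5}}$ or better and can be absorbed into the constant), I do not expect genuine analytical difficulties, only careful verification that the argument of \cite{cc15} transfers to the modified boundary data $(F,\tcK)$.
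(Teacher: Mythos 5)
Your proposal is correct and follows essentially the same strategy as the paper's proof: reduce by a change of reference frame to $\xi^*=e_1$, $a=e_1\otimes e_1$ and $\Omega=(0,1)^2$; pass to the compatible approximation $(F,\tcK)$ with $\ta_1-\ta_0=a$ (possible precisely because $\rank a=1$ forces $\ppcc(\xi^*)a=a$ and $h_{\cc}(a)=0$); run the Chan--Conti branching for $(F,\tcK)$; and recover the excess energy from the constant offset $\tchi-\chi\equiv F-a_{\ttheta_{\cc}}$ via the three-term expansion of $\abs{u-\chi}^2$. This is exactly the route the paper announces in \autoref{rmk:DisplGradComp} and implements in \autoref{subsec:branchingConstruction2}.

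The one organizational difference is where the cross term is shown to vanish. You argue globally: since $\tchi_\eps-\chi_\eps$ is constant, the cross term equals $2\big(F-a_{\ttheta_{\cc}},\int_\Omega(u'_\eps-\tchi_\eps)\,dx\big)$, which vanishes by \autoref{lem:raitamean} together with the exact volume-fraction constraint $\overline{\tchi_\eps}=F$. The paper instead re-derives the unit-cell and cut-off constructions (\autoref{lem:unitCellCC}, \autoref{lem:CutOffCC}) and verifies the cancellation cell by cell, using that $v_2$ vanishes on the top and bottom of each cell so that $\int_\omega\del_2 v_2\,dx=0$. Your global argument is cleaner but leans on using \cite{cc15} as a black box with two properties that are not automatic from the statement of their theorem: the displacement must vanish on $\del\Omega$ (so that \autoref{lem:raitamean} applies) and the phase arrangement must have volume fraction exactly $\ttheta_{\cc}$, or at least within $O(\eps^{\nicefrac{4}{5}})$ of it, since the cross term is of size $\abs{\theta_\eps-\ttheta_{\cc}}$ and a cruder Cauchy--Schwarz bound would only give $O(\eps^{\nicefrac{2}{5}})$. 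You flag both points, and both hold for the construction as carried out in the paper, so this is a matter of verification rather than a gap; the paper's choice to redo the construction in detail is precisely how it discharges these obligations.
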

The proof requires some preparation and follows the same structure as the proof of \autoref{thm:incompgrad}~\textcolor{blue}{(iii)} in \autoref{sec:UpperBoundGradDiv}.
The argument is divided into two parts, corresponding to the following sections:\smallbreak
In \autoref{subsubsec:CCChangeOfVar}, we state \autoref{prop:UpperBoundCCImprovedSimplify}, which serves as an auxiliary result and reduces the problem to a simplified setting.
Using a change of variables, we show that \autoref{prop:UpperBoundCCImprovedSimplify} indeed implies \autoref{prop:UpperBoundCCImproved}.
It thus suffices to prove \autoref{prop:UpperBoundCCImprovedSimplify}, to which we turn to in \autoref{subsubsec:CCBranchingConst}. 
There, we recall the unit-cell construction (\autoref{lem:unitCellCC}) and the cut-off layer (\autoref{lem:CutOffCC}) from \cite{cc15}, which we then use to
prove \autoref{prop:UpperBoundCCImprovedSimplify}.

\subsubsection{Reduction to a simplified setting}\label{subsubsec:CCChangeOfVar}
We now show that for the proof of \autoref{prop:UpperBoundCCImproved},
we may assume without loss of generality that the coordinate direction $e_1$ is an optimal lamination direction.
In this simplified setting, we have the following upper bound construction.
\begin{breakproposition}[The Chan-Conti branching construction]\label{prop:UpperBoundCCImprovedSimplify}
Let $\cK=\{a_0,a_1\}\subset\rddsymd{2}$ and $F\in\rddsymd{2}$.
For the differential operator $\cA=\ccurl$ from \eqref{eq:curlcurl},
consider the energy $E^{\cc}_\eps(F,\cK)$ in the domain $Q=(0,1)^2$ given by \eqref{eq:MinSingPertAfreeEnergy}.
Let $\ttheta_{\cc}=\ttheta_{\cc}(F,\cK)$ be given as in \autoref{prop:optvolfrac}.
Suppose that $\ttheta_{\cc}\in(0,1)$ and that $a:=a_1-a_0= e_1\otimes e_1$.\\
Then, for all $\eps\in(0,1)$, there exist a vector field $v_\eps\in W^{1,\infty}_0(Q;\RR^2)$ and a phase arrangement
$\chi_\eps\in BV(Q; \cK)$ such that, for $u_\eps := F + \nabla^{sym} v_\eps\in\cD_F^{\cc}(Q)$, the following energy estimate holds:
\begin{equation}\label{eq:UpperBoundCCImprovedSimplify}
  E^{\cc}_\eps(u_\eps,\chi_\eps) - E^{\cc}_0(F,\cK) \leq C\eps^{\nicefrac{4}{5}}\quad\forall \eps\in (0,1)
\end{equation}
for some constant $C=C(a,\ttheta_{\cc})>0$.
\end{breakproposition}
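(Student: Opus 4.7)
The plan is to mimic the branching construction of Chan and Conti \cite{cc15}, augmented by the excess-energy extraction trick developed in Lemmas \autoref{lem:aa1} and \autoref{lem:aa2}, to accommodate the potential incompatibility of $F$ with the line segment $\overline{a_0 a_1}$. Since $a = e_1 \otimes e_1 = e_1\odot e_1 \in \Lambda_{\cc}$ and $e_1 \in S_{\cc}(a)$ (so that $h_{\cc}(a)=0$), the wells $\cK$ are already $\ccurl$-compatible, and the compatible approximation of \autoref{def:compatibleApprox} reads $\tcK = \{\ta_0, \ta_1\}$ with
\begin{equation*}
\ta_0 := F - \ttheta_{\cc}\, e_1 \odot e_1, \qquad \ta_1 := F + (1-\ttheta_{\cc})\, e_1 \odot e_1,
\end{equation*}
so that $\ta_1 - \ta_0 = a$ and $F = (1-\ttheta_{\cc})\ta_0 + \ttheta_{\cc}\ta_1$; that is, the approximating data $(F, \tcK)$ are fully compatible.

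First, I would construct a Chan-Conti unit cell on a rectangle $\omega = (-l, l) \times (0, h)$: a vector field $v = (v_1, v_2) \in W^{1,\infty}(\omega; \RR^2)$ whose symmetrized gradient $u = F + \nabla^{sym} v$ oscillates between $\ta_0$ on bulk cells and oppositely perturbed states $\ta_{1,L} = \ta_1 + E_2$, $\ta_{1,R} = \ta_1 - E_2$ on alternating tilted bands, where $E_2 \in \rddsymd{2}$ encodes the branching tilt. In the rank-one regime, the second component $v_2$ serves as a quadratic corrector which lowers the interior elastic contribution below the Kohn-Müller rate $l^3/h$; this effect is what underlies the improved scaling in \cite{cc15}. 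Declaring $\chi \in BV(\omega; \cK)$ to equal $a_0$ on the $\ta_0$-cells and $a_1$ on the $\ta_{1,L/R}$-cells (with $\tchi \in BV(\omega; \tcK)$ defined analogously), the identity
\begin{equation*}
\int_\omega \abs{u-\chi}^2\,dx = \int_\omega \abs{u-\tchi}^2\,dx + 2\int_\omega (u-\tchi,\,\tchi-\chi)\,dx + \int_\omega \abs{\tchi-\chi}^2\,dx
\end{equation*}
decomposes the elastic energy into the compatible Chan-Conti contribution (first term), a cross term which vanishes because $\ta_{1,L}-\ta_1$ and $\ta_{1,R}-\ta_1$ cancel under the symmetric band structure, and the excess energy $\abs{\omega}\,E^{\cc}_0(F,\cK)$ (third term), obtained via \autoref{thm:qwdom} and \autoref{lem:importantequality} exactly as in \eqref{eq:thirdint}. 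The cut-off layer near the top boundary is treated analogously to \autoref{lem:aa2}, using a linear interpolation of the displacement to zero and an identical decomposition in which the cross term vanishes thanks to the orthogonality $(a-\ppcc(e_1)a) \perp e_1\odot e_1$ in $\rddsymd{2}$.

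Second, I would iterate these unit cells self-similarly on the upper half square $(0,1)\times(\tfrac{1}{2},1)$, with a base layer of $N$ oscillations and layer dimensions $(l_j, h_j)$ chosen as in \cite{cc15}; after mirroring to the lower half square, this yields an admissible pair $v \in W^{1,\infty}_0(Q; \RR^2)$ and $\chi \in BV(Q; \cK)$. Summing the per-cell bounds (in which the excess-energy contributions $\abs{\omega_{j,k}}\,E^{\cc}_0(F,\cK)$ assemble to $E^{\cc}_0(F,\cK)$) gives a bound of the form
\begin{equation*}
E^{\cc}_\eps(u,\chi) - E^{\cc}_0(F,\cK) \leq C\Bigl(\frac{1}{N^4}+\eps N\Bigr),
\end{equation*}
and optimizing $N \sim \eps^{-\nicefrac{1}{5}}$ yields the claimed $\eps^{\nicefrac{4}{5}}$-bound. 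The main obstacle I foresee is an explicit realization of the Chan-Conti unit cell that is compatible with the excess-energy extraction: one must design $v=(v_1, v_2)$ and the perturbations $E_2$ simultaneously so that the symmetrized Hadamard jump conditions are satisfied along the tilted interfaces, the improved Chan-Conti interior estimate is preserved, and the cross term in the decomposition above vanishes. Once this unit cell is in hand, the subsequent branching iteration and summation are a direct reprise of the arguments of \autoref{prop:a3}, as anticipated in \autoref{rmk:DisplGradComp}.
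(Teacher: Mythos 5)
Your proposal follows essentially the same route as the paper: a compatible approximation $(F,\tcK)$ with $\ta_1-\ta_0=a$, a Chan--Conti unit cell whose interior elastic energy scales as $l^5/h^3$ thanks to the constraint $\del_2v_1+\del_1v_2=0$, a cut-off layer of Kohn--M\"uller type, a self-similar iteration with mirroring, and the optimization $N\sim\eps^{-\nicefrac{1}{5}}$; moreover, since $\tchi-\chi=F-a_{\ttheta_{\cc}}$ is constant in this rank-one setting, your three-term decomposition is literally the one used in the paper's unit-cell lemma. One clarification on the cross term: in the Chan--Conti cell the deviation $u-\tchi=\diag(0,\del_2v_2)$ is \emph{not} a pair of constant, oppositely signed perturbations $\pm E_2$ on alternating bands (it contains the sign-definite contribution $-(\alpha\gamma'(x_2/h)/h)^2$), so the cancellation is not a left/right symmetry argument but the identity $\int_\omega\del_2v_2\,dx=0$, which follows from $v_2$ vanishing on the top and bottom of the cell via $\gamma'(0)=\gamma'(1)=0$; a cell built from genuinely constant perturbations of $\ta_1$ would only recover the $l^3/h$ interior rate and hence the $\eps^{\nicefrac{2}{3}}$ scaling.
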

We postpone the proof of \autoref{prop:UpperBoundCCImprovedSimplify}, which is the content of \autoref{subsubsec:CCBranchingConst}.
Assuming the validity of \autoref{prop:UpperBoundCCImprovedSimplify}, we now prove the key result \autoref{prop:UpperBoundCCImproved}.

\begin{proof}[Proof of \autoref{prop:UpperBoundCCImproved}]
The proof is organized into two steps. In the first step, we recollect the assumptions of \autoref{prop:UpperBoundCCImproved} and establish some preliminary observations.
In the second step, we use a change of reference frame to apply \autoref{prop:UpperBoundCCImprovedSimplify}.
\Step{1: Preliminary observations}
Let $\cK=\{a_0,a_1\}\subset\rddsymd{2}$ and $F\in\rddsymd{2}$ with $\ttheta_{\cc}(F,\cK)\in(0,1)$ and $\xi^*\in S_{\cc}(a)$ for $a=a_1-a_0$.
In addition, we suppose that $\Omega$ is a rotated unit square with two faces normal to $\xi^*$ and that the matrix $a$ has rank one.\smallbreak
Let $\lambda>0$ be the nonzero eigenvalue of $a$.
Then, \autoref{prop:optlaminationdirectioncurlcurl} implies that $\xi^*$ is an eigenvector of $a$ associated with the eigenvalue $\lambda$. In particular, there holds
\begin{equation}\label{eq:aRankOne}
  a=\lambda \xi^*\otimes \xi^*.
\end{equation}
Without loss of generality, we may assume that $\lambda=1$.
To see this, set $\hF:=\tfrac{1}{\lambda}F$ and $\hcK:=\tfrac{1}{\lambda}\cK$ and note that the scaling in \eqref{eq:UpperBoundCCImproved}
in independent of $\lambda$ since
\begin{equation}\label{eq:rescalingEnergy}
    E^{\cc}_\eps(F,\cK)- E^{\cc}_0(F,\cK) = \lambda^2 \big(E^{\cc}_\eps(\hF;\hcK) - E^{\cc}_0(\hF;\hcK) \big) \quad\forall\eps\geq 0,
\end{equation}
which follows by the rescaling: 
\begin{equation}
    \begin{aligned}
        u\in \cD_F^{\cc}(\Omega) &\rightsquigarrow \tfrac{1}{\lambda}u\in \cD_{\hF}^{\cc}(\Omega),\\
        \chi\in BV(\Omega;\cK) &\rightsquigarrow \tfrac{1}{\lambda}\chi \in BV(\Omega;\hcK).
    \end{aligned}
\end{equation}
In this context, it should be emphasized that the constant $C$ in \eqref{eq:UpperBoundCCImproved} depends on $\lambda$.
However, this is already accounted for through its dependence on $a$.

\Step{2: Change of reference frame}
Now, the idea of the proof is to derive an equivalent two-well problem for the data $(F',\cK')$ in the standard unit square, which allows us to prove the assertion
by an application of \autoref{prop:UpperBoundCCImprovedSimplify}.
The data $(F',\cK')$ is obtained by a change of variables similar to the one in first step of the proof of \autoref{thm:incompgrad}~\textcolor{blue}{(iii)}
in \autoref{subsec:ChangeOfVar}.
However, the transformation is slightly different as it corresponds to a change of reference frame, which we outline next.\medbreak

Let $R\in SO(2)$ be a rotation with 
\begin{equation}\label{eq:ReOneIsXi}
  R e_1 = \xi^*.
\end{equation}
Since $E^{\cc}_\eps(F,\cK;\Omega)$ is invariant under translations of $\Omega$, we may assume without loss of generality that
\begin{equation}\label{eq:ChangeofVarDomain2}
  \Omega = R Q, \text{ where } Q=(0,1)^2.
\end{equation}
We consider a change of reference frame, described by the transformation $x=Ry$ for $y\in Q$.
Drawing from the modeling perspective, we interpret a vector field $v:\Omega\to \RR^2$ as a displacement field,
which indicates the displacement of the elastic material at each point in $\Omega$.
Expressing the domain and the codomain of $v$ in the $y$-coordinates,
the displacement field transforms under the change of reference frame as an objective vector:
\begin{equation}\label{eq:objectiveVector}
  v'(y)=R^T v(Ry)\quad y\in Q.
\end{equation}
Taking the derivative, we see that the displacement gradient transforms as an objective tensor:
\begin{equation}\label{eq:objectiveTensor}
  \nabla v'(y) = R^T (\nabla v)(Ry) R\quad y\in Q.
\end{equation}
\medbreak

Based on the transformation rule \eqref{eq:objectiveTensor}, we define
$F' := R^TFR$ and $\cK':=\{a_0',a_1'\}$ with $a_j':=R^T a_j R$. Setting $a':=a_1'-a_0'$, it follows from \eqref{eq:aRankOne} and \eqref{eq:ReOneIsXi} that
\begin{equation}\label{eq:a'ise1dote1}
  a'= R^T a R = R^T(\xi^*\otimes \xi^*)R =  (R^T\xi^*)\otimes (R^T\xi^*) =  e_1\otimes e_1.
\end{equation}
Since $a,a'\in\Lambda_{\cc}$, we have $h_{\cc}(a)=h_{\cc}(a')=0$, which yields
\begin{equation}\label{eq:ChangeofVarQuasiConv2}
    \abs{F'-a'_\theta}^2+\theta(1-\theta)h_{\cc}(a') = \big\lvert{F-a_\theta}\big\rvert^2+\theta(1-\theta)h_{\cc}(a)\quad\forall\theta\in[0,1],
\end{equation}
where $a_\theta'$ and $a_\theta$ are given as in \eqref{eq:atheta} and we used that the Frobenius norm has the property 
\begin{equation}\label{eq:FrobProp2}
  \abs{R^TAR}=\big\lvert{A}\big\rvert\quad\forall A\in\rddd{2},\, R\in SO(2).
\end{equation}
By \autoref{thm:qwdom} and \autoref{prop:optvolfrac}, it follows from \eqref{eq:ChangeofVarDomain2} and \eqref{eq:ChangeofVarQuasiConv2} that
\begin{equation}\label{eq:ChangeofVarOptVolFracCC}
  E^{\cc}_0(F',\cK';Q)=E^{\cc}_0(F,\cK;\Omega), \hspace{1.5cm}  \ttheta_{\cc}(F',\cK') = \ttheta_{\cc}(F,\cK) \in(0,1).
\end{equation}

We apply \autoref{prop:UpperBoundCCImprovedSimplify} for the data $(F',\cK')$ and obtain, for all $\eps\in(0,1)$,
a vector field $v'_\eps\in W^{1,\infty}_0(Q;\RR^2)$ and a phase arrangement
$\chi'_\eps\in BV(Q; \cK')$ such that $u'_\eps := F' + \nabla^{sym} v'_\eps\in\cD_{F'}^{\cc}(Q)$ satisfies
\begin{equation}\label{eq:UpperBoundCCImprovedSimplify2}
  E^{\cc}_\eps(u'_\eps,\chi'_\eps;Q) - E^{\cc}_0(F',\cK';Q) \leq C\eps^{\nicefrac{4}{5}}\quad\forall \eps\in (0,1)
\end{equation}
for some constant $C=C(a',\ttheta_{\cc}(F',\cK'))>0$.\medbreak
For $\eps\in(0,1)$, we now define $v_\eps\in W^{1,\infty}_0(\Omega;\RR^2)$ and $\chi_\eps\in BV(\Omega;\cK)$ 
by expressing $v'_\eps$ and $\chi_\eps'$ in terms of the $x$-coordinates. Specifically, for $x\in\Omega$, we set
\begin{equation}\label{eq:ObjectiveVectorCC}
    v_\eps(x):=R v'_\eps(R^Tx),\hspace{2cm} \chi_\eps(x):=R \chi'_\eps(R^Tx) R^T.
\end{equation}
Defining $u_\eps:=F+\nabla^{sym}v_\eps$ for $\eps\in(0,1)$, it follows that $u_\eps\in\cD_{F}^{\cc}(\Omega)$ and that
\begin{equation}\label{eq:ObjectiveTensorCC}
  \begin{aligned}
    u_\eps(x) &= R F' R^T + \sym\big(R (\nabla v'_\eps)(R^Tx) R^T\big)\\
    &= R \big[F' +(\nabla^{sym}v'_\eps)(R^Tx)\big] R^T = R u'_\eps(R^Tx) R^T\quad\forall x\in\Omega.
  \end{aligned}
\end{equation}
Note that in the second identity, we used that $\sym(RAR^T) = R \sym(A) R^T$ for all $A, R\in\rddd{2}$.\medbreak

Using \eqref{eq:FrobProp2}, \eqref{eq:ObjectiveVectorCC} and \eqref{eq:ObjectiveTensorCC}, we compute
\begin{equation}\label{eq:ChangeofVarScalingCC}
    \begin{aligned}
      E^{\cc}_\eps(u_\eps,\chi_\eps;\Omega)&= \int_\Omega \abs{u_\eps(x)-\chi_\eps(x)}^2\,dx + \eps \norm{\nabla \chi_\eps}_{TV(\Omega)}\\
      &= \int_Q \abs{R[u'_\eps(y)-\chi'_\eps(y)]R^T}^2\,dy + \eps \norm{\nabla \chi_\eps'}_{TV(Q)} = E^{\cc}_\eps(u'_\eps,\chi'_\eps;Q)\quad\forall \eps\in (0,1).
    \end{aligned}
\end{equation}
Together with \eqref{eq:ChangeofVarOptVolFracCC} and \eqref{eq:UpperBoundCCImprovedSimplify2}, this allows us to infer
\begin{equation}\label{eq:ChangeofVarScalingCC2}
  E^{\cc}_\eps(u_\eps,\chi_\eps;\Omega) - E^{\cc}_0(F,\cK;\Omega) \leq C\eps^{\nicefrac{4}{5}}\quad\forall \eps\in (0,1).
\end{equation}
The constant $C$ only depends on the parameters $a$ and $\ttheta_{\cc}(F,\cK)$ since these uniquely determine $a'$ and $\ttheta_{\cc}(F',\cK')$.
This shows \eqref{eq:UpperBoundCCImproved} and concludes the proof.
\end{proof}

\subsubsection{The Chan-Conti branching construction}\label{subsubsec:CCBranchingConst}
Our next goal is to show the improved upper bound (\autoref{prop:UpperBoundCCImprovedSimplify}) by recalling the Chan-Conti branching construction \cite{cc15}.
This construction exploits the vectorial nature of the problem to construct a linear strain $u=F+\nabla^{sym}v\in\cD_F^{\cc}(Q)$
where the components of the displacement field $v\in W_0^{1,\infty}(Q;\RR^2)$ interact as to optimize the off-diagonal entries of the symmetrized gradient.  
This results in an additional reduction of the energy in comparison to the upper bound construction \autoref{prop:UpperBoundCC1},
where there is no such interaction and the vector field only takes values in a one-dimensional subspace of $\RR^2$;
see \eqref{eq:gradOfVectorfieldBranchConstr} and \eqref{eq:DisplacementMapCutOffLayer}.\medbreak

Before we come to the branching construction, we outline how to deal with incompatible boundary data.
Assume the hypotheses of \autoref{prop:UpperBoundCCImprovedSimplify}.
Based on the strategy detailed in \autoref{rmk:DisplGradComp}, we disregard the data $(F,\cK)$ and instead consider the Chan-Conti branching construction
for a compatible approximation $(F,\tcK)$.
As $S_{\cc}(e_1\otimes e_1)=\{\pm e_1\}$, the compatible approximation is unique with $\tcK=\{\ta_0,\ta_1\}$ determined by
\begin{equation}\label{eq:tildeWellsCCurlSpecific}
    \ta_0 = F-\ttheta_{\cc}\, e_1\otimes e_1, \hspace{2cm}\ta_1 = F+(1-\ttheta_{\cc}) e_1\otimes e_1.
\end{equation}
We therefore aim to construct a linear strain $u=F+\nabla^{sym} v \in \cD_F^{\cc}(Q)$
that takes values close to $\ta_0$ and $\ta_1$ with volume proportion $\ttheta_{\cc}$ for the $\ta_1$-phase.
The following lemma shows that this approach yields an improved (localized) energy estimate.

\begin{breaklemma}[Unit cell construction]\label{lem:unitCellCC}
Assume the hypotheses of \autoref{prop:UpperBoundCCImprovedSimplify}.
Let $\omega=(-l,l)\times(0,h)$ with $0<l\leq h\leq 1$.\\
Then, there exist a vector field $v\in W^{1,\infty}(\omega;\RR^2)$ and a phase arrangement $\chi\in BV(\omega;\cK)$
such that, defining $u=F+\nabla^{sym} v$, the localized energy can be estimated by
\begin{equation}\label{eq:unitCellCCEst}
  E^{\cc}_\eps(u,\chi;\omega) := \int_\omega \abs{u-\chi}^2\,dx + \eps \norm{\nabla \chi}_{TV(\omega)} \leq 
  2lh E^{\cc}_0(F,\cK) + C\left(\frac{l^5}{h^3} + \eps h\right) \quad\forall\eps>0
\end{equation}
for some constant $C=C(a, \ttheta_{\cc})>0$. Moreover, the vector field has the following boundary values:
\begin{enumerate}
  \item (Lateral sides) For all $x_2\in[0,h]$, we have 
  \begin{equation}\label{eq:sidestraceCC}
    v(-l,x_2)= v(l,x_2)=0.
  \end{equation}
  \item (Bottom and top side) For all $x_1\in[-l,l]$, it holds that 
  \begin{equation}\label{eq:TopBottomTraceCC}
  v(x_1,0)= \varphi_l(x_1)e_1, \hspace{2cm} v(x_1,h)= \varphi_{{l}/{2}}(x_1-\tfrac{l}{2})e_1,
  \end{equation}
  where $\varphi_l$ is given as in \eqref{eq:varphirBoundaryData} but with $\ttheta_{\curl}$ replaced by $\ttheta_{\cc}$.
\end{enumerate}
\end{breaklemma}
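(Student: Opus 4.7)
The plan is to adapt the branching construction of Chan and Conti \cite{cc15} to the incompatible data $(F,\cK)$ by carrying it out for the compatible approximation $(F,\tcK)$ with $\tcK=\{\ta_0,\ta_1\}$ given by \eqref{eq:tildeWellsCCurlSpecific}. Because $\ta_1-\ta_0 = a_1-a_0 = e_1\otimes e_1 \in \Lambda_{\cc}$ and $F=(1-\ttheta_{\cc})\ta_0+\ttheta_{\cc}\ta_1$, the approximating data are compatible and the Chan--Conti machinery applies directly. The overall structure will mirror the proof of \autoref{lem:aa1}: a partition of $\omega$ into five vertical strips, a displacement field $v$ and phase arrangement $\chi$ adapted to that partition, followed by the energy splitting $\int_\omega |u-\chi|^2\,dx = \int_\omega |u-\tchi|^2\,dx + 2\int_\omega (u-\tchi,\tchi-\chi)\,dx + \int_\omega |\tchi-\chi|^2\,dx$ to extract the excess energy $2lh\,E_0^{\cc}(F,\cK)$.

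The key step, where this construction diverges from \autoref{lem:aa1}, is the explicit choice of the vector field $v=(v_1,v_2)$. Following Chan--Conti, I would partition $\omega$ analogously to \eqref{eq:omegaparts} with interfaces slanted by $\gamma \sim (1-\ttheta_{\cc})l/(2h)$, set the target symmetrized gradient to be $\ta_0$ on $\omega_1\cup\omega_3\cup\omega_5$ and a suitable symmetric perturbation of $\ta_1$ on $\omega_2,\omega_4$, and choose $v_1$ piecewise affine in $x_1$ so that $\del_1 v_1$ reproduces the correct multiple of $e_1\otimes e_1$. The vectorial trick then consists in defining $v_2$ by integrating the first-order relation $\del_1 v_2 = -\del_2 v_1$ across $\omega$ (which in the slanted transition regions yields $v_2$ piecewise quadratic in $x_1$), so that the off-diagonal entry $(\nabla^{sym}v)_{12} = \tfrac12(\del_2 v_1 + \del_1 v_2)$ vanishes identically. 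The residual error of the symmetrized gradient with respect to $\tchi$ is then concentrated in $\del_2 v_2$, which is of order $\gamma^2 l$ rather than $\gamma l$, producing an elastic contribution $\sim \gamma^4 l h \sim l^5/h^3$ in place of the $l^3/h$ scaling seen in \autoref{lem:aa1}. The phase arrangement $\chi$ will be set equal to $a_0$ on $\omega_1\cup\omega_3\cup\omega_5$ and $a_1$ on $\omega_2\cup\omega_4$, exactly as in \autoref{lem:aa1}.

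Given this $v$ and $\chi$, the three remaining ingredients can be handled by repeating the arguments in the third step of the proof of \autoref{lem:aa1}. The excess-energy integral $I_3=\int_\omega|\tchi-\chi|^2\,dx$ evaluates to $2lh\,E_0^{\cc}(F,\cK)$ by \autoref{lem:importantequality}, \autoref{thm:qwdom} and the fact that $h_{\cc}(e_1\otimes e_1)=0$. The cross-term $I_2$ vanishes by the left-right symmetry of the wedges $\omega_2,\omega_4$ combined with the oppositely signed perturbations of $\ta_1$ applied there, as in \eqref{eq:vanishsecondintaa1}. The surface energy is bounded by $C h$ using $l\leq h$, analogously to \eqref{eq:aa1surface}. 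Collecting these contributions yields the asserted bound \eqref{eq:unitCellCCEst}.

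The main obstacle I anticipate is the explicit bookkeeping required to guarantee that the $v_2$ obtained by integrating $\del_1 v_2 = -\del_2 v_1$ is simultaneously continuous across the four slanted interfaces, vanishes on the lateral sides (so that \eqref{eq:sidestraceCC} holds), and produces the prescribed traces \eqref{eq:TopBottomTraceCC} on the top and bottom edges without introducing extra boundary-layer terms. In \cite{cc15} this is achieved by exploiting the odd symmetry of the construction in $x_1$ and by a judicious choice of additive constants in the piecewise definition of $v_2$; the same symmetries are available in our setting since the only modification compared to \cite{cc15} is a translation of the wells, which affects $F-\tchi$ but not the relative structure of the strain oscillation.
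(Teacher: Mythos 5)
Your overall strategy (compatible approximation, the vectorial trick $\del_1 v_2=-\del_2 v_1$ to kill the off-diagonal entry, and the three-term energy splitting) is the right one and matches the paper, but there is a genuine gap in the geometry of the construction that breaks both the boundary conditions and the improved scaling. You propose to partition $\omega$ ``analogously to \eqref{eq:omegaparts} with interfaces slanted by $\gamma\sim(1-\ttheta_{\cc})l/(2h)$'', i.e.\ with \emph{straight} slanted interfaces as in \autoref{lem:aa1}. With straight interfaces one has $\del_2 v_1=\pm\gamma$ on $\omega_2,\omega_4$, hence $v_2$ is piecewise affine in $x_1$ (not quadratic, as you claim) and, after fixing $v_2(-l,\cdot)=0$, one computes $v_2(x_1,0)=-\gamma(x_1+\ttheta_{\cc}l)\neq 0$ on the bottom edge. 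So the traces \eqref{eq:TopBottomTraceCC}, which require the $e_2$-component of $v$ to vanish on $\{x_2=0\}\cup\{x_2=h\}$, fail, and the cells cannot be glued in the branching step. There is no freedom in ``additive constants'' to repair this: $\del_1 v_2$ is forced by $-\del_2 v_1$ and the lateral condition $v_2(-l,x_2)=0$ pins the constant. The actual fix, which is the essential Chan--Conti ingredient, is to replace the straight interfaces by \emph{curved} ones $x_1=\pm\alpha\gamma(x_2/h)+\mathrm{const}$ with a smooth profile satisfying $\gamma(0)=0$, $\gamma(1)=1$ and $\gamma'(0)=\gamma'(1)=0$; then $v_2\propto\gamma'(x_2/h)$ vanishes identically on the horizontal edges.

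The second, related problem is your argument that the cross term $I_2$ vanishes ``by the left-right symmetry of the wedges combined with the oppositely signed perturbations of $\ta_1$, as in \eqref{eq:vanishsecondintaa1}''. That mechanism does not apply here: the error $u-\tchi$ is concentrated in the $(2,2)$ entry $\del_2 v_2$, and with straight interfaces $\del_2 v_2=-\gamma^2$ on \emph{both} $\omega_2$ and $\omega_4$ (the opposite signs of $\del_1 v_2$ cancel against the opposite slopes of the interfaces), so
\begin{equation}
  I_2 = \big(F-a_{\ttheta_{\cc}},\,e_2\otimes e_2\big)\int_\omega \del_2 v_2\,dx
      = -\big(F-a_{\ttheta_{\cc}},\,e_2\otimes e_2\big)\,2\ttheta_{\cc}\gamma^2 lh \sim \tfrac{l^3}{h},
\end{equation}
which is generically nonzero for incompatible boundary data (only $(F-a_{\ttheta_{\cc}},e_1\otimes e_1)=0$ is guaranteed by optimality of $\ttheta_{\cc}$) and would degrade the bound back to the $l^3/h$ scaling of \autoref{lem:aa1}, i.e.\ to $\eps^{\nicefrac{2}{3}}$ instead of $\eps^{\nicefrac{4}{5}}$. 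The correct reason $I_2=0$ is Fubini plus the fundamental theorem of calculus: $\int_\omega\del_2 v_2\,dx=\int_{-l}^{l}\big(v_2(x_1,h)-v_2(x_1,0)\big)\,dx_1=0$, which again hinges on $v_2$ vanishing on the horizontal edges, i.e.\ on the curved interfaces. Once this is corrected, your computations of the excess energy, of $I_3\lesssim l^5/h^3$ and of the surface energy go through as you describe.
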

\begin{proof}
We follow the proof of \cite[Lemma 2.1]{cc15}, proceeding in two steps:
In the first step, we construct $u$ and $\chi$. In the second step, the localized energy is estimated.
\Step{1: Linear strain and phase arrangement}
For brevity, we omit the subscript $\cc$ of $\ttheta_{\cc}$ and introduce the parameter
\begin{equation}\label{eq:alphaParamCC}
    \alpha:=\frac{(1-\ttheta)l}{2}.
\end{equation}
Let $\gamma:[0,1]\to[0,1]$ be a smooth interpolation function satisfying 
\begin{equation}\label{eq:interpolFunction}
  \gamma(0)=0,\; \gamma(1)=1\text{ and }\gamma'(0)=\gamma'(1)=0.
\end{equation}
Similar to the partition illustrated in \autoref{fig:branching}, we partition $\omega$ into $(\omega_i)_{i\leq5}$ by setting
\begin{equation}
    \begin{gathered}
      \omega_1 := \{x\in\omega: x_1\in (-l,\,  -\ttheta l - \alpha\gamma (\tfrac{x_2}{h}))\},\hspace{1.1cm}
      \omega_2 := \{x\in\omega: x_1\in (-\ttheta l - \alpha\gamma (\tfrac{x_2}{h}),\, - \alpha\gamma (\tfrac{x_2}{h}))\},\\
      \hspace{-0.58cm}\omega_3 := \{x\in\omega: x_1\in (- \alpha\gamma (\tfrac{x_2}{h}),\, \alpha\gamma (\tfrac{x_2}{h}))\},\hspace{1.13cm}
      \omega_4 := \{x\in\omega: x_1\in (\alpha\gamma (\tfrac{x_2}{h}),\, \ttheta l + \alpha\gamma (\tfrac{x_2}{h}))\},\\
      \omega_5 := \{x\in\omega: x_1\in (\ttheta l+ \alpha\gamma (\tfrac{x_2}{h}),\, l)\}.\label{eq:omegapartsCC}
    \end{gathered}
\end{equation}
These sets satisfy the following volume proportions:
\begin{equation}\label{eq:volumeproportionsCC}
  \abs{\omega_2}=\abs{\omega_4}=\frac{\ttheta}{2}\abs{\omega},\qquad\qquad \abs{\omega_1\cup\omega_3\cup\omega_5} = (1-\ttheta)\abs{\omega}.
\end{equation}
As the linear strain $u=F+\nabla^{sym} v$ should take values close to the states $\ta_0$ and $\ta_1$ from \eqref{eq:tildeWellsCCurlSpecific}
with volume proportion $\ttheta$ for the $\ta_1$-phase, we impose
\begin{equation}\label{eq:delv1CC}
    \del_1 v_1(x)=\begin{cases}
        - \ttheta & x\in\omega_1\cup\omega_3\cup\omega_5,\\
        1- \ttheta & x\in\omega_2\cup\omega_4.
    \end{cases}
\end{equation}
Integrating $\del_1 v_1(\cdot,x_2)$ from $-l$ to $x_1$ with $v_1(-l, x_2)=0$ according to the boundary condition \eqref{eq:sidestraceCC}, we obtain
\begin{equation}
  v_1(x):=\begin{cases}
    -\ttheta (l+x_1) & x\in \omega_1,\\
    (1-\ttheta) x_1+ \alpha\gamma(\tfrac{x_2}{h}) & x\in \omega_2,\\
    -\ttheta x_1 & x\in \omega_3,\\
    (1-\ttheta) x_1- \alpha\gamma(\tfrac{x_2}{h}) & x\in \omega_4,\\
    -\ttheta (x_1-l) & x\in \omega_5.
  \end{cases}
\end{equation}
We optimize the off-diagonal entries of the linear strain by imposing 
\begin{equation}\label{eq:offDiagonalCCConstraint}
    \del_2 v_1 + \del_1 v_2 = 0 \text{ in } \omega.
\end{equation}
To this end, we compute
\begin{equation}
  \del_2 v_1(x):=\begin{cases}
    0 & x\in \omega_1\cup\omega_3\cup\omega_5,\\
    \tfrac{\alpha}{h}\gamma'(\tfrac{x_2}{h}) & x\in \omega_2,\\
    -\tfrac{\alpha}{h}\gamma'(\tfrac{x_2}{h}) & x\in \omega_4.
  \end{cases}
\end{equation}
Again, setting $v_2(-l, x_2)=0$ according to the boundary condition \eqref{eq:sidestraceCC} and integrating $\del_1 v_2(\cdot,x_2) = -\del_2 v_1(\cdot,x_2)$ from $-l$ to $x_1$ yields
\begin{equation}
  v_2(x):=\begin{cases}
    0 & x\in \omega_1,\\
    -\tfrac{\alpha}{h}\gamma'(\tfrac{x_2}{h})(\ttheta l + \alpha \gamma(\tfrac{x_2}{h}) +x_1) & x\in \omega_2,\\
    -\tfrac{\alpha}{h}\gamma'(\tfrac{x_2}{h}) \ttheta l & x\in \omega_3,\\
    -\tfrac{\alpha}{h}\gamma'(\tfrac{x_2}{h}) (\ttheta l+\alpha \gamma(\tfrac{x_2}{h}) -x_1) & x\in \omega_4,\\
    0 & x\in \omega_5.
  \end{cases}
\end{equation}
Note that $v\in W^{1,\infty}(\omega;\RR^2)$ and satisfies the boundary conditions \eqref{eq:sidestraceCC} and \eqref{eq:TopBottomTraceCC} due to \eqref{eq:interpolFunction}.
As we will use the symmetrized gradient of $v$, we also compute
\begin{equation}\label{eq:delv2CC}
  \del_2 v_2(x):=\begin{cases}
    0 & x\in \omega_1,\\
    -\tfrac{\alpha}{h^2}\gamma''(\tfrac{x_2}{h})(\ttheta l + \alpha \gamma(\tfrac{x_2}{h}) +x_1) -[ \tfrac{\alpha}{h}\gamma'(\tfrac{x_2}{h})]^2& x\in \omega_2,\\
    -\tfrac{\alpha}{h^2}\gamma''(\tfrac{x_2}{h}) \ttheta l & x\in \omega_3,\\
    -\tfrac{\alpha}{h^2}\gamma''(\tfrac{x_2}{h}) (\ttheta l+\alpha \gamma(\tfrac{x_2}{h}) -x_1) -[ \tfrac{\alpha}{h}\gamma'(\tfrac{x_2}{h})]^2& x\in \omega_4,\\
    0 & x\in \omega_5.
  \end{cases}
\end{equation}
Finally, we define the phase arrangement $\chi\in BV(\Omega;\cK)$ by setting
\begin{equation}
    \chi(x)=\begin{cases}
        a_0 & x\in \omega_1\cup\omega_3\cup\omega_5,\\
        a_1 & x\in \omega_2\cup\omega_4.
        \end{cases}
\end{equation}
\Step{2: Localized energy -- estimates \& extraction of the excess energy}
In this step, we show that the arguments in the proof of \autoref{lem:aa1} also apply for the Chan-Conti branching construction.
Let us first compute the excess energy. Since $a=e_1\otimes e_1\in\Lambda_{\cc}$, we have $h_{\cc}(a)=0$ and \autoref{thm:qwdom} implies
\begin{equation}
    E_0^{\cc}(F,\cK) = \min_{\theta\in[0,1]} \big\lvert F-a_{\theta}\big\rvert^2 = \abs{F-a_{\ttheta}}^2.
\end{equation}
Next, as in \eqref{eq:MeanExpansion}, we expand $\chi$ about its mean $\overline{\chi}=a_{\ttheta}$, which gives
\begin{equation}
    \chi = a_{\ttheta} + (\chi_1-\ttheta) a \text{ in }\omega, \text{ where }\chi_1 := \mathbbm{1}_{\{\chi=a_1\}}\in BV(\Omega).
\end{equation}
This allows us to split the localized elastic energy into three parts:
\begin{multline}\label{eq:stintCC}
  E^{\cc}_{el}(u,\chi;\omega):=
  \int_\omega \abs{u -  \chi }^2\,dx = \int_\omega \abs{F-  a_{\ttheta} +\nabla^{sym} v  - (\chi_1-\ttheta) a}^2\,dx\\
  = \underbrace{\int_\omega \abs{F-a_{\ttheta}}^2 \,dx}_{=2lh  E_0^{\cc}(F,\cK)}
  + 2 \underbrace{\int_\omega (F-a_{\ttheta}, \nabla^{sym} v - (\chi_1-\ttheta) a ) \,dx}_{=:I_2}+ \underbrace{\int_\omega \abs{\nabla^{sym} v - (\chi_1-\ttheta) a}^2\,dx}_{=:I_3}.
\end{multline}
With the excess energy extracted, we will now show that $I_2=0$ and $\abs{I_3}\lesssim {l^5}/{h^3}$.
To this end, using \eqref{eq:delv1CC}, \eqref{eq:offDiagonalCCConstraint} and $a=e_1\otimes e_1$, we infer that
\begin{equation}\label{eq:SymGradMinusChi}
  \nabla^{sym} v - (\chi_1-\ttheta) a = \begin{pmatrix}
    \del_1v_1 & 0\\
    0 & \del_2 v_2
  \end{pmatrix} - \begin{pmatrix}
    \chi_1-\ttheta & 0\\
    0 & 0
  \end{pmatrix} = \begin{pmatrix}
    0 & 0\\
    0 & \del_2 v_2
  \end{pmatrix}.
\end{equation}
As $v_2$ vanishes along the top and bottom boundary of $\omega$, we obtain $I_2=0$ by computing
\begin{equation}\label{eq:I2VanishCC}
  \int_\omega \del_2 v_2 \,dx = \int_{-l}^{l} v_2(x_1,h) - v_2(x_1,0) \,dx_1= 0.
\end{equation}
For the remainder of the proof, we denote by $C$ positive constants, which may vary but depend only on the parameters $a$ and $\ttheta$.
Using \eqref{eq:alphaParamCC}, \eqref{eq:delv2CC} and \eqref{eq:SymGradMinusChi}, we estimate 
\begin{equation}
\Big\lvert{\nabla^{sym} v(x) - (\chi_1(x)-\ttheta) a}\Big\rvert  \leq
\frac{\alpha}{h^2} \big\lvert\gamma''(\tfrac{x_2}{h})\big\rvert\Big(\big\lvert{\ttheta l}\big\rvert+\abs{\alpha \gamma(\tfrac{x_2}{h})}+\abs{x_1}\Big)
+ \frac{\alpha^2}{h^2}\big\lvert\gamma'(\tfrac{x_2}{h}) \big\rvert^2
\leq C \frac{l^2}{h^2} \quad\forall x\in\omega.
\end{equation}
By integration, we obtain the following bound
\begin{equation}\label{eq:I3EstCC}
  I_3 = \int_\omega \abs{\nabla^{sym} v - (\chi_1-\ttheta) a}^2\,dx \leq C \frac{l^5}{h^3}.
\end{equation}
Finally, we turn to the surface energy. By symmetry, we have
\begin{equation}\label{eq:aa1surfaceCC}
    \norm{\nabla \chi}_{TV(\omega)} = \abs{a}\per(\{\chi=a_0\};\omega) = \abs{a}\per(\omega_1\cup\omega_3\cup\omega_5;\omega)
    =  4 \abs{a}\length (\tgamma),
\end{equation}
where the curve $\tgamma :[0,h]\to \omega$ parametrizes the interface of $\omega_3$ and $\omega_4$ and is given by
\begin{equation}
  \tgamma(t):=(\alpha\gamma (\tfrac{t}{h}), t),\quad t\in[0,h].
\end{equation}
Using \eqref{eq:alphaParamCC} and $l\leq h$, we estimate
\begin{equation}
\begin{aligned}
     \length (\tgamma) = \int_{0}^{h}\abs{\tgamma'(t)}\, dt = \int_{0}^{h}  \sqrt{ 1+ \frac{\alpha^2 }{h^2}\gamma'^{\hspace{0.02cm} 2}\Big(\frac{t}{h}\Big)   \,}   \, dt 
      & =\int_{0}^{1}  \sqrt{ h^2+ \alpha^2 \gamma'^{\hspace{0.02cm} 2}(s)   \,}   \, ds\\
      &\leq\int_{0}^{1}  \sqrt{ h^2+ Cl^2  \,}   \, ds
      \leq C h.
\end{aligned}
\end{equation}
Together with \eqref{eq:stintCC}, \eqref{eq:I2VanishCC} and \eqref{eq:I3EstCC}, this shows the desired estimate \eqref{eq:unitCellCCEst} and concludes the proof.
\end{proof}
In the branching construction, we also need the following cut-off layer.
\begin{breaklemma}[Cut-off layer]\label{lem:CutOffCC}
Assume the hypotheses of \autoref{prop:UpperBoundCCImprovedSimplify}.
Let $\omega=(-l,l)\times(0,h)$ with $0<l\leq 2h\leq 1$. \\
Then, there exists $v\in W^{1,\infty}(\omega;\RR^2)$ and a phase arrangement $\chi\in BV(\omega;\cK)$
such that, defining $u=F+\nabla^{sym} v$, the localized energy can be estimated by
\begin{equation}\label{eq:CutOffCCEst2}
  E^{\cc}_\eps(u,\chi;\omega) = \int_\omega \abs{u-\chi}^2\,dx + \eps \norm{\nabla \chi}_{TV(\omega)} \leq 
  2lh E^{\cc}_0(F,\cK) + C\left(lh + \eps h\right) \quad\forall\eps>0
\end{equation}
for some constant $C=C(a, \ttheta_{\cc})>0$. Moreover, the vector field has the following boundary values:
\begin{enumerate}
  \item (Lateral sides) For all $x_2\in[0,h]$, we have 
  \begin{equation}\label{eq:sidestrace2CC}
    v(-l,x_2)= v(l,x_2)=0.
  \end{equation}
  \item (Bottom and top side) For all $x_1\in[-l,l]$, it holds that 
  \begin{equation}\label{eq:TopBottomTrace2CC}
  v(x_1,0)= \varphi_l(x_1)e_1, \hspace{2cm} v(x_1,h)=0,
  \end{equation}
  where $\varphi_l$ is given as in \eqref{eq:varphirBoundaryData} but with $\ttheta_{\curl}$ replaced by $\ttheta_{\cc}$.
\end{enumerate}
\end{breaklemma}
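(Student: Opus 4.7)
The plan is to closely mirror the proof of Lemma \ref{lem:aa2}, adjusting only where the symmetrized gradient replaces the gradient. Since $a = e_1\otimes e_1\in\Lambda_{\cc}$ with $\xi^*=e_1$, the unique compatible approximation $(F,\tcK)$ of $(F,\cK)$ has $\tcK=\{\ta_0,\ta_1\}$ with $\ta_0,\ta_1$ given by \eqref{eq:tildeWellsCCurlSpecific}; note in particular $\ta_1-\ta_0=e_1\otimes e_1=a_1-a_0$. I define $v\in W^{1,\infty}(\omega;\RR^2)$ by
\begin{equation}
v(x):=\psi\!\left(\tfrac{x_2}{h}\right)\varphi_l(x_1)\,e_1,\quad x\in\omega,
\end{equation}
with $\psi$ as in \eqref{eq:CutOffFunction}; the boundary conditions \eqref{eq:sidestrace2CC}, \eqref{eq:TopBottomTrace2CC} are immediate from $\psi(0)=1$, $\psi(1)=0$ and $\varphi_l(\pm l)=0$. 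Partitioning $\omega$ into $\omega_I:=[-\ttheta_{\cc}l,\ttheta_{\cc}l]\times(0,h)$ and $\omega_E:=\omega\setminus\overline{\omega_I}$, I define $\chi\in BV(\omega;\cK)$, $\tchi\in BV(\omega;\tcK)$ by $\chi|_{\omega_I}=a_1$, $\chi|_{\omega_E}=a_0$, and analogously for $\tchi$. The key identity, analogous to \eqref{eq:UsefulIdentityInBoundaryLayer}, is $\tchi(x)=F+\varphi_l'(x_1)\,e_1\otimes e_1$ in $\omega$.

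Next I split the localized elastic energy as in \eqref{eq:stint3}:
\begin{equation}
E^{\cc}_{el}(u,\chi;\omega)=\int_\omega|u-\tchi|^2\,dx+2\int_\omega(u-\tchi,\tchi-\chi)\,dx+\int_\omega|\tchi-\chi|^2\,dx.
\end{equation}
The third integral evaluates to $2lh\,E_0^{\cc}(F,\cK)$ by the same computation as in \eqref{eq:thirdint}, using the volume proportions of $\omega_I$ and $\omega_E$, \autoref{lem:importantequality}, and $h_{\cc}(a)=0$.

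The cross-term vanishes by the same symmetry/compatibility argument as in Step 2 of the proof of \autoref{lem:aa2}. Indeed, computing
\begin{equation}
u(x)-\tchi(x)=\left(\psi\!\left(\tfrac{x_2}{h}\right)-1\right)\varphi_l'(x_1)\,e_1\otimes e_1+\frac{1}{h}\psi'\!\left(\tfrac{x_2}{h}\right)\varphi_l(x_1)\,e_1\odot e_2,
\end{equation}
the contribution of the $e_1\odot e_2$-term to the cross-integral vanishes by Fubini since $\varphi_l$ is odd and $\omega_E$, $\omega_I$ are symmetric in $x_1$. Combining the $\omega_E$ and $\omega_I$ contributions of the $e_1\otimes e_1$-term yields a factor $(e_1\otimes e_1,(\ta_1-a_1)-(\ta_0-a_0))=(e_1\otimes e_1,(\ta_1-\ta_0)-a)=0$, again because $\ta_1-\ta_0=a$.

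Finally, I estimate the remainder and the surface energy. Using $|\varphi_l|\leq l$, $|\varphi_l'|\leq 1$ and that $\{e_1\otimes e_1\}$ is orthogonal to $\{e_1\odot e_2\}$ in the Frobenius inner product, I obtain the pointwise bound
\begin{equation}
|u-\tchi|^2\leq C\left(1+\tfrac{l^2}{h^2}\,\psi'^{\,2}(\tfrac{x_2}{h})\right),
\end{equation}
analogous to \eqref{eq:BoundaryLayerEstimate}. Integration and the hypothesis $l\leq 2h$ then give $\int_\omega|u-\tchi|^2\,dx\leq C\,lh$. The surface energy is bounded by $\|\nabla\chi\|_{TV(\omega)}\leq 2h|a|$ since the internal phase interfaces are the two vertical segments $\{\pm\ttheta_{\cc}l\}\times(0,h)$. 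Assembling these estimates yields \eqref{eq:CutOffCCEst2} with $C=C(a,\ttheta_{\cc})$. There is no conceptual obstacle here; the only thing to be careful about is keeping track of the $e_1\odot e_2$-component in $\nabla^{sym}v$ and verifying that its contribution in the cross-term still vanishes by the oddness of $\varphi_l$, which is a short Fubini computation.
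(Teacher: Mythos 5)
Your proposal is correct, but it takes a different (more explicit) route than the paper. You redo the computation of Lemma~\ref{lem:aa2} from scratch for the symmetrized gradient: the same $v(x)=\psi(x_2/h)\varphi_l(x_1)e_1$ and the same $\chi$, followed by the three-way splitting of the elastic energy, the extraction of $2lh\,E^{\cc}_0(F,\cK)$ from $\int_\omega|\tchi-\chi|^2$, the vanishing of the cross-term (where your observation that $\ta_1-\ta_0=e_1\otimes e_1=a$ makes the key cancellation even more transparent than the orthogonality argument needed in the incompatible curl case), and the pointwise bound on $|u-\tchi|^2$. All of these steps check out; in particular the extra $e_1\odot e_2$-component of $\nabla^{sym}v$ is harmless in the cross-term by the oddness of $\varphi_l$, exactly as you say. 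The paper instead avoids any recomputation: it notes that for $a=e_1\otimes e_1$ one has $S_{\curl}(a)=S_{\cc}(a)=\{\pm e_1\}$, $E^{\curl}_0(F,\cK)=E^{\cc}_0(F,\cK)$ and $\ttheta_{\curl}(F,\cK)=\ttheta_{\cc}(F,\cK)$, applies Lemma~\ref{lem:aa2} verbatim with $b=e_1$ to get $\tu=F+\nabla v$ and $\chi$, and then transfers the estimate via $|{\sym}\,A|\le|A|$, i.e.\ $E^{\cc}_\eps(u,\chi;\omega)\le E^{\curl}_\eps(\tu,\chi;\omega)$ for $u=\sym\tu$. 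The two proofs produce identical pairs $(v,\chi)$; the paper's reduction is shorter, while your direct verification is self-contained and makes visible exactly which terms survive symmetrization.
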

\begin{proof}
Instead of redoing the construction, we interpret the data in terms of a gradient two-well energy and apply \autoref{lem:aa2}.
To this end, we make some observations. Since $a=e_1\otimes e_1$ is positive semidefinite with $F\in\rddsymd{2}$ and $\cK\subset\rddsymd{2}$,
the proof of \autoref{prop:UpperBoundCC1} shows that
\begin{gather}
    S_{\curl}(a)  = S_{\cc}(a)= \{\pm e_1\},\hsp
    E^{\curl}_0(F,\cK) = E^{\cc}_0(F,\cK),\label{eq:EnergyCutOffCC}\\
    \ttheta_{\curl}(F,\cK) = \ttheta_{\cc}(F,\cK)\in(0,1).\label{eq:VolFracCutOffCC}
\end{gather}
As $\ppc(e_1)a=e_1\otimes e_1$, we can apply \autoref{lem:aa2} with $b=e_1$ and obtain $v\in W^{1,\infty}(\omega;\RR^2)$ and $\chi\in BV(\omega;\cK)$
such that, for $\tu := F +\nabla v$, we have
\begin{equation}\label{eq:aa1est2CC}
  E^{\curl}_\eps(\tu,\chi;\omega) = \int_\omega \abs{\tu-\chi}^2\,dx + \eps \norm{\nabla \chi}_{TV(\omega)} \leq 
  2lh E^{\curl}_0(F,\cK) + C\left(lh + \eps h\right) \quad\forall\eps>0
\end{equation}
for some constant $C=C(a, \ttheta_{\cc}(F,\cK))>0$.
The vector field $v$ satisfies the boundary conditions \eqref{eq:sidestrace2CC} and \eqref{eq:TopBottomTrace2CC} because $b=e_1$ and the optimal volume fractions are equal \eqref{eq:VolFracCutOffCC}.\medbreak

For $u=F+\nabla^{sym} v=\sym \tu$, we have
$\int_\omega \abs{u-\chi}^2\,dx \leq \int_\omega \abs{\tu-\chi}^2\,dx$
since $\chi=\sym \chi$ in $\omega$ and $\lvert \sym A\rvert\leq \lvert A\rvert$ for all $A\in\rddd{2}$.
Together with \eqref{eq:EnergyCutOffCC} and \eqref{eq:aa1est2CC}, this shows
\begin{equation}
  E^{\cc}_\eps(u,\chi;\omega) \leq  E^{\curl}_\eps(\tu,\chi;\omega)  \leq 2lh E^{\cc}_0(F,\cK) + C\left(lh + \eps h\right) \quad\forall\eps>0
\end{equation}
and completes the proof.
\end{proof}

With \autoref{lem:unitCellCC} and \autoref{lem:CutOffCC} in hand, we have the following upper bound construction.
\begin{breakproposition}[Branching construction]\label{prop:BranchingConstrCC}
Assume the hypotheses of \autoref{prop:UpperBoundCCImprovedSimplify}.
Then, for all $N\in\NN_{>1}$, there exists a vector field $v\in W^{1,\infty}_0(Q;\RR^2)$ and a phase arrangement
$\chi\in BV(Q; \cK)$ such that, for $u = F + \nabla^{sym} v\in\cD_F^{\cc}(Q)$, the following energy estimate holds:
\begin{equation}\label{eq:BranchingConstrEstCC}
  E^{\cc}_\eps(u,\chi) - E^{\cc}_0(F,\cK) \leq C\Big(\frac{1}{N^4}+\eps N\Big)\quad\forall \eps>0
\end{equation}
for some constant $C=C(a,\ttheta_{\cc})>0$.
\end{breakproposition}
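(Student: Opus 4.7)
The construction follows the same self-similar branching scheme as in \autoref{prop:a3}, with \autoref{lem:unitCellCC} and \autoref{lem:CutOffCC} replacing \autoref{lem:aa1} and \autoref{lem:aa2}. First I would fix $\tau \in (2^{-4/3}, \tfrac{1}{2})$ and, in the upper half square $Q^+ := (0,1)\times(\tfrac{1}{2},1)$, set $l_j := \tfrac{1}{2N\cdot 2^j}$ and $h_j := \tau^j \tfrac{1-\tau}{2}$ for $j \in \{0, \dots, j_0\}$ with $j_0 := \max\{j \in \NN_0: l_j \le h_j\}$, so that the $j$-th layer is partitioned into $N\cdot 2^j$ translated copies of $(-l_j, l_j) \times (0, h_j)$ stacked above the horizontal line $y_j = 1 - \tau^j/2$. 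In every interior cell I would apply \autoref{lem:unitCellCC} to define $v$ and $\chi$ locally, and in the topmost row of cells (of dimensions $2l_{j_0+1} \times \tau^{j_0+1}/2$) I would invoke the cut-off construction of \autoref{lem:CutOffCC}. Because the top boundary trace in \autoref{lem:unitCellCC} is exactly $\varphi_{l/2}(x_1 - l/2)\,e_1$ while the bottom trace is $\varphi_l(x_1)\,e_1$, the profile doubles its frequency from one layer to the next, and the lateral traces vanish; the blockwise definitions therefore glue continuously, yielding $v \in W^{1,\infty}(Q^+; \RR^2)$ that vanishes along the top and lateral faces of $Q^+$.

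Second, I would extend $v$ and $\chi$ to all of $Q$ by reflection across $\{x_2 = \tfrac{1}{2}\}$, exactly as in Step~3 of the proof of \autoref{prop:a3}, so that $v \in W^{1,\infty}_0(Q;\RR^2)$ and $u := F + \nabla^{sym} v \in \cD_F^{\cc}(Q)$. The reflection doubles the surface energy without creating any new jump of $\chi$ along the midline.

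Finally, to estimate $E^{\cc}_\eps(u,\chi)$ I would split it into the localized contributions over all cells $\omega_{j,k}$ and apply the per-cell bounds of \autoref{lem:unitCellCC} and \autoref{lem:CutOffCC}. The excess energy assembles to $E^{\cc}_0(F,\cK)$ because $\sum_{j,k} \lvert \omega_{j,k}\rvert = \lvert Q\rvert$ after mirroring. The remainder becomes
\begin{equation}
C \sum_{j=0}^{j_0+1} N\cdot 2^j \Big( \tfrac{l_j^5}{h_j^3} + \eps h_j \Big)
= C \sum_{j} \Big[\tfrac{1}{N^4}\Big(\tfrac{1}{2^4 \tau^3}\Big)^{\!j} + \eps N\,(2\tau)^j\Big],
\end{equation}
and the condition $\tau \in (2^{-4/3}, \tfrac{1}{2})$ makes both geometric series converge, producing exactly the bound $C(N^{-4} + \eps N)$ asserted in \eqref{eq:BranchingConstrEstCC}, with the constant depending only on $a$ and $\ttheta_{\cc}$ through the constants in \autoref{lem:unitCellCC} and \autoref{lem:CutOffCC}.

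The structural obstacle is the same as in \autoref{prop:a3}: verifying that the boundary data of the unit cells compose correctly across layers, which is built into the definition of $\varphi_l$ in \eqref{eq:varphirBoundaryData}; and handling the cut-off layer cleanly so that the top trace on $Q^+$ vanishes. The quantitative novelty is the sharper admissible range $\tau > 2^{-4/3}$ (versus $\tau > 1/4$ for the gradient problem), which reflects precisely the improved elastic estimate $l^5/h^3$ in \autoref{lem:unitCellCC} arising from the Chan–Conti cancellation of the off-diagonal entries of the symmetrized gradient; the $\eps^{4/5}$-scaling of \autoref{prop:UpperBoundCCImprovedSimplify} will then follow by optimizing $N \sim \eps^{-1/5}$.
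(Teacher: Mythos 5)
Your proposal is correct and follows essentially the same route as the paper, whose proof of this proposition consists of a one-line reference to the argument of \autoref{prop:a3} (with \autoref{lem:unitCellCC} and \autoref{lem:CutOffCC} in place of \autoref{lem:aa1} and \autoref{lem:aa2}) and to the original Chan--Conti construction. Your observation that the admissible range of the branching parameter must be sharpened to $\tau\in(2^{-\nicefrac{4}{3}},\tfrac{1}{2})$ --- rather than the $\tau\in(\tfrac{1}{4},\tfrac{1}{2})$ used in the gradient case --- so that the geometric series $\sum_j (2^4\tau^3)^{-j}$ arising from the improved elastic term $l_j^5/h_j^3$ converges is a correct and worthwhile detail that the paper's abbreviated proof glosses over.
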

\begin{proof}
The assertion follows by the same arguments as in the proof of \autoref{prop:a3}.
For more details, we refer to the proof in \cite[Lemma 2.3]{cc15}.
\end{proof}
We are now ready to prove \autoref{prop:UpperBoundCCImprovedSimplify}, which then also completes the proof of \autoref{prop:UpperBoundCCImproved} as demonstrated
in \autoref{subsubsec:CCChangeOfVar}.
\begin{proof}[Proof of \autoref{prop:UpperBoundCCImprovedSimplify}]
Given $\eps\in(0,1)$, we apply \autoref{prop:BranchingConstrCC} for $N:=\lceil \eps^{\nicefrac{-1}{5}} \rceil\in\NN_{>1}$.
Since $\eps^{\nicefrac{-1}{5}}\leq N\leq 2 \eps^{\nicefrac{-1}{5}}$,
the desired estimate \eqref{eq:UpperBoundCCImprovedSimplify} follows from \eqref{eq:BranchingConstrEstCC}. 
\end{proof}
We conclude this section by completing the proof of \autoref{thm:incompsyma}.
\begin{proof}[Proof of \autoref{thm:incompsyma}~\textcolor{blue}{(iii)}]
The general upper bound \autoref{prop:UpperBoundCC1} together with the improved upper bound \autoref{prop:UpperBoundCCImproved} prove
the asserted estimate.
\end{proof}

\appendix
\titleformat{\section}
  {\centering\large\scshape}
  {Appendix \Alph{section}.}
  {0.5em}
  {}
\section{Relaxation and potentials}\label{sec:AppendixA}
In this section, we prove \autoref{lem:domaininvarianceInText} which addresses the domain independence of the $\cA$-quasiconvex envelope.
Although this result follows from the work of Rai\c t\u a \cite{raitapot19} and is presumably known to experts,
it does not seem to appear in the literature. For completeness, we therefore provide a detailed proof below.\medbreak

We begin by observing that the $\cA$-free two-well energy does not depend on the geometry of the domain but only on its volume.
\begin{lemma}\label{lem:domaininvariance}
Let $\cA$ be a differential operator as in \eqref{eq:diffopera}.
Let $F\in X$ and $\cK=\{a_0,a_1\}\subset X$.
For a bounded Lipschitz domain $\Omega\subset\RR^d$, let $E_0^\cA(F,\cK;\Omega)$ be given as in \eqref{eq:MinSingPertAfreeEnergy}.
Furthermore, let $Q=(0,1)^d$ be the unit cube.
Then, it holds that
\begin{equation}
  E_0^\cA(F,\cK;\Omega) = \abs{\Omega}E_0^\cA(F,\cK;Q)
\end{equation}
for any bounded domain $\Omega\subset\RR^d$.
\end{lemma}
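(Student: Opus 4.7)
I would prove the two matching inequalities $E_0^\cA(F,\cK;\Omega)\leq|\Omega|E_0^\cA(F,\cK;Q)$ and $|\Omega|E_0^\cA(F,\cK;Q)\leq E_0^\cA(F,\cK;\Omega)$ separately, both by a tiling-and-rescaling construction. The central observation is the following: for any bounded Lipschitz domain $D\subset\RR^d$, any $u\in\cD^\cA_F(D)$ and any pair $(n,\kappa)\in\NN\times\RR^d$, the translated rescaling $u^{(n,\kappa)}(x):=u(nx-\kappa)$ lies in $\cD^\cA_F(\tfrac{1}{n}(D+\kappa))$. Indeed, denoting by $m$ the order of $\cA$, the chain rule gives $\cA u^{(n,\kappa)}(x)=n^m(\cA u)(nx-\kappa)=0$ distributionally, and $u^{(n,\kappa)}\equiv F$ outside $\tfrac{1}{n}(D+\kappa)$. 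Moreover, since $\cA F=0$ (constants are annihilated by every positive-order differential operator), finite superpositions of fluctuations $u_i-F$ with pairwise disjoint supports remain $\cA$-free on all of $\RR^d$, so rescaled copies may be patched together without constraint.

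\noindent For the upper bound, fix $\delta>0$ and a competitor $(u,\chi)\in\cD^\cA_F(Q)\times BV(Q;\cK)$ with $E^\cA_{el}(u,\chi;Q)\leq E_0^\cA(F,\cK;Q)+\delta$. For each $n\in\NN$ let $K_n:=\{\kappa\in\ZZ^d:\tfrac{1}{n}(\kappa+Q)\subset\Omega\}$, place on each cell $\tfrac{1}{n}(\kappa+Q)$ the rescaled copy $(u^{(n,\kappa)},\chi^{(n,\kappa)})$, and extend by the constant pair $(F,a_0)$ on the boundary layer $\Omega\setminus\bigcup_{\kappa\in K_n}\tfrac{1}{n}(\kappa+Q)$. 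By the preceding observation the resulting $(u_n,\chi_n)$ is admissible for $E_0^\cA(F,\cK;\Omega)$, and its elastic energy equals
\begin{equation*}
\#K_n\cdot n^{-d}\,E^\cA_{el}(u,\chi;Q)\;+\;\Big|\Omega\setminus\textstyle\bigcup_{\kappa\in K_n}\tfrac{1}{n}(\kappa+Q)\Big|\cdot\abs{F-a_0}_\cA^2.
\end{equation*}
Lipschitz regularity of $\partial\Omega$ forces the boundary-layer term to vanish as $n\to\infty$, while $\#K_n\cdot n^{-d}\to|\Omega|$; hence $E_0^\cA(F,\cK;\Omega)\leq|\Omega|\bigl(E_0^\cA(F,\cK;Q)+\delta\bigr)$, and sending $\delta\to0$ gives the bound.

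\noindent The lower bound proceeds symmetrically, with the cubic grid replaced by a Vitali-type covering. A standard application of Vitali's theorem to the family of scaled translates $\{x+r\Omega:r>0,\,x+r\Omega\subset Q\}$ -- whose members are all similar to the fixed Lipschitz domain $\Omega$ and hence share a common shape factor -- yields, for every $\delta>0$, pairwise disjoint copies $\{x_i+r_i\Omega\}_{i=1}^N\subset Q$ with $\bigl|Q\setminus\bigcup_{i=1}^N(x_i+r_i\Omega)\bigr|<\delta$. Placing a rescaled copy of a near-optimal competitor for $E_0^\cA(F,\cK;\Omega)$ on each $x_i+r_i\Omega$ and $(F,a_0)$ on the complement produces an admissible pair on $Q$ of energy $\sum_{i=1}^N r_i^d\,E^\cA_{el}(u,\chi;\Omega)+O(\delta)$; using $\sum_{i=1}^N r_i^d=|\Omega|^{-1}\bigl|\bigcup_{i=1}^N(x_i+r_i\Omega)\bigr|\to|\Omega|^{-1}$ and sending $\delta\to 0$ yields the reverse inequality. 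The only mildly nontrivial step is the Vitali covering for a non-ball family of shapes, which however reduces to the classical setting by the uniform shape of $\{x+r\Omega\}$; everything else is routine bookkeeping exploiting Lipschitz regularity of $\partial\Omega$ together with the linearity of $\cA$ as a distributional operator.
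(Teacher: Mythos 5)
Your proposal is correct and is precisely the standard relaxation argument the paper alludes to: the paper omits the proof of this lemma, noting only that it follows from Vitali's covering theorem together with the fact that the fluctuation $u-F$ of any $u\in\cD^\cA_F(\Omega)$ vanishes outside $\overline{\Omega}$ (so that rescaled, disjointly supported copies can be superposed while staying $\cA$-free), and your two-sided tiling/covering construction with the boundary layer filled by the constant pair $(F,a_0)$ is exactly that argument carried out in full. The only cosmetic remarks are that disjointness of supports is needed only for additivity of the energy (linearity of $\cA$ alone gives $\cA$-freeness of the superposition), and that one should note in passing that the patched $\chi_n$ has finite total variation (finitely many cells, each contributing a rescaled TV plus bounded jumps across cell faces) so that it is admissible in \eqref{eq:MinSingPertAfreeEnergy}; neither point affects the validity of the proof.
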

The proof is an application of Vitali's covering theorem and uses that for any $u\in\cD^\cA_F(\Omega)$, the map $u-F\in\cD^\cA_0(\Omega)$ vanishes outside $\Omega$.
We omit the proof as it is standard in the theory of relaxation; see, for instance, \cite[Lemma 5.2]{rindler}.\smallbreak

It thus suffices to prove \autoref{lem:domaininvarianceInText} for the domain $Q=(0,1)^d$, which we capture in the following proposition.
\begin{proposition}\label{prop:qwdomCube}
Let $\cA$ be a differential operator as in \eqref{eq:diffopera} satisfying the constant rank property; see \autoref{def:constantrank}.
Let $Q=(0,1)^d$ be the unit cube.
Given $F\in X$ and $\cK=\{a_0,a_1\}\subset X$, let $E_0^\cA(F,\cK;Q)$ be given as in \eqref{eq:MinSingPertAfreeEnergy} for the domain $Q$.
Moreover, let $Q^\cA W$ be the $\cA$\nobreakdash-\hspace{0pt}quasiconvex envelope of $W=\dist^2_\cA(\cdot,\cK)$ given as in \autoref{def:aquasi}.
Then, it holds that
\begin{equation}
  E_0^\cA(F,\cK;Q) =Q^\cA W(F).
\end{equation}
\end{proposition}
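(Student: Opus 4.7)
The plan is to prove the two inequalities $E_0^\cA(F,\cK;Q)\leq Q^\cA W(F)$ and $E_0^\cA(F,\cK;Q)\geq Q^\cA W(F)$ using the potential operator $\cB$ provided by Rai\c t\u a's theorem \cite{raitapot19}, whose existence is guaranteed by the constant rank assumption on $\cA$. Recall that $\cB$ is a homogeneous linear differential operator (of some order $m\geq 1$) satisfying $\cA\cB=0$, with the property that every mean-zero $\cA$-free $L^2$-function on $\TT^d$ can be written as $\cB\psi$ for some $\psi$ enjoying an $L^2$-bound of the form $\|\psi\|_{L^2}\lesssim\|\cB\psi\|_{L^2}$. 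Since $\eps=0$ in the present statement, only the $L^2$-distance of the admissible map to $\cK$ is relevant and no surface contribution needs to be tracked through the construction.

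For the upper bound, I would take an $\eps$-optimal periodic pair $(u^\sharp,\chi^\sharp)\in\cD^{\cA,\textup{per}}_F\times L^2(\TT^d;\cK)$ for \eqref{eq:AQuasiconvexification}, rescale to period $1/N$ to produce $(u_N,\chi_N)$ on $Q$ with the same averaged energy, and then correct the boundary values to $F$ outside $Q$. The correction is performed via the potential: write $u_N-F=\cB\psi_N$ with $\psi_N$ of mean zero (and $\psi_N$ small in $L^2$ by the bound above), multiply $\psi_N$ by a smooth cutoff $\eta$ supported in $Q$ equal to one off a thin boundary layer of width $\sim N^{-1/2}$, and set $\tilde u_N:=F+\cB(\eta\psi_N)\in\cD^\cA_F(Q)$. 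Applying the Leibniz rule, $\tilde u_N$ coincides with $u_N$ in the bulk and differs from it in the boundary layer by commutator terms $[\cB,\eta]\psi_N$ controlled by derivatives of $\eta$ against lower-order derivatives of $\psi_N$. Pairing $\tilde u_N$ with a phase arrangement $\tilde\chi_N$ which equals $\chi_N$ in the bulk and (say) $a_0$ in the boundary layer, the elastic energy in the bulk reproduces $\int_{\TT^d}\lvert u^\sharp-\chi^\sharp\rvert_\cA^2\,dx$, while the boundary-layer contribution is $o(1)$ as $N\to\infty$.

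For the reverse inequality, I would start from any admissible $(u,\chi)$ with $u\in\cD^\cA_F(Q)$ and observe that $w:=u-F$ is $\cA$-free on $\RR^d$ and vanishes on $\RR^d\setminus\overline Q$. The goal is to approximate $w$ in $L^2$ by an $\cA$-free function compactly supported in the interior of $Q$, which can then be periodically extended to a mean-zero $\cA$-free function on $\TT^d$. To this end, I again invoke Rai\c t\u a's potential: on a sufficiently large torus containing $Q$, write $w=\cB\psi$ for a potential $\psi$ which, since $\cB\psi=0$ on the open connected set $\RR^d\setminus\overline Q$ and $\cB$ annihilates polynomials of degree less than $m$, can be chosen to vanish outside $\overline Q$. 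Mollification then yields $\psi_\delta$ with compact support strictly inside $Q$, and $w_\delta:=\cB\psi_\delta$ approximates $w$ in $L^2$. The periodic extension of $F+w_\delta$ lies in $\cD^{\cA,\textup{per}}_F$, and pairing it with a periodic extension of $\chi$ (adjusted on the vanishing-measure set where $F+w_\delta$ differs from $u$) provides a competitor for $Q^\cA W(F)$ whose elastic energy converges to $\int_Q\lvert u-\chi\rvert_\cA^2\,dx$ as $\delta\to 0$.

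The main obstacle is the boundary-layer estimate in the upper bound: one must carefully balance the width of the cutoff region against the Sobolev-regularity of the Rai\c t\u a potential $\psi_N$ so that the commutator error $[\cB,\eta]\psi_N$ is negligible in $L^2$. This is the technical step that relies crucially on the constant rank hypothesis, which is exactly what yields the $L^2$-estimates on $\psi$ underlying this balance.
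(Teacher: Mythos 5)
Your reverse inequality contains a genuine gap. You want to replace $w=u-F$ (which is $\cA$-free on $\RR^d$ and supported in $\overline Q$) by an $\cA$-free field compactly supported in the interior of $Q$, and to do so you write $w=\cB\psi$ and claim $\psi$ "can be chosen to vanish outside $\overline Q$" because $\cB\psi=0$ there and "$\cB$ annihilates polynomials of degree less than $m$". This presumes that the local kernel of $\cB$ on a connected open set consists of polynomials of degree below its order, which is false for Rai\c t\u a's potential: its symbol is $\BB(\xi)=a_r(\xi)\ppa(\xi)$ with $a_r(\xi)\neq0$ for $\xi\neq0$, so $\ker\cB$ contains every field whose Fourier transform lies pointwise in $V_\cA(\xi)^\perp$ --- an infinite-dimensional space, not a space of polynomials (already for $\cA=\ccurl$ the potential has order $2\raa k=4$ and a huge kernel). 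Without a unique-continuation/normalization statement for $\ker\cB$, you cannot subtract off the values of $\psi$ on $\RR^d\setminus\overline Q$ while preserving $\cB\psi=w$ globally, so the compactly supported approximant $w_\delta$ is not produced. The fix is to drop the detour entirely: the periodic extension $u_{\mathrm{per}}$ of $u|_Q$ is \emph{directly} admissible for the periodic problem, since $u-F$ is $\cA$-free on all of $\RR^d$ and supported in $\overline Q$, hence the locally finite sum of its lattice translates is $\cA$-free on $\TT^d$; combined with $\fint_Q u\,dx=F$ (\autoref{lem:raitamean}) this gives $u_{\mathrm{per}}\in\cD^{\cA,\textup{per}}_F$ with the same energy, and the inequality $E_0^\cA(F,\cK;Q)\geq Q^\cA W(F)$ follows with no approximation. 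This is exactly what the paper does.

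Your upper bound is sound in outline, but note that it amounts to re-deriving \autoref{lem:AQuasiconvexificationAsPotential} (Rai\c t\u a's Corollary~1) by the standard homogenization-plus-cutoff argument with the commutator estimate $[\cB,\eta]\psi_N$; the paper instead cites that result and only needs the trivial observation that $F+\cB v\in\cD^\cA_F(Q)$ for $v\in C^\infty_c(Q;X)$. If you do carry out the cutoff yourself, the details (choice of layer width against the order $m$ of $\cB$ and the $H^m$-bounds on the pseudoinverse potential) must be written out; as stated they are only sketched.
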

Roughly speaking, this proposition asserts that the minimum of the $\cA$-free two-well energy in the unit cube
is independent of whether we impose Dirichlet boundary conditions \eqref{eq:MinSingPertAfreeEnergy} or the periodic constraint \eqref{eq:AQuasiconvexification}.\smallbreak
Before proving \autoref{prop:qwdomCube}, we need some auxiliary results.
A key ingredient in its proof is the existence of potentials for constant rank differential operators.
The relation of the gradient and the curl operator serves as an instructive example:
In a simply connected domain $\Omega\subset\RR^d$, we may verify if a matrix field $u\in C^\infty(\Omega;\RR^{d\times d})$ is the gradient 
of a vector field $v\in C^\infty(\Omega;\RR^{d})$ by checking if $\curl u=0$ in $\Omega$. 
Motivated by translating the $\cA$-quasiconvexity condition to the level of potentials, 
it was recently shown that a constant rank differential operator $\cA$ admits an exact potential $\cB$ in frequency space.
\begin{proposition}[{\cite[Theorem 1]{raitapot19}}]\label{prop:potfreq}
  Let $\cA$ be a differential operator as in \eqref{eq:diffopera} and $\AA(\xi)$ its symbol as in \eqref{eq:symbol}.
  Then $\cA$ has constant rank $\raa\in\NN$ (see \autoref{def:constantrank}) if and only if 
  there exists a linear, homogeneous, constant coefficient differential operator
  $\cB:C^\infty(\RR^d; X )\rightarrow C^\infty(\RR^d; X )$ such that
  \begin{equation}\label{eq:kerran}
    \ker \AA(\xi) = \Ran \BB(\xi)\quad \forall\xi\in\RR^d\setminus\{0\},
  \end{equation}
  where $\BB(\xi)$ is the symbol $\cB$.
\end{proposition}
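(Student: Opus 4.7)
The plan is to prove the two implications separately. The forward direction (potential $\Rightarrow$ constant rank) is a short semicontinuity argument, while the reverse direction (constant rank $\Rightarrow$ potential) requires the main construction, producing $\BB(\xi)$ as a spectral projection onto $\ker \AA(\xi)$, rescaled so as to yield a polynomial matrix.

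First I would handle the forward direction. Since both $\AA(\xi)$ and $\BB(\xi)$ are polynomial matrices in $\xi$, the rank functions $\xi \mapsto \rank \AA(\xi)$ and $\xi \mapsto \rank \BB(\xi)$ are lower semicontinuous, as each drops only on the common zero set of appropriate minors. Consequently $\xi \mapsto \dim \ker \AA(\xi) = \dim X - \rank \AA(\xi)$ is upper semicontinuous, while $\xi \mapsto \dim \Ran \BB(\xi)$ is lower semicontinuous. The hypothesis $\ker \AA(\xi) = \Ran \BB(\xi)$ forces these dimensions to coincide, so $\xi \mapsto \dim \ker \AA(\xi)$ is continuous, and being integer-valued on the connected set $\RR^d \setminus \{0\}$ (for $d \geq 2$) it must be constant. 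The case $d = 1$ follows from the homogeneity of $\AA$.

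For the reverse direction, I would pass through the composition $M(\xi) := \AA(\xi)^{T} \AA(\xi) \in \Lin(X)$, which is polynomial in $\xi$, positive semidefinite, and satisfies $\ker M(\xi) = \ker \AA(\xi)$. Writing $n := \dim X$ and $\kappa := n - \raa$, the constant rank assumption and positive semidefiniteness yield a factorization of the characteristic polynomial
\begin{equation}
  \det(tI - M(\xi)) = t^{\kappa} q_\xi(t), \qquad q_\xi(t) = \textstyle\sum_{j=\kappa}^{n} c_j(\xi) \, t^{j-\kappa},
\end{equation}
in which the coefficients $c_j(\xi)$ are polynomial in $\xi$ and $q_\xi(0) = c_\kappa(\xi)$ equals, up to sign, the product of the nonzero eigenvalues of $M(\xi)$, hence never vanishes on $\RR^d \setminus \{0\}$. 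Since $M(\xi)$ is symmetric and therefore diagonalizable, functional calculus yields the identity $q_\xi(M(\xi)) = q_\xi(0) \, \ppa(\xi)$, where $\ppa(\xi)$ is the orthogonal projection onto $\ker \AA(\xi)$ from \autoref{def:compatibilityProjection}. Setting $\BB(\xi) := q_\xi(M(\xi))$ then produces a polynomial matrix in $\xi$ satisfying $\Ran \BB(\xi) = \Ran \ppa(\xi) = \ker \AA(\xi)$ for all $\xi \neq 0$, which is the key identity.

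The main obstacle will be verifying that this construction gives a \emph{homogeneous} symbol, as required to identify $\BB(\xi)$ with the symbol of a homogeneous constant-coefficient differential operator $\cB$. Tracking weights: since $\AA(\xi)$ is homogeneous of degree $k$, the matrix $M(\xi)$ is homogeneous of degree $2k$, and hence $c_j(\xi)$ — essentially the $(n-j)$-th elementary symmetric function in the eigenvalues of $M(\xi)$ — is homogeneous of degree $2k(n-j)$. Each summand $c_j(\xi) M(\xi)^{j-\kappa}$ in $\BB(\xi)$ then has total degree $2k(n-j) + 2k(j - \kappa) = 2k \raa$, so $\BB(\xi)$ is homogeneous of degree $2k\raa$ and defines a differential operator $\cB$ of order $2k\raa$ acting between $C^\infty(\RR^d; X)$ and $C^\infty(\RR^d; X)$. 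Together with the spectral-projection identity of the previous paragraph, this completes the construction and closes the equivalence.
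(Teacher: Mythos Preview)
The paper does not prove this proposition; it is quoted from \cite{raitapot19}. Your argument is correct and is essentially Rai\c{t}\u{a}'s original construction, which the paper sketches in \autoref{rem:ellipticorder}: there the symbol is written as $\BB(\xi)=a_r(\xi)\,\ppa(\xi)$ with $a_r(\xi)$ the $\raa$-th coefficient of the characteristic polynomial of $\cH(\xi)=\AA(\xi)\AA(\xi)^{*}$. Your version replaces $\cH(\xi)$ by $M(\xi)=\AA(\xi)^{T}\AA(\xi)$, which has the same nonzero spectrum and hence the same relevant coefficient, and you make explicit why the product $c_\kappa(\xi)\,\ppa(\xi)$ is polynomial by realizing it as $q_\xi(M(\xi))$. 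The homogeneity bookkeeping you carry out recovers exactly the order $2k\raa$ noted in the remark.
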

We adopt the terminology from \cite{raitapot19} and refer to $\cA$ as \textit{annihilator} and $\cB$ as \textit{potential}.
\begin{rmk}\label{rem:ellipticorder}
In \autoref{def:constantrank}, we required constant rank differential operators $\cA$ to have rank $\raa\in\NN$. Consequently, we do not 
consider the trivial differential operator $\cA=0$ to be of constant rank.
Under this convention, a careful reading of the proof of \cite[Theorem 1]{raitapot19} reveals that $\cB$ has order zero if and only if $\cA$ is elliptic; see \eqref{eq:symbol}.
To see this, note that in \cite[Equation 9]{raitapot19}, the potential $\cB$ is defined via its symbol:
\begin{equation}\label{eq:bbxi}
  \BB(\xi)=a_r(\xi) \ppa(\xi),\quad \xi\in\RR^d,
\end{equation}
where $a_r(\xi)$ is the $\raa$-th coefficient of the characteristic polynomial of $ \cH(\xi):=\AA(\xi)\AA(\xi)^*$ for $\xi\in\RR^d$.
Since $\cA$ has order $k\in\NN$, $\cH$ is $2k$-homogeneous.
For all $\xi\in\RR^d$, the coefficient $a_r(\xi)$ is a linear combination of the principal minors of
$\cH(\xi)$ of order $\raa$. Hence, $a_r$ is $2\raa k$-homogeneous.
With \eqref{eq:bbxi} and zero-homogeneity of $\ppa$ \eqref{eq:zerohom},
we see that $\cB$ is a differential operator of order $2\raa k$ if $\ppa\ne0$. 
Now, note that $\ppa\equiv 0$ is equivalent to ellipticity of $\cA$, as only then $V_\cA(\xi)=\{0\}$ for all $\xi\in\RR^d\setminus\{0\}$; see \eqref{eq:xicompatiblestates}.
\end{rmk}

\begin{lemma}[{\cite[Corollary 1]{raitapot19}}]\label{lem:AQuasiconvexificationAsPotential}
Let $\cA$ be a differential operator as in \eqref{eq:diffopera} satisfying the constant rank property; see \autoref{def:constantrank}.
Let $\cB$ be the differential operator obtained from \autoref{prop:potfreq}.
Let $Q=(0,1)^d$ be the unit cube.
Given $\cK=\{a_0,a_1\}\subset X$, let $Q^\cA W $ be the
$\cA$-quasiconvex envelope of $W:=\dist_\cA^2(\cdot,\cK)$ as introduced in \autoref{def:aquasi}. Then, it holds that
\begin{equation}\label{eq:AQuasiconvexificationAsPotential}
  Q^\cA W (F) = \inf \left\{\int_{Q} W(F+\cB v): v\in C^\infty_c( Q ; X )   \right\}\quad \forall F\in X .
\end{equation}
\end{lemma}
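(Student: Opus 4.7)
The plan is to establish both inequalities separately. For the easy direction $Q^\cA W(F) \leq \text{RHS}$, I would observe that $\{F + \cB v : v \in C^\infty_c(Q; X)\} \subseteq \cD^{\cA,\mathrm{per}}_F$. Indeed, for $v \in C^\infty_c(Q; X)$ extended by zero to $\RR^d$, the function $\cB v$ is smooth and compactly supported in $Q$, so its $\TT^d$-periodic extension is smooth; moreover, $\cA(F + \cB v) = \cA \cB v = 0$ by the potential property from \autoref{prop:potfreq} (which gives $\Ran \BB(\xi) \subseteq \ker \AA(\xi)$), and the mean $\overline{\cB v} = \widehat{\cB v}(0) = \BB(0) \hat v(0) = 0$ whenever $\cB$ has positive order, i.e., whenever $\cA$ is non-elliptic; the elliptic case, by \autoref{rem:ellipticorder}, forces $\BB \equiv 0$ and both sides of the identity reduce to $W(F)$. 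Taking the infimum of $\int_Q W(F + \cB v)$ over such $v$ thus bounds $Q^\cA W(F)$ from above.

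For the reverse inequality $Q^\cA W(F) \geq \text{RHS}$, the plan is to prove that every $u \in \cD^{\cA,\mathrm{per}}_F$ is the $L^2(\TT^d; X)$-limit of a sequence $F + \cB v_n$ with $v_n \in C^\infty_c(Q; X)$, and then combine this density with continuity of the functional $u \mapsto \int_{\TT^d} W(u)\, dx$ on $L^2$-bounded sets. The latter follows from $W = \dist_\cA^2(\cdot, \cK)$ being locally Lipschitz with quadratic growth, so that on sets bounded in $L^2$, $L^2$-convergence implies convergence of energies. The construction of the approximating sequence is the analytic heart of the argument: writing $w := u - F$, which is periodic, mean-zero and $\cA$-free, constant rank of $\cA$ yields a smooth Moore-Penrose pseudoinverse $\BB(\xi)^\dagger$ on $\RR^d \setminus \{0\}$ whose operator norm scales like $|\xi|^{-m}$ with $m := \mathrm{ord}\,\cB$. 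Defining $\tilde v \in H^m(\TT^d; X)$ through its Fourier coefficients $\hat{\tilde v}(\xi) := \BB(\xi)^\dagger \hat w(\xi)$ for $\xi \neq 0$, $\hat{\tilde v}(0) := 0$, yields a periodic potential satisfying $\cB \tilde v = w$ by construction.

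The main obstacle, and the technical core of the argument, is to convert $\tilde v$ into compactly supported approximants without losing control of the energy. A naive cut-off $\eta_\delta \tilde v$ with $\eta_\delta \in C^\infty_c(Q)$ equal to $1$ on $[\delta, 1-\delta]^d$ produces compactly supported functions, but the commutator identity
\begin{equation}
  \cB(\eta_\delta \tilde v) - \eta_\delta \cB \tilde v = [\cB, \eta_\delta] \tilde v
\end{equation}
introduces boundary-layer terms involving $\partial^\gamma \eta_\delta$ of pointwise size $\delta^{-|\gamma|}$ supported in a strip of measure $O(\delta)$, which do not vanish naively in $L^2$. To overcome this, I would first regularize via periodic mollification $\tilde v_\epsilon := \tilde v * \rho_\epsilon$ (so that $\cB \tilde v_\epsilon = w * \rho_\epsilon \to w$ in $L^2$), then apply the cut-off, and use the additional Sobolev regularity afforded by constant rank to absorb the commutator losses into a vanishing boundary-layer contribution, choosing $\epsilon = \epsilon(\delta)$ via a diagonal argument as in the standard density constructions for $\cA$-quasiconvex envelopes; see \cite{fonsecamuller99, raitapot19}. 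Setting $v_n := \eta_{\delta_n} \tilde v_{\epsilon_n} \in C^\infty_c(Q; X)$ then produces $F + \cB v_n \to u$ in $L^2(\TT^d; X)$, and the aforementioned continuity of $\int W$ yields $\text{RHS} \leq Q^\cA W(F)$, completing the proof.
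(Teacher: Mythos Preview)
The paper does not prove this lemma; it is cited directly from \cite{raitapot19}, and the only hint at the argument is the remark immediately following the statement, which says the transition from the periodic to the Dirichlet setting ``is achieved using homogenization and a cut-off argument.'' Your easy direction is correct and matches that of \cite{raitapot19}.

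Your hard direction, however, has a genuine gap. Mollification does not make the commutator $[\cB,\eta_\delta]\tilde v_\epsilon$ small in $L^2$: the terms involve $\partial^\alpha\eta_\delta$ of pointwise size $\delta^{-|\alpha|}$ multiplied by $\partial^\beta\tilde v_\epsilon$ with $|\beta|<m$, and the latter remain of unit $L^2$ size on the boundary strip regardless of $\epsilon$ (mollification only helps with convergence of $\cB\tilde v_\epsilon$, not with making lower-order derivatives small). The strip has measure $O(\delta)$, which is insufficient to compensate for $\delta^{-|\alpha|}$ once $|\alpha|\ge 1$. No diagonal choice $\epsilon=\epsilon(\delta)$ rescues this.

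The mechanism actually used in \cite{raitapot19}, and alluded to by the word ``homogenization'' in the paper's remark, is rescaling: set $\tilde v_n(x):=n^{-m}\tilde v(nx)$ so that $\cB\tilde v_n(x)=w(nx)$ and, by periodicity, $\int_Q W(F+\cB\tilde v_n)=\int_Q W(u)$ for every $n$. The point is that $\partial^\beta\tilde v_n$ scales as $n^{|\beta|-m}$ for $|\beta|<m$, so for any \emph{fixed} cut-off $\eta_\delta$ one has $[\cB,\eta_\delta]\tilde v_n\to 0$ in $L^2$ as $n\to\infty$; one then sends $\delta\to 0$ afterwards. This extra small parameter $n^{-1}$ is what your argument is missing.
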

\begin{rmk}
  Roughly speaking, \autoref{lem:AQuasiconvexificationAsPotential} proves that the two-well energy for the potential $\cB$ with Dirichlet boundary data
  (the right-hand side of \eqref{eq:AQuasiconvexificationAsPotential})
  is equivalent to the periodic $\cA$-free two-well problem \eqref{eq:AQuasiconvexification}.
  The transition from the periodic setting to the Dirichlet problem is achieved using homogenization and a cut-off argument as detailed in \cite{raitapot19}.
\end{rmk}

Given boundary data $F\in\rdd$ on a bounded Lipschitz domain $\Omega\subset\RR^d$, the divergence theorem implies that the average gradient of 
$u\in W^{1,2}(\Omega;\RR^d)$ with $u(x)=Fx$ on $\del\Omega$ (in the sense of traces) is
\begin{equation}
\overline{\nabla u}=\fint_\Omega \nabla u\,dx =F.
\end{equation}
By the following lemma, this carries over to the $\cA$-free setting.
\begin{lemma}\label{lem:raitamean}
Let $\cA$ be a differential operator as in \eqref{eq:diffopera} satisfying the constant rank property; see \autoref{def:constantrank}.
Let $\Omega\subset\RR^d$ be a bounded domain.
Given $F\in X $, let $\cD^\cA_F(\Omega)$ be as in \eqref{eq:admapsa}. Then, it holds that
\begin{equation}\label{eq:meanforadmissible}
\overline{u}=\fint_\Omega u\, dx = F\quad \forall u\in \cD^\cA_F(\Omega).
\end{equation}
\end{lemma}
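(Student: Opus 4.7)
Set $w := u - F \in L^2_{\mathrm{loc}}(\RR^d; X)$. Since $u \equiv F$ on $\RR^d \setminus \overline{\Omega}$, the function $w$ has compact support in $\overline{\Omega}$. Because $\cA$ is homogeneous of order $k \geq 1$, it annihilates the constant field $F$, so $\cA w = \cA u - \cA F = 0$ in $\RR^d$ in the distributional sense. Thus the identity \eqref{eq:meanforadmissible} is equivalent to the vector-valued assertion $\int_{\RR^d} w \, dx = 0$, and it is this reformulation that I propose to attack.

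My plan is to test the weak formulation of $\cA w = 0$ against suitable polynomial-based test functions. For every $\phi \in C_c^\infty(\RR^d; Y)$ one has
\[
\int_{\RR^d}(w, \cA^* \phi)_\cA\, dx = 0, \qquad \cA^* = (-1)^k \sum_{|\alpha|=k} A_\alpha^* \partial^\alpha.
\]
Given a constant $c \in X$, I aim to choose $\phi = \phi_R$ such that $\cA^* \phi_R \equiv c$ on $\supp(w)$; the weak formulation will then collapse to $\int_{\RR^d}(w, c)_\cA\, dx = 0$, and letting $c$ range over $X$ yields $\int_{\RR^d} w\, dx = 0$. To build $\phi_R$, I would first construct a $Y$-valued homogeneous polynomial $p(x) = \sum_{|\alpha|=k}\tfrac{x^\alpha}{\alpha!}\psi_\alpha$ of degree $k$ with $\cA^* p \equiv c$; since $\partial^\beta p \equiv \psi_\beta$ for $|\beta|=k$, a direct computation reduces this to the algebraic equation $\sum_\alpha A_\alpha^* \psi_\alpha = (-1)^k c$, which is solvable for every $c \in X$ as long as $\Ran\bigl(\sum_\alpha A_\alpha^*\bigr) = \bigl(\bigcap_{\xi \in \RR^d} \ker \AA(\xi)\bigr)^\perp$ exhausts $X$, i.e. as long as no nonzero vector of $X$ is annihilated by every $A_\alpha$. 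Next, for $\eta_R \in C_c^\infty(\RR^d)$ with $\eta_R \equiv 1$ on $B_R$ and $\supp(\eta_R) \subset B_{2R}$, set $\phi_R := \eta_R p$. The Leibniz rule then gives $\cA^* \phi_R = \eta_R c + R_R$, where the remainder $R_R$ involves at least one derivative of $\eta_R$ and is therefore supported in $B_{2R} \setminus B_R$; choosing $R$ so large that $\supp(w) \subset B_R$ makes $R_R$ vanish on $\supp(w)$, so $\cA^* \phi_R \equiv c$ there, as required.

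The delicate step I anticipate is verifying the solvability identity $\Ran\bigl(\sum_\alpha A_\alpha^*\bigr) = X$, which encodes a non-degeneracy property of $\cA$ hidden inside the constant-rank/spanning-wave-cone structure in use throughout the paper. A more invariant fallback, to keep in reserve, splits into two cases via \autoref{rem:ellipticorder}: if $\cA$ is elliptic, then $V_\cA(\xi) = \{0\}$ for $\xi \ne 0$, so the pointwise constraint $\AA(\xi)\hat w(\xi) = 0$ together with the continuity of $\hat w$ (Paley--Wiener, since $w$ is compactly supported) already forces $\hat w \equiv 0$ and hence $w \equiv 0$; if $\cA$ is non-elliptic, then by \autoref{rem:ellipticorder} the Rai\c t\u a potential $\cB$ of \autoref{prop:potfreq} has positive order, so $\BB(0) = 0$, and representing $w = \cB v$ (after a preliminary mollification, and using the pseudo-inverse of $\BB(\xi)$ on $\Ran \BB(\xi) = V_\cA(\xi)$ in frequency space) yields $\int_{\RR^d} w \, dx = (2\pi)^{d/2}\,\widehat{\cB v}(0) = (2\pi)^{d/2}\,\BB(0)\,\hat v(0) = 0$.
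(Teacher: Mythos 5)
Your primary argument is a genuinely different and more elementary route than the paper's: instead of passing to Fourier space and invoking Rai\c t\u a's exact potential $\cB$ (which is what the paper does, after first disposing of the elliptic case), you test the distributional identity $\cA w=0$ for $w=u-F$ against cut-off polynomial fields $\phi_R=\eta_R p$ with $\cA^*p\equiv c$. The computation you sketch is correct: $\partial^\beta p\equiv\psi_\beta$ for $|\beta|=k$, the problem reduces to the algebraic equation $\sum_{|\alpha|=k}A_\alpha^*\psi_\alpha=(-1)^kc$, and the joint map $(\psi_\alpha)_\alpha\mapsto\sum_\alpha A_\alpha^*\psi_\alpha$ has range $\bigl(\bigcap_\alpha\ker A_\alpha\bigr)^\perp=\bigl(\bigcap_{\xi\in\rrdmz}\ker\AA(\xi)\bigr)^\perp$. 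So you have correctly isolated the crux of the matter.

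The catch is that the surjectivity you need, i.e. $\bigcap_{\xi\in\rrdmz}\ker\AA(\xi)=\{0\}$, is a genuine additional hypothesis: it is \emph{not} implied by constant rank (nor by constant rank together with a spanning wave cone), so the ``delicate step'' you flag cannot be closed from the stated assumptions. Concretely, for $\cA(u_1,u_2,u_3):=\partial_1u_1+\partial_2u_2$ acting on $\RR^3$-valued fields on $\RR^2$ one has $\rank\AA(\xi)=1$ for all $\xi\ne0$, yet $e_3\in\bigcap_\xi\ker\AA(\xi)$, and the statement of the lemma itself fails there: $u:=F+(0,0,\mathbbm{1}_\Omega)$ belongs to $\cD^\cA_F(\Omega)$ but $\overline{u}\ne F$. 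So the obstruction you identified is real rather than an artifact of your method; your argument proves the lemma exactly on its range of validity, which does cover $\curl$, $\Div$ and $\ccurl$ (for these, $\bigcap_\xi V_\cA(\xi)=\{0\}$). Your fallback is essentially the paper's proof and inherits the same subtlety: if $w=\cB v$ with $v$ Schwartz and $\cB$ of positive order, then $\hat w(0)=\BB(0)\hat v(0)=0$ is automatic, so such a $v$ cannot exist when $\int_{\RR^d}w\,dx\ne0$; the pseudo-inverse $\BB(\xi)^\dagger\hat w(\xi)$ is singular at $\xi=0$ in precisely that case, and the representation lemma of Rai\c t\u a requires a vanishing condition at the zero frequency. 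Unless you make explicit where that condition is supplied, the fallback risks circularity; your polynomial-duality argument, run under the explicitly stated hypothesis $\bigcap_{\xi\in\rrdmz}\ker\AA(\xi)=\{0\}$, is the cleaner way to a complete proof.
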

\begin{proof}
  Let $u\in \cD^\cA_F(\Omega)$.
  If $\cA$ is elliptic, taking the Fourier transform of $\cA u=0$ shows that $\hat{u}(\xi)=0$ for all $\xi\in\RR^d\setminus\{0\}$. In particular, $u\equiv F$ is constant in $\RR^d$ and \eqref{eq:meanforadmissible} is immediate.
  Therefore, we can assume that $\cA$ is not elliptic.
  Let $\cB$ be the potential obtained from \autoref{prop:potfreq}.
  By \autoref{rem:ellipticorder}, the differential operator $\cB$ is of order at least one.
  Now, denote by $\eta\in C^\infty_c(\RR^d;\RR)$ the standard mollifier. Setting $u_0 = (u-F)*\eta$, it holds 
  $u_0\in \SP(\RR^d; X )$ and $\cA u_0=0$. By \cite[Lemma 2]{raitapot19}, it follows that there exists
  $w\in\SP(\RR^d; X )$ with $\cB w= u_0$. Observing that
  \begin{equation}
    0= \int_{\RR^d}\cB w\,dx =\int_{\RR^d}u_0\,dx = \int_{\RR^d}u-F\,dx = \int_\Omega u \, dx - \abs{\Omega} F
  \end{equation}
  completes the proof.
\end{proof}
We are now ready to prove \autoref{prop:qwdomCube}.
\begin{proof}[Proof of \autoref{prop:qwdomCube}]
We apply \autoref{lem:AQuasiconvexificationAsPotential} to obtain a potential $\cB$ for the differential operator $\cA$
such that \eqref{eq:AQuasiconvexificationAsPotential} holds. Taking the Fourier transform, it is immediate that $F+Bv\in\cD^\cA_F(Q)$ for all $v\in C^\infty_c( Q ; X )$.
Recalling \eqref{eq:MinSingPertAfreeEnergy}, the estimate $E_0^\cA(F,\cK;Q) \leq Q^\cA W(F)$ follows from \eqref{eq:AQuasiconvexificationAsPotential}.\smallbreak
For the reverse inequality, let $u\in\cD^\cA_F(Q)$. We restrict $u$ to the unit cube $Q$ and extend $\TT^d$-periodically to obtain $u_{\text{per}}\in L^2(\TT^d; X )$.
A quick computation shows that $\cA u_{\text{per}}=0$ in $\mathscr{D}'(\TT^d)$. Together with \autoref{lem:raitamean} this yields $u_{\text{per}}\in\cD^{\cA,\textup{per}}_F$; see \eqref{eq:admissibleMapsPeriodic}. By \eqref{eq:MinSingPertAfreeEnergy} and \eqref{eq:AQuasiconvexification}, we obtain $E_0^\cA(F,\cK;Q) \geq Q^\cA W(F)$.
\end{proof}
We are now ready to prove the domain independence of the $\cA$-quasiconvex envelope.
\begin{proof}[Proof of \autoref{lem:domaininvarianceInText}]
The statement follows by applying \autoref{lem:domaininvariance} and \autoref{prop:qwdomCube}.
\end{proof}

\addcontentsline{toc}{section}{References}
\emergencystretch=1em
\fancyhead[LE]{\small\nouppercase{\leftmark}}
\hypersetup{urlcolor={black}}    
\printbibliography

\end{document}